\newtheorem{theorem}{Theorem}[section]
\newtheorem{corollary}[theorem]{Corollary}
\newtheorem{proposition}[theorem]{Proposition}
\newtheorem{definition}[theorem]{Definition}
\newtheorem{lemma}[theorem]{Lemma}
\newtheorem{claim}[theorem]{Claim}
\newtheorem*{theorem*}{Theorem}
\newtheorem*{proposition*}{Proposition}
\newtheorem*{definition*}{Definition}
\newtheorem*{lemma*}{Lemma}
\newtheorem*{claim*}{Claim}
\newtheorem*{corollary*}{Corollary}
\newtheorem*{convention*}{Convention}
\newtheorem{observation}[theorem]{Observation}
\theoremstyle{definition}
\newtheorem{convention}[theorem]{Convention}
\newtheorem{example}{Example}
\newtheorem{question}{Question}
\newtheorem*{notation}{Notation}
\theoremstyle{remark}
\newtheorem{rem}[theorem]{Remark}
\newtheorem*{rem*}{Remark}
\newtheorem{case}{Case}
\newcommand{\wt}[1]{\widetilde{#1}}
\newcommand\bR{\mathbb R}
\newcommand\bZ{\mathbb Z}
\newcommand\bN{\mathbb N}
\newcommand\bC{\mathbb C}
\newcommand{\R}{\mathbb R}
\DeclareMathOperator{\TL}{\mathbf{TL}}
\DeclareMathOperator{\id}{Id}
\newcommand{\putinbox}[1]{\noindent\fbox{\parbox{.98\textwidth}{#1}}\medskip}
\newcommand\orb{ \mathcal O }
\newcommand{\cC}{\mathcal{C}}
\newcommand{\cF}{\mathcal{F}}
\newcommand{\cL}{\mathcal{L}}
\newcommand{\cP}{\mathcal{P}}
\newcommand{\cE}{\mathcal{E}}
\newcommand{\Pbound}{\partial P}
\newcommand{\fix}{\cP}
\newcommand{\fixnc}{\cP^{\mathrm{NC}}}
\newcommand{\acts}{\curvearrowright}
\DeclareMathOperator{\idealpts}{\partial\mathcal{F}}
\newcounter{notes}
\title{Orbit equivalences of pseudo-Anosov flows}
\author[Thomas Barthelm\'e]{Thomas Barthelm\'e}
\address{Queen's University, Kingston, Ontario}
\email{thomas.barthelme@queensu.ca}
\urladdr{sites.google.com/site/thomasbarthelme}
\author[Steven Frankel]{Steven Frankel}
 \address{Washington University, St.~Louis, Mi}
 \email{steven.frankel@wustl.edu}
\author[Kathryn Mann]{Kathryn Mann}
 \address{Cornell University, Ithaca, NY}
 \email{k.mann@cornell.edu}
\urladdr{https://e.math.cornell.edu/people/mann}
\begin{document}

 \begin{abstract}
 We prove a classification theorem for transitive Anosov and pseudo-Anosov flows on closed 3-manifolds, up to orbit equivalence.  
 In many cases, flows on a 3-manifold $M$ are completely determined by the set of free homotopy classes of their (unoriented) periodic orbits.  The exceptional cases are flows with a special structure in their orbit space called a ``tree of scalloped regions"; in these cases the set of free homotopy classes of unoriented periodic orbits together with the additional data of a choice of sign for each $\pi_1(M)$-orbit of tree gives a complete invariant of orbit equivalence classes of flows.  

 The framework for the proof is a more general result about {\em Anosov-like actions} of abstract groups on bifoliated planes, showing that the homeomorphism type of the bifoliation and the conjugacy class of the action can be recovered from knowledge of which elements of the group act with fixed points. 
  
As a consequence, we show that Anosov flows are determined up to orbit equivalence by the action on the ideal boundary of their orbit spaces, and more generally that transitive Anosov-like actions on bifoliated planes are determined up to conjugacy by their actions on the plane's ideal boundary: any conjugacy between two such actions on their ideal circles can be extended uniquely to a conjugacy on the interior of the plane.  
 \end{abstract}

 \maketitle
 
 \section{Introduction}
 In \cite[II.3]{Smale}, Smale frames the ``very important" problem of describing all transitive Anosov flows on a given compact manifold $M$. 
Flows on a fixed manifold may be considered up to either orbit equivalence or isotopy equivalence: an \emph{orbit equivalence} between two flows $\varphi^t$ and $\psi^t$ is a homeomorphism of $M$ that takes the orbits of $\phi^t$ to orbits of $\psi^t$; an \emph{isotopy equivalence} is an orbit equivalence that is isotopic to the identity.  Classification up to such equivalences amounts to understanding the topology of the possible foliations given by orbits of such flows\footnote{Note that our convention does not require the homeomorphism to preserve  orientation of orbits. Also, since we are concerned with a purely topological question, we will drop the time parameter $t$ and denote flows by $\varphi$ and $\psi$.  
 }. 
 
The problem is already challenging and important on manifolds of dimension 3.   Work of Plante, Ghys and Barbot gives finiteness results on certain geometric (solvable or Seifert fibered) 3-manifolds \cite{Plante81, Ghys84, Barbot96}.  By contrast, much more recent work of Bonatti--Bin--Yu \cite{BBY}, Bowden and the second author \cite{BM}, and Clay--Pinsky \cite{CP} shows that, for any given $n$, one can find examples of closed 3-manifolds (either hyperbolic or with non-trivial JSJ decomposition) that admit at least $n$ inequivalent transitive Anosov flows.  A main difficulty is certifying that the flows in consideration are indeed inequivalent.  More generally, 
the classification problem asks for invariants to distinguish inequivalent flows.  This is what we do here.  
 
 Given a flow $\varphi$, let $\cP(\varphi)$ denote the set of conjugacy classes of elements $\gamma \in \pi_1(M)$ such that either $\gamma$ or $\gamma^{-1}$ represents the homotopy class of a periodic orbit of $\varphi$, i.e. the set of unoriented free homotopy classes of closed loops in the foliation of $M$ by orbits. 
It is immediate from the definition that if $f\colon M \to M$ is an orbit equivalence between $\varphi$ and $\psi$, then $f_\ast P(\varphi) = P(\psi)$.  We show that $P(\varphi)$ is a complete invariant of transitive pseudo-Anosov flows  in many cases, specifically if the orbit space of the flow does not contain a very special structure called a ``tree of scalloped regions" (see Definition \ref{def_tree_scalloped}).  Additional discrete data, essentially a choice of sign for each orbit of tree of scalloped regions,  gives a complete invariant applicable to all transitive pseudo-Anosov flows (see Theorem \ref{thm_main_Anosov} below). 

 \begin{theorem} \label{thm_notree_Anosov}
Let $\varphi$ and $\psi$ be two pseudo-Anosov flows on a closed $3$-manifold $M$.  Assume at least one flow is transitive, and at least one flow does not have a tree of scalloped regions in its orbit space. Then, 
\begin{enumerate} 
\item $\varphi$ and $\psi$  are isotopically equivalent if and only if $\cP(\varphi) = \cP(\psi)$.
\item More generally, $\varphi$ and $\psi$ are orbit equivalent if and only if there is an automorphism $\Phi$ of $\pi_1(M)$ such that $\Phi_\ast \cP(\varphi) = \cP(\psi)$. In this case, the orbit equivalence can be taken to induce $\Phi$ on $\pi_1(M)$. 
\end{enumerate}
 \end{theorem}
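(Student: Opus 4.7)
The plan is to pass to the universal cover, encode each flow as an \emph{Anosov-like action} on its bifoliated orbit space, and then invoke the abstract classification theorem (stated in the abstract and developed in later sections) to obtain an equivariant homeomorphism of orbit spaces, which we finally lift back to an orbit equivalence of $M$.

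\emph{Step 1 (setup).} Let $\widetilde M$ be the universal cover of $M$. The lifted flow $\wt\varphi$ has orbit space $O_\varphi = \widetilde M/\wt\varphi$, a topological plane carrying two transverse singular foliations $\fs_\varphi, \fu_\varphi$ coming from the weak stable and unstable foliations; the deck action of $\pi_1(M)$ descends to an action on $O_\varphi$ preserving this bifoliation. This is the standard Anosov-like action associated to $\varphi$, and likewise one gets $(O_\psi, \fs_\psi, \fu_\psi)$ with its $\pi_1(M)$-action. The key identification is that periodic orbits of $\varphi$ correspond bijectively to $\pi_1(M)$-orbits of fixed points in $O_\varphi$, and the stabilizer of each fixed point is generated by the element representing the associated periodic orbit. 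Consequently, $\cP(\varphi)$ is exactly the set of conjugacy classes $[\gamma]$ (up to $\gamma \leftrightarrow \gamma^{-1}$) for which $\gamma$ has a fixed point in $O_\varphi$; and similarly for $\psi$.

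\emph{Step 2 (apply the abstract classification).} The hypothesis $\Phi_\ast\cP(\varphi)=\cP(\psi)$ says precisely that, after precomposing the $\psi$-action by $\Phi$, the two Anosov-like actions have the \emph{same} set of conjugacy classes of fixed-point-producing elements. The general theorem announced in the abstract says that an Anosov-like action on a bifoliated plane is determined up to conjugacy by this data, \emph{provided} the orbit space has no tree of scalloped regions (otherwise an extra discrete sign choice is needed). Under our assumption that at least one of $O_\varphi,O_\psi$ has no tree of scalloped regions, this hypothesis transfers to the other via the identification of fixed-point data, so the abstract theorem produces a $\Phi$-equivariant homeomorphism $h\colon O_\varphi \to O_\psi$ sending $\fs_\varphi$ to $\fs_\psi$ and $\fu_\varphi$ to $\fu_\psi$. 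Transitivity of one flow implies that periodic orbits are dense in $M$, hence fixed points of $\pi_1(M)$ are dense in the corresponding orbit space, which is what the abstract theorem uses to pin down $h$ uniquely on the interior.

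\emph{Step 3 (lift $h$ to an orbit equivalence).} The projections $\widetilde M \to O_\varphi$ and $\widetilde M \to O_\psi$ are trivial oriented line bundles away from the singular orbits, and $\Z$-bundles over the singular locus after passing to the appropriate branched cover. Using a choice of continuous, $\pi_1(M)$-equivariant sections away from singular orbits and extending across singularities in the standard way (matching prong structures, which are combinatorially determined by $h$), one lifts $h$ to a $\Phi$-equivariant homeomorphism $\wt f\colon \widetilde M \to \widetilde M$ sending orbits of $\wt\varphi$ to orbits of $\wt\psi$. Equivariance under $\Phi$ means $\wt f$ descends to an orbit equivalence $f\colon M\to M$ realizing $\Phi$ on $\pi_1(M)$, proving part (2). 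Part (1) follows because when $\cP(\varphi)=\cP(\psi)$ we may take $\Phi=\mathrm{id}$, and then the induced $f$ acts trivially on $\pi_1(M)$; since $M$ is aspherical, a self-homeomorphism inducing the identity on $\pi_1$ is homotopic, hence isotopic (by standard results for Haken or hyperbolic 3-manifolds, and verifiable for the remaining geometric pieces that admit pseudo-Anosov flows), to the identity.

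\emph{Main obstacle.} The hard part is the abstract Step 2: recovering the entire bifoliated plane together with its action from the purely algebraic data of which conjugacy classes fix points. The delicate issues there are (a) distinguishing the combinatorial types of regions in the orbit space (product, lozenge, scalloped region, tree of scalloped regions) from fixed-point data alone, (b) showing that a conjugacy defined on fixed points extends continuously and uniquely to the plane, and (c) handling the sign ambiguity on trees of scalloped regions, which is exactly the source of the extra hypothesis in this theorem. Step 3 is comparatively routine once one takes care of singular orbits, and Step 1 is just unpacking definitions.
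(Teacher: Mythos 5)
Your high-level plan matches the paper's: pass to the bifoliated orbit space, apply the abstract classification theorem (Theorem~\ref{thm_main_general}) to conjugate the two $\pi_1(M)$-actions, then descend to an equivalence of flows. However, there is a genuine gap and one place where you depart from the paper's route.

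\textbf{Gap: transitivity of \emph{both} flows.} The abstract Theorem~\ref{thm_main_general} applies to two \emph{Anosov-like} actions, and the Anosov-like axioms include topological transitivity (Axiom~\ref{Anosov_like_topologically_transitive}) and density of fixed points (Axiom~\ref{Anosov_like_dense_fixed_points}) for each action. The statement of Theorem~\ref{thm_notree_Anosov} only assumes \emph{one} of the two flows is transitive. You observe that transitivity of one flow gives dense fixed points in its orbit space, but you never establish that the other flow is transitive, so you have not verified the hypotheses of the abstract theorem for the second orbit space. The paper closes this gap with Proposition~\ref{prop_1transitive_both_transitive}, whose proof is nontrivial: if $\psi$ were non-transitive, Brunella's transverse tori separate its basic sets, and then transitivity plus the specification property for $\varphi$ produces a periodic orbit of $\varphi$ whose free homotopy class cannot be realized by a periodic orbit of $\psi$, contradicting $\cP(\varphi)=\cP(\psi)$. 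Without such an argument, Step~2 of your proposal is not justified.

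\textbf{Departure: descending from orbit-space conjugacy to flow equivalence.} You propose to lift the orbit-space homeomorphism $h$ to $\widetilde M$ by choosing equivariant sections of the projections $\widetilde M\to O_\varphi$, $\widetilde M\to O_\psi$, matching prong structures at singular orbits, and then descending. The paper instead quotes Barbot's result (\cite[Th\'eor\`eme 3.4]{Bar_caracterisation}) that a $\pi_1(M)$-equivariant homeomorphism of orbit spaces is equivalent to an isotopy equivalence of the flows, so no fresh construction is needed. Your proposed construction is morally the content of that theorem and can be made to work, but it is considerably more delicate than ``routine'' near singular orbits and in ensuring the section is continuous and equivariant; a careful write-up would essentially reprove Barbot's theorem. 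Also note that the paper first promotes $\Phi$ to a homeomorphism of $M$ (Waldhausen, Gabai--Meyerhoff--Thurston), conjugates $\varphi$ so that one reduces to $\Phi=\mathrm{id}$, and then deduces part~(1) as the special case; you reverse the order, proving part~(2) directly and then deducing part~(1) from asphericity. Both orderings work, but the paper's ordering localizes the 3-manifold topology (realizing $\Phi$) to one upfront step, and then the remaining argument is pure orbit-space dynamics.

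Finally, for completeness, you should note that the trivially bifoliated case (suspension flows) is handled separately in the paper via Plante's theorem, since Theorem~\ref{thm_main_general} does not cover affine actions on trivial planes.
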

Note that, perhaps surprisingly, one does not need to know any information about the {\em number} of orbits in a free homotopy class, or their length; $\cP(\varphi)$ as a set suffices.  
 
 While we postpone the definition of tree of scalloped regions to Section \ref{sec_scalloped_and_trees}, we know exactly which flows admit such regions and they are relatively rare (see Proposition \ref{prop:tree_iff_periodic} for the precise statement). In particular, we have
\begin{proposition}\label{prop_conditions_no_tree}
Let $M$ be a $3$-manifold $M$ with pseudo-Anosov flow $\varphi$. Under {\em any} of the following conditions, $\varphi$ does not admit a tree of scalloped regions; thus $P(\varphi)$ is a complete invariant of isotopy equivalences of such flows.  
\begin{enumerate}
\item $M$ is hyperbolic;
\item More generally, $M$ only has atoroidal pieces in its JSJ decomposition;
\item There is no essential torus in $M$ transverse to the flow $\varphi$;
\item For any Seifert piece of the JSJ decomposition of $M$, a regular Seifert fiber
 is \emph{not} freely homotopic to any periodic orbit of $\varphi$.
\end{enumerate}
\end{proposition}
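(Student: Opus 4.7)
The plan is to reduce the whole proposition to the characterization provided by Proposition \ref{prop:tree_iff_periodic}. Judging from its label and from the surrounding discussion, that proposition should assert that $\varphi$ admits a tree of scalloped regions in its orbit space if and only if some Seifert piece of the JSJ decomposition of $M$ contains a periodic orbit of $\varphi$ freely homotopic to a regular Seifert fiber. Granting this, condition~(4) is precisely the negation of the right-hand side, and so directly implies there is no tree of scalloped regions.

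With (4) acting as the pivotal condition, it then suffices to show $(1)\Rightarrow(2)$, $(2)\Rightarrow(4)$, and $(3)\Rightarrow(4)$. The first is immediate from Thurston's geometrization: a closed hyperbolic $3$-manifold is atoroidal, so its JSJ decomposition is trivial and consists of the single atoroidal piece $M$ itself. The second is essentially vacuous: if every piece of the JSJ decomposition of $M$ is atoroidal, then $M$ has no Seifert piece at all, and there is nothing to check in~(4). Note that one cannot shorten this to $(2)\Rightarrow(3)$, because JSJ tori themselves are essential and could a priori be transverse to $\varphi$; but for the desired conclusion this detour is unnecessary.

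The substantive step is $(3)\Rightarrow(4)$, which I would attack by contrapositive. Suppose a regular fiber $f$ of some Seifert piece $N\subset M$ is freely homotopic to a periodic orbit $\gamma$ of $\varphi$; the goal is to produce an essential torus in $M$ transverse to $\varphi$. I expect to invoke the Barbot--Fenley analysis of pseudo-Anosov flows restricted to Seifert pieces: under this homotopy hypothesis, the center of $\pi_1(N)$ is generated by the class of $\gamma$, so the full subgroup $\pi_1(N)$ commutes with $\gamma$ and preserves the $\gamma$-invariant structure in the orbit space $\orb$. Such a Seifert piece is then ``periodic" in the Barbot--Fenley sense, and the standing theorem for that case is that the boundary tori of $N$ are isotopic in $M$ to tori transverse to $\varphi$. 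Those tori are essential (they are boundaries of a JSJ piece), contradicting~(3).

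The main obstacle is pinning down and correctly citing this last step in full generality: the precise statement ``regular fiber freely homotopic to a periodic orbit of $\varphi$ forces the boundary tori of the Seifert piece to be isotopic to tori transverse to $\varphi$" should appear either in Barbot's work on Seifert-fibered pieces carrying Anosov flows, in Fenley's subsequent work on free homotopy classes of orbits, or within the present paper's own structural analysis of pseudo-Anosov flows in Seifert pieces. Verifying that some available version applies without modification to the pseudo-Anosov (not just Anosov) setting is the one delicate point; everything else is soft topology and geometrization, once Proposition \ref{prop:tree_iff_periodic} is in place.
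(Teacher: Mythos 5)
Your plan hinges on treating Proposition \ref{prop:tree_iff_periodic} as an available black box, but the paper explicitly declines to use it: immediately after stating that proposition, the authors write that its proof is \emph{postponed} to the forthcoming paper with Fenley, and that they will ``instead \ldots just prove the weaker statement given in Proposition \ref{prop_conditions_no_tree}.'' So the direction of the equivalence you need (tree of scalloped regions $\Rightarrow$ periodic Seifert piece) is precisely what has to be established here, not what can be cited. In other words, your proposal converts the problem into ``prove the only-if direction of Proposition \ref{prop:tree_iff_periodic}'' without noticing that this is the whole content.

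The paper's actual argument is self-contained and does not route everything through condition~(4). It has two independent prongs. First, for conditions~(1) and~(3): by Lemma \ref{lem:Z2stabilizer} the stabilizer of a scalloped region is virtually $\bZ^2$, and by Barbot--Fenley (\cite{Barbot_MPOT}, \cite{BarbFen_pA_toroidal}) that $\bZ^2$ corresponds to an \emph{essential torus transverse to the flow}. This immediately gives the contrapositive of~(3) and, since hyperbolic manifolds are atoroidal, of~(1). Second, for conditions~(2) and~(4): fixing a tree of scalloped regions, one takes $g$ fixing every corner and observes that the stabilizer of one scalloped region $U_0 \subset T$ contributes an element $h$, while a different scalloped region $U_1 \subset T$ contributes an $h_1$ commuting with $g$ but not with $h$. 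Hence the centralizer $C(g)$ is neither cyclic nor $\bZ^2$, and by \cite[Theorem 3.1]{AFW} it is the fundamental group of a Seifert piece of the JSJ decomposition with $g$ a regular fiber. That element $g$ is freely homotopic to a periodic orbit (it fixes a point in the orbit space), so~(4) fails, and since a Seifert piece exists,~(2) fails.

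Beyond the reliance on the unproved proposition, your implication $(3)\Rightarrow(4)$ has a concrete hole. Your contrapositive argument invokes ``the boundary tori of the Seifert piece $N$ are isotopic to tori transverse to $\varphi$'' — but this presupposes that $N$ has boundary tori, i.e. that $N \subsetneq M$. If the entire manifold $M$ is Seifert fibered (trivial JSJ decomposition) and a regular fiber is freely homotopic to a periodic orbit, there are no JSJ boundary tori at all, so your argument produces nothing. The paper's centralizer approach sidesteps this entirely: it never needs a boundary torus, and the AFW theorem applies equally well when $C(g) = \pi_1(M)$ and the Seifert piece is all of $M$. You would need to treat this corner case separately — in fact in that case the orbit space is trivial or skewed and has no trees of scalloped regions anyway, by the paper's trichotomy — but that observation is not in your write-up.
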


The general statement, applicable to all transitive pseudo-Anosov flow is as follows. 

 \begin{theorem}[Classification of transitive pseudo-Anosov flows] \label{thm_main_Anosov}
Let $\varphi$ and $\psi$ be two pseudo-Anosov flows on a closed $3$-manifold $M$ with at least one flow transitive.  
If $\Phi_\ast \cP(\varphi) = \cP(\psi)$ for some automorphism $\Phi$ of $\pi_1(M)$, and the {\em signs} of each $\pi_1(M)$-orbit of tree of scalloped regions agree, then there is an orbit equivalence between $\varphi$ and $\psi$ that induces $\Phi$ on $\pi_1(M)$.
\end{theorem}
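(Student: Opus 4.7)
The strategy is to descend from the flows on $M$ to the induced actions of $\pi_1(M)$ on the orbit spaces of their lifts to $\widetilde M$, apply the paper's general classification theorem for Anosov-like actions on bifoliated planes, and then lift the resulting conjugacy back to an orbit equivalence on $M$.  To a pseudo-Anosov flow $\varphi$ on $M$ is associated its \emph{orbit space} $\orb_\varphi = \widetilde M / \widetilde\varphi$, a topological plane with two transverse (singular) foliations on which $\pi_1(M)$ acts by bifoliation-preserving homeomorphisms, giving the prototype of an Anosov-like action. A non-identity element $\gamma \in \pi_1(M)$ has a fixed point in $\orb_\varphi$ if and only if $\gamma$ or $\gamma^{-1}$ is represented by a periodic orbit of $\varphi$; hence $\cP(\varphi)$ is precisely the set of conjugacy classes in $\pi_1(M)$ that act with a fixed point on $\orb_\varphi$, and similarly for $\psi$ and $\orb_\psi$.

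Given the automorphism $\Phi$ of $\pi_1(M)$ with $\Phi_\ast \cP(\varphi) = \cP(\psi)$, I would replace the $\pi_1(M)$-action on $\orb_\psi$ by its pre-composition with $\Phi$: the new action $\gamma \cdot x := \Phi(\gamma) \cdot_\psi x$ is still Anosov-like, and the hypothesis $\Phi_\ast \cP(\varphi) = \cP(\psi)$ translates into saying that the original action on $\orb_\varphi$ and the $\Phi$-twisted action on $\orb_\psi$ have exactly the same set of elements acting with fixed points.  The hypothesis that the signs of each $\pi_1(M)$-orbit of tree of scalloped regions agree supplies the extra discrete data needed to make these two actions match on their exceptional "scalloped" pieces, where the fixed-point data alone is insufficient. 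At this point the paper's general theorem on Anosov-like actions applies and produces a $\pi_1(M)$-equivariant bifoliation-preserving homeomorphism $H\colon \orb_\varphi \to \orb_\psi$ conjugating the two actions.

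It then remains to promote $H$ to an orbit equivalence $F\colon M \to M$ inducing $\Phi$ on $\pi_1(M)$. Pulling back via the quotient maps $\widetilde M \to \orb_\varphi$ and $\widetilde M \to \orb_\psi$, the map $H$ determines a $\pi_1(M)$-equivariant correspondence between the orbit sets of $\widetilde\varphi$ and $\widetilde\psi$. At each regular orbit one builds a homeomorphism of orbits by matching time parameters, using the transverse (weak) stable and unstable foliations of $\widetilde\varphi$ and $\widetilde\psi$, which are carried to each other by $H$, to pin down the map on transverse neighborhoods; at singular orbits the local prong structure is likewise encoded by $H$ as a singularity of the bifoliated plane. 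This assembles a $\pi_1(M)$-equivariant homeomorphism $\widetilde F\colon \widetilde M \to \widetilde M$ carrying $\widetilde\varphi$-orbits to $\widetilde\psi$-orbits, which descends to the desired orbit equivalence $F$ inducing $\Phi$.

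The principal technical obstacle here is the reduction to and invocation of the general Anosov-like action theorem (the main theorem of the paper), which is what supplies $H$ in the first place and where the sign data on trees of scalloped regions is used in an essential way. The remaining steps, the reinterpretation of $\cP$ as fixed-point data and the promotion of an orbit-space conjugacy to a flow orbit equivalence, are by comparison routine, amounting to checking that the bifoliated plane together with the action retains enough information to reconstruct the flow up to the equivalence considered.
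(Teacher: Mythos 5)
Your overall strategy matches the paper's: reinterpret $\cP$ as fixed-point data for the $\pi_1(M)$-actions on orbit spaces, invoke the general rigidity theorem for Anosov-like actions to obtain a conjugacy $H$ between the orbit space actions, and then promote $H$ to an orbit equivalence of the flows. Your idea of twisting the $\rho_\psi$-action by $\Phi$ rather than first realizing $\Phi$ by a diffeomorphism of $M$ (the paper does the latter, via the $K(\pi,1)$ property plus Waldhausen and Gabai--Meyerhoff--Thurston, and then cites Barbot's result that orbit-space conjugacy is equivalent to isotopy equivalence of flows) is a cosmetic variant that ultimately leads to the same place, since the $\Phi$-equivariant lift you describe is exactly what Barbot's theorem provides after untwisting.

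There are, however, two concrete gaps. First, you never verify that \emph{both} flows are transitive. The hypothesis only gives that at least one flow is transitive, but the general Theorem~\ref{thm_main_general} requires both induced actions to be Anosov-like, and topological transitivity (Axiom~\ref{Anosov_like_topologically_transitive}) is part of that definition. The paper supplies Proposition~\ref{prop_1transitive_both_transitive} precisely to show that if $\varphi$ is transitive and $\cP(\varphi)=\cP(\psi)$ then $\psi$ is forced to be transitive too (a non-transitive flow has basic sets separated by transverse tori, and transitivity plus the Anosov closing lemma produce a periodic orbit of $\varphi$ whose free homotopy class could not be realized by a periodic orbit of any non-transitive flow). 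Without this step, the invocation of the general theorem is unjustified.

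Second, Theorem~\ref{thm_main_general} is stated only for actions on \emph{nontrivial} bifoliated planes, and as the paper notes (Remark~\ref{rem_affine_counterexample}), Anosov-like affine actions on the trivial plane $\R\times\R$ are genuinely not spectrally rigid, so the theorem simply does not apply there. Your proof does not address the case where the orbit space is trivially foliated, which by Theorem~\ref{thm:pA_is_anosov_like} happens exactly when the flow is a suspension of an Anosov diffeomorphism of the torus. The paper disposes of this case separately: if the orbit space of one flow is trivial then $M$ is a mapping torus of an Anosov diffeomorphism, and by Plante's theorem any Anosov flow on such a manifold is orbit equivalent to the suspension. You need this branch in your argument as well.
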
 

As will be shown in Section \ref{sec:nonskewed_proof}, when $\Phi_\ast \cP(\varphi) = \cP(\psi)$ there is a natural bijection between the orbits of trees of scalloped regions in the orbit space of the flow, which gives us the definition of ``sign".  As in our discussion above, the converse to Theorem \ref{thm_main_Anosov} is also true and immediate from the definitions.  Theorem \ref{thm_notree_Anosov} is simply the special case of this statement when there are no trees of scalloped regions present.

 \subsubsection*{Sharpness of Results}
Notice that, in Theorems  \ref{thm_notree_Anosov} and \ref{thm_main_Anosov}, we only assume one of the flows to be transitive, but as a consequence, the other flow must also be transitive.
 In Section \ref{sec:transitive} we show transitivity is a necessary hypothesis by building non-transitive, inequivalent flows with the same free homotopy classes represented by periodic orbits.
In an upcoming article with Sergio Fenley \cite{BFM_in_preparation}, we show the necessity of taking the sign data for the trees of scalloped regions into account, building examples of transitive, non-orbit equivalent Anosov flows with the same set of free homotopy classes (which of course do have trees of scalloped regions in their orbit spaces).  This construction is done by gluing {\em periodic Seifert pieces} as studied in \cite{BF_totally_per}.

 \subsubsection*{Earlier work}
Two special cases of Theorem \ref{thm_main_Anosov} were previously known: In \cite{BMB}, the first and last authors proved the theorem for $\bR$-covered Anosov flows -- we use this result in our proof. In \cite[Proposition 7.2]{FenPot}, Fenley and Potrie proved a version of this theorem for the special case of pseudo-Anosov flows in hyperbolic manifolds with no freely homotopic periodic orbits (this in particular implies there are no trees of scalloped regions), under the more general condition that $\cP(\varphi) \subset \cP(\psi)$, this is entirely independent of our work here. 
In a different vein, Barbot and Fenley \cite{BF_totally_per} gave a classification result for pseudo-Anosov flows on graph manifolds such that {\em all} Seifert pieces have fibers represented by periodic obits.  
Unsurprisingly, their classification relies on much more topological data than the free homotopy classes of periodic orbits.  

 \subsubsection*{Proof strategy}
To prove Theorem \ref{thm_main_Anosov}, we consider the action of $\pi_1(M)$ on the {\em orbit space}, a topological plane obtained by lifting the flow to $\widetilde{M}$ and identifying each orbit to a point.  The action of $\pi_1(M)$ by deck transformations induces an action on this plane that preserves a pair of transverse, $1$-dimensional, singular foliations $\cF^\pm$ coming from the weak-stable and unstable foliations of the flow.  The main technical result underlying Theorem \ref{thm_main_Anosov} is a rigidity theorem for groups acting on bifoliated planes (Theorem~\ref{thm_main_general}), stating that under suitable dynamical assumptions, such actions (as well as the topology of the bifoliation) are determined up to conjugacy by the algebraic data of the {\em set of group elements acting with fixed points}.  

\subsection*{Group actions on bifoliated planes} 
A {\em bifoliated plane} is a topological plane $P$ equipped with a transverse pair of singular foliations $\cF^+$ and $\cF^-$.  A group action $G \acts P$ is called {\em Anosov-like} if it preserves the foliations and satisfies a list of dynamical requirements motivated by the dynamics of a 3-manifold fundamental group acting on the orbit space of a pseudo-Anosov flow.  These include such requirements as topological hyperbolicity of fixed points; a complete list is given in Definition \ref{def_anosov_like}.  We prove the following.  

\begin{theorem}\label{thm_main_general}
Let $\rho_i$, $i=1,2$ be two Anosov-like actions of a group $G$ on nontrivial bifoliated planes $(P_i,\cF_i^+, \cF_i^-)$. 

If at least one plane has no {\em tree of scalloped regions}, and the same set of elements of $G$ acts with fixed points under $\rho_1$ as under $\rho_2$, then the two actions are conjugate by a homeomorphism $h \colon P_1 \to P_2$ sending the pair $\cF_1^{\pm}$ to $\cF_2^{\pm}$.

In general, if the same set of elements of $G$ acts with fixed points under $\rho_1$ as under $\rho_2$, then there is a natural bijection between $G$-orbits of trees of scalloped regions, and the two actions are conjugate by a homeomorphism as above if and only if the signs of their trees of scalloped regions agree.
\end{theorem}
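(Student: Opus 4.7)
The approach would be to reconstruct the bifoliated plane together with the $G$-action from the algebraic data of which elements of $G$ act with fixed points. First, I would use the Anosov-like axioms---especially topological hyperbolicity and the constraints on singular fixed points---to classify the fixed-point behavior of each element $g$ acting with fixed points. For a generic ``regular'' element, there is a unique fixed point, a regular point of the bifoliation through which pass one stable and one unstable leaf; singular fixed points are fixed by larger subgroups with a controlled combinatorial structure. This assigns to each such $g$ a canonical point $p_i(g) \in P_i$ together with its pair of transverse leaves under $\rho_i$, yielding a preliminary bijection $h_0 : \{p_1(g)\} \to \{p_2(g)\}$ by $h_0(p_1(g)) = p_2(g)$.

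Second, I would read off the combinatorial structure of the bifoliation from $\fix(\rho_i)$ and verify that $h_0$ preserves it. This means identifying, via purely algebraic criteria on stabilizers, which pairs of periodic points share a stable or unstable leaf---typically by sharing attracting or repelling dynamics or common ideal endpoints---and analogously characterizing transverse intersections, cyclic order at infinity, and the separation relation between leaves. Density of periodic points, which should follow from the Anosov-like axioms, then yields a compatible bijection on a dense set of periodic leaves of $\cF_1^\pm$ and $\cF_2^\pm$.

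Third, I would extend $h_0$ to a $G$-equivariant homeomorphism $h : P_1 \to P_2$ sending $\cF_1^\pm$ to $\cF_2^\pm$. A general leaf is a limit of periodic ones; a general point is the transverse intersection of such limits. Equivariance is automatic from the construction, and continuity follows once one shows that the leaf topology is reconstructible from how periodic leaves accumulate---\emph{provided} no scalloped ambiguity intervenes.

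The main obstacle, and the content of the second half of the statement, is the scalloped regions. A tree of scalloped regions is precisely a maximal open set on which the reconstruction fails to be unique: inside such a tree, one can perform a global ``flip'' of the bifoliation that preserves the set of elements acting with fixed points, yielding two inequivalent candidate extensions of $h$. The plan is to characterize trees of scalloped regions algebraically in terms of shared fixed-point data, so that the hypothesis directly produces a $G$-equivariant bijection between $G$-orbits of trees under the two actions; then attach a binary sign to each such orbit encoding the local choice of flip. If the signs match, the choices are globally compatible and $h$ extends to a bifoliation-preserving homeomorphism across the trees; otherwise no such extension exists. When no trees are present, the reconstruction is automatic and unique, giving the unconditional first half of the statement.
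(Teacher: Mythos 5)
Your plan is conceptually the same as the paper's: use the fixed-point spectrum to define a preliminary map on periodic points, recover the combinatorics of the bifoliation from algebraic data, extend by density, and isolate trees of scalloped regions as the sole source of ambiguity encoded by a sign. That said, several steps you treat as routine are the real work, and one case falls outside your framework entirely.

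First, the skew case. In a skewed plane \emph{every} fixed point is a corner of a lozenge (each periodic point is one of two corners of a unique lozenge), so there are no non-corner fixed points at all; your ``canonical point $p_i(g)$'' would not be canonical, and the preliminary bijection $h_0$ cannot be built this way. The paper has to treat skewed planes separately by reducing to the result of \cite{BMB} on hyperbolic-like actions on the line, after first proving (Corollary \ref{cor_skewed_implies_skewed}) that $\fix(\rho_1)=\fix(\rho_2)$ forces both planes to be skewed or neither. Your proposal does not address this case.

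Second, the mechanism by which $\fix(\rho)$ determines the combinatorics is considerably subtler than ``algebraic criteria on stabilizers'' for shared leaves. The paper does not read off shared leaves from stabilizers; it detects the \emph{linking} of fixed points (total, partial, unlinked) via which products $\alpha^n\beta^m$ have fixed points (Observation \ref{obs_easy_tot_linked}, Propositions \ref{prop_charac_tot_linked}, \ref{prop_charac_tot_linked_pos}, \ref{prop:linked_loz}), using the action on the ideal circle as the key tool. Linking, not shared leaves, is the primitive relation that is actually visible in $\fix(\rho)$. Even before that, one must show that $\fix(\rho_1)=\fix(\rho_2)$ implies $\fixnc(\rho_1)=\fixnc(\rho_2)$ (Proposition \ref{prop_non-corners_are_equal}); otherwise the domain of your $h_0$ is not well-defined, since an element might have a unique non-corner fixed point under $\rho_1$ but a corner fixed point (and hence a whole chain of lozenges) under $\rho_2$. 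That proposition is nontrivial and rests on the linking machinery. Finally, the extension-by-density argument requires showing that limits of images of convergent sequences are unique, which is precisely where one must characterize interiors of lozenges via $k$-webs (Proposition \ref{prop:same_lozenge_II}), rule out infinite product regions, and isolate trees of scalloped regions as the only failure (Lemma \ref{lem_case_of_sequences_on_opposite_sides}). Your sketch correctly predicts the \emph{shape} of the argument but does not supply the linking-based criteria that make each step actually go through from $\fix(\rho)$ alone.
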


Here, a bifoliated plane is {\em nontrivial} if it is not homeomorphic to $\R \times \R$ with the two coordinate foliations. As we discuss in Section \ref{sec:bifoliated}, an Anosov-like action on a {\em trivial} bifoliated plane is necessarily an action by affine transformations. Affine actions are not generally distinguishable up to conjugacy by their fixed spectra; however, the only examples arising from Anosov flows on compact manifolds come from suspension flows of hyperbolic toral automorphisms, which do satisfy spectral rigidity.

Note that we do not assume a priori that the foliations on $P_i$ are homeomorphic -- instead this follows from the theorem. Note also that the homeomorphism $h$ sends the foliations of $P_1$ to those of $P_2$ but may interchange them i.e.~we may have $h (\cF_1^+)=\cF_2^-$.

Because it passes through Theorem \ref{thm_main_general}, the proof of Theorem \ref{thm_main_Anosov} does not make use of the topology of the manifold $M$, rather, only on the dynamics of group actions on the planes.  This motivates the general question, in the spirit of the convergence group theorem: {\em 
Can one detect, purely from the dynamics of a group $G$ acting (faithfully) on a bifoliated plane, that $G$ is a 3-manifold group and the action comes from a pseudo-Anosov flow?}  

In Theorem \ref{thm:pA_is_anosov_like} we show that Anosov-like is a necessary condition.  In the special cases when the bifoliated plane is trivial or skewed, it is {\em not} sufficient (see Remarks \ref{rem_affine_counterexample} and \ref{rem:skew_counterexample} below).   However, we do not know whether this holds in general.  See Section \ref{sec:further_q} for discussion.

 \subsection*{Rigidity of boundary actions} 
 Theorem \ref{thm_main_general} is similar in spirit to Theorem 2.6 of \cite{BMB}, which gives a general theorem about conjugacy of {\em hyperbolic-like}  actions on the line, a class of actions inspired by the actions of skew-Anosov flows on the ideal boundary of the orbit space.   The work in \cite{BMB} was motivated by the fact that skew-Anosov flows are easily seen to be determined up to orbit equivalence from their boundary actions, thus studying the dynamics on the boundary is a simple substitute for studying the action on the orbit space.   For arbitrary pseudo-Anosov flows, the situation is significantly more complicated.  
 
A compactification of the orbit space of a pseudo-Anosov flow by an ideal circle was carried out by Fenley in \cite{Fen_ideal_boundaries}; this is natural in the sense that the action of $\pi_1(M)$ on $P$ extends to an action by homeomorphisms on the ideal circle.  In addition to the classification problem, one of the motivations for the present work was to understand whether this representation $\pi_1(M) \to \mathrm{Homeo}(S^1)$ determines the orbit equivalence class of the flow in this general setting.  As a consequence of our work, we answer this in the affirmative:
 
 \begin{theorem} \label{cor_main_Anosov}
  Let $\varphi$ and $\psi$ be two transitive, pseudo-Anosov flows on a closed $3$-manifold $M$. If the actions of $\pi_1(M)$ on the ideal circles of the orbit spaces of $\varphi$ and $\psi$ are conjugate, then $\varphi$ and $\psi$ are orbit equivalent.
 \end{theorem}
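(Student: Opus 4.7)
The plan is to reduce Theorem \ref{cor_main_Anosov} to Theorem \ref{thm_main_Anosov} by extracting from the given boundary conjugacy enough information to verify the hypotheses of the latter. By assumption the two actions on $\ideal$ are conjugate, so there is an automorphism $\Phi$ of $\pi_1(M)$ and a homeomorphism $h\colon \ideal(\varphi) \to \ideal(\psi)$ intertwining the boundary actions. I will show that $\Phi_\ast \cP(\varphi) = \cP(\psi)$ and that the signs attached to each $\pi_1(M)$-orbit of tree of scalloped regions agree under $\Phi$.

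The first step is to characterize $\cP(\varphi)$ purely in terms of the boundary action, namely as the set of elements of $\pi_1(M)$ that act on $\ideal(\varphi)$ with fixed points of a distinguished (``hyperbolic'') combinatorial type. One direction is immediate: an element $\gamma \in \cP(\varphi)$ acts on the orbit space with a hyperbolic fixed point, and the four ideal endpoints of the stable and unstable leaves through that fixed point are fixed by $\gamma$ on the ideal circle, with recognizable attracting/repelling pattern. For the converse, a fixed point $\xi \in \ideal$ of some $\gamma$ gives, via topological north-south dynamics that follows from the Anosov-like structure of the boundary action, a $\gamma$-attracting and a $\gamma$-repelling arc; combining these with the $\gamma$-invariance of the bifoliation on the orbit space $P$, a standard nested-leaf argument produces an interior fixed point of $\gamma$, placing $\gamma$ in $\cP(\varphi)$. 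One can alternatively appeal directly to Fenley's construction of $\ideal$ for this characterization. Since $h$ intertwines the two boundary actions, it follows that $\Phi_\ast \cP(\varphi) = \cP(\psi)$.

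It remains to see that the sign of each $\pi_1(M)$-orbit of tree of scalloped regions is also detectable from the boundary action and therefore transported faithfully by the conjugacy. Scalloped regions determine a distinguished collection of arcs and accumulation points in the ideal circle, organized by the cyclic action of their stabilizers; the sign is a discrete binary invariant that can be read off from this combinatorial pattern together with the dynamics of the stabilizer of the tree. Because $h$ preserves cyclic order and intertwines stabilizer actions, the signs correspond under $\Phi$. With both hypotheses verified, Theorem \ref{thm_main_Anosov} directly produces an orbit equivalence between $\varphi$ and $\psi$ inducing $\Phi$ on $\pi_1(M)$. The principal obstacle is the second step: it requires a careful combinatorial check that the sign data, as defined in Section \ref{sec_scalloped_and_trees}, is genuinely visible in the dynamics on the ideal circle; the first step, while standard in spirit, also requires some attention to ensure that the correspondence between interior and boundary fixed points is exact and not merely one-directional.
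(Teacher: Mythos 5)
Your overall plan is the same as the paper's: the paper proves Theorem~\ref{cor_main_Anosov} as a special case of Theorem~\ref{thm:ideal_circle}, which shows that a conjugacy of boundary actions of Anosov-like actions extends to a conjugacy of the planes; the substance of that proof is precisely your two steps, namely showing that the boundary conjugacy forces $\fix(\rho_1)=\fix(\rho_2)$ (equivalently, $\cP(\varphi)=\cP(\psi)$ after correcting by $\Phi$) and that the sign data for trees of scalloped regions agree.

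However, your first step contains a genuine gap that you half-acknowledge but do not resolve, and it is exactly where the paper's argument has content. Fixing a boundary point with ``hyperbolic'' dynamics does \emph{not} by itself force an interior fixed point: by Proposition~\ref{prop:boundary_action_general}, an element $g$ acting freely on $P$ can still preserve a scalloped region and hence fix exactly four boundary points. Your proposed ``nested-leaf argument'' or ``topological north-south dynamics'' would, as stated, not distinguish such an element from a non-corner hyperbolic element, which also fixes four ideal points. The paper's proof resolves this by examining the \emph{dynamics} at those four points: for a scalloped-region stabilizer the four ideal corners are not alternating attractor/repeller (an attracting corner is adjacent to points that are also pushed toward it under positive iteration, via endpoints of leaves crossing the region), whereas the endpoints of $\cF^{\pm}(x)$ at a non-corner fixed point $x$ always alternate. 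This is the key combinatorial distinction your sketch omits. The paper also needs a further small argument to pass from $g^k\in\fix(\rho_2)$ (where $g^k$ fixes half-leaves) back to $g\in\fix(\rho_2)$, and it treats the trivial and skewed planes separately since there $\cP$ is not a complete invariant and one builds the conjugacy directly from endpoint data.

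Your second step is also vaguer than the paper's. The sign of a tree of scalloped regions is not read off from a static combinatorial pattern on the circle alone; the paper's argument uses the already-constructed map $H$ on non-corner fixed points, picks non-corner regular fixed points $x^i_n$ near a positive corner, identifies the ideal endpoints of $\cF^+(x^i_n)$ as the unique attracting boundary fixed points of the corresponding conjugating elements $g_{i,n}$, and uses the boundary conjugacy to force both sequences of images to converge to the same $\cF^+$-leaf, hence the same lozenge side. That dynamical argument is the content; ``a binary invariant read off from the combinatorial pattern plus stabilizer dynamics'' is not yet a proof. In short, your decomposition is right, but both of your flagged ``obstacles'' are real, and resolving them is where the paper's proof lives (Proposition~\ref{prop:boundary_action_general} and the final paragraph of the proof of Theorem~\ref{thm:ideal_circle}).
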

 
 In fact, we prove more generally that a conjugacy between the ideal circle actions of two Anosov-like actions extends to a conjugacy between their bifoliated planes (Theorem \ref{thm:ideal_circle}).  
 Notice that no hypotheses on trees of scalloped regions are required in this statement, the data of conjugate boundary actions suffices.

\subsection*{Outline}

Section \ref{sec:bifoliated} contains background material on bifoliated planes and Anosov-like actions, and a reduction of Theorem \ref{thm_main_Anosov} to Theorem \ref{thm_main_general}.   

In Section \ref{sec:ideal_circle} we introduce the {\em ideal circle} of a bifoliated plane, following Fenley and Frankel.  Homeomorphisms of the bifoliated plane extend naturally to homeomorphisms of the ideal circle, and we prove some general results about the dynamics of the actions of elements of Anosov-like groups that will be used later in our work.  These results are new, but should feel familiar to experts well acquainted with the dynamics on the orbit space in the Anosov flow case.  

Section \ref{sec:linking} contains the groundwork for the proof of Theorem \ref{thm_main_general}, showing that some topological properties (e.g. intersections or ``linking" of leaves) of a bifoliated plane can be detected from the spectral data of the set of elements of an Anosov-like action on the plane.   

Section \ref{sec:main_proof} is the technical heart of the paper.  In essence, we show how the ``linking'' of points can be used to reconstruct the bifoliated plane together with the Anosov-like action, which is how we obtain the proof of Theorem \ref{thm_main_general}.

In section \ref{sec:corollary_for_action_ideal_circle}, we prove a more general version of Theorem \ref{cor_main_Anosov} (Theorem \ref{thm:ideal_circle}), that shows, in the context of Anosov-like group actions, that the induced action on the ideal circle determines the action on the bifoliated plane.  

Finally, section \ref{sec:transitive} shows that the transitivity hypothesis in Theorem \ref{thm_main_Anosov} is necessary, building counterexamples in the nontransitive case, and Section \ref{sec:further_q} gives some discussion and questions for further study.

\subsection*{Acknowledgements}
The authors thank Sergio Fenley for detailed comments on an earlier version of the manuscript, which led to numerous improvements.  
TB was partially supported by the NSERC (Funding reference number RGPIN-2017-04592).
SF was partially supported by NSF CAREER grant DMS-2045323. KM was partially supported by NSF CAREER grant DMS-1933598 and a Sloan fellowship, and thanks the CNRS and Inst.~Math.~Jussieu for support and hospitality.

\section{Preliminaries} \label{sec:bifoliated}

A pseudo-Anosov flow $\varphi$ on a closed $3$-manifold $M$ lifts to a flow $\widetilde{\varphi}$ on the universal cover $\widetilde{M}$. By results of Verjovsky and Palmeira, $\widetilde{M}$ is always homeomorphic to $\bR^3$, and by \cite{Bar_caracterisation,Fen_Anosov_flow_3_manifolds} for Anosov flows and \cite{FenMosher} for the pseudo-Anosov case, 
 the quotient of $\widetilde{M}$ by the orbits of $\widetilde{\varphi}$, is homeomorphic to a plane.  We call this plane the {\em orbit space}, denoted by $P_\varphi$.  The deck group action $\pi(M) \acts \widetilde{M}$ takes orbits to orbits, so induces an action $\pi_1(M) \acts P_\varphi$.  
 
 The pseudo-Anosov structure of $\varphi$ gives $P_\varphi$ the extra structure of a transverse pair of $1$-dimensional singular foliations $\cF^+$ and $\cF^-$ which are preserved by the action of $\pi_1(M)$, and constrains the dynamics of this action: For instance an element $g \in \pi_1(M)$ fixes a point $p \in P$ if and only if it represents a (positive or negative) power of the free homotopy class of a closed orbit,
in which case the action is topologically contracting along the leaf of $\cF^-$ and expanding on $\cF^+$ (or vice versa).  
 
The proof of our main theorem takes place entirely in the orbit space $P_\varphi$.  Thus, we begin our work by distilling the relevant properties of the action of $\pi_1(M)$ on the orbit space, making an abstract definition of  ``Anosov-like group actions on bifoliated planes", and then recast the main theorem using the correspondence between closed orbits of a flow and fixed points of group elements on the orbit space. 

\subsection{Bifoliated planes and Anosov-like actions}

\begin{definition} A {\em bifoliated plane} $(P, \cF^+, \cF^-)$ is a topological plane $P$ with a transverse pair of singular foliations $\cF^+, \cF^-$ whose leaves are properly embedded $n$-prongs. 
A bifoliated plane $(P, \cF^+, \cF^-)$  is called \emph{trivial} if there is a homeomorphism $P \to \R^2$ that takes $\cF^+$ and $\cF^-$ to the foliations by horizontal and vertical lines.  
\end{definition}

\begin{notation}
For a point $x \in P$, we let $\cF^+(x)$ and $\cF^-(x)$ denote the leaves of $\cF^+$ and $\cF^-$ through $x$, such a leaf is either homeomorphic to $\R$, or to a $n$-prong for some $n\geq 3$.
A \emph{half-leaf} of $\cF^{\pm}(x)$ a closed subset of $\cF^{\pm}(x)$, homeomorphic to $[0,+\infty)$, and having $x$ as its (unique) boundary point.  A {\em regular point} is one where the foliation is locally trivial, other points (i.e. the centers of prongs) are {\em singular}.  
\end{notation} 

\begin{definition} 
The {\em leaf space} of a singular foliation $\cF$ of a plane $P$ is the quotient of $P$ obtained by collapsing each leaf of $\cF$ to a point, equipped with the quotient topology.  
\end{definition} 

Before stating the main definition, we recall that a homeomorphism $g$ of a topological space $X$ is {\em topologically contracting} if there is a point $x \in X$ such that $g^n(y) \to x$ as $n \to \infty$ for all $y \in X$, {\em and topologically expanding} if $g^{-1}$ is topologically contracting.  In particular, $x \in X$ is a unique fixed point for the action of $g$. 

\begin{definition}[Anosov-like action] \label{def_anosov_like}
An action of a group $G$ on a bifoliated plane, preserving each foliation, is called \emph{Anosov-like} if it satisfies the following: 
	\begin{enumerate}[label = (A\arabic*)]
	\item\label{Anosov_like_A1} 
	If a nontrivial element $g \in G$ fixes a leaf $l \in \cF^\pm$, then it has a fixed point $x \in l$, and is topologically expanding on one leaf through $x$ and topologically contracting on the other. 
		\item\label{Anosov_like_topologically_transitive} The action is topologically transitive, i.e. has a dense orbit.
		\item\label{Anosov_like_dense_fixed_points} The set of points that are fixed by some nontrivial element of $G$ is dense in $P$.
		\item \label{Anosov_like_prongs_are_fixed} Each singular point is fixed by some nontrivial element of $G$.
		\item \label{Anosov_like_periodic_non-separated} If two leaves $l_1,l_2$ in either $\cF^+$ or $\cF^-$ are non-separated in the corresponding leaf space, then some nontrivial element $g\in G$ fixes both.
\item \label{Anosov_like_totallyideal} There are no \emph{totally ideal quadrilaterals} in $P$ (see Definition \ref{def_totally_ideal_quad}).
	\end{enumerate}
\end{definition}

One could perhaps equally well call these 	{\em transitive Anosov-like actions}, emphasizing Axiom \ref{Anosov_like_topologically_transitive} and the relationship with transitive Anosov flows; however, transitivity will be a standing assumption in our work so we refer to these simply as Anosov-like actions. 

\begin{rem} \label{rem:dense_fixed}
In the case of an Anosov-like action coming from a pseudo-Anosov flow on a closed $3$-manifold, Axiom \ref{Anosov_like_dense_fixed_points} is equivalent to asking for the flow to be transitive, hence, thanks to the Anosov closing lemma, it is also equivalent to asking for the action to be topologically transitive, i.e. Axiom \ref{Anosov_like_topologically_transitive}. In general, it does not seem trivial to show that an Anosov-like action satisfying \ref{Anosov_like_dense_fixed_points} would automatically be topologically transitive, hence the additional assumption.   Several of our early lemmas do not require topological transitivity of the action as a hypothesis, but we have included it in our axioms to streamline the overall proof.    We comment on other possibilities of streamlining the axioms later in Section \ref{sec:digression_on_axioms}.  

 Axiom \ref{Anosov_like_totallyideal} will be used only once in this work, albeit crucially, in Proposition \ref{prop_unique_limit_for_NC}.  A {\em totally ideal quadrilateral} is a trivially foliated region $Q$ in $P$ that is bounded by exactly four complete leaves.
These are called $(4,0)$-ideal quadrilaterals by Fenley in \cite{Fen_qgpA16}, who showed the bifoliated plane of a pseudo-Anosov flow contains no such quadrilaterals. 
 \end{rem}

The next theorem justifies our choice of axioms, showing that transitive pseudo-Anosov flows give Anosov-like actions.

 \begin{theorem} \label{thm:pA_is_anosov_like}
Let $\varphi$ be a transitive pseudo-Anosov flow on a closed $3$-manifold $M$.   Let $P_\varphi$ be its orbit space; with stable and unstable foliations $\cF^+$ and $\cF^-$, respectively.  Then the action of $\pi_1(M)$ on $(P_\varphi,\cF^+, \cF^-)$ is a  transitive Anosov-like action on a bifoliated plane.
	Moreover,  $(P_\varphi,\cF^+, \cF^-)$ is trivially foliated if and only if $\varphi$ is a suspension of an Anosov diffeomorphism of the torus.
\end{theorem}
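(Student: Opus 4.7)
The plan is to verify each of axioms \ref{Anosov_like_A1}--\ref{Anosov_like_totallyideal} for the action $\pi_1(M) \acts (P_\varphi, \cF^+, \cF^-)$, and then settle the ``moreover'' statement. Most axioms are either built into the definition of a pseudo-Anosov flow or follow from established results in the literature on pseudo-Anosov flows, so the proof is primarily a matter of assembling the correct references rather than new argument.

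For \ref{Anosov_like_A1}, if a nontrivial $g \in \pi_1(M)$ fixes a leaf $l \in \cF^\pm$, then $g$ preserves setwise the corresponding lifted weak-stable (or weak-unstable) leaf $\widetilde W$ in $\widetilde M$; projecting down, the image weak leaf $W \subset M$ has $\pi_1(W)$ containing the class of $g$, which forces $W$ to contain a closed orbit $\gamma$ with $g$ a power of $[\gamma]$. The unique lift of $\gamma$ to $\widetilde W$ supplies the required fixed point $x \in l$, and the hyperbolic local model at $\gamma$ gives topological contraction on one transverse leaf through $x$ and expansion on the other. Axioms \ref{Anosov_like_topologically_transitive} and \ref{Anosov_like_dense_fixed_points} follow from transitivity: a dense $\varphi$-orbit in $M$ projects to a dense $\pi_1(M)$-orbit in $P_\varphi$, while the Anosov closing lemma in the pseudo-Anosov setting (via the work of Mosher and \cite{FenMosher}) yields a dense set of periodic orbits in $M$, hence a dense set of fixed points in $P_\varphi$. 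Axiom \ref{Anosov_like_prongs_are_fixed} is immediate from the definition of a pseudo-Anosov flow, as all singular orbits are periodic. Axioms \ref{Anosov_like_periodic_non-separated} and \ref{Anosov_like_totallyideal} are the most substantial: the former is a theorem of Fenley asserting that any pair of non-separated stable or unstable leaves are simultaneously fixed by a single nontrivial element; the latter (the absence of totally ideal, i.e. $(4,0)$-ideal, quadrilaterals) is Fenley's theorem from \cite{Fen_qgpA16}, already cited in the paper.

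For the ``moreover'' dichotomy, if $(P_\varphi, \cF^+, \cF^-)$ is trivially foliated then $\cF^\pm$ have no singular leaves, so $\varphi$ is a genuine Anosov flow, and both leaf spaces are homeomorphic to $\R$ with a product structure; in particular $\varphi$ is $\R$-covered with Hausdorff orbit space $\R^2$ in the product of its leaf spaces. By Barbot's classification of $\R$-covered Anosov flows on closed $3$-manifolds, the only such flows whose orbit space is genuinely a topological product are the suspensions of hyperbolic toral automorphisms, all other $\R$-covered examples being skew (where the two foliations are present on $\R^2$ but fail to form a product). The converse is immediate by writing down the suspension directly. The main obstacle in the whole proof is verifying \ref{Anosov_like_periodic_non-separated} and \ref{Anosov_like_totallyideal}, whose justifications rely on substantive theorems of Fenley (and Fenley--Mosher) rather than on direct arguments; the remaining axioms and the ``moreover'' clause are essentially immediate once the correct classical result is cited.
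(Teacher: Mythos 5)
Your proposal is correct and follows essentially the same approach as the paper: verifying each axiom by invoking the standard structure theory (hyperbolicity of closed orbits for \ref{Anosov_like_A1}, transitivity plus the closing lemma for \ref{Anosov_like_topologically_transitive}–\ref{Anosov_like_dense_fixed_points}, Fenley's theorems on non-separated leaves and ideal quadrilaterals for \ref{Anosov_like_periodic_non-separated} and \ref{Anosov_like_totallyideal}), and Barbot's result for the ``moreover'' dichotomy. You supply slightly more detail in the \ref{Anosov_like_A1} argument and phrase the ``moreover'' via Barbot's $\R$-covered classification rather than citing \cite[Th\'eor\`eme 2.7]{Bar_caracterisation} directly, but the underlying source and logic are the same.
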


\begin{proof}
We assume the reader has basic familiarity with the definitions of Anosov and pseudo-Anosov flow.  See for example \cite[Definitions 4.51 and 6.41]{Calegari_book}.  As mentioned above, the orbit space of a pseudo-Anosov flow is a topological plane ( \cite{Bar_caracterisation,Fen_Anosov_flow_3_manifolds,FenMosher}) equipped with two singular foliations given by the projections of the stable and unstable foliations.

The fact that axioms \ref{Anosov_like_A1}-\ref{Anosov_like_periodic_non-separated} hold is fairly standard:
Since fixed points in $P$ of elements of $\pi_1(M)$ correspond to positive or negative powers of closed orbits, Axiom \ref{Anosov_like_A1} follows from the contraction/expansion of weak stable/unstable leaves: Indeed, that the flow is hyperbolic implies that there is a unique closed orbit on every invariant leaf and that fixed points are topologically hyperbolic (expanding on one leaf and contracting on the other) or hyperbolic prongs, as in Axiom \ref{Anosov_like_A1}.  
Singular orbits of a pseudo-Anosov flow are closed orbits (by definition) so the corresponding singular points are fixed points in $P_\varphi$, giving \ref{Anosov_like_prongs_are_fixed}.   Axiom \ref{Anosov_like_periodic_non-separated} is Theorem D of \cite{Fenley_structure_branching}. As noted in Remark \ref{rem:dense_fixed}, Conditions \ref{Anosov_like_topologically_transitive} and  \ref{Anosov_like_dense_fixed_points} are both equivalent to topological transitivity in this case. 
Fenley \cite[Proposition 4.4]{Fen_qgpA16} proved that the orbit space of pseudo-Anosov flows do not contain totally ideal quadrilaterals, giving Axiom \ref{Anosov_like_totallyideal}.

Finally, the fact that the orbit space is trivially foliated if and only if $\varphi$ is a suspension of an Anosov diffeomorphism was proved in \cite[Th\'eor\`eme 2.7]{Bar_caracterisation}, the proof of Theorem \ref{thm_trivial_skewed_or_nowheredense} below gives a generalized version of this argument.
\end{proof}
 
The next few sections give some preliminary structure theory on Anosov-like actions on bifoliated planes.

\subsection{Affine actions and trivial bifoliated planes}

\begin{lemma}\label{lem_same_fixed_leaf_same_fixed_point}
 Let $(P,\cF^+, \cF^-)$ be a bifoliated plane with an Anosov-like action of a group $G$, and let $l$ be a leaf of either $\cF^+$ or $\cF^-$.  
Then the stabilizer $G_l$ of $l$ in $G$ acts on $l$ with a common fixed point, and the finite-index subgroup of $G_l$ that preserves each half-leaf through that fixed point is Abelian.  
\end{lemma}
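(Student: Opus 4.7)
The proof has two parts: existence of a common fixed point on $l$, and Abelianity of the finite-index subgroup.

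\emph{Common fixed point.} If $l$ contains a singular point $p$, then $p$ is the unique singularity on $l$ (a leaf is either homeomorphic to $\R$ or is an $n$-prong with a single center), hence $p$ is fixed by $G_l$ since the action preserves the set of singular points. If $l$ is regular then $l \cong \R$, and by Axiom \ref{Anosov_like_A1} each nontrivial $g \in G_l$ acts on $l$ as a topological contraction or expansion with a unique fixed point $x_g \in l$. I argue by contradiction that these points coincide: suppose $g, h \in G_l$ have $x_g \neq x_h$, both contracting on $l$ after passing to inverses if needed. The goal is to exhibit a nontrivial element of $G_l$ with no fixed point on $l$, contradicting \ref{Anosov_like_A1}. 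In the affine setting this is essentially immediate (commutators become translations), but in general one must leverage the two-dimensional structure: the conjugates $h^n g h^{-n}$ have fixed points $h^n(x_g) \to x_h$ on $l$, and the invariant transverse leaves $h^n(\cF^{\mp}(x_g))$ either converge in the leaf space of $\cF^{\mp}$ to $\cF^{\mp}(x_h)$ (yielding a rigidity that collapses $x_g$ and $x_h$) or limit onto a distinct leaf non-separated from $\cF^{\mp}(x_h)$, whereupon Axiom \ref{Anosov_like_periodic_non-separated} produces additional elements with common fixed leaves, eventually forcing the required fixed-point-free element of $G_l$.

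\emph{Abelian finite-index subgroup.} Let $x \in l$ be the common fixed point and let $H \leq G_l$ be the subgroup preserving each half-leaf of $l$ through $x$. Since the number of half-leaves at $x$ is either $2$ (regular case) or some finite $n \geq 3$ (prong case), the quotient $G_l/H$ injects into a finite symmetric group, so $H$ has finite index. Pick a connected component $l^+$ of $l \setminus \{x\}$, homeomorphic to $\R$. For any nontrivial $h \in H$, the unique fixed point $x$ of $h$ on $l$ lies outside $l^+$, so $h$ acts freely on $l^+$; being a topological contraction or expansion toward $x$ that preserves $l^+$ setwise, $h$ is orientation-preserving on $l^+$. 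The restriction $H \to \mathrm{Homeo}(l^+)$ is injective, since a nontrivial element acting as the identity on the open interval $l^+$ would have infinitely many fixed points on $l$, contradicting the uniqueness part of \ref{Anosov_like_A1}. Thus $H$ embeds into $\mathrm{Homeo}_+(\R)$ as a free action, and H\"older's theorem gives that $H$ is Abelian.

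\emph{Main obstacle.} The real difficulty is the contradiction in the first part. Topological contractions on $\R$ with distinct fixed points do not automatically generate a group containing a fixed-point-free element, so the proof must genuinely exploit the two-dimensional bifoliated structure and Axiom \ref{Anosov_like_periodic_non-separated} to force the required rigidity. By contrast, once the common fixed point is established, the Abelianity statement reduces cleanly to H\"older's theorem via the induced free action on a half-leaf.
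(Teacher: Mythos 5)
Your decomposition matches the paper's, and your second part (Abelianity via H\"older) is correct and actually slightly more careful than the paper's version: you point out that the restriction $H \to \mathrm{Homeo}_+(l^+)$ is \emph{faithful} (a nontrivial kernel element would have infinitely many fixed points on $l$, violating the uniqueness in Axiom~\ref{Anosov_like_A1}), which is needed to conclude that $H$ itself --- not merely its image --- is Abelian. The paper elides this point.

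The gap is exactly where you flagged it: the first part, showing that two elements $g,h \in G_l$ fixing a regular leaf $l$ share a fixed point. Your sketch drifts toward a two-dimensional argument (conjugates $h^n g h^{-n}$, transverse leaves, non-separated leaves, Axiom~\ref{Anosov_like_periodic_non-separated}), but this never closes: you would need to show these considerations actually force a fixed-point-free element of $G_l$, and the mechanism you propose does not deliver this. More importantly, the needed input is one-dimensional, not two-dimensional. After passing to squares so that $g$ and $h$ preserve orientation of $l$ (harmless since passing to a power does not change the fixed point on $l$, again by uniqueness in~\ref{Anosov_like_A1}), the group they generate is a subgroup of $\mathrm{Homeo}_+(\R)$ in which every nontrivial element has \emph{exactly} one fixed point, by~\ref{Anosov_like_A1}. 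This is precisely the hypothesis of \textbf{Solodov's theorem} (see Navas, Theorem~2.2.36): such a group either has a global fixed point --- which is what you want --- or is semi-conjugate to a subgroup of the affine group. In the affine case without a global fixed point, there is a nontrivial normal subgroup acting by topological translations, i.e.\ without fixed points, directly contradicting~\ref{Anosov_like_A1} once more. So the two-dimensional structure enters only through Axiom~\ref{Anosov_like_A1}; once you know every element of $G_l$ has exactly one fixed point on $l$, the rest is pure one-variable dynamics, and Axiom~\ref{Anosov_like_periodic_non-separated} plays no role here.
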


\begin{proof}
We first show that any two elements fixing $l$ have the same fixed point.  
Suppose $g$ and $h\in G$ both fix $l$.  Axiom \ref{Anosov_like_A1} implies that $g$ and $h$ fix a single point each on $l$.  If $l$ is singular, then $g$ and $h$ necessarily fix the singular point, which is unique by \ref{Anosov_like_A1}.  
If $l$ is not singular, we may identify it with $\bR$. Then, up to replacing $g$ and $h$ by their squares, which does not affect their fixed points given that fixed points on leaves are unique, we can assume that they preserve orientation, and consider the group generated by $g$ and $h$ as a group of orientation-preserving homeomorphisms of $\bR$. By Axiom \ref{Anosov_like_A1}, each element of this group acts with exactly one fixed point. Solodov's Theorem (see \cite[Th. 2.236]{Navas_book} for a proof) implies that either the group acts with a global fixed point (in which case we are done), or is semi-conjugate to a group of Affine transformations. In this latter case, in the absence of a global fixed point, there is a nontrivial subgroup acting by topological translations, which would contradict axiom \ref{Anosov_like_A1}.  

Now consider the finite index subgroup of $G_l$ that preserves each half-leaf based at the common fixed point.  Each half leaf is homeomorphic to $\R$, and the restriction of the action to the half leaf is free, so by H\"older's theorem (see \cite{Navas_book}) the subgroup is Abelian. \end{proof}

The following theorem is also shown in \cite[Th\'eor\`eme 2.7]{Bar_caracterisation}; we include a short proof for completeness. 

\begin{proposition}[Trivial planes have affine actions] \label{prop:trivial_affine} 
Any Anosov-like action of a group on a trivially bifoliated plane is conjugate to an action by affine transformations on each factor, i.e. maps of the form $(x,y) \mapsto (ax+b, cy+d)$. 
\end{proposition}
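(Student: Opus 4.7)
The plan is to reduce the problem to two one-dimensional actions via the product structure of the trivial bifoliation, then invoke Solodov's theorem on each factor much as in the proof of Lemma \ref{lem_same_fixed_leaf_same_fixed_point}, but globally rather than on a single leaf.

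First I would fix an identification $P \cong \R^2$ sending $\cF^+$ and $\cF^-$ to the horizontal and vertical coordinate foliations. Any homeomorphism of $\R^2$ preserving each of these foliations has product form $(x,y) \mapsto (F(x),G(y))$, so the $G$-action induces a pair of representations $\rho_{-},\rho_{+} : G \to \mathrm{Homeo}(\R)$ on the leaf spaces of $\cF^-$ and $\cF^+$ respectively. It therefore suffices to produce homeomorphisms $h_\pm$ of $\R$ conjugating $\rho_\pm$ into $\mathrm{Aff}(\R)$: the product $h_{-} \times h_{+}$ is then a homeomorphism of $\R^2$ preserving the bifoliation and conjugating the $G$-action to the desired form.

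Next, I would use Axiom \ref{Anosov_like_A1} to pin down the dynamics on each factor. If $g$ fixes a point $(x_0,y_0) \in P$, then $g$ is topologically hyperbolic there, so, up to swapping the two factors, $\rho_{-}(g)$ is topologically expanding on $\R$ with unique fixed point $x_0$ and $\rho_{+}(g)$ is topologically contracting on $\R$ with unique fixed point $y_0$. In particular each nontrivial element of $\rho_\pm(G)$ has at most one fixed point, with hyperbolic dynamics there. Faithfulness of $\rho_\pm$ follows from a short additional argument: if $\rho_{-}(g)=\mathrm{id}$ but $g \ne e$, then by \ref{Anosov_like_A1} the fixed set of $g$ in $P$ would be a full horizontal leaf on which $g$ acts trivially, contradicting hyperbolicity. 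Axiom \ref{Anosov_like_dense_fixed_points} then guarantees density of the fixed points of $\rho_\pm$ in $\R$, and combining density with hyperbolicity gives minimality: for any nonempty open $I \subset \R$, choose $g$ with a hyperbolic $\rho_{+}(g)$-fixed point in $I$; its attracting or repelling basin is all of $\R$, so a suitable power of $g$ sends every orbit into $I$.

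Finally, Solodov's theorem (as cited in the proof of Lemma \ref{lem_same_fixed_leaf_same_fixed_point}) applies to the finite-index subgroup $G_0 \le G$ mapping to $\mathrm{Homeo}_+(\R)$ under $\rho_+$: either $\rho_+|_{G_0}$ has a global fixed point, or it is semi-conjugate to a subgroup of $\mathrm{Aff}_+(\R)$. Minimality rules out a global fixed point and also forces any such semi-conjugacy to be a genuine conjugacy, since a collapsed interval would generate a proper closed invariant subset. The same reasoning applies to $\rho_{-}$. Orientation-reversing elements are handled by noting that they normalize the orientation-preserving finite-index subgroup, and the normalizer in $\mathrm{Homeo}(\R)$ of a minimal subgroup of $\mathrm{Aff}_+(\R)$ lies in $\mathrm{Aff}(\R)$ itself. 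The main obstacle in writing up the proof is this last step: upgrading semi-conjugacy to conjugacy and then extending through orientation reversal. Both parts rely essentially on the minimality of $\rho_\pm$ established from Axioms \ref{Anosov_like_A1} and \ref{Anosov_like_dense_fixed_points}; once minimality is secured, the remaining arguments are standard.
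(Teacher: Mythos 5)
Your proof follows the same route as the paper's: reduce via the product structure to two one-dimensional actions, use Axiom \ref{Anosov_like_A1} to show each nontrivial element has at most one (hyperbolic) fixed point on each leaf space, apply Solodov's theorem, and upgrade the semi-conjugacy to a genuine conjugacy. Two details of your write-up are worth flagging. Where the paper invokes Axiom \ref{Anosov_like_topologically_transitive} to upgrade the semi-conjugacy, you derive minimality of the leaf-space actions directly from Axioms \ref{Anosov_like_A1} and \ref{Anosov_like_dense_fixed_points}: your observation that a hyperbolic fixed point has all of $\R$ as its attracting or repelling basin is correct, gives a stronger conclusion, and in fact makes \ref{Anosov_like_topologically_transitive} unnecessary for this particular proposition. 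You also explicitly address orientation-reversing elements by passing to the orientation-preserving finite-index subgroup before applying Solodov and then using a normalizer argument; this handles a point the paper's proof quietly elides (Solodov's theorem as cited applies to $\mathrm{Homeo}_+(\R)$). The normalizer step you identify as the main remaining work is real but standard: one checks that the conjugated image in $\mathrm{Aff}_+(\R)$ has a dense, nontrivial translation subgroup (its elements are exactly the fixed-point-free ones, so it is preserved under conjugation by any normalizing homeomorphism), and then density plus continuity of the normalizer forces it to be affine. Spelling that out completes your argument.
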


\begin{proof} 
	That $G$ preserves the two coordinate foliations implies that each element acts by maps of the form $f(x, y) = (f_1(x), f_2(y))$.  It remains to show that the induced actions on the two coordinate leaf spaces are conjugate to actions by affine transformations.  
	
	Since the plane is trivially foliated, every leaf of $\cF^+$ intersects every leaf of $\cF^-$.  Thus, if some $f\in G$ fixes a point $p$, this fixed point is necessarily unique.  In particular, its first coordinate map $f_1$ has at most one fixed point.  We conclude that the action of $G$ on $\R$ (the leaf space of the vertical foliations) has the property that each element has at most one fixed point.  Furthermore, Axiom \ref{Anosov_like_dense_fixed_points} together with the fact that fixed points are unique implies that the action does not have a global fixed point.   Thus, by Solodov's Theorem, this induced action is semi-conjugate to an action by affine linear transformations, and \ref{Anosov_like_topologically_transitive} (topological transitivity) implies that this semi-conjugacy is in fact a conjugacy.  The same argument applies to the second coordinate maps, completing the proof.  
\end{proof} 

Note that topological hyperbolicity of the action on leaves (Axiom \ref{Anosov_like_A1}) implies that for an affine map $(x,y) \mapsto (ax+b, cy+d)$ in an Anosov-like action on a trivially foliated plane, we have $a = 0$ iff $c=0$. Otherwise, we have $|a|>1$ and $|c|<1$, or vice versa.  Axiom \ref{Anosov_like_dense_fixed_points} means that the action is not by translations.  Beyond this, there are many examples of topologically transitive such actions, including many which do not come from Anosov flows.  One such family of examples is the following: 

\begin{example} 
	Let $\lambda_1, \lambda_2 \in \bR$ be such that $1$, $\lambda_1$ and $\lambda_2$ are all algebraically independent.
	Let $\tau_1 \colon \bR^2 \to \bR^2$ be the translation $(x, y) \mapsto (x+1, y)$, and let $\tau_2$ be $(x, y) \mapsto (x, y+1)$.  Let $f_{\lambda_i}\colon \bR^2 \to \bR^2$ be the linear map $(x,y) \mapsto (\lambda_i x, \lambda_i^{-1} y)$.
	Then the action generated by $\tau_1, \tau_2 ,f_{\lambda_1}, f_{\lambda_2}$ on $\bR\times \bR$ is an Anosov-like affine action. However, it does not come from an Anosov flow: affine actions coming from Anosov flows must come from a suspension of an Anosov diffeomorphism, they are thus algebraic, and commensurable to the semi-direct product of $\bZ \times \bZ$ with $\bZ$ acting by an hyperbolic element of $\mathrm{SL}(2,\bZ)$. 
\end{example} 

\begin{rem}  \label{rem_affine_counterexample}
	Building on the example above, note also that, if $\lambda_3\in \bR$ is yet another algebraically independent number, then the group generated by $\tau_1, \tau_2 ,f_{\lambda_1}, f_{\lambda_3}$ is isomorphic to group generated by $\tau_1, \tau_2 ,f_{\lambda_1}, f_{\lambda_2}$.  One may see this by looking at the action on the $x$-coordinate.  Fixing $i = 2$ or $i=3$, each element can be written uniquely as $x \mapsto (\lambda_1)^m (\lambda_i)^n x + p(\lambda_1, \lambda_i)$, where $p(t,s)$ is an element of the Laurent polynomial ring $\bZ[t, t^{-1}, s, s^{-1}]$.  This gives an abstract isomorphism between each group and the semi-direct product of $\bZ \times \bZ$ with $\bZ[t, t^{-1}, s, s^{-1}]$, where $((m,n), p(t, s))$ in this semi-direct product corresponds to the map $x \mapsto (\lambda_1)^m (\lambda_i)^n x + p(\lambda_1, \lambda_i)$. 
	
An element acts with fixed points if and only if its linear part is nontrivial, i.e. if the term $(\lambda_1)^m (\lambda_i)^n \neq 1$.  Since $\lambda_1$ and $\lambda_i$ are algebraically independent, both groups have precisely the same elements acting with fixed points in $\bR\times \bR$.  However, the actions are not conjugate --- the subgroup generated by $x \mapsto x+1$ and $f_{\lambda_1}$  already acts minimally on the $x$-coordinate, so uniquely determines the action of the group up to conjugacy.  
This shows that Theorem \ref{thm_main_general} does not hold for Anosov-like affine actions on trivial planes. 
\end{rem}

\subsection{Perfect fits and lozenges}
 We introduce a few definitions to describe configurations of leaves in a bifoliated plane.  
 
 \begin{definition}\label{def_perfect_fit}
  Two leaves (or half leaves) $l^+\in \cF^+$ and $l^-\in \cF^-$ are said to make a \emph{perfect fit} if they have empty intersection, but  ``just miss'' each other, as illustrated in Figure~\ref{fig:perffits}. That is, there is an arc $\tau^+$ starting at a point of $l^+$ and transverse to $\cF^+$ and an arc $\tau^-$ starting at $l^-$ transverse to $\cF^-$ such that
\begin{enumerate}[label=(\roman*)]                                            
\item every leaf $k^+ \in \cF^+$ that intersects the interior of ${\tau}^+$ intersects $l^-$, and
\item every leaf $k^- \in \cF^-$ that intersects the interior of ${\tau}^-$ intersects $l^+$.
 \end{enumerate}
 \end{definition}

\begin{observation} \label{obs:fix_perf_fit}
If $G$ is an Anosov-like action and $g \in G$ preserves orientation of $P$ and fixes a half-leaf $l$, then $g$ fixes all leaves that make a perfect fit with $l$.   This can be seen as follows:  without loss of generality, say $l \in \cF^+$ and take $\tau^-$ to be a small segment of $\cF^{-}(x)$, where $x$ is the unique fixed point of $g$ on $l$.  Up to replacing $g$ with its inverse, we have then $g\tau \subset \tau$, and the conclusion follows readily from the definition.   
\end{observation}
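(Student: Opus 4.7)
Assume without loss of generality $l \in \cF^+$, and let $x$ be the (finite) endpoint of the half-leaf $l$. By Axiom~\ref{Anosov_like_A1}, $x$ is the unique fixed point of $g$ on the leaf of $\cF^+$ containing $l$, and $g$ acts topologically hyperbolically at $x$: expanding on one of $\cF^+(x)$, $\cF^-(x)$ and contracting on the other. Since $g$ preserves orientation of $P$ and the half-leaf $l$, $g$ also preserves each half-leaf of $\cF^-(x)$ based at $x$.

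Let $m \in \cF^-$ make a perfect fit with $l$ on a specified side, and let $\tau$ be a short half-segment of $\cF^-(x)$ at $x$ on that side. Replacing $g$ by $g^{-1}$ if needed, which preserves orientation and does not affect the conclusion (since $gm=m$ iff $g^{-1}m=m$), I may assume $g$ contracts $\cF^-(x)$ toward $x$, so $g\tau \subseteq \tau$. Using the local product structure of the bifoliation in the sector at $x$ bounded by $l$ and $\tau$, I would verify that $\tau$ itself serves as a valid transversal $\tau^+$ in Definition~\ref{def_perfect_fit} for the perfect fit of $l$ with $m$, i.e.\ every leaf of $\cF^+$ meeting the interior of $\tau$ meets $m$.

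The core step is then immediate: for any leaf $k^+ \in \cF^+$ meeting $\tau$, the image $gk^+$ meets $g\tau\subseteq \tau$ and hence meets $m$; equivalently, $k^+$ meets $g^{-1}m$. So $g^{-1}m$ is a leaf of $\cF^-$ meeting every leaf of $\cF^+$ that crosses $\tau$, which means $g^{-1}m$ also makes a perfect fit with $l$ on the same side as $m$. I would conclude with uniqueness of perfect fits on a specified side of $l$: if two distinct leaves $m_1,m_2\in\cF^-$ both made perfect fits with $l$ from the same side, labeling so that $m_1$ separates $m_2$ from $l$, then any transversal $\tau^-_{m_2}$ witnessing the perfect fit of $m_2$ with $l$ via Definition~\ref{def_perfect_fit}(ii) produces leaves of $\cF^-$ arbitrarily close to $m_2$ that cross $l$; but such leaves lie between $m_1$ and $m_2$, and the properly embedded leaf $m_1$ separates them from $l$, a contradiction. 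Hence $g^{-1}m=m$ and so $gm=m$.

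\textbf{Main obstacle.} The only technical point that warrants care is the local step of verifying that the chosen segment $\tau\subset \cF^-(x)$ can play the role of a valid $\tau^+$ in Definition~\ref{def_perfect_fit} for the given perfect fit $(l,m)$. This should follow from the local product structure of the bifoliation in the appropriate sector at $x$ combined with the asymptotic nature of perfect fits, but is worth spelling out carefully since $x$ may be a singular point of $\cF^+$ or $\cF^-$. The remainder of the argument — including the uniqueness of perfect fits on a given side via the Jordan-separation property of properly embedded leaves — is direct.
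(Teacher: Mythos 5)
Your argument reaches the right conclusion and follows essentially the same line as the paper's sketch (choose a segment $\tau$ of $\cF^-(x)$ based at the fixed point, arrange $g\tau\subseteq\tau$, and push the perfect fit around by $g$). However, there is a cleaner way to carry out the middle step that also dissolves the ``main obstacle'' you flag. Since $g^{-1}$ is a homeomorphism of $P$ preserving both $\cF^+$ and $\cF^-$, it carries the entire perfect-fit configuration for $(l,m)$ --- the transversals $\tau^+$, $\tau^-$, and the incidence conditions (i), (ii) --- to a perfect-fit configuration for $(g^{-1}l,g^{-1}m)=(l,g^{-1}m)$. This gives you, at a stroke, the disjointness $g^{-1}m\cap l=\emptyset$, \emph{both} conditions of Definition~\ref{def_perfect_fit}, and no need whatsoever to certify that your chosen $\tau$ is a legitimate $\tau^+$ for $(l,m)$. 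By contrast, your explicit argument via $\tau$ establishes only condition~(i) for the pair $(l,g^{-1}m)$: you show every $\cF^+$-leaf through $\tau$ crosses $g^{-1}m$, but you do not independently verify condition~(ii) (existence of a transverse arc at $g^{-1}m$ whose $\cF^-$-leaves all cross $l$), and as stated you would also owe the reader the disjointness check. So the step you marked as the one ``warranting care'' is indeed a real gap in your chosen route, but the route is longer than it needs to be.

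The pieces you do supply are correct: that $g$ preserving orientation and a half-leaf of $\cF^+(x)$ forces it to preserve each half-leaf through $x$ (hence each side of $l$), and the uniqueness of a perfect-fit partner on a given side of $l$. On the latter, your key observation --- that a transversal $\tau^-_{m_2}$ for the perfect fit $(l,m_2)$ cannot cross $m_1$, since otherwise $m_1$ itself would be an $\cF^-$-leaf through the interior of $\tau^-_{m_2}$ that misses $l$ --- is exactly the right point, and together with the fact that $\cF^-$-leaves through $\tau^-_{m_2}$ then stay on the far side of the separating leaf $m_1$ it does yield the contradiction. (One should also note that among three pairwise disjoint properly embedded leaves $l,m_1,m_2$, one separates the other two, so the labeling ``$m_1$ separates $m_2$ from $l$'' is available once the two perfect-fit partners are known to lie on the same side of $l$.) In sum: keep your endgame, but replace the transversal-pushing argument for ``$g^{-1}m$ is a perfect fit with $l$'' by the direct observation that $g^{-1}$ transports perfect-fit configurations.
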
 
 
 \begin{figure}
   \labellist 
  \small\hair 2pt
     \pinlabel $l^+$ at 10 105 
    \pinlabel $l^-$ at 130 105 
    \pinlabel $\tau^+$ at 50 70 
     \pinlabel $x$ at 425 72 
  \pinlabel $r_x^+$ at 405 112 
 \pinlabel $r_x^-$ at 445 40
   \pinlabel $r_y^+$ at 500 60 
 \pinlabel $r_y^-$ at 445 160
  \pinlabel $y$ at 495 120 
   \pinlabel $L$ at 450 90 
 \endlabellist
     \centerline{ \mbox{
\includegraphics[width=11cm]{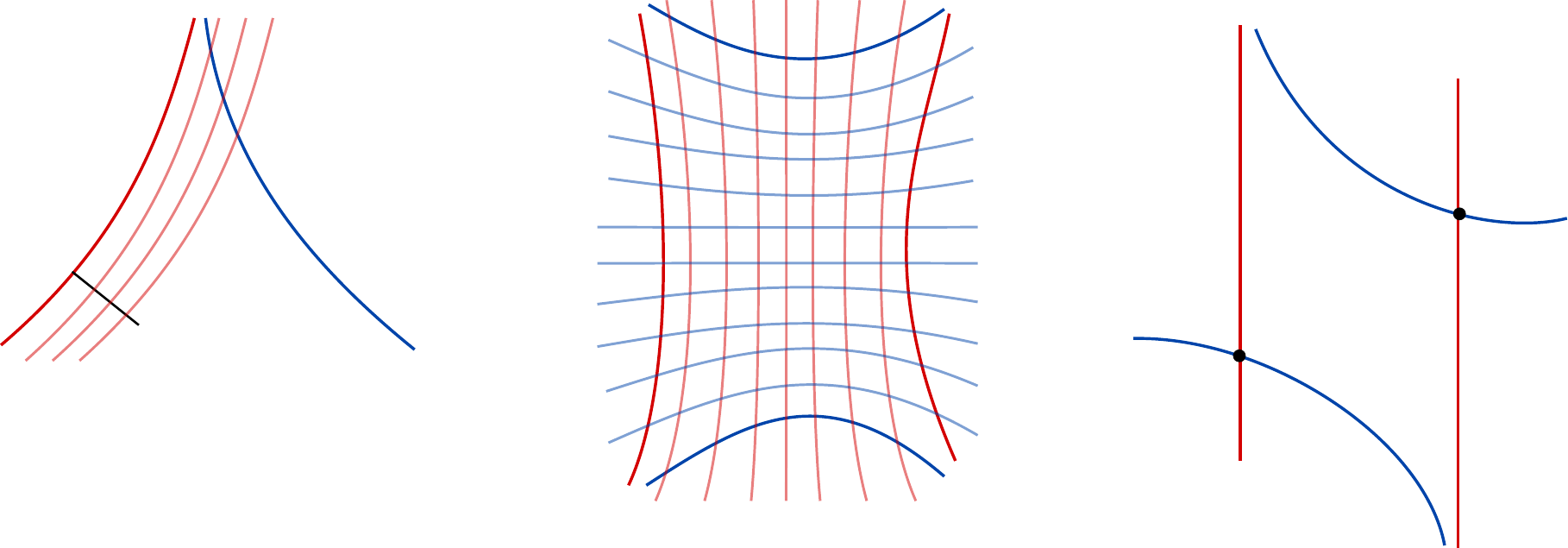}}}
\caption{A perfect fit, a totally ideal quadrilateral, and a lozenge with corners $x$ and $y$}
 \label{fig:perffits} 
\end{figure}

 \begin{definition}[Totally ideal quadrilateral]\label{def_totally_ideal_quad}
  A \emph{totally ideal quadrilateral} is an open set $Q$ bounded by four leaves $l_1, l_3 \in \cF^+$ and $l_2, l_4\in \cF^-$ such that 
  \begin{itemize}
   \item $l_i$ makes a perfect fit with $l_{i+1}$ for $i=1, \dots, 4$ (with the convention that $l_5=l_1$);
   \item Any leaf $l\in \cF^-$ (resp.~$\cF^+$) intersects $l_1$ if and only if it intersects $l_3$ (resp.~it intersects $l_2$ if and only if it intersects $l_4$).
  \end{itemize}
 \end{definition}

Recall that Axiom \ref{Anosov_like_totallyideal} says that such regions are not allowed in a bifoliated plane equipped with an Anosov-like action.

\begin{definition}[Lozenges and chains]
Let $x,y$ be two points in $(P,\cF^+,\cF^-)$ and let $r_x^\pm \subset \cF^\pm(x)$, and $r_y^\pm \subset \cF^\pm(y)$ be four half leaves, starting at $x$ and $y$ respectively, such that $r_x^\pm$ and $r_y^{\mp}$ make perfect fits, as shown in Figure \ref{fig:perffits} (right).

A \emph{lozenge $L$ with corners $x$ and $y$} is the open subset of $\orb$ ``bounded" by these half leaves; precisely, it is given by 
\begin{equation*}
 L := \lbrace p \in P \mid \cF^+(p) \cap r_x^- \neq \emptyset \text{ and } \cF^-(p) \cap r_x^- \neq \emptyset \rbrace.
\end{equation*}
The half-leaves $r_x^\pm$ and $r_y^{\pm}$ are called the \emph{sides} of the lozenge, the intersection of two sides is called a {\em corner}. A \emph{closed lozenge} is a lozenge together with its sides and corners, and a \emph{chain of lozenges} is a union of closed lozenges that satisfies the following connectedness property: for any two lozenges $L,L'$ in the chain, there exist lozenges $L_0, \dots,L_n$ in the chain such that $L=L_0$, $L'=L_n$, and, for all $i$, $L_i$ and $L_{i+1}$ share a corner (and may or may not share a side).
\end{definition}

\begin{definition}[Quadrants]
For a point $x \in P$, a {\em quadrant} of $x$ is a connected component of $P \smallsetminus \cF^{\pm}(x)$ that contains $x$ in its boundary.  
If $x$ is a regular point, it has 4 quadrants; an $n$-pronged singularity has $2n$ quadrants. 
\end{definition} 

Note that with this definition, the union of the closed quadrants at $x$ may not be the whole plane: if $x$ is a regular point on a singular leaf, parts of the plane are not contained in any quadrants of $x$.  See figure \ref{fig:quadrants}.  
However, we will typically use quadrants in the case where $x$ is a point fixed by some element of $G$, in which case the quadrants do partition the plane.

  \begin{figure}[h]
   \labellist 
   \small\hair 2pt
      \pinlabel $x$ at 260 93 
     \pinlabel $Q_1$ at 310 120 
     \pinlabel $Q_2$ at 220 140 
    \pinlabel $Q_3$ at 200 70  
    \pinlabel $Q_4$ at 310 60 
  \endlabellist
     \centerline{ \mbox{
\includegraphics[width=6cm]{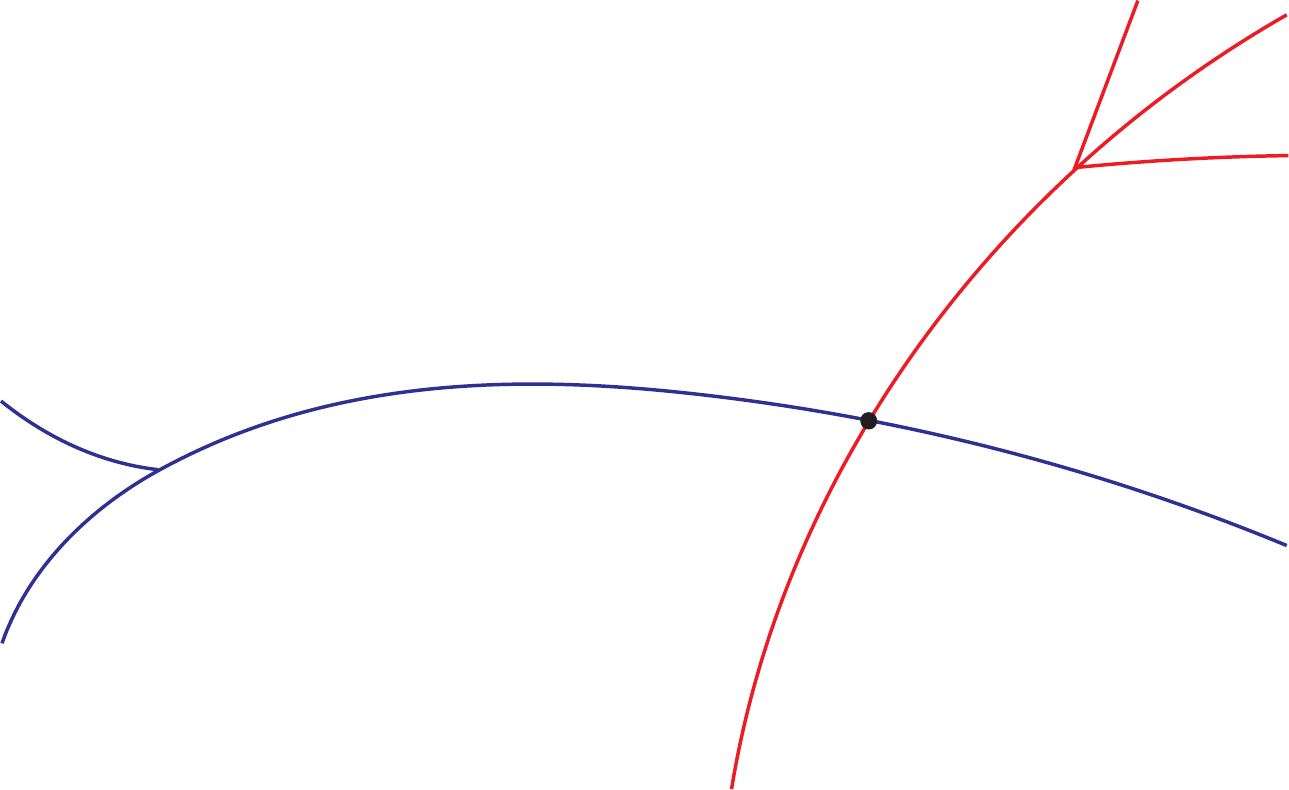}} }
\caption{The four quadrants of a regular point on singular leaves.}
 \label{fig:quadrants} 
\end{figure}

\subsection{A dynamical trichotomy} 
 
 As well as the trivial case, there is another special possible structure to a bifoliated plane.  
 \begin{definition} \label{def:skewed}
A bifoliated plane $(P,\cF^+, \cF^-)$ is called \emph{skewed} if it is homeomorphic to the strip $\{(x,y)\in \R^2 \mid x-1<y<x+1\}$ foliated by vertical and horizontal lines.  
 \end{definition}
 
Barbot and Fenley showed that Anosov flows on $3$-manifolds satisfy a trichotomy:  a flow is either orbit equivalent to a suspension, or has orbit space homeomorphic to a skewed plane, or both leaf spaces are non-Hausdorff.  
We will show that this trichotomy also holds in the setting of Anosov-like actions on bifoliated planes.  

 Before proving the trichotomy, we note the following complementary result to Remark \ref{rem_affine_counterexample}.  
 \begin{rem}  \label{rem:skew_counterexample}   
 Skewed planes also admit Anosov-like actions that do not arise from pseudo-Anosov flows (necessarily Anosov in this case, since the foliations are non-singular).  One may even take these to be faithful actions of 3-manifold groups.  As one such example, let $G \subset \mathrm{PSL}(2,\bC)$ be the fundamental group of a closed, hyperbolic 3-manifold group.  If the {\em trace field} of $G$ admits a real place, then this group embeds (non-discretely) into $\mathrm{PSL}(2,\bR)$, and its action on $S^1 \cong \R \cup \infty$ will lift to the universal cover $\R \cong \wt{S^1}$, commuting with the group of deck transformations.  In the language of \cite{BMB}, this is a \emph{hyperbolic-like} action on the line.  Considering the line as the boundary of the diagonal strip (and hence the leaf space of both $\cF^+$ and $\cF^-$), such an action on the line induces an Anosov-like action on the diagonal strip in $\bR^2$ preserving each foliation.   
Thurston \cite{Thurston:3MFC} classified the possible actions on skewed planes which come from Anosov flows, which he calls {\em extended convergence groups}, and this example is easily seen to fail Thurston's condition of acting properly discontinuously on the associated space of triples, so it does not arise from an Anosov flow.   
 \end{rem}

 \begin{theorem}[Dynamical trichotomy for Anosov-like actions] \label{thm_trivial_skewed_or_nowheredense}
 Let $(P,\cF^+, \cF^-)$ be a bifoliated plane with an Anosov-like action of a group $G$. Then exactly one of the following holds:
  \begin{enumerate}[label=(\roman*)]
   \item The bifoliated plane $(P,\cF^+, \cF^-)$ is trivial.
   \item The bifoliated plane $(P,\cF^+, \cF^-)$ is skewed.
   \item There is either a singular point in $P$, or the leaf spaces of $\cF^+$ and $\cF^-$ are both non-Hausdorff.
  \end{enumerate}
 \end{theorem}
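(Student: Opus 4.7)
The plan is to show that if we are not in case (iii) --- i.e.\ there are no singular points \emph{and} at least one leaf space, say $L^+$, is Hausdorff --- then $(P,\cF^+, \cF^-)$ is either trivial or skewed. Up to exchanging $\cF^+$ and $\cF^-$, the hypothesis that one leaf space is Hausdorff is symmetric, so this suffices.

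First I would put $(P, \cF^+)$ in normal form. Since $\cF^+$ is a non-singular foliation of the topological plane $P$ by properly embedded lines with Hausdorff leaf space $L^+\cong\R$, a standard planar foliation-theoretic argument (in the style of Haefliger--Palmeira) yields a homeomorphism $P\cong\R\times\R$ sending leaves of $\cF^+$ to vertical lines $\{t\}\times\R$. Because $\cF^-$ is also non-singular and transverse to $\cF^+$, any leaf of $\cF^-$ meets each vertical line in at most one point: a second intersection, together with a vertical arc, would bound a topological disk transversely foliated by two non-singular foliations, which is impossible by a Poincar\'e--Hopf style argument. Hence each leaf of $\cF^-$ is the graph of a strictly monotone continuous function over some open interval $I\subseteq\R$.

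Next I would split into two subcases. If every leaf of $\cF^-$ has $I=\R$, then the map $\Phi\colon P\to L^+\times L^-$ given by $p\mapsto(\cF^+(p),\cF^-(p))$ is a continuous bijection onto its image; transversality of $\cF^+$ and $\cF^-$ makes $\Phi$ open, hence a homeomorphism. This forces $L^-$ to be Hausdorff and identifies $P$ with $\R\times\R$ together with its two coordinate foliations, so $(P,\cF^\pm)$ is trivial --- case (i). Otherwise, some $\cF^-$ leaf has interval of definition $I\subsetneq\R$; at any finite endpoint $a\in\partial I$ the graph must escape to $\pm\infty$ vertically (as the leaf is properly embedded and strictly monotone), producing a perfect fit between that $\cF^-$-leaf and the vertical leaf $\{a\}\times\R$.

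The remaining task, which is the main obstacle, is to upgrade the existence of such a single perfect fit to a uniformly skewed structure on the whole plane. The strategy is to use the Anosov-like axioms to propagate the ``bounded projection'' behavior. Density of fixed points (\ref{Anosov_like_dense_fixed_points}) together with the contraction/expansion dynamics on fixed leaves (\ref{Anosov_like_A1}) gives a dense set of periodic points, each producing its own perfect-fit corners; topological transitivity (\ref{Anosov_like_topologically_transitive}) then spreads this configuration across $P$; and axiom \ref{Anosov_like_periodic_non-separated}, applied to any would-be non-separated leaves in $L^-$, rules out branching of $L^-$ (any such branching would produce configurations already ruled out by the uniformity obtained from \ref{Anosov_like_A1} and transitivity). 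The conclusion is that $L^-$ is Hausdorff, $\cong\R$, and the image of $\Phi$ is a diagonal strip inside $L^+\times L^-$, yielding a $G$-equivariant homeomorphism from $(P,\cF^\pm)$ to the skewed model $\{x-1<y<x+1\}\subset\R^2$ with its vertical and horizontal foliations --- case (ii). The technical heart is ruling out intermediate configurations in which the projections $I$ have varying ``widths'' or only some leaves are graphs over proper subintervals; this is where transitivity together with the topological hyperbolicity in \ref{Anosov_like_A1} must be used to force uniformity.
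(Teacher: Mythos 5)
Your proposal follows the same overall strategy as the paper (which itself adapts Barbot's argument): assume no singular points and one Hausdorff leaf space, identify $P\cong\R^2$ with $\cF^+$ the vertical foliation, observe each $\cF^-$-leaf is a monotone graph over an open interval, and then split according to whether all those intervals are $\R$ (trivial) or not (skewed). The early steps — including the Poincar\'e--Hopf observation that an $\cF^-$-leaf meets each $\cF^+$-leaf at most once — are fine.

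The gap is exactly where you flag it yourself, and it is a real gap rather than a deferred technicality. The crux is a dichotomy lemma: \emph{either every} $\cF^-$-leaf projects onto all of $\R$, \emph{or none does} (and symmetrically at the other end). Your sketch says density of fixed points "gives a dense set of periodic points, each producing its own perfect-fit corners" and "topological transitivity then spreads this configuration" — but neither of those assertions is established. A periodic point does not in general sit at a perfect fit; in the trivial case it never does, so if you assume it you have assumed the conclusion. What actually does the work in the paper is a specific open/invariant/saturated argument: writing $I(l^-)=(\alpha(l^-),\omega(l^-))\subset\cL^+$, one shows the set $A_{-\infty}=\{x:\alpha(\cF^-(x))=-\infty\}$ is open — by finding, via density of fixed points, translates of $\cF^-(x)$ that straddle $x$ along $\cF^+(x)$ so that every nearby $\cF^-$-leaf is trapped between two leaves with $\alpha=-\infty$. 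Since $A_{-\infty}$ is also $G$-invariant and $\cF^-$-saturated, topological transitivity forces it to be empty or everything; a short argument using Axiom~\ref{Anosov_like_A1} shows $\alpha\equiv-\infty$ iff $\omega\equiv+\infty$. This is the uniformity you are trying to get; your sketch names the right axioms but not this argument, and the openness step in particular cannot be omitted.

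A secondary issue: to finish the skewed case the paper does not invoke Axiom~\ref{Anosov_like_periodic_non-separated}, as you propose. Instead it shows directly that every $\cF^-$-leaf separates its two boundary leaves $\alpha(l^-)$ and $\omega(l^-)$, by contradiction: the set $C$ of points where separation fails is open, so (by density) contains a fixed point $x$ of some $g$; applying $g^2$ one finds fixed points on $\alpha(\cF^-(x))$ and $\omega(\cF^-(x))$, forcing $x$ to be a corner of two adjacent lozenges, and choosing another fixed point slightly inside $D_x$ then contradicts uniqueness of fixed points on leaves. Your route via ruling out non-separated leaves using Axiom~\ref{Anosov_like_periodic_non-separated} might be salvageable, but you would need an argument — having a common fixing element is not by itself contradictory in a plane where $\alpha$ and $\omega$ are everywhere finite, so you would still need to derive a contradiction from the resulting lozenge structure, which would land you close to the paper's argument anyway.
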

 
 \begin{proof}[Proof of Theorem \ref{thm_trivial_skewed_or_nowheredense}]
  Suppose first that $(P,\cF^+, \cF^-)$ does not have singular points and that the leaf space of $\cF^+$ is Hausdorff, and hence homeomorphic to $\bR$. A simple rewriting of \cite[Theorem 3.4]{Fen_Anosov_flow_3_manifolds} or of \cite[Th\'eor\`eme 4.1]{Bar_caracterisation} using the properties of Anosov-like actions will then show that $\cF^-$ is also homeomorphic to $\bR$ and that $(P,\cF^+, \cF^-)$ is either trivial or skewed. For the sake of the reader, we quickly recall the arguments, following \cite{Bar_caracterisation} whose proof is more readily seen to only depend on the action on the bifoliated plane (Fenley's proof in \cite{Fen_Anosov_flow_3_manifolds} has some arguments using the flow).
  
  Let $\cL^+\simeq \bR$ be the leaf space of $\cF^+$. For any leaf $l^-\in \cF^-$, we consider the set $I(l^-) \subset \cL^+$ defined by 
  \[
   I(l^-) :=\lbrace l^+\in \cL^+ \mid l^- \cap l^+ \neq \emptyset\rbrace.
  \]
Then $I(l^-)$ is open and connected, that is, an interval. So there exist two functions $\alpha,\omega\colon \cL^- \to \bR \cup\{-\infty,+\infty\}$ such that $I(l^-) = (\alpha(l^-),\omega(l^-))$.

The first thing to note is that $\alpha$ (and $\omega$) is either always finite or always infinite:
\begin{claim}
 If there exists $l_0^-$ such that $\alpha(l_0^-)= -\infty$, then $\alpha \equiv -\infty$. Similarly, if $\omega$ takes the value $\infty$ at any point, then $\omega \cong \infty$.
\end{claim}

\begin{proof}
The set $A_{-\infty}:= \{ x\in P : \alpha(\cF^-(x))= -\infty\}$ is invariant under the action of $G$ and saturated by $\cF^-$. Since the action of $G$ is assumed to be topologically transitive, it suffices to show that $A_{-\infty}$ is open.

Let $x \in A_{-\infty}$. Since the set of fixed points of elements of $G$ is dense in $P$ (Axiom \ref{Anosov_like_dense_fixed_points}), we can find $g_1,g_2\in G$ such that $g_1\cdot \cF^-(x)$ 
 and $g_2\cdot \cF^-(x)$ intersect $\cF^+(x)$ in points $y_1,y_2$ which lie on either side of $x$ in $\cF^+(x)$.
Since $\alpha(g_1\cdot \cF^-(x)) = \alpha(g_2\cdot \cF^-(x)) = -\infty$, we conclude that for any point $z\in \cF^+(x)$ between $y_1$ and $y_2$, we also have $\alpha(\cF^-(z))=-\infty$. Thus $A_{-\infty}$ contains the saturation of the interval between $y_1$ and $y_2$ by $\cF^-$, which is an open neighborhood of $x$, proving the claim about $\alpha$.  The case for $\omega$ is completely analogous.
 \end{proof}

 \begin{claim}
  $\alpha \equiv -\infty$ if and only if $\omega \equiv +\infty$
 \end{claim}

 \begin{proof}
 Suppose that $\omega$ is always finite.  Consider some point $x\in P$ fixed by an element $g \in G$. Then $\omega(\cF^-(x))$ is also fixed by $g$, so (by Axiom \ref{Anosov_like_A1}) $g$ has a fixed point on this leaf. If we were to have $\alpha \equiv -\infty$, then $\cF^+(y) \cap \cF^-(x) \neq \emptyset$, contradicting the uniqueness of fixed points on a given leaf (Axiom \ref{Anosov_like_A1}).  Thus, $\alpha$ is always finite when $\omega$ is.  The reverse implication is symmetric.  
 \end{proof}
If both $\alpha$ and $\omega$ are infinite, then the plane is trivial, so we can assume now that both $\alpha$ and $\omega$ are finite.
 To finish the proof, we will show that in this case, any leaf $l^-$ of $\cF^-$ separates the leaves $\alpha(l^-)$ and $\omega(l^-)$ in $P$. This will imply that the leaf space $\cL^-$ of $\cF^-$ is Hausdorff, hence homeomorphic to $\bR$. 

 Let $C\subset P$ be the set of points $x$ such that $\cF^-(x)$ does \emph{not} separate $\alpha(\cF^-(x))$ and $\omega(\cF^-(x))$.  We assume $C$ nonempty for a contradiction. 
 For any $x\in C$, denote by $D_x$ the connected component of $P\smallsetminus \cF^-(x)$ that does not contain $\alpha(\cF^-(x))$ and $\omega(\cF^-(x))$. Since $\cL^+\simeq \bR$, we have$D_x \subset C$. Hence $C$ is open and, by Axiom \ref{Anosov_like_dense_fixed_points} contains a point, call it $x$ again, fixed by an element $g$. Up to replacing $g$ with $g^2$, we have that $\alpha(\cF^-(x))$ and $\omega(\cF^-(x))$ are both fixed by $g$. Thus there exists $y\in \alpha(\cF^-(x))$ and $z\in \omega(\cF^-(x))$ both fixed by $g$, and we must have that $x$, $y$ and $z$ are the corners of two lozenges with a shared side (one half-leaf of $\cF^+(x)$).
 Now, picking another fixed point $x'$ of some element $g'$ close to $x$ in the half-space $D_x$, we get a contradiction.

We conclude that $\cL^-$ is homeomorphic to $\bR$.  Now the fact that $(P,\cF^+, \cF^-)$ is skewed follows readily: The plane $P$ can be seen as a subset of the plane obtained as the product of the leaf spaces $\cL^+\times\cL^- \simeq  \bR^2$ bounded by the graphs of the function $\alpha$ and $\omega$. Up to an isotopy, we can make the graphs of $\alpha$ and $\omega$ be the two lines $y=x\pm 1$ (see \cite[Theorem 3.4]{Fen_Anosov_flow_3_manifolds}).
\end{proof}

Next we will show that a non-affine, Anosov-like action does not admit infinite strips on which the foliations are trivial. To make this precise, we introduce the following definition.  

\begin{definition}[Infinite product regions]\label{def_product_region}
	An \emph{infinite product region} is an open region $U \subset P$ such that (up to reversing the roles of $+$ and $-$), we have
	\begin{enumerate}[label=(\roman*)] 
		\item the boundary, $\partial U$, consists of a compact segment $I^+$ of a leaf of $\cF^+$ and two half-leaves $r_1^-, r_2^-$ of $\cF^-$,
		\item for each $x\in U$, the leaf $\cF^+(x)$
		intersects both $r_1^-$ and $r_2^-$, as in  Figure \ref{fig:product_region}.
	\end{enumerate}
\end{definition}

\begin{figure}[h]
	\labellist 
	\small\hair 2pt
	\pinlabel $r_1^-$ at 160 130
	\pinlabel $r_2^-$ at 160 20 
	\pinlabel $I^+$ at 50 70 
	 \pinlabel $\ldots$ at 310 75 
	\endlabellist
	\centerline{ \mbox{
			\includegraphics[width=6cm]{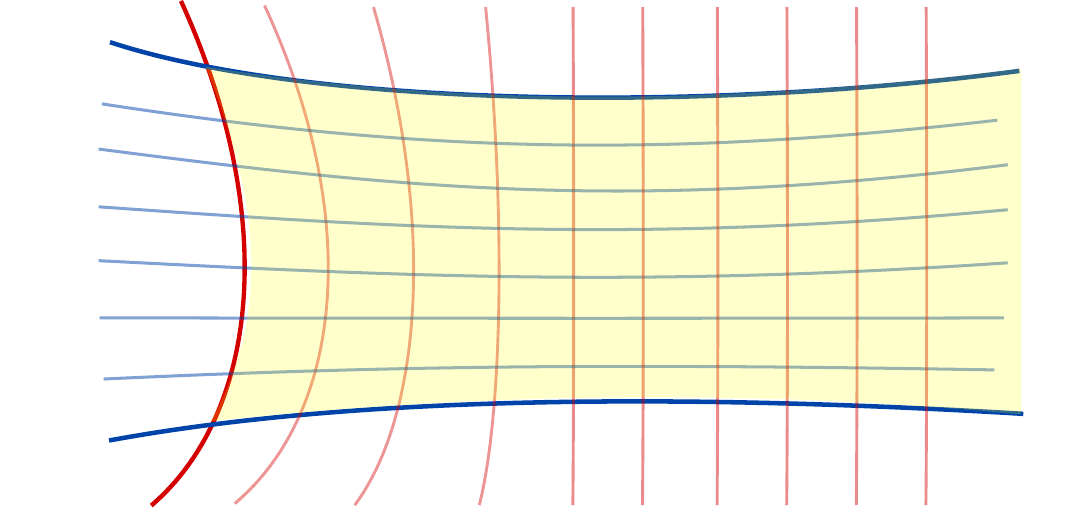}}}
	\caption{An infinite product region with base $I^+$} 
		\label{fig:product_region} 
\end{figure}

 \begin{proposition}\label{prop:no_product}
  Let $(P,\cF^+, \cF^-)$ be a bifoliated plane with Anosov like action of $G$.  The plane $P$ has an infinite product region if an only if the action is affine and $(P,\cF^+, \cF^-)$ is trivial.
 \end{proposition}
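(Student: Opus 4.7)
The easy direction is ``trivial $\Rightarrow$ infinite product region exists'': identifying $P$ with $\R^2$ with coordinate foliations, any half-strip $(-\infty,0]\times[a,b]$, with base $I^+=\{0\}\times[a,b]$ and sides the two horizontal half-lines from its endpoints, is an infinite product region.

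For the converse, suppose $U$ is an infinite product region with base $I^+\subset\cF^+$ and sides $r_1^-,r_2^-\subset\cF^-$. The plan is to prove that $(P,\cF^+,\cF^-)$ is trivial, after which Proposition \ref{prop:trivial_affine} yields the affineness of the action. By the dynamical trichotomy of Theorem \ref{thm_trivial_skewed_or_nowheredense}, it suffices to (a) rule out the skewed case, and (b) show that $P$ has neither singularities nor non-Hausdorff leaf spaces.

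The starting point will be to pick, by density of fixed points (Axiom \ref{Anosov_like_dense_fixed_points}), a point $p\in U$ fixed by some $g\in G$. Axiom \ref{Anosov_like_A1} makes $g$ topologically hyperbolic at $p$; after replacing $g$ by $g^{-1}$ if necessary, we may assume $g$ contracts $\cF^+(p)$ and expands $\cF^-(p)$. Because $U$ is itself trivially bifoliated (by definition of product region), $p$ is regular, and the half-leaf $r_p^-$ of $\cF^-(p)$ starting at $p$ and heading into the open end of $U$ is entirely contained in $U$ (it cannot cross $I^+$ in the right direction, and it cannot cross $r_1^-$ or $r_2^-$ since distinct $\cF^-$-leaves are disjoint). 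The core step is then to propagate this product structure via the $g$-dynamics: each $g^n(U)$ is again an infinite product region, and the $g^{-n}$-iterates, which expand $\cF^+(p)$ and contract $\cF^-(p)$ toward $p$, should produce a nested family of overlapping product regions whose union fills an entire half-plane bounded by $\cF^-(p)$. A symmetric argument, applied to a second fixed point on the opposite side of $\cF^-(p)$ obtained via density, then extends the product structure across $\cF^-(p)$ and exhausts $P$. Axioms \ref{Anosov_like_prongs_are_fixed} and \ref{Anosov_like_periodic_non-separated} now force any putative singularity or non-separated pair of leaves to be fixed by some element of $G$ and hence to lie inside one of these product regions, an impossibility since such regions are trivially bifoliated. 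Thus $P$ has no singularities, both leaf spaces are homeomorphic to $\R$, and every pair of $\cF^\pm$-leaves meets, so $P$ is trivial. The skewed case is excluded separately by noting that in the diagonal strip model, the functions $\alpha,\omega$ from the proof of Theorem \ref{thm_trivial_skewed_or_nowheredense} are everywhere finite, preventing two half-leaves of $\cF^-$ emanating from the endpoints of a compact $\cF^+$-segment from being crossed by all intermediate $\cF^+$-leaves indefinitely.

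The main obstacle will be making the propagation step precise: the element $g$ does not generally preserve $U$, so one cannot directly take the $g$-orbit of $U$. The key technical ingredient is to observe that the intersection $g^n(U)\cap U$ is itself an infinite product region for each $n$ (controlled by the local product structure near $p$), and that the union of these overlapping regions can be patched to give a single trivially bifoliated open set covering a growing half-plane. Once this is in place, combining with the density and transitivity hypotheses of the Anosov-like axioms yields the desired exhaustion of $P$.
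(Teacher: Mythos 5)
Your high‑level plan (reduce to the trichotomy of Theorem \ref{thm_trivial_skewed_or_nowheredense} by ruling out singularities, branching, and skewedness) matches the paper, but the core of your argument — propagating the product structure via $g^{-n}(U)$ to exhaust a half‑plane — is not carried out, and this is not a small gap. You flag it yourself as ``the main obstacle,'' and it is genuinely hard: in an abstract nontrivial plane, as the base $g^{-n}(I^+)$ lengthens and approaches $\cF^+(p)$, the segments can accumulate onto a \emph{union} of nonseparated leaves rather than onto a single leaf, so there is no a priori reason for $\bigcup_n g^{-n}(U)$ to be a half‑plane rather than some smaller trivially bifoliated region bounded by a branching locus. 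Showing that the union exhausts a half‑plane is essentially equivalent to showing that branching does not occur there — which is what you are trying to prove. The ``symmetric argument on the other side of $\cF^-(p)$'' has the same circularity: you need a fixed point whose product neighborhood can again be propagated, but nothing guarantees that the region on the other side has any infinite product structure to start from.

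The paper avoids this entirely by reversing the direction of attack. Rather than pushing $U$ outward to cover the plane, it pushes the offending object \emph{into} $U$. Lemma \ref{lem_orbit_of_leaves_are_dense} (orbits of leaves are dense, a consequence of Axioms \ref{Anosov_like_topologically_transitive} and \ref{Anosov_like_dense_fixed_points}) lets one translate a hypothetical nonseparated pair $l_1^-, l_2^-$ so that $l_1^-$ crosses $U$ while $l_2^-$ does not; then a fixed point $x$ chosen near $l_2^-$ with $\cF^-(x)\cap U\neq\emptyset$, together with the contracting dynamics of $g$ along $\cF^+(x)$, forces $g^n(l_2^-)$ and its nonseparated partner $g^n(l_1^-)$ both into $U$ for large $n$ — contradicting triviality of the bifoliation in $U$. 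The same pattern handles singular points. This localization is strictly easier than your global exhaustion and needs no patching. If you want to salvage your argument, you should adopt this ``pull the obstruction into $U$'' strategy in place of the propagation step; the density ingredient you would need is exactly Lemma \ref{lem_orbit_of_leaves_are_dense}.

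One small additional remark: your exclusion of the skewed case via finiteness of $\alpha,\omega$ is correct and matches the paper's one‑line observation that skewed planes contain no infinite product regions, so no issue there.
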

 
 To simplify the proof of this proposition, we first give a easy consequence of Axioms \ref{Anosov_like_topologically_transitive} and \ref{Anosov_like_dense_fixed_points}:
 \begin{lemma}\label{lem_orbit_of_leaves_are_dense}
  Let $(P,\cF^+, \cF^-)$ be a bifoliated plane with Anosov like action of $G$. Then the image of any leaf of $\cF^+$ or $\cF^-$ under $G$ is dense in $P$.
 \end{lemma}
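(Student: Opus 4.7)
The plan is to fix a leaf $l \in \cF^+$ (the case $l \in \cF^-$ is entirely symmetric) and show $\overline{G \cdot l} = P$. The key observation will be: if $p \in P$ is fixed by some nontrivial $h \in G$ and $\cF^-(p) \cap l \neq \emptyset$, then $p \in \overline{G \cdot l}$. Indeed, Axiom \ref{Anosov_like_A1} says either $h$ or $h^{-1}$ is topologically contracting on $\cF^-(p)$, so for any $y \in l \cap \cF^-(p)$ the iterates $h^{\pm n}(y)$ lie in $\cF^-(p)$ and converge to $p$; since $h^{\pm n}(y) \in h^{\pm n}(l) \subset G \cdot l$, this gives $p \in \overline{G \cdot l}$.

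Next I would introduce the $\cF^-$-saturation $S := \{z \in P : \cF^-(z) \cap l \neq \emptyset\}$, which is open by transversality of the two foliations and nonempty since it contains $l$. By Axiom \ref{Anosov_like_dense_fixed_points}, points fixed by nontrivial elements of $G$ are dense in $S$, and by the observation above every such fixed point in $S$ lies in $\overline{G \cdot l}$. Hence $\overline{G \cdot l}$ contains a dense subset of $S$, giving $\overline{G \cdot l} \supset \overline{S}$. Since $\overline{G \cdot l}$ is $G$-invariant it also contains the open $G$-invariant set $G \cdot S$.

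To finish, I would invoke topological transitivity (Axiom \ref{Anosov_like_topologically_transitive}) to pick $x_0 \in P$ with $\overline{G \cdot x_0} = P$; the dense orbit $G \cdot x_0$ must meet the nonempty open set $S$, which forces $x_0 \in G \cdot S$ and hence $G \cdot x_0 \subset G \cdot S$. Therefore $G \cdot S$ is dense in $P$, and $\overline{G \cdot l}$, being closed and containing this dense set, equals $P$.

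The main point to verify carefully will be the opening observation at singular fixed points: at a pronged singularity $p$ of $\cF^-$, one should check that topological contraction from Axiom \ref{Anosov_like_A1} still delivers $h^{\pm n}(y) \to p$ along the pronged leaf $\cF^-(p)$ even when $h$ permutes the prongs, which is immediate from the definition of topological contraction recalled just before Definition \ref{def_anosov_like}. The openness of $S$ at singularities is similarly routine, using that leaves of $\cF^\pm$ are properly embedded $n$-prongs transverse to one another.
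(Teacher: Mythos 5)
The proposal has a real (though repairable) gap in the second step: the claim that the $\cF^-$-saturation $S = \{z : \cF^-(z) \cap l \neq \emptyset\}$ is open is false at singular leaves, and the parenthetical assurance that "openness of $S$ at singularities is \ldots routine" is exactly where the argument breaks. Concretely, let $s$ be a $k$-prong singularity of $\cF^-$ (with $k\ge 3$) and suppose $l$ crosses exactly one prong $r_1$ of $\cF^-(s)$ at a regular point and stays away from a neighborhood of $s$ otherwise. Then $s\in S$ because $\cF^-(s)\cap l \neq\emptyset$, but points $z$ arbitrarily close to $s$ in the sectors not adjacent to $r_1$ have $\cF^-(z)$ shadowing two prongs \emph{other} than $r_1$, and there is no reason for those leaves to meet $l$; so $z\notin S$ and $S$ is not open at $s$. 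The same phenomenon occurs at regular points of $S$ lying on singular $\cF^-$-leaves when the arc from $z$ to $l$ along $\cF^-(z)$ runs through the $\cF^-$-singularity. Since your application of Axiom \ref{Anosov_like_dense_fixed_points} requires an \emph{open} set in which to find a dense family of fixed points, this is a genuine gap as written.

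The gap is easy to close: instead of the full saturation $S$, take a regular point $y\in l$ that lies on no singular leaf of $\cF^-$, and a trivially foliated box $W$ around $y$ containing no singularities. Every $w\in W$ has $\cF^-(w)$ meeting $l$ inside $W$, so $W$ is an open subset of $S$; Axiom \ref{Anosov_like_dense_fixed_points} then gives a dense set of fixed points in $W$, your opening observation (which is correct, including the pronged case) puts them in $\overline{G\cdot l}$, and the transitivity step goes through verbatim with $W$ in place of $S$. For what it's worth, the paper's own proof sidesteps the same issue explicitly: it uses Axiom \ref{Anosov_like_topologically_transitive} to choose $x$ ``close enough to $l$ to ensure that there are no singular points on $\cF^-(x)$ between $x$ and $l$'' before invoking Axiom \ref{Anosov_like_dense_fixed_points} to find a nearby fixed point — so the insistence on staying away from singular arcs is not a cosmetic choice but the exact point you glossed over. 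Beyond this fix, your overall architecture (build $\overline{G\cdot l}$ up from fixed points near $l$, then spread with a dense orbit) is a clean and valid alternative organization of the same three axioms the paper uses, and your key observation via topological contraction/expansion on $\cF^-(p)$ is exactly the hyperbolic push the paper also employs, just phrased as a standalone lemma rather than inline.
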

 
 \begin{proof}
  Let $U$ be an open set in $P$ and $l$ a leaf in $\cF^{+}$. (The $\cF^-$ case is analogous.)
  By Axiom \ref{Anosov_like_topologically_transitive}, there exists $x\in P$ and $g\in G$ such that $\cF^-(x)\cap l\neq \emptyset$ and $gx \in U$.  Furthermore, we can choose $x$ close enough to $l$ to ensure that there are no singular points on $\cF^-(x)$ between $x$ and $l$. Now by Axiom \ref{Anosov_like_dense_fixed_points}, we can choose a point $y$, fixed by some element $h\in G$, very close to $gx$ so that $y\in U$ and $\cF^-(y)\cap gl\neq \emptyset$. Then, for $n$ large enough, either $h^{n} g l\cap U \neq \emptyset$, or $h^{-n} g l\cap U \neq \emptyset$, as sought.
 \end{proof}

With this lemma it is easy to prove the proposition.
 \begin{proof}[Proof of Proposition \ref{prop:no_product}]
 The reverse direction is immediate from the definition of trivial plane, so we need only prove the forwards direction.   This is established for Anosov flows in \cite[Theorem 5.1]{Fenley_structure_branching}, and more to the point for us, for \emph{transitive} Anosov flow in \cite[Proposition 3.5]{Fenley_one_sided}. It easily translates to the setting of Anosov-like actions. For completeness, we give the proof:
 
 Suppose $U$ is an infinite product region bounded by interval $I^+$, and half-leaves $r_1^-$ and $r_2^-$ (switching the roles of $+$ and $-$ has no impact on the proof)  and suppose that there exists two leaves $l_1^-, l_2^-\in \cF^-$ that are non-separated. By Lemma \ref{lem_orbit_of_leaves_are_dense}, the orbit of $l_1^-$ is dense so after translating by some $g \in G$, we can assume $l_1^-$ intersects $U$. Since $l_2^-$ is non-separated with $l_1^-$, it cannot intersect the product region $U$.
 
 By density of fixed points (Axiom \ref{Anosov_like_dense_fixed_points}) and the fact that since $l_1^-,l_2^-$ are not separated, we can choose a point $x$ near $l_2^-$, fixed by some non-trivial $g\in G$ such that $\cF^+(x)\cap l_2^-\neq \emptyset$ and $\cF^-(x)\cap U\neq \emptyset$.  After passing to a power of $g$ if needed, we will have that $g$ preserves each half-leaf through $x$ and contracts $\cF^+(x)$.  Then for $n$ large enough, $g^n(l_2^-)$ will intersect $U$ and $g^n(l_1^-)$ will be entirely contained in $U$, a contradiction with the fact that $U$ is an infinite product region.
 
 Thus the leaf space of $\cF^-$ is Hausdorff. Similarly, we can show that there are no singular points. If $y$ was a singular point, as above, acting by the group $G$, we may assume that $\cF^-(y)$ intersects $U$. But then, we may again find a point $x$ close to $y$ and fixed by some non trivial element $g\in G$ such that $g^n(y)\in U$. This is impossible as infinite product regions cannot contain singular points.
 
By Theorem \ref{thm_trivial_skewed_or_nowheredense}, we deduce that the plane is either skewed or trivial. As skewed planes have no infinite product regions, we conclude that the plane is trivial, and the action affine.
\end{proof}

The following lemma is the only result that uses Axiom \ref{Anosov_like_totallyideal} 
(note that the conclusion does not hold if $l_1$ and $l_2$ are on opposite sides of a totally ideal quadrilateral). It will only be referenced in an essential way in the proof of Proposition \ref{prop_unique_limit_for_NC} at the end of Section \ref{sec:nonskewed_proof}, but is also used as a shortcut in the proof of Lemma \ref{lem:accumulation_for_non_corner_leaves}.

\begin{lemma} 
 \label{lem_distinct_leaves_distinct_saturations}
  Let $G$ be a group with an Anosov-like action on a nontrivial bifoliated plane $(P,\cF^+, \cF^-)$.  Then for any two distinct leaves $l_1, l_2 \in \cF^+$  there exists a leaf $l\in \cF^-$ that intersects one but not the other, i.e. (up to switching 1 and 2)  
 we have 
  \[
   l\cap l_1 \neq \emptyset  \text{ and } l\cap l_2 = \emptyset.
  \]
  The same holds reversing the roles of $+$ and $-$. 
 \end{lemma}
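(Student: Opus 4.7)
My plan is to argue by contradiction: suppose that every leaf $l \in \cF^-$ intersects $l_1$ if and only if it intersects $l_2$, and derive a totally ideal quadrilateral, contradicting Axiom \ref{Anosov_like_totallyideal}. The symmetric statement with the roles of $+$ and $-$ reversed follows by an identical argument.

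First I set up the strip between $l_1$ and $l_2$. Since $l_1 \neq l_2$ are both leaves of $\cF^+$, they are disjoint, and $l_2$ lies in a single component of $P \setminus l_1$. The contradiction hypothesis gives a natural homeomorphism $\phi \colon l_1 \to l_2$ sending $x \in l_1$ to the unique point of $\cF^-(x) \cap l_2$. Let $S$ denote the open region swept by the open $\cF^-$-segments connecting $x$ to $\phi(x)$; then $S$ is an $\cF^+$-saturated open region with $\partial S = l_1 \cup l_2$, and each $\cF^-$ leaf meeting $S$ crosses it transversally from $l_1$ to $l_2$.

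Next I analyze the behavior at each end of $S$. Parameterize $l_1$ as $\R$ (if $l_1$ is singular, treat each prong emanating from the singularity separately). As $x \to +\infty$ on $l_1$, the leaves $\cF^-(x)$ form a family of properly embedded curves, each also passing through $\phi(x)\in l_2$ with $\phi(x)\to +\infty$. By properness and transversality, I expect these to converge in the leaf space of $\cF^-$ to a limit leaf $r^+ \in \cF^-$ that makes a perfect fit with the positive end of $l_1$ and, symmetrically (because the same leaves approach $l_2$ from its positive end), with the positive end of $l_2$. Constructing $r^-$ in the same way at the negative end, I claim the four leaves $l_1, r^+, l_2, r^-$ form a totally ideal quadrilateral. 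The four perfect-fit conditions are in place by construction, the $\cF^-$-intersection condition is the hypothesis, and the $\cF^+$-intersection condition follows because any $\cF^+$-leaf entering $S$ through $r^+$ is trapped between $l_1$ and $l_2$ (it cannot cross either), so by properness it must leave through $r^-$.

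The main obstacle is ruling out non-separated limit behavior at the ends of the strip. A priori, $\cF^-(x)$ as $x\to+\infty$ on $l_1$ could accumulate onto a non-separated family rather than a single leaf. If this happens, Axiom \ref{Anosov_like_periodic_non-separated} furnishes a nontrivial $g\in G$ fixing every member of that family, and Axiom \ref{Anosov_like_A1} places a hyperbolic fixed point on each such leaf. If the ``$l_1$-side'' and ``$l_2$-side'' limits at the positive end were two distinct non-separated leaves, standard lozenge arguments applied to the resulting fixed points, combined with the density of fixed points (Axiom \ref{Anosov_like_dense_fixed_points}) in the strip $S$, would produce an infinite product region inside $S$, contradicting Proposition \ref{prop:no_product} since the ambient plane is nontrivial. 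Thus the limits must coincide and be a single leaf $r^+$; an identical argument gives $r^-$ at the negative end. With the totally ideal quadrilateral $l_1, r^+, l_2, r^-$ in hand, the contradiction with Axiom \ref{Anosov_like_totallyideal} completes the proof.
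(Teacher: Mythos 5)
Your overall strategy — considering the set of $\cF^-$-leaves crossing both $l_1$ and $l_2$, forming the region between them, and trying to produce a totally ideal quadrilateral to contradict Axiom \ref{Anosov_like_totallyideal} — is the same as the paper's. But there are two real gaps.

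First, the statement ``By properness and transversality, I expect these to converge in the leaf space of $\cF^-$ to a limit leaf'' is not an argument. You never rule out the possibility that $\cF^-(x)$ escapes to infinity in the $\cF^-$-leaf space as $x \to +\infty$, leaving no limit leaf at all. This is exactly the situation Proposition \ref{prop:no_product} forbids (the half of the strip past any cross-segment would be an infinite product region after swapping the roles of $+$ and $-$), and the paper invokes this immediately to get that $\partial R$ consists of families of non-separated leaves. You do eventually cite Proposition \ref{prop:no_product} but only in the non-separated discussion, not here, so this step is missing. Relatedly, the assertion ``$\partial S = l_1\cup l_2$'' is wrong precisely because of these end-limits.

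Second, and more seriously, the non-separated case. You assert that if the family at the positive end has two distinct non-separated leaves, then ``standard lozenge arguments'' together with density of fixed points ``would produce an infinite product region inside $S$.'' This is not substantiated, and I do not believe it is correct as stated. The region one is led to look at inside the strip is bounded by segments of $l_1$, a half-leaf of the $\cF^+$-leaf $f^+$ making a perfect fit with both extreme non-separated leaves, and part of the non-separated boundary family itself — this has the wrong boundary structure to be an infinite product region. The paper's treatment of this case is genuinely different: it uses Axiom \ref{Anosov_like_periodic_non-separated} to produce a nontrivial $g$ fixing the family, Lemma \ref{lemma_scalloped_ideal_corner} to bound the family, then shows (via Lemma \ref{lem:periodic_pf_corner} and the chain of lozenges running along $\{l_i^-\}$ and anchored to $l_1$ and $l_2$) that $l_1$ and $l_2$ are forced to be sides of that line of lozenges, which in turn forces one of the opposite-end boundary leaves $f_j^-$ to meet $l_1$ but not $l_2$ — i.e., the desired leaf is produced directly, rather than obtaining a contradiction by an infinite product region. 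To repair your proof you would need to either carry out that lozenge argument in detail, or give a correct substitute; the sketch as written does not close the gap.
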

  
 \begin{proof}
  Consider the set $I = \left\{ l^-\in \cF^- \mid l^-\cap l_1 \neq\emptyset  \text{ and } l^-\cap l_2 \neq\emptyset\right\}$.  If it is empty we are done.  If not, 
 consider the region $R$ formed by the union of the segments of leaves of $I$ between $l_1$ and $l_2$.  
By Proposition \ref{prop:no_product} we have no infinite product regions, so $R$ is bounded on either side by either a segment of a leaf, a leaf, or a union of non-separated leaves, i.e.  
 $ \partial R  \subset \{l^-_i\}\cup \{f^-_j\}$ where $\{l^-_i\}$ and $\{f^-_j\}$ are some unions of pairwise non-separated leaves.  Each union can consist of a unique element, a finite union or an infinite union.  
First note that, if some leaf $l^-$ in $\partial I$ intersects $l_i$, then it cannot intersect $l_{3-i}$, and we are done.

So we assume that no leaf in $\partial R$ intersects either $l_1$ or $l_2$.  
If both $\{l^-_i\}$ and $\{f^-_j\}$ each consisted of a unique leaf, then $l_1$, $l_2$ and $\partial R$ would make up the boundary of a totally ideal quadrilateral, contradicting Axiom \ref{Anosov_like_totallyideal}.
So we must have that at least one (say $\{l^-_i\}$) consists of more than one leaf. 
Thus, by Axiom \ref{Anosov_like_periodic_non-separated} there is some element $g$ fixing each leaf of $\{l^-_i\}$.  Note also that by Lemma \ref{lemma_scalloped_ideal_corner} $\{l^-_i\}$ is a finite union.
We deduce that both $l_1$ and $l_2$ are two sides of a line of lozenges fixed by $g$.  
 This implies that there is in fact some leaf $f^-_j$ of  $\{f^-_j\}$ intersecting $l_1$ and not $l_2$.   
 \end{proof}

\subsection{Lozenges, corners and non-corners}
We give here some further preliminary results involving lozenges, corners and perfect fits. Most of the results in this section are extensions to Anosov-like actions of known results on (pseudo)-Anosov flows, the one exception being Lemma \ref{lem_nowhere_dense}, which is new.

\begin{lemma} \label{lem:periodic_pf_corner} 
 Let $(P,\cF^+, \cF^-)$ be a bifoliated plane with an Anosov-like action of $G$.  If some nontrivial element $g\in G$ fixes a point $x$, and some half-leaf through $x$ makes a perfect fit, then $x$ is the corner of a lozenge.
\end{lemma}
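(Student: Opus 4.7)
The plan is to reflect the perfect-fit configuration through a fixed point on the partner leaf $l$ and check that the resulting four half-leaves bound a genuine lozenge.

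\emph{Setup.} Assume without loss of generality that the given half-leaf is $r_x^+ \subset \cF^+(x)$ and that it makes a perfect fit with a leaf $l \in \cF^-$. Replace $g$ by a power that preserves the orientation of $P$ and each half-leaf through $x$, and further by its inverse if needed, so that $g$ is topologically contracting on $r_x^+$ (Axiom \ref{Anosov_like_A1}). By Observation \ref{obs:fix_perf_fit}, $g$ fixes $l$; by Axiom \ref{Anosov_like_A1}, $g$ has a unique fixed point $y$ on $l$, and $y \ne x$ since $x \notin l$.

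\emph{Dynamics at $y$.} For any $z \in r_x^+$ close to $x$, the perfect fit yields $z' := \cF^-(z) \cap l$. Since $g$ contracts $r_x^+$, we have $g^n(z) \to x$; by $g$-invariance of the whole perfect-fit structure, $g^n(z') = \cF^-(g^n z) \cap l$ is forced to escape along $l$ toward the perfect-fit end. Because $y$ is the unique fixed point of $g$ on $l$ (Axiom \ref{Anosov_like_A1}), $g|_l$ must be topologically hyperbolic with $y$ as a repelling fixed point. Thus $g$ expands $\cF^-(y) = l$ and, by \ref{Anosov_like_A1}, contracts $\cF^+(y)$.

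\emph{Quadrant structure and disjointness.} The leaves $\cF^+(y)$ and $\cF^-(x)$ are disjoint: any intersection point would be a single point fixed by $g$ (as the unique intersection of two $g$-invariant leaves), yielding a second fixed point on $\cF^-(x)$ and violating \ref{Anosov_like_A1}. Let $r_x^-$ be the half-leaf of $\cF^-(x)$ such that, together with $r_x^+$, it bounds the quadrant $Q$ at $x$ whose closure contains the perfect-fit end of $l$; let $r_y^-$ be the half-leaf of $l$ from $y$ pointing into $Q$; and let $r_y^+$ be the half-leaf of $\cF^+(y)$ from $y$ pointing into $Q$. Then $r_x^+$ and $r_y^-$ make a perfect fit by construction, and $r_x^-, r_y^+$ are disjoint.

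\emph{The remaining perfect fit (main obstacle).} It remains to prove that $r_x^-$ and $r_y^+$ make a perfect fit, which will exhibit a lozenge with corners $x$ and $y$. Pick $w \in r_x^-$ close enough to $x$ that $\cF^+(w)$ crosses a short transverse arc $\tau^+$ witnessing the perfect fit of $(r_x^+, l)$; then $\cF^+(w) \cap l \ne \emptyset$, and denote this intersection by $w'$. Applying $g^{-n}$, the iterates $g^{-n}(w)$ escape along $r_x^-$ to infinity (since $g$ expands $\cF^-(x)$), while $g^{-n}(w') \to y$ (since $g^{-1}$ contracts $l$ toward $y$). Thus the leaves $\cF^+(g^{-n} w) = g^{-n}\cF^+(w)$ cross $r_x^-$ at points escaping to infinity and pass through points converging to $y$, which is precisely the defining feature of a perfect fit between $r_x^-$ and the half-leaf through $y$ into $Q$, namely $r_y^+$. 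The potential obstacle is that the limiting leaf could be non-separated from $\cF^+(y)$ rather than $\cF^+(y)$ itself: but by Axiom \ref{Anosov_like_periodic_non-separated}, any such non-separated leaf would also be fixed by $g$, giving a second fixed point on a $g$-fixed leaf and contradicting \ref{Anosov_like_A1}; and any failure of the asymptotic convergence would produce an infinite product region adjacent to $r_x^-$, contradicting Proposition \ref{prop:no_product}.
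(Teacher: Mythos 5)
There is a genuine gap. The central error is in the ``Dynamics at $y$'' paragraph: you define $z' := \cF^-(z)\cap l$ for $z \in r_x^+$, but $\cF^-(z)$ and $l$ are two distinct leaves of the same foliation $\cF^-$ (distinct since $z\notin l$, as $r_x^+\cap l=\emptyset$), so this intersection is always empty. There is no point-to-point correspondence of the kind you invoke between $r_x^+$ and $l$. The conclusion you draw from it --- that $y$ is repelling on $l$, so $g$ expands $l=\cF^-(y)$ and contracts $\cF^+(y)$ --- is in fact backwards. Using the genuine correspondence $w\mapsto w'=\cF^+(w)\cap l$ for $w\in r_x^-$ near $x$ (the one your final paragraph correctly uses), one has $w'\to$ the perfect-fit ideal end of $l$ as $w\to x$; since $g$ expands $r_x^-$ (by \ref{Anosov_like_A1}, as $g$ contracts $r_x^+$), the points $g^n(w)$ move away from $x$, so $g^n(w')$ move away from that ideal end, and since they cannot escape to the other ideal end of $l$ without creating an infinite product region (Proposition \ref{prop:no_product}), they converge to $y$. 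So $g$ \emph{contracts} $l$ toward $y$.

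These errors propagate to the last paragraph: you assert that ``the iterates $g^{-n}(w)$ escape along $r_x^-$ to infinity (since $g$ expands $\cF^-(x)$)'', but if $g$ expands $\cF^-(x)$ then $g^{-n}(w)\to x$; it is $g^{n}(w)$ that escapes. Under no consistent assignment of expansion/contraction do both of your displayed limits hold with the exponent $-n$. After replacing $g^{-n}$ by $g^{n}$ and correcting the dynamics at $y$, your strategy becomes close to the paper's, which instead analyzes the nonempty, connected, $g$-invariant set $Z = \{z \in r_x^+ : \cF^-(z)\cap r_y^+ \neq \emptyset\}$ and concludes $Z = r_x^+\setminus\{x\}$ --- a route that never needs to determine the direction of the dynamics on $l$, and that sidesteps the non-separated-leaf issue entirely. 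Your handling of that last case is also imprecise: a non-separated limit leaf, though $g$-invariant, carries its own unique $g$-fixed point on a \emph{different} leaf rather than a second fixed point on $l$ or $\cF^+(y)$, so \ref{Anosov_like_A1} is not immediately violated; that scenario merely produces a chain of adjacent lozenges based at $x$, which still gives the lemma's conclusion.
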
 
\begin{proof} 
	This is standard (it is for instance used in the proof of \cite[Theorem 3.3]{Fenley_QGAF}), we sketch a proof for completeness.   By passing to a power, we may assume that $g$ preserves all half-leaves through $x$.  Let $r_x$ be a half-leaf from $x$ making a perfect fit with a leaf $l$, and assume without loss of generality that $r_x = r_x^+ \in \cF^+$ and $l \in \cF^-$.  By Observation \ref{obs:fix_perf_fit} $g(l)=l$, so there exists $y \in l$ such that $g(y)=y$. Let $r_y^- \subset l$ be the half-leaf based at $y$ that makes a perfect fit with $r_x^+$.
	
	Take $r_x^- \in \cF^-(x)$ to be the half-leaf such that $r_x^+$ and $r_x^-$ bound the quadrant at $x$ that contains $y$, and take $r_y^+ \in \cF^+(y)$ to be the half-leaf such that $r_y^-$ and $r_y^+$ bound the quadrant at $y$ that contains $x$. We will show that $r_x^-$ makes a perfect fit with $r_y^+$, completing the proof. 
	
	Let $Z$ be the set of points $z \in r_x^+$ such that $\cF^-(z)$ intersects $r_y^+$. This set cannot contain $x$, since that would mean that $r_x^-$ intersects $r_y^+$ and the (necessarily unique) intersection point would therefore be fixed by $g$, contradicting \ref{Anosov_like_A1}. Since $Z$ is nontrivial, connected and $g$-invariant, it must be all of $r^+_x \setminus \{x\}$. Similarly, the $\cF^+$-leaf through every point in $r_y^-$ intersects $r_x^-$. Thus $r_x^-$ makes a perfect fit with $r_y^+$, forming the other sides of the desired lozenge.
\end{proof} 
An essential fact in the topological study of (pseudo)-Anosov flows in $3$-manifolds is that if two orbits are freely homotopic, then there are coherent lifts to the orbit space that are corners of a chain of lozenges (see \cite[Theorem 3.3]{Fenley_QGAF}\footnote{The theorem is stated for Anosov flows, but the same proof holds for pseudo-Anosov, see \cite[Theorem 4.8]{Fen_foliation_good_geo}.}). In our setup, this translates into the following proposition.

\begin{proposition}\label{proposition:cofixed_lozenge}
 Let $(P,\cF^+, \cF^-)$ be a bifoliated plane with an Anosov-like action of $G$. If some nontrivial $g \in G$ fixes distinct points $x, y \in P$ then there is a chain of lozenges in $P$ containing both $x$ and $y$ as corners.  
\end{proposition}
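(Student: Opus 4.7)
\medskip
\noindent\textbf{Proof proposal.}
The plan is to adapt the strategy of \cite[Theorem 3.3]{Fenley_QGAF} to the abstract Anosov-like setting, replacing flow-dynamical tools with the axioms of Definition \ref{def_anosov_like}.

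First, I would replace $g$ by a positive power so that it preserves the orientation of $P$ and stabilizes each half-leaf through $x$ and through $y$. By Axiom \ref{Anosov_like_A1}, the four leaves $\cF^+(x), \cF^-(x), \cF^+(y), \cF^-(y)$ are pairwise distinct (else some leaf would contain two fixed points of $g$), so $y$ lies in the interior of a unique quadrant $Q$ of $x$. Let $r_x^\pm$ be the half-leaves of $\cF^\pm(x)$ bounding $Q$, and $r_y^\pm$ the half-leaves of $\cF^\pm(y)$ bounding the quadrant at $y$ that contains $x$. Up to replacing $g$ by $g^{-1}$, assume $g$ contracts $\cF^+(x)$ toward $x$.

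Next I would produce the first lozenge in the chain. The half-leaf $r_x^+$ cannot meet $r_y^-$: any intersection point would be $g$-fixed and lie on $\cF^+(x)$ distinct from $x$, contradicting Axiom \ref{Anosov_like_A1}. Consider then the $g$-invariant open set $I \subset \cL^-$ of leaves of $\cF^-$ that meet $r_x^+$. Following $r_x^+$ outward from $x$, the frontier of $I$ in $\cL^-$ is reached at infinity and consists either of a single leaf, in which case $r_x^+$ makes a perfect fit with that leaf, or of several pairwise non-separated leaves. In the first case, Lemma \ref{lem:periodic_pf_corner} immediately produces a lozenge based at $x$; in the second, Axiom \ref{Anosov_like_periodic_non-separated} supplies an element of $G$ fixing each of these non-separated leaves, and combining this with $g$-invariance of the frontier (and passing to a further power of $g$ if needed) yields a $g$-fixed point that is again a corner of a lozenge based at $x$ via Lemma \ref{lem:periodic_pf_corner}. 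Either way, we obtain a lozenge $L_1$ with $x$ as one corner and a new $g$-fixed opposite corner $x_1$ lying in $\overline{Q}$, ``closer'' to $y$ in the sense that the quadrant at $x_1$ containing $y$ is strictly smaller than $Q$.

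The next step is to iterate this construction starting from $x_1$, building successive lozenges $L_1, L_2, \dots$ all of whose corners are $g$-fixed. The hard part will be termination: one must show that after finitely many steps the chain arrives at $y$. For this I would argue by contradiction. An infinite chain progressing monotonically into $Q$ would eventually accumulate either on a non-trivial infinite product region in $P$, contradicting Proposition \ref{prop:no_product}, or on a limit $g$-fixed leaf carrying multiple limit $g$-fixed points, contradicting the uniqueness in Axiom \ref{Anosov_like_A1}. To rule out that the terminal corner is some $g$-fixed point other than $y$, I would run the same construction symmetrically from $y$ back toward $x$ and splice the two chains together: if they failed to share a common corner, the two ``almost meeting'' terminal lozenges would bound a totally ideal quadrilateral, contradicting Axiom \ref{Anosov_like_totallyideal}.
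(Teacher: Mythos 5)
Your overall strategy matches the paper's: build the chain one lozenge at a time by examining the boundary of a $g$-invariant family of $\cF^-$-leaves swept along the half-leaf pointing toward $y$, and then rule out an infinite chain by an accumulation argument. The first-lozenge step is essentially correct in outline, but note that the claim that ``$r_x^+$ makes a perfect fit with that leaf'' is not automatic from the frontier being reached at infinity: that only gives one of the two conditions in Definition \ref{def_perfect_fit}. The other direction (that $\cF^+$-leaves near $r_x^+$ actually reach the frontier leaf) is exactly what the paper obtains by first showing that $\cF^+(x_1)$ crosses all of $S$ (using hyperbolicity of $g$ on $\cF^+(x)$) and then invoking the absence of infinite product regions (Proposition~\ref{prop:no_product}). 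Without that step the perfect fit is an assertion, not a deduction, so you should not skip it.

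The genuine misjudgment is in your final paragraph. You propose a two-sided chain construction spliced together, invoking Axiom~\ref{Anosov_like_totallyideal}, to rule out the possibility that the chain ``terminates at some $g$-fixed point other than $y$.'' But that scenario does not arise: at each step, the boundary leaf $l$ of the invariant family is $g$-invariant and either contains $y$ or separates $x_{n-1}$ from $y$. If $y \in l$, then since $y$ is $g$-fixed and $l$ is a $g$-fixed leaf, the unique fixed point on $l$ guaranteed by Axiom~\ref{Anosov_like_A1} must be $y$ itself, so $x_n = y$ and the chain has arrived. Otherwise one continues. The \emph{only} way the iteration stops is by reaching $y$, so after you've shown termination via the accumulation argument (which you have, modulo the unnecessary dichotomy with infinite product regions — the paper's cleaner version is that the limit leaf $l'$ is $g$-fixed, carries a fixed point $x_\infty$, and then $\cF^-(x_\infty)$ would meet $\cF^+(x_n)$ for large $n$, violating uniqueness of fixed points on a leaf), you are done. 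The splicing and the appeal to Axiom~\ref{Anosov_like_totallyideal} are not needed, and in fact the paper deliberately avoids using Axiom~\ref{Anosov_like_totallyideal} anywhere except Lemma~\ref{lem_distinct_leaves_distinct_saturations} (see Remark~\ref{rem:dense_fixed}), so importing it here would weaken what is otherwise a structurally identical proof.
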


\begin{proof}
The proof of \cite[Theorem 3.3]{Fenley_QGAF} applies here with only cosmetic changes, so we just sketch the argument.

Given $x$ and $y$ as above, let $Q$ be the quadrant of $x$ containing $y$, and consider the set $S$ of leaves of $\cF^{-}$ that intersect both 
$\cF^{+}(x)$ and $Q$.  
This set is $g$-invariant, indexed by points along the half-leaf of $\cF^{+}(x)$ bounding $Q$ and bounded on one side by $\cF^-(x)$. 
It must also have some additional boundary because $\cF^{\mp}(x) \cap \cF^{\pm}(y) = \emptyset$. Hence, there is a (unique) leaf $l$ of the boundary such that either $y\in l$ or $l$ separates $x$ from $y$. This leaf $l$ is necessarily preserved by $g$, since $g$ fixes $x$ and $y$ and preserves $S$.  By \ref{Anosov_like_A1} $g$ fixes some point $x_1$ on $l$.  The set $\{l^- \in S : \cF^+(x_1) \cap l^- \neq \emptyset \}$ is $g$-invariant and contains an open neighborhood of $l$ in the leaf space of $S$, so by the hyperbolicity of the action of $g$ on $\cF^{+}(x)$ given by  \ref{Anosov_like_A1}, the set is equal to $S$. Since there are no infinite product regions (Proposition \ref{prop:no_product}), we deduce that $\cF^+(x_1)$ makes a perfect fit with either $\cF^-(x)$ or a leaf non-separated with it. By Lemma \ref{lem:periodic_pf_corner} above, in the first case $x$ and $x_1$ are the two corners of a lozenge, and in the latter case, they are corners of a chain of adjacent lozenges.

If $x_1=y$, then we are done. Otherwise we iterate the construction and find a sequence $x_n$ of corners, all fixed by $g$, in a chain of lozenges starting at $x$ and disjoint from $\cF^{\pm}(y)$.  
The final step is to show that this process must terminate.  If it does {\em not} terminate, one obtains an infinite sequence of leaves $\cF^+(x_n)$, disjoint from $\cF^{\pm}(y)$, which are forced to accumulate on some leaf or leaves. Exactly one of these limit leaves, call it $l'$, separates $x_n$ from $y$.  Since $\cF^+(x_n)$ and $\cF^+(y)$ are all fixed by $g$, the leaf $l'$ is as well, and thus contains a fixed point $x_\infty$ of $g$.  But then $\cF^-(x_\infty)$ will intersect the fixed leaf $\cF^+(x_n)$ for sufficiently large $n$, contradicting the uniqueness of fixed points.
\end{proof}

\begin{lemma}\label{lem_power_fixes_max_chain}
 Let $(P,\cF^+, \cF^-)$ be a bifoliated plane with an Anosov-like action of $G$.  Suppose $g \in G$ preserves all half-leaves through a corner point $x$.  Then $g$ preserves each corner, and each lozenge in the maximal chain containing $x$.  
In particular, if $g$ fixes both corners of a lozenge $L$, then $g$ fixes all corners and lozenges in the chain containing $L$. 
\end{lemma}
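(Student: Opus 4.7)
The plan is to establish the main statement by induction along the chain, carrying along the stronger inductive hypothesis that $g$ fixes each corner $z$ it encounters \emph{and} preserves every half-leaf through $z$.  The ``in particular'' conclusion then follows from the main statement once one shows that fixing both corners of a single lozenge forces $g$ to preserve every half-leaf through one of the corners.

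For the inductive step, suppose $g$ fixes a corner $z$ and preserves every half-leaf through $z$.  In a neighborhood of $z$, $g$ then preserves each half-leaf individually, making $g$ locally and hence globally (by connectedness of $P$) orientation preserving, so Observation~\ref{obs:fix_perf_fit} applies.  Let $L$ be a lozenge with corner $z$, opposite corner $z'$, and sides $r_z^{\pm}\subset\cF^{\pm}(z)$, $r_{z'}^{\pm}\subset\cF^{\pm}(z')$.  Applying Observation~\ref{obs:fix_perf_fit} to $r_z^+$ and to $r_z^-$ in turn, the element $g$ fixes both leaves $\cF^-(z')$ and $\cF^+(z')$ and hence their intersection $z'$.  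Since the perfect-fit configuration is a topological invariant and $g$ preserves $r_z^{\mp}$, the foliations, and $\cF^{\pm}(z')$, it also preserves the specific halves $r_{z'}^{\pm}$ of $\cF^{\pm}(z')$ making the perfect fits, and therefore preserves $L$ setwise.

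To upgrade to ``every half-leaf through $z'$ is preserved'', the regular case is immediate: $g$ preserves one half of $\cF^{\pm}(z')$ and the whole leaf, hence also the other half.  In the singular case, where $z'$ is $n$-pronged with $n\ge 3$, orientation- and foliation-preservation force $g$ to act on the $n$ prongs of $\cF^+(z')$ as a cyclic rotation, and fixing the prong $r_{z'}^+$ makes the rotation trivial.  The same argument applies to $\cF^-(z')$, so every half-leaf through $z'$ is preserved.  Since the maximal chain is connected by the shared-corner relation, iterating this step from $x$ outward completes the proof of the main statement.

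For the ``in particular'' statement, assume $g$ fixes both corners $x, y$ of a lozenge $L$.  Then $g$ preserves $L$, hence preserves the two sides $r_x^{\pm}$ at $x$.  If $x$ is regular we are done, by the same ``other half'' argument as above.  If $x$ is $n$-pronged with $n\ge 2$, $g$ acts on the cyclically ordered set of $2n$ alternating prongs at $x$ as an element of the dihedral group that preserves foliation types; any type-preserving reflection can fix at most two prongs, and these must be diametrically opposite in the $2n$-cycle.  But $r_x^+$ and $r_x^-$ are \emph{adjacent} since they bound the quadrant of $x$ containing $L$, so reflections are ruled out and $g$ acts by a rotation, which fixing the prong $r_x^+$ forces to be trivial.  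Hence $g$ preserves every half-leaf through $x$, and the main statement applies.  The main obstacle throughout is the careful case analysis at singular corners, where the cyclic combinatorics of the $2n$ alternating prongs must be controlled using both orientation and foliation-preservation to exclude spurious symmetries.
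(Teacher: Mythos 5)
Your proof is correct and takes essentially the same approach as the paper: the paper simply asserts without detail that $g$ fixes all half-leaves through a corner $x$ of $L$ if and only if $g$ fixes both corners of $L$, and then runs the same induction along the connected chain. You have supplied the missing details of that equivalence --- the use of Observation~\ref{obs:fix_perf_fit} for the forward direction and the dihedral/rotation analysis at singular corners for the backward direction --- but the overall structure is the same.
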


\begin{proof}
First note that $g$ fixes all half-leaves through a corner $x$ of a lozenge $L$ if and only if $g$ fixes both corners of $L$.  
Let $x$ be a corner point fixed by $g$, and suppose $g$ fixes all half leaves through $x$. Let $\mathcal C$  be the maximal chain of lozenges containing the lozenge $L$ with $x$ as corner.  Then $g$ fixes each other corner of every lozenge in $\mathcal C$ containing $x$.  
Since chains are by definition connected, we conclude inductively that $g$ fixes each corner of each lozenge in $\mathcal \cC$, as claimed.  
\end{proof}

In section \ref{sec:ideal_circle}, we will make frequent use of the following 
 consequence of Proposition \ref{proposition:cofixed_lozenge}.   

\begin{lemma}\label{lem:nonsep_family}  
Suppose that $S$ is a maximal set of pairwise nonseparated leaves in $\cF^+$ (maximal meaning that no leaf outside of $S$ is non-separated with a leaf of $S$).  Then the leaves of $S$ may be indexed $l_k$, for $k$ in 
some set (finite, infinite or bi-infinite) of consecutive integers, so that 
for each $k$, there exists a unique leaf of $\cF^{-}$ making a perfect fit with both $l_k$ and $l_{k+1}$.   Consequently, the limit leaves are all fixed by a common nontrivial element $g \in G$.  The same holds with the roles of $+$ and $-$ reversed. 
\end{lemma}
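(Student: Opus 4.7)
The plan is to leverage Axiom~\ref{Anosov_like_periodic_non-separated}, which gives a common fixing element for any non-separated pair, together with the chain-of-lozenges machinery from Proposition~\ref{proposition:cofixed_lozenge} and the propagation in Lemma~\ref{lem_power_fixes_max_chain}, in order to reduce the statement to concrete combinatorics of a single chain of lozenges in $P$.

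Pick two leaves $l, l' \in S$. Axiom~\ref{Anosov_like_periodic_non-separated} supplies a nontrivial $g \in G$ fixing both, Axiom~\ref{Anosov_like_A1} gives unique fixed points $x \in l$ and $x' \in l'$, and Proposition~\ref{proposition:cofixed_lozenge} produces a chain of lozenges from $x$ to $x'$. After replacing $g$ by the power that preserves orientation of $P$ and every half-leaf through $x$, Lemma~\ref{lem_power_fixes_max_chain} ensures this power fixes every corner and every lozenge of the maximal chain $\mathcal{C}$ containing the initial lozenge at $x$. A local analysis at each pair of adjacent lozenges in $\mathcal{C}$ is then needed: they share either an $\cF^+$- or an $\cF^-$-half-leaf through their common corner, and only the $\cF^-$-case produces distinct non-separated $\cF^+$-leaves at the outer corners (the $\cF^+$-case would merge them into a single leaf). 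Since $l \neq l'$, every sharing along the sub-chain from $x$ to $x'$ is of $\cF^-$-type, and a small extra argument forces this throughout all of $\mathcal{C}$.

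Writing the corners of $\mathcal{C}$ as a (finite, infinite, or bi-infinite) sequence $\ldots, x_0, x_1, x_2, \ldots$ with adjacent lozenges $L^{(k)}, L^{(k+1)}$ sharing an $\cF^-$-half-leaf through $x_{k+1}$, the $\cF^+$-leaves $\cF^+(x_{2k+1})$ at odd-indexed corners form a family of pairwise non-separated leaves, while each intermediate $\cF^-$-leaf $\cF^-(x_{2k+2})$ makes perfect fits with both $\cF^+(x_{2k+1})$ and $\cF^+(x_{2k+3})$. Any further leaf of $S$ non-separated with $\cF^+(x_1)$ sits, by Proposition~\ref{proposition:cofixed_lozenge} applied to the corresponding common fixing element, in a chain based at $x_1$; by maximality of $\mathcal{C}$ this chain must be contained in $\mathcal{C}$, so the leaf appears among the odd-indexed corners. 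Thus $S = \{\cF^+(x_{2k+1})\}$, which we index as $l_k := \cF^+(x_{2k+1})$ over an interval of $\bZ$, with perfect-fit leaf $m_k := \cF^-(x_{2k+2})$; uniqueness of $m_k$ comes from Axiom~\ref{Anosov_like_A1}, since a second such $\cF^-$-leaf would force a second fixed point of $g$ on one of the leaves $l_k, l_{k+1}$. The common fixing element of $S$ is precisely the power of $g$ constructed above, as it fixes every corner of $\mathcal{C}$; the version with the roles of $\cF^+$ and $\cF^-$ reversed is symmetric.

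The main obstacle is the local two-lozenge analysis together with verifying that a single maximal chain absorbs all of $S$: one has to rule out the $\cF^+$-half-leaf sharing (which would merge distinct leaves of $S$) and show that no leaf of $S$ can live in a maximal chain disjoint from $\mathcal{C}$. Both steps ultimately turn on distinctness of the leaves of $S$ together with the uniqueness statements in Axiom~\ref{Anosov_like_A1}.
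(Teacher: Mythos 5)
Your overall strategy coincides with the paper's: invoke Axiom~\ref{Anosov_like_periodic_non-separated} to get a common fixing element for a pair $l,l'$ of leaves of $S$, run the construction in the proof of Proposition~\ref{proposition:cofixed_lozenge} to obtain a chain of lozenges connecting the fixed points, and then leverage maximality. But there are concrete problems in the way you execute this.

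First, your local analysis is not quite right. You assert that when two adjacent lozenges share an $\cF^+$-half-leaf through the common corner, this ``would merge'' the outer $\cF^+$-leaves into a single leaf. It does not: the outer $\cF^+$-leaves remain distinct. What is actually true is that in the $\cF^+$-sharing case the two outer $\cF^+$-leaves are \emph{separated} by the $\cF^+$-leaf through the shared corner (they each make a perfect fit with the shared $\cF^-$-leaf from opposite sides), so they are not non-separated. The conclusion you want holds, but the reason you supply for it is false, and this is the crux step that forces the line structure, so it deserves an accurate argument. You also overlook the possibility that two consecutive lozenges in a chain share only a corner and no side at all (this is explicitly permitted by Definition~\ref{def_perfect_fit}'s companion definition of chains).

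Second, and more seriously, you pass to the \emph{full maximal chain} $\mathcal{C}$ via Lemma~\ref{lem_power_fixes_max_chain} and then write ``a small extra argument forces this [$\cF^-$-type sharing] throughout all of $\mathcal{C}$,'' subsequently treating the corners of $\mathcal{C}$ as a linearly ordered sequence. This is not valid: the maximal chain containing the lozenges of interest can fail to be a line. For instance, it can be a tree of scalloped regions (Definition~\ref{def_tree_scalloped}), which contains lozenges sharing both $\cF^+$- and $\cF^-$-sides, and whose corners admit no linear order. Your indexing of ``the corners of $\mathcal{C}$'' as $x_0, x_1, x_2, \ldots$ and the identification of $S$ with ``the odd-indexed corners'' of that sequence therefore breaks down. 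The paper sidesteps this by working only with the sub-chain connecting the two chosen fixed points, which \emph{is} a line (because only $\cF^-$-side-sharing propagates $\cF^+$-non-separation), and by applying maximality to $S$ rather than to $\mathcal{C}$; that is the correct scope. You do invoke maximality of $\mathcal{C}$ to absorb any further chain emanating from a leaf of $S$, which is a legitimate observation, but it only tells you the new chain sits somewhere in $\mathcal{C}$, not that its lozenges fall into your purported linear list.

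The remaining ingredients (replacing $g$ by a suitable power, using Axiom~\ref{Anosov_like_A1} to get uniqueness of the $\cF^-$-leaf between $l_k$ and $l_{k+1}$, and the symmetric $\pm$ statement) are handled correctly. To repair the proof, restrict all of the linearity and indexing claims to the sub-chain of lozenges whose outer $\cF^+$-sides lie in $S$, justify the $\cF^-$-sharing structure of that sub-chain via the correct separation argument sketched above, and only afterwards bring in Lemma~\ref{lem_power_fixes_max_chain} for the final sentence about the common fixing element.
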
 

\begin{proof}
Let $l, l'$ be two leaves in $S$.  By Axiom \ref{Anosov_like_periodic_non-separated}, both are fixed by the same element $g\in G$. \emph{A priori}, $g$ may depend on $l$ and $l'$.  Let $x$ and $x'$ be the points of $l$ and $l'$ fixed by $g$.  Applying the procedure from Proposition \ref{proposition:cofixed_lozenge} produces a chain of adjacent lozenges between $x$ and $x'$, configured like those with sides labelled $l^{1,+}_i$ in Figure \ref{fig_scalloped}.  
Thus, leaves of $S$ may be {\em locally} indexed by consecutive integers and form sides of adjacent lozenges -- maximality implies that all sides $l^{1,+}_i$ actually are elements of $S$.  Patching this procedure together, we conclude that 
globally, $S$ consists of sides of adjacent lozenges, and is either finite, or indexed by $\bN$ or by $\bZ$ as described, and leave of $S$ are all fixed by any element that fixes both corners of one of these lozenges.
\end{proof}

\begin{rem}
 We will see in section \ref{sec:ideal_circle}, that, in fact, a maximal union cannot be indexed in in the manner above by $\bN$, as an infinite union of non-separated leaves forces the existence of a scalloped region (see Definition \ref{def_scalloped}) and thus is contained in a bi-infinite collection of pairwise nonseparated leaves.  
\end{rem}

In the proof of the main theorem, we often need to distinguish corners of lozenges from other points.   For convenience, we make the following definition.  
 
 \begin{definition}
A point $x\in P$ is called a \emph{corner} if it is a corner of some lozenge. It is called a \emph{non-corner} if it is not the corner of any lozenge.   Thus, when we say {\em non-corner fixed point} we mean a point fixed by some element of $G$, which is not the corner of any lozenge. 
\end{definition}
  
  \begin{lemma}[Non-corner criterion] \label{lem_no_corner_criterion}  
  Let $x\in P$, and let $r^{\pm}_x$ be two half-leaves bounding a quadrant $Q$. 
  If $r^+_x$ and $r^-_x$ intersect a each of pair of leaves making a perfect fit, or if they each intersect leaves of a singular point, then there are no lozenges in $Q$ with $x$ as a corner.
  \end{lemma}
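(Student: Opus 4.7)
The plan is to argue by contradiction. Suppose there is a lozenge $L \subset Q$ with $x$ as a corner, and let $y$ denote its opposite corner with sides $r_y^{\pm}$, so that by definition $r_x^+$ makes a perfect fit with $r_y^-$ and $r_x^-$ with $r_y^+$. The key fact I will exploit is that $L$ is a \emph{trivially foliated} open region: there is a homeomorphism identifying $(L, \cF^+|_L, \cF^-|_L)$ with the open square $(0,1)^2$ equipped with its coordinate bifoliation, extending so as to identify $r_x^{\pm}$ and $r_y^{\pm}$ with the four sides of the closed square. Three consequences I will use repeatedly are: (i) $L$ contains no singular point; (ii) any $\cF^+$-leaf that enters $L$ does so through an interior point of $r_x^-$ and exits through an interior point of $r_y^+$, with the symmetric statement for $\cF^-$-leaves crossing $r_x^+$ and $r_y^-$; and (iii) any $\cF^+$-leaf and any $\cF^-$-leaf that both meet $L$ intersect at exactly one point of $L$.

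For the perfect fit case, let $l^+ \in \cF^+$ and $l^- \in \cF^-$ make a perfect fit with $l^+ \cap r_x^- \ni p$ and $l^- \cap r_x^+ \ni q$. If $p = x$ then $l^+ = \cF^+(x) \supset r_x^+ \ni q$, giving $q \in l^+ \cap l^-$, which contradicts the defining property of a perfect fit (requiring $l^+ \cap l^- = \emptyset$); the case $q = x$ is symmetric. Hence $p$ and $q$ lie in the interiors of $r_x^-$ and $r_x^+$ respectively, so by (ii) each of $l^+$ and $l^-$ has a nontrivial arc inside $L$, and by (iii) these arcs meet inside $L$, again contradicting the perfect fit.

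For the singular point case, suppose $r_x^+ \cap \cF^-(s) \ni q$ and $r_x^- \cap \cF^+(s) \ni p$ for some singular point $s$. The edge cases $p = x$ or $q = x$ force $x \in \cF^+(s)$ or $x \in \cF^-(s)$; together with the fact that any two transverse leaves of the bifoliation meet in at most one point (so $\cF^+(s) \cap \cF^-(s) = \{s\}$), this collapses the putative intersections to $\{x\}$ and forces $x = s$, which is not a transverse intersection as envisioned by the hypothesis. Assuming then that $p, q$ lie in the interiors, the prong of $\cF^+(s)$ through $p$ and the prong of $\cF^-(s)$ through $q$ both enter $L$ by (ii), and by (iii) they meet at a unique point of $L$. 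That intersection point belongs to $\cF^+(s) \cap \cF^-(s) = \{s\}$, so it must equal $s$; but $s \notin L$ by (i), a contradiction.

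The main obstacle is the rigorous justification of the product structure (i)--(iii), which is essentially built into the definition of a lozenge and the perfect-fit axiom and is standard in the bifoliation literature. Beyond this, the only work is the careful dispatching of the edge cases where the intersection points coincide with $x$, which reduces to the basic fact that two transverse leaves of the bifoliation meet in at most one point.
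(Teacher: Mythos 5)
Your argument is correct in substance and takes a genuinely different route from the paper's, so let me compare the two.

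The paper does not posit the lozenge $L$ or invoke its product structure. Instead it considers the leaves $l_1^+, l_1^-$ that would have to make perfect fits with $r_x^-$ and $r_x^+$ in order to supply the other two sides of a lozenge in $Q$, observes that $l_1^+$ must lie in the component of $P \smallsetminus l^+$ not containing $x$ (and $l_1^-$ beyond $l^-$), and then notes that because $l^+$ and $l^-$ are disjoint, crossing $r_x^-$ and $r_x^+$ respectively, those two ``far'' regions are disjoint --- so $l_1^+$ and $l_1^-$ cannot meet and there is no opposite corner. Your argument is dual in flavor: you assume the lozenge $L$ exists and use the trivial product structure of $L$ to force the hypothesis leaves $l^+, l^-$ to cross $L$ and therefore intersect inside it, contradicting the perfect fit (or locating the singular point $s$ inside $L$). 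Both are valid and of comparable length; the paper's version is leaner in that it needs only a separation observation, while yours calls on the full product-structure fact (i)--(iii). On the other hand, yours spells out the degenerate cases $p = x$ or $q = x$, which the paper's write-up elides.

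Two small corrections. First, in property (ii) the exit side is mislabeled: an $\cF^+$-leaf crossing $L$ enters through $r_x^-$ and exits through $r_y^-$, not $r_y^+$, since $\cF^+$-leaves can only cross the two $\cF^-$-sides of the lozenge (and symmetrically $\cF^-$-leaves cross $r_x^+$ and $r_y^+$). This typo does not affect the downstream conclusion (iii) that the two arcs must meet. Second, in the singular edge case your deduction is slightly off: if $p = x$ then $\cF^+(x) = \cF^+(s)$, which forces $q = s$ (since $q \in r_x^+ \cap l^- \subset \cF^+(s) \cap \cF^-(s) = \{s\}$), not $x = s$. That still places a singular point on $r_x^+$ and the case is degenerate (indeed excluded by the intended reading of the hypothesis, in which $s \notin \cF^\pm(x)$), but the stated implication as written isn't quite right. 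Neither issue damages the argument.
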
 
  
  \begin{proof}
   Assuming the set-up of the lemma, let $l^{\pm}$ denote the leaves of $\cF^{\pm}$ that intersect $r^{\pm}_x$ and either make a perfect fit or are the leaves of a prong.  Suppose that there exists some leaf $l_1^+$ making a perfect fit with $r^-_x$ and a leaf $l_1^-$ making a perfect fit with $r^+_x$ in $Q$, as shown in Figure \ref{fig:no_corner_lem}.   
   
The leaf $l_1^+$ must be inside the half-space $P\smallsetminus l^+$ that does not contain $x$, and $l_1^-$ is inside the half-space $P\smallsetminus l^-$ that does not contain $x$.  By assumption, these two half-spaces are disjoint, so these leaves cannot intersect, as would be required to form the other corner of a lozenge in $Q$.  We conclude that $x$ cannot be a corner of a lozenge in $Q$. 
  \end{proof}

  \begin{figure}
   \labellist 
   \small\hair 2pt
      \pinlabel $x$ at 80 93 
     \pinlabel $x$ at 350 90 
     \pinlabel $l^+$ at 160 45 
     \pinlabel $l^-$ at 30 140 
          \pinlabel $l^-$ at 330 135 
     \pinlabel $l^+$ at 444 45 
  \endlabellist
     \centerline{ \mbox{
\includegraphics[width=8cm]{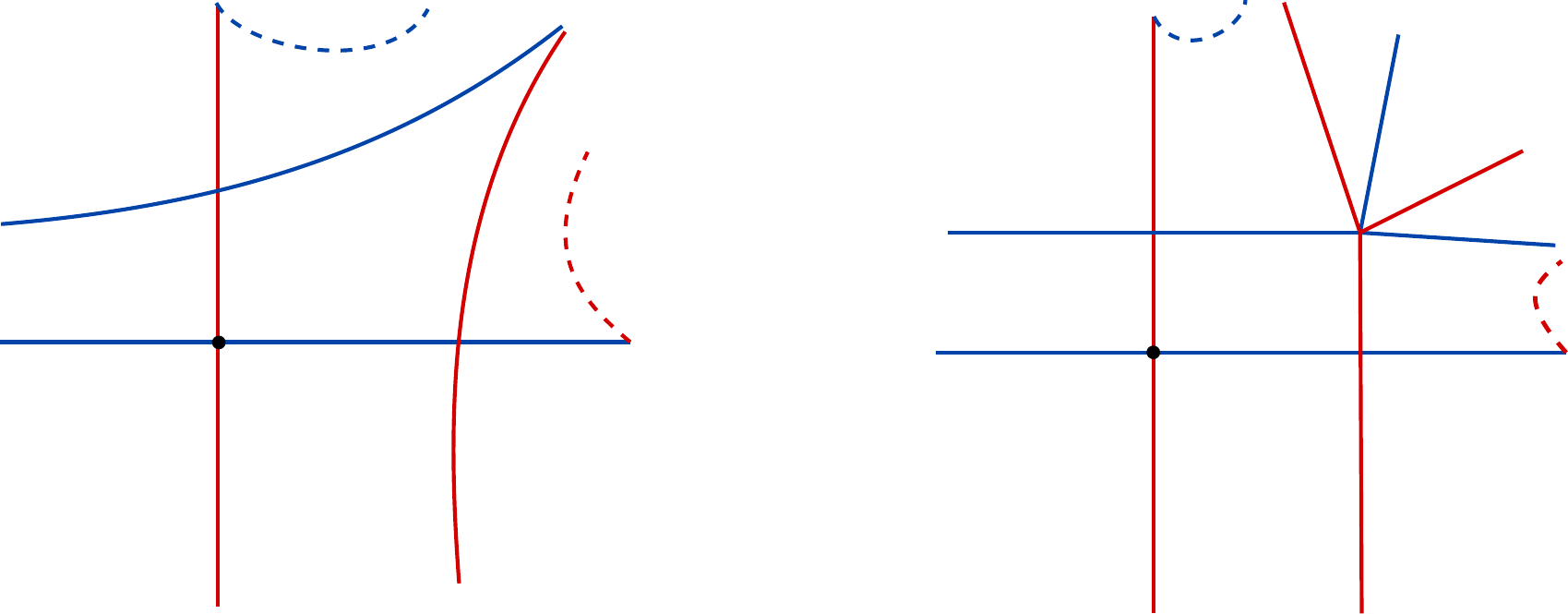}} }
\caption{The dotted leaves making perfect fits cannot intersect to form a lozenge with $x$ as a corner.}
 \label{fig:no_corner_lem} 
\end{figure}

We will use extensively the following fact that, when the plane is neither skewed nor trivial, we can find non corner points everywhere.

\begin{lemma}\label{lem_nowhere_dense}
 Let $(P,\cF^+, \cF^-)$ be a bifoliated plane with an Anosov-like action of $G$ and assume $P$ either has non-separated leaves in $\cF^+$ or a singular point. Then the set of corner points in $(P,\cF^+, \cF^-)$ is nowhere dense.
\end{lemma}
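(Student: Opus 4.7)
The plan is to show that every non-empty open set $U \subset P$ contains a non-empty open subset $V$ consisting entirely of non-corner points, which is equivalent to the corners being nowhere dense. My main tool is the non-corner criterion of Lemma \ref{lem_no_corner_criterion}: a point $x$ is not a corner of any lozenge in a given quadrant $Q$ once the two half-leaves of $x$ bounding $Q$ each cross a leaf of some ``blocker'' configuration -- either a pair of leaves making a perfect fit, or two prongs of a singular point. So the goal is to construct four blockers in $U$, one for each quadrant direction of some small region, so that every point of that region has all four quadrants blocked.

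The first step is to produce at least one blocker $B_0$ using the hypothesis. If $P$ contains a singular point $p$, two adjacent prongs at $p$ give a blocker immediately. Otherwise there are non-separated leaves in $\cF^+$, and Axiom \ref{Anosov_like_periodic_non-separated} combined with Lemma \ref{lem:nonsep_family} shows that any two non-separated leaves of $\cF^+$ share a common $\cF^-$-leaf making a perfect fit with each, yielding a blocker. The second step is to use Lemma \ref{lem_orbit_of_leaves_are_dense} (density of $G$-orbits of leaves) to place four $G$-translates $g_1 B_0, \ldots, g_4 B_0$ around a chosen fixed point $y_0 \in U$ (which exists by Axiom \ref{Anosov_like_dense_fixed_points}), with one translate positioned so that its perfect-fit or prong corner lies in each of the four quadrant-directions of $y_0$. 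Using the contracting-expanding dynamics of a fixed-point element at $y_0$ (Axiom \ref{Anosov_like_A1}), powers can be iterated on a starting blocker to accumulate blockers along the half-leaves through $y_0$, from which four suitably placed blockers can be extracted. The intersection of the four ``good regions'' of these blockers is then an open region $V \subset U$ on which every point has each of its four quadrants crossed by a blocker pair; Lemma \ref{lem_no_corner_criterion} makes every point of $V$ a non-corner.

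The main obstacle is Axiom \ref{Anosov_like_totallyideal} (the non-existence of totally ideal quadrilaterals), which forbids four leaves of $P$ assembling into a rectangle with four perfect-fit corners. If the four blockers happened to share leaves pairwise -- producing only four distinct leaves with four perfect-fit corners -- the configuration would be forbidden by the axiom, and my construction would collapse. The way around this is to keep the eight leaves of the four translated blockers pairwise distinct, which is a generic condition and can be arranged by density of the $G$-orbit of $B_0$; in the singular-point case the obstruction never arises because prong-type blockers share a singular point rather than making a perfect fit between distinct leaves. I expect the main technical content of the proof to be this bookkeeping -- verifying that the eight leaves of the chosen translates remain distinct, that the blockers assemble coherently into a bounded open region around $y_0$, and that the half-leaves of points of that region really do cross the correct blocker leaves in each of their four quadrants.
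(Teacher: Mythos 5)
Your high-level strategy is the same as the paper's—use Lemma \ref{lem_no_corner_criterion} to manufacture an open set of non-corner points—but you aim to carry it out directly inside an arbitrary open set $U$, whereas the paper exhibits one such open set somewhere in $P$ and then propagates it by topological transitivity (since the non-corner set is $G$-invariant). Either reduction is valid, but the paper's choice does less work: it never needs to \emph{position} blockers in a prescribed location.

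That positioning step is where your proposal is thin. The paper's case of non-separated leaves does not scatter four translates of a blocker around a fixed point; it takes two \emph{adjacent} lozenges $L_1, L_2$ sharing an $\cF^-$-side, picks one contracting element $g$ fixing a point of $L_1$, and observes that $g(L_1)\cap L_2$ is an open set in which the sides of $L_1$, $L_2$, and $g(L_1)$ simultaneously supply a perfect fit in each quadrant of every interior point. One application of $g$ produces all four blockers at once, in the right places, for free. Your scheme has to independently find four translates $g_iB_0$ so that for each quadrant-direction of $y_0$ both blocker leaves cross $\cF^{\pm}(y_0)$; density of orbits of a single leaf (Lemma \ref{lem_orbit_of_leaves_are_dense}) lets you put $g_il^+$ where you like but gives no control over where $g_il^-$ lands, so you are not guaranteed that $y_0$ falls inside the wedge of the translated perfect fit. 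Your proposed fix—iterate a hyperbolic element at $y_0$ to ``accumulate blockers along the half-leaves''—does not close this gap: after passing to a power that fixes the half-leaves (which you must do to apply \ref{Anosov_like_A1}), iteration preserves each quadrant of $y_0$, so all iterates of a single blocker stay in the quadrant where it started, and you still need four independent starting positions. In the singular case the paper is similarly more surgical, producing a pair of linked singular points and a single contracting element $f$; you would have to rebuild that by hand.

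Finally, the worry about Axiom \ref{Anosov_like_totallyideal} is a red herring: the paper's proof of this lemma never invokes it, and the configuration it forbids is not one your construction would ever be forced to produce. You are right that distinctness of the translates is cheap to arrange, but spending a paragraph on it misses where the real work is.
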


 \begin{proof}
Suppose first that there exist two non-separated leaves in $\cF$.  By \ref{Anosov_like_periodic_non-separated}, some element of $G$ fixes both leaves, so by Proposition \ref{proposition:cofixed_lozenge}, these leaves belong to a chain of lozenges.  Note that at least two lozenges in the chain must share a side, for if they only share corners, then each leaf of $\cF$ in each lozenge is separated from each other by leaves that meet the interior of the lozenges.   
So let $L_1$ and $L_2$ be two lozenges sharing a side.  Suppose for concreteness this side is in $\cF^-$. 

By Axiom \ref{Anosov_like_dense_fixed_points} we can pick a point $x$ in $L_1$ fixed by some  $g\in G$ (note that $x$ is automatically non-singular since it is inside a lozenge).  Up to replacing $g$ with $g^{-1}$, it will contract $\cF^-(x)$ and so $g(L_1)$ will intersect both $L_1$ and $L_2$, as shown in Figure \ref{fig:non-corner-dense}. 

If $p \in g (L_1) \cap L_2$ is a fixed point of some element (again, it is necessarily non-singular), then each of its quadrants will contain a perfect fit involving a leaf of $L_1$, $g(L_1)$, or $L_2$.  See Figure \ref{fig:non-corner-dense}.  
Thus, $p$ satisfies the conditions of Lemma \ref{lem_no_corner_criterion} so cannot be a corner.  This gives an open set without corners, and by topological transitivity of the action, it follows that there is an open, dense set of non-corners.

 \begin{figure}
   \labellist 
  \small\hair 2pt
     \pinlabel $L_1$ at 135 165 
    \pinlabel $L_2$ at 180 165 
    \pinlabel $g(L_1)$ at 35 100 
    \pinlabel $p$ at 195 120 
 \endlabellist
     \centerline{ \mbox{
\includegraphics[width=7cm]{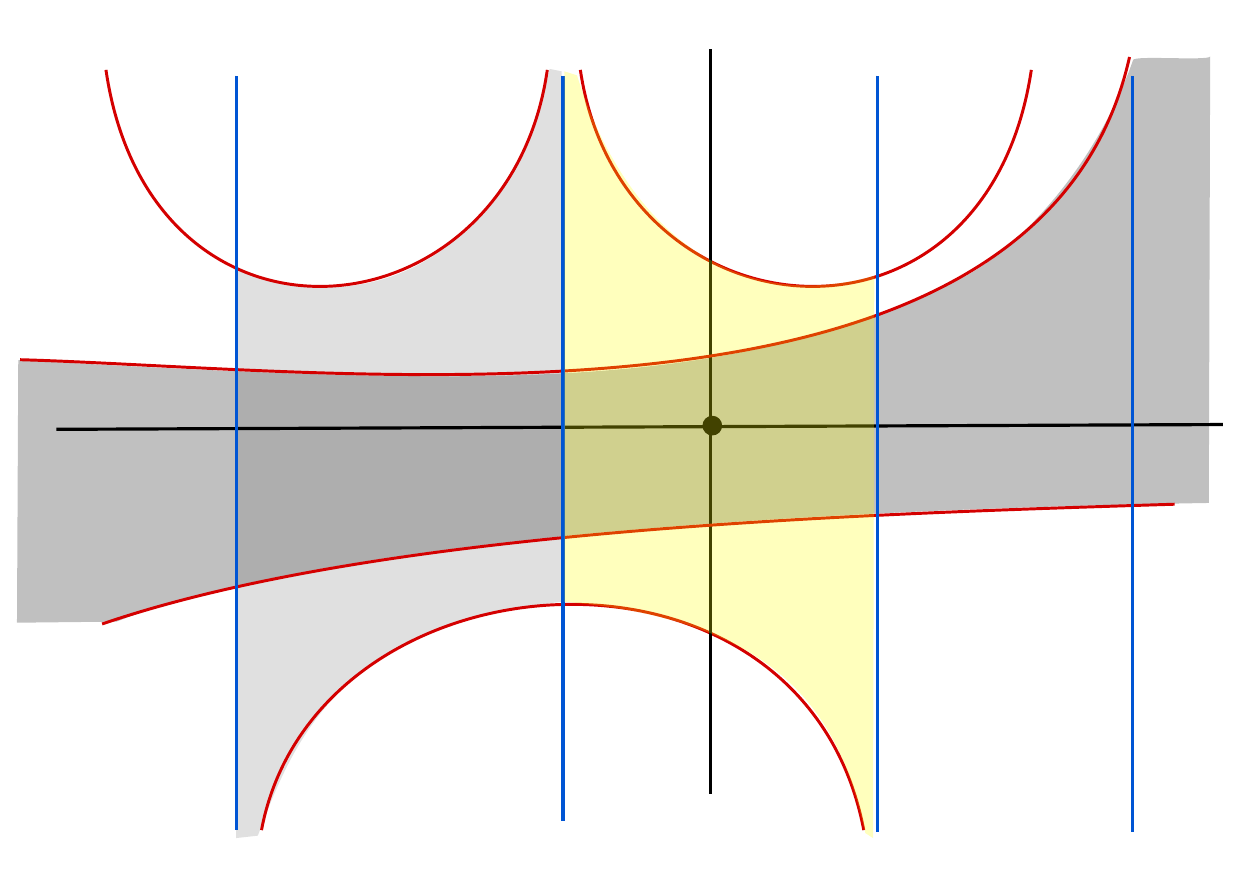}}}
\caption{$L_2 \cap g( L_1)$ contains no corner points}
 \label{fig:non-corner-dense} 
\end{figure}

Now, we treat the second case, assuming that no leaves are non-separated, and there is a singular point $x$ in $P$.
By density of fixed points (Axiom \ref{Anosov_like_dense_fixed_points}), we can find $g \in G$ such that $\cF^+(g x)\cap\cF^-(x)\neq \emptyset$. Let $y = g x$, which is also a singular point, and  let $Q$ be the quadrant of $x$ containing $y$.
If $x$ is a corner of a lozenge $L$, with opposite corner $p$ in $Q$, then a neighborhood of $\cF^+(y)\cap L$ on the side containing $x$ cannot contain any corner, as illustrated in Figure \ref{fig_proof_non-corners} on the left.

Thus, we are left to consider the case where $Q$ does not contain any lozenge with corner $x$.  We claim that in this case, neither of the half-leaves of $\cF^{\pm}(x)$ bounding $Q$ make a perfect fit with another leaf in $Q$.   Indeed, if there where such a perfect fit with a leaf $l$, then the element $f \in G$ fixing the singular point $x$ will fix $l$ also.  Lemma \ref{lem:periodic_pf_corner} then says that $x$ would be a corner of a lozenge with its other corner in $l\subset Q$, a contradiction.   This proves the claim.  

To finish the proof, let $z= \cF^+(y)\cap \cF^-(x)$, and let $[z,y]^+$ be the segment in $\cF^+(y)$ between $z$ and $y$. Consider the set of leaves of $\cF^{-}$ that pass through $[z,y]^+$. By the above argument, none of these leaves can make a perfect fit with $\cF^+(x)$. Thus, either some such leaf has another leaf of $\cF^-$ with which it is non-separated, or all leaves through $[z,y]^+$ intersect $\cF^+(x)$.
The former case does not arise as we assumed that there are no non-separated leaves. So we are in the latter case and we conclude that $\cF^-(y)$ intersects $\cF^+(x)$, as in Figure \ref{fig_proof_non-corners} right.

Let $f \in G$ be the element fixing $x$ and contracting $\cF^+(x)$.  
Then $\cF^{-1}(f(y))$ and $\cF^{+}(f^{-1}(y))$ intersect at a point $q$ in the rectangle with corners $x$ and $y$.  Points in the quadrant of $q$ containing $y$ that are sufficiently close to $q$ will all satisfy the conditions of Lemma \ref{lem_no_corner_criterion}, for each of their quadrants, since every pair of rays will intersect two prongs of a singularity. Thus we obtain an open set consisting of points which are not corners of a lozenge, as in Figure \ref{fig_proof_non-corners}.  
As in the previous case, topological transitivity now implies that corners are nowhere dense.\qedhere

\begin{figure}[h] 
   \labellist 
  \small\hair 2pt
     \pinlabel $x$ at 38 60 
  \pinlabel $y$ at 168 65 
    \pinlabel $z$ at 72 95
    \pinlabel $p$ at 145 115 
      \pinlabel $x$ at 329 63 
  \pinlabel $y$ at 466 76
  \pinlabel $g(y)$ at 430 128  
  \pinlabel $z$ at 370 115
    \pinlabel $g^{-1}(y)$ at 520 55  
 \endlabellist
     \centerline{ \mbox{
\includegraphics[width=14cm]{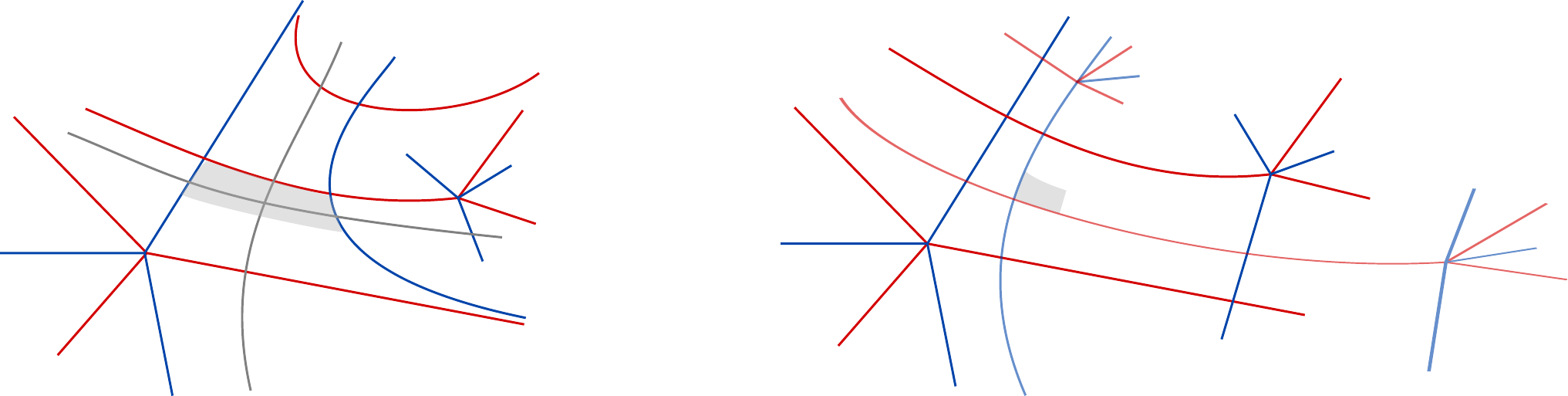} }}
\caption{The shaded regions cannot contain any corners of lozenges.}
\label{fig_proof_non-corners} 
\end{figure}
\end{proof}

\subsection{Scalloped regions and trees} \label{sec_scalloped_and_trees}

The following definition comes from \cite{Fenley_structure_branching}, \cite{BarbFen_pA_toroidal}.  
While it appears technical, it is simply describing the configuration depicted in Figure \ref{fig_scalloped} right.

 \begin{definition}\label{def_scalloped}
  A \emph{scalloped region} is an open, unbounded set $U \subset P$ with the following properties:
  \begin{enumerate}[label=(\roman*)]
   \item The boundary $\partial U$ consists of the union of four families of leaves $l_k^{1,+}, l_k^{2,+}$ in $\cF^+$ and $l_k^{1,-},l_k^{2,-}$ in $\cF^-$, indexed by $k\in \bZ$.
   \item The leaves of each family $l_k^{i,\pm}$, $k \in \bZ$ are pairwise non-separated.
   \item The boundary leaves are ordered so that there exists a (unique) leaf $f_k^{1,-}$ that makes a perfect fit with $l_k^{1,+}$ and $l_{k+1}^{1,+}$. Moreover, $f_k^{1,-}$ accumulates on the leaves $\cup_{i\in \bZ} l_i^{1,-}$ as $k \to \infty$, and  on $\cup_{i\in \bZ} l_i^{2,-}$ as $k \to -\infty$.  The analogous statement holds for leaves making perfect fits with the other families $l_k^{i,\pm}$.
   \item The bifoliation is trivial inside $U$, i.e., for all $x \neq y\in U$, $\cF^+(x)\cap \cF^-(y) \neq \emptyset $ and  $\cF^+(y)\cap \cF^-(x) \neq \emptyset $ and $U$ contains no singular points.
  \end{enumerate}
 \end{definition}

\begin{figure}
   \labellist 
  \small\hair 2pt
     \pinlabel $l_{i-1}^{1,+}$ at 8 85 
     \pinlabel $l_{i}^{1,+}$ at 70 84 
   \pinlabel $l_{i+1}^{1,+}$ at 122 84
    \pinlabel $f_i^{1,-}$ at 102 55
      \pinlabel $l$ at 188 42 
   \pinlabel $\cdot \cdot \cdot \xi$ at 180 105  
   \pinlabel $\xi$ at 475 105
  \pinlabel $l^{1,-}_j$ at 485 25  
  \pinlabel $l_{i}^{1,+}$ at 410 115 
 \endlabellist
     \centerline{ \mbox{
\includegraphics[width=13cm]{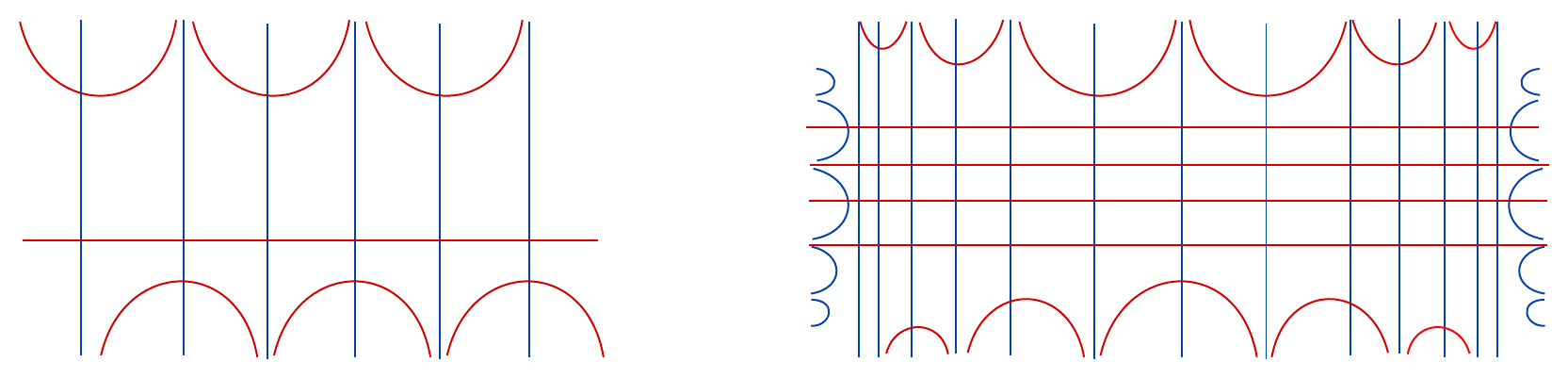} }}
\caption{Adjacent lozenges and a scalloped region.  The leaves $f_i^{j,-}$ are vertical, $f_i^{j,+}$ are horizontal.}
\label{fig_scalloped}
\end{figure}
 
 \begin{lemma} \label{lemma_scalloped_ideal_corner}
Let $(P,\cF^+, \cF^-)$ be a nontrivial bifoliated plane with Anosov-like action of a group $G$.
Suppose $P$ contains an infinite set of pairwise non-separated leaves.  Then this set is contained in the boundary of a single scalloped region in $P$.  
 \end{lemma}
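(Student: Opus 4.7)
\medskip
\noindent\emph{Proof proposal.}
The plan is to use the common fixing element provided by Lemma \ref{lem:nonsep_family} and then extract the remaining three boundary families of a putative scalloped region as accumulation sets, all fixed by that same element. First, I would replace the given infinite family by a maximal collection $S \subset \cF^+$ of pairwise non-separated leaves. Lemma \ref{lem:nonsep_family} then provides an indexing $\{l_k^{1,+}\}$ of $S$ by a set of consecutive integers, a perfect-fit leaf $f_k^{1,-} \in \cF^-$ for each consecutive pair $l_k^{1,+}, l_{k+1}^{1,+}$, and a common nontrivial $g \in G$ fixing every $l_k^{1,+}$ and every $f_k^{1,-}$. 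After passing to a power, $g$ preserves every half-leaf through its fixed points $x_k := \mathrm{Fix}(g) \cap l_k^{1,+}$, and by Proposition \ref{proposition:cofixed_lozenge} the points $x_k, x_{k+1}$ are corners of $g$-fixed lozenges $L_k$.

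Next I would analyze accumulation of the perfect-fit leaves. Since $S$ is infinite in at least one direction, so is $\{f_k^{1,-}\}$. These leaves are pairwise non-separated in $\cF^-$ and all $g$-fixed, and by Proposition \ref{prop:no_product} they cannot escape into an infinite product region; hence they must accumulate on a nonempty set of pairwise non-separated $\cF^-$ leaves $\{l_k^{1,-}\}$, again all $g$-fixed. Applying Lemma \ref{lem:nonsep_family} to this new maximal family and iterating the accumulation argument produces the remaining boundary families $\{l_k^{2,+}\}$ and $\{l_k^{2,-}\}$. Running the same argument in the opposite direction forces each of the four families to be bi-infinite, sharpening Lemma \ref{lem:nonsep_family} in the way predicted by the remark following it.

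Finally, let $U$ be the open region cut out by the four families. Conditions (i)--(iii) of Definition \ref{def_scalloped} follow directly from the construction, since the accumulation of $f_k^{1,-}$ on the two $\cF^-$-families is precisely how those were defined. For the trivial-foliation condition (iv), I would combine Proposition \ref{prop:no_product} with Lemma \ref{lem_nowhere_dense}: any singular point or pair of non-separated leaves inside $U$ would be $g$-invariant and would enlarge the already-maximal boundary families, a contradiction. The main obstacle I anticipate is the accumulation step itself: verifying that $\{f_k^{1,-}\}$ neither drifts to infinity in the $\cF^-$ leaf space nor collapses onto a single leaf, and that its limit set is in fact a full bi-infinite family of non-separated leaves. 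The key tools are the topologically hyperbolic action of $g$ on its invariant leaves and the absence of infinite product regions.
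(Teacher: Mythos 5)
Your outline is on the right track, and the paper does take essentially this approach: pass to a maximal family via Lemma~\ref{lem:nonsep_family}, analyze the accumulation of the perfect-fit leaves $f_k^{1,-}$, and iterate to build all four boundary families. But there are two concrete problems.

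First, the claim that the leaves $\{f_k^{1,-}\}$ are ``pairwise non-separated in $\cF^-$'' is false. Consecutive $f_k^{1,-}$ and $f_{k+1}^{1,-}$ are the shared sides of adjacent lozenges $L_k$ and $L_{k+1}$; the interior of $L_{k+1}$ contains a one-parameter family of $\cF^-$-leaves strictly between them, so they are separated in the leaf space of $\cF^-$. The $f_k^{1,-}$ form a monotone sequence, not a non-separated family. This matters because the false claim is what you use to leap from ``no infinite product region'' to ``they accumulate on a non-separated family.''

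Second, and more importantly, the step you flag as ``the main obstacle I anticipate'' is precisely the heart of the argument, and it is not resolved. Once one knows $(f_k^{1,-})$ does not escape to infinity (this part your infinite-product-region argument handles, as in the paper), there remain two possibilities: the accumulation set is a finite or $\bN$-indexed union of non-separated leaves, or it is a bi-infinite union. Ruling out the former is where the real work lies. The paper does this by taking the common fixing element $g$, locating its fixed point $x$ on an extremal limit leaf $l_1^{1,-}$, and noting that for $k$ large, $f_k^{1,-}$ must cross $\cF^+(x)$; but $\cF^+(x)$ would then carry infinitely many $g$-fixed points, contradicting the uniqueness in Axiom~\ref{Anosov_like_A1}. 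Your proposal never produces this contradiction, so it establishes only that the limit set is nonempty, not that it is bi-infinite. A similar issue recurs when you ``run the same argument in the opposite direction'': you need to know the original family $\{l_k^{1,+}\}$ is itself bi-infinite, which again requires ruling out the $\bN$-indexed case.

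Finally, for condition~(iv), the paper's argument is cleaner than the one you sketch: the region $U$ is exhibited as a union of adjacent lozenges, and interiors of lozenges are automatically trivially foliated and singularity-free. Appealing to Lemma~\ref{lem_nowhere_dense} does not directly give this.
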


 \begin{proof}
 Suppose $\{l_k^{1,+}\}$ is an infinite set of pairwise non-separated leaves of $\cF^+$ (the proof is the same for leaves of $\cF^-$). Without loss of generality we may assume that $\{l_k^{1,+}\}$ is a maximal (with respect to inclusion) family of pairwise nonseparated leaves.    
 By Lemma \ref{lem:nonsep_family} we can take the indexing to be by consecutive integers $k$ and so that $l_k^{1,+}$ and $l_{k+1}^{1,+}$ make a perfect fit with a common leaf $f_k^{1,-}$ (forming a shared side of adjacent lozenges).    Since $\{l_k^{1,+}\}$ is infinite, without loss of generality we assume the index set contains all integers $k>0$.

  Consider the sequence of leaves $(f_k^{1,-})$.  As $k\to+\infty$, there are {\em a priori} three possibilities:
  \begin{enumerate}
   \item $(f_k^{1,-})$ does not accumulate onto anything, or 
   \item  $(f_k^{1,-})$ accumulates onto a union of leaves that is finite or indexed by $\bN$, or
   \item  $(f_k^{1,-})$ accumulates onto a bi-infinite union of leaves.  
  \end{enumerate}

We first eliminate case (1), showing this leads to an infinite product region.  Let $l, l' \in \cF^+$ be leaves intersecting $f_1^{1,-}$ and passing through the chain of lozenges.  Let $r$ and $r'$ be the half leaves of $l$ and $l'$, respectively, that intersect $f_2^{1,-}$.  For all $i \geq 1$, the region between $f_i^{1,-}$ and $f_{i+1}^{1,-}$ bounded by segments of $r$ and $r'$ is trivially foliated.  In the case where $(f_k^{1,-})$ does not accumulate, the union of these trivially foliated regions produces an infinite product region, contradicting Proposition \ref{prop:no_product} since the bifoliated plane has non-separated leaves so is nontrivial.

Now suppose that we are in case (2) and $(f_k^{1,-})$ accumulates onto a union of leaves, say
$l_1^{1,-}, l_2^{1,-} \dots$, either finite or indexed by $\mathbb{N}$.   
 Let $g \in G$ be an element that fixes each $l_k^{1,+}$.  Then $g$ fixes each $f_k^{1,-}$, and so fixes $l_1^{1,-}$.  Let $x$ be the fixed point of $g$ on $l_1^{1,-}$. Then, for $k$ large enough, $f_k^{1,-}\cap \cF^+(x)\neq \emptyset$. But then $\cF^+(x)$ must contain several distinct fixed points of $g$, contradicting Axiom \ref{Anosov_like_A1}.

Therefore, we must be in case (3), where we have a bi-infinite family of leaves accumulated by $(f_k^{1,-})$.  Denote this family by $l_j^{1,-}$, indexed by $j\in \bZ$.  It remains to show that this forces the structure of a scalloped region, as in Figure \ref{fig_scalloped} (right). 

To do this, we may repeat the entire argument above using $l_j^{1,-}$, taking leaves $f_j^{1,+}$ forming perfect fits with $l_j^{1,-}$ and $l_{j+1}^{1,-}$ and intersecting the leaves $f_k^{1,-}$ already considered.   For concreteness, reverse the indexing of $l_j^{1,-}$ and $f_j^{1,+}$ if needed so that $f_1^{1,+}$ separates the original leaves $l_k^{1,+}$ from $f_0^{1,+}$, i.e the indexing increases as the $f_j^{1,+}$ move in the direction of $\{l_k^{1,+}\}$
As $k \to \infty$, the sequence $f_j^{1,+}$ limits to a bi-infinite union of leaves, we claim that this contains $\{l_k^{1,+}\}$ (and hence is equal to $\{l_k^{1,+}\}$ by maximality).   To prove the claim, suppose for contradiction that this does not hold, so some leaf $l$ and nearby leaf $l'$ both separate the limit leaves from $\{l_k^{1,+}\}$.  Then $l$ and $l'$ both intersect each $f_k^{1,-}$, but do not intersect any $l^{1,-}_j$.  Since the limit of the $f_k^{1,-}$, as $k \to \infty$ is the union $l^{1,-}_j$, we see an infinite product region bounded by $l$ and $l'$, again contradicting that the plane is nontrivial.  This proves the claim, and we conclude additionally that $\{l_k^{1,+}\}$ is indexed by $\bZ$. 

Thus, we can run the argument again taking $k$ to $-\infty$ and conclude $(f_k^{1,-})$ accumulates as $k \to -\infty$ onto a bi-infinite union of leaves, $l^{2,-}_j$, with leaves $f_j^{2,+}$ making perfect fits between them; these are forced to intersect each $f_k^{1,-}$ and hence together with $f_j^{1,-}$ form the sides of lozenges bridging between the leave of $\{l^{1,-}_j\}$ and $\{l^{2,-}_j\}$.  

We may also run the argument with $f_j^{1,+}$ taking $j \to -\infty$ to conclude these limit onto a fourth bi-infinite family of leaves, which we call $l_k^{2,+}$.  As in the previous case, taking leaves $f_k^{2,-}$ that make perfect fits with $l_k^{2,+}$ and $l_{k+1}^{2,+}$, these form the remaining sides of an infinite family of lozenges with sides from $f_k^{i,+}$ and $l_k^{i, -}$.   Let $U$ denote the union of these lozenges.  Since $U$ is a union of adjacent lozenges, the bifoliation is trivial in $U$.  By construction the boundary of $U$ is the union of the leaves of $\{l_k^{1,+}\}$, $\{l_k^{2,+}\}$ $\{l_k^{1,-}\}$ and $\{l_k^{2,-}\}$, with families of leaves limiting on to the desired sides, as claimed in the statement of the Lemma.  
\end{proof}

\begin{lemma}  \label{lem:Z2stabilizer}
If $G$ has an Anosov-like action on a bifoliated plane with a scalloped region $U$, then the stabilizer of $U$ in $G$ is virtually isomorphic to $\bZ^2$. 
\end{lemma}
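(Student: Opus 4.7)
The plan is to construct a homomorphism from a finite-index subgroup of $\mathrm{Stab}_G(U)$ into $\bZ^4$ and show that it has trivial kernel and rank-$2$ image. Any element of $\mathrm{Stab}_G(U)$ permutes the four boundary families $F_+^1, F_+^2, F_-^1, F_-^2$ of non-separated leaves from Definition \ref{def_scalloped}; since an Anosov-like action preserves each foliation, no element can interchange $\cF^+$-families with $\cF^-$-families, so the induced permutation lies in a subgroup of $(\bZ/2)^2$. Each family is canonically $\bZ$-indexed by Lemma \ref{lem:nonsep_family}. Passing to a finite-index subgroup $S_0 \subset \mathrm{Stab}_G(U)$ preserving each family setwise with its order, I would obtain a homomorphism $\pi \colon S_0 \to \bZ^4$ recording the four translation amounts.

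Next I would show that $\pi$ has trivial kernel. For $g \in \ker\pi$ fixing every boundary leaf of $U$, Observation \ref{obs:fix_perf_fit} applied to $g^2$ implies that $g^2$ fixes every leaf making a perfect fit with a boundary leaf, and in particular each of the interior families $\{f_k^{i,\pm}\}$ from Definition \ref{def_scalloped}. Since $U$ is trivially foliated, each $f_j^{1,+}$ meets each $f_k^{1,-}$ at a single point of $U$, so $g^2$ would have infinitely many fixed points on the single leaf $f_0^{1,-}$, contradicting the uniqueness of fixed points on fixed leaves from Axiom \ref{Anosov_like_A1} unless $g^2 = \id$. Since a topologically hyperbolic element cannot have finite order, this forces $g = \id$.

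For the rank-$2$ lower bound I would apply Lemma \ref{lem:nonsep_family} to the infinite families $F_+^1$ and $F_-^1$, producing nontrivial elements $g_+, g_- \in G$ fixing every leaf of $F_+^1$ and $F_-^1$, respectively. Both preserve $U$ and, after passing to powers, lie in $S_0$. The triviality of $\ker\pi$ forces $\pi(g_\pm) \neq 0$. Moreover, the same perfect-fit--intersection argument shows that $g_+$ cannot also fix $F_-^1$ pointwise, so $\pi(g_+)$ has nonzero $F_-^1$-coordinate while its $F_+^1$-coordinate vanishes; symmetrically for $\pi(g_-)$. This gives linear independence and hence rank at least $2$.

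The main obstacle will be the rank-$2$ upper bound. I would attack it via the uniqueness clause of Definition \ref{def_scalloped}(iii): any translation on $F_+^1$ by $n$ forces the same translation on the interior family $\{f_k^{1,-}\}$, and careful tracking of how this shifted interior family accumulates on both $F_-^1$ and $F_-^2$, combined with the topological hyperbolicity from Axiom \ref{Anosov_like_A1} at any fixed point of an element of $S_0$ in $U$, should yield the coupling relations $n_+^1 = n_+^2$ and $n_-^1 = n_-^2$. Once these confine $\pi(S_0)$ to a rank-$2$ sublattice of $\bZ^4$, the triviality of $\ker\pi$ gives $S_0 \cong \bZ^2$, so $\mathrm{Stab}_G(U)$ is virtually $\bZ^2$ as claimed.
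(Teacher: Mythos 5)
Your argument is correct in outline, but you take a strictly heavier route than necessary, and the step you flag as ``the main obstacle'' is in fact avoidable entirely. The paper maps the finite-index subgroup into $\bZ \times \bZ$ rather than $\bZ^4$, recording only the translation amounts on the two families $\{l_k^{1,+}\}$ and $\{l_k^{1,-}\}$. The kernel-triviality argument works verbatim with just these two coordinates: an element fixing every $l_k^{1,+}$ must fix every $f_k^{1,-}$ (it is the unique leaf making perfect fits with $l_k^{1,+}$ and $l_{k+1}^{1,+}$ --- no appeal to Observation~\ref{obs:fix_perf_fit} or to $g^2$ is needed), and likewise it fixes every $f_j^{1,+}$; since each $f_k^{1,-}$ meets all the $f_j^{1,+}$, this yields infinitely many fixed points on a single leaf, contradicting~\ref{Anosov_like_A1}. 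Your rank-$2$ lower bound using $g_+$ and $g_-$ is the same as the paper's. With the target being $\bZ^2$, the rank-$2$ upper bound is automatic, so the conclusion $S_0 \cong \bZ^2$ follows immediately.

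By contrast, your choice of $\bZ^4$ as the target forces you to argue that the translation amount on $F_+^1$ determines that on $F_+^2$ (and similarly for $F_-^1, F_-^2$). Your sketch of this --- that the interior family $\{f_k^{1,-}\}$ coincides with $\{f_k^{2,-}\}$ up to a fixed reindexing, so the translations couple --- is plausible and does appear to be fillable, but it is work you simply do not need to do. If you insist on starting from $\bZ^4$, the cleanest repair is to note that the projection of $\pi$ onto the $(F_+^1, F_-^1)$ coordinate pair already has trivial kernel by the same perfect-fit argument, and then run the argument on the projected map; that collapses your approach to the paper's.
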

We remark that this was proved for orbit spaces of (pseudo)-Anosov flows by Barbot and Fenley, see e.g. \cite[Corollary 4.8]{Fenley_structure_branching}.  We give an elementary proof here for Anosov-like actions.  

\begin{proof} 
Let $G_U$ denote the stabilizer of the scalloped region $U$. A subgroup $G'_U$ of index at most $4$ will preserve (setwise) each of the four bi-infinite families of leaves forming the boundary of $U$.   The action of $G'_U$ on each family preserves the linear ordering of the indices, i.e. can be considered a translation of $\bZ$.  Thus, there is a homomorphism  $G'_U \to \bZ \times \bZ$ by considering the translation actions of $G'_U$ on the families $l_k^{1,+}$ and  $l_k^{1,-}$ (keeping the notation from above).  We claim that the kernel of this homomorphism is trivial, and the image contains nontrivial elements of $\{0\} \times \bZ$ and $\bZ \times \{0\}$.  For the first statement, if $h$ is in the kernel, then $h$ preserves each leaf $f_k^{1,-}$ and $f_j^{1,+}$.  Each leaf $f_k^{1,-}$  intersects all of the infinitely many $f_j^{1,+}$, giving infinitely many fixed points for $h$.  Thus, $h$ is identity.  

For the image, we note that there is some nontrivial element $g\in G$ that fixes each $l_k^{1,+}$ (since these are nonseparated), so $g^2$ fixes each half leaf through the corners of the lozenges with sides in $l_K^{1,+}$ so lies in $G_U$.  Since $g^2$ is nontrivial, it cannot act trivially on $U$, so must have nontrivial image in $\bZ \times \bZ$, i.e. is a nontrivial element of $\bZ \times \{0\}$.  The argument for the other factor is completely analogous.  
\end{proof}

\begin{definition}\label{def_tree_scalloped}
A tree of scalloped regions $T \subset P$ is a chain of lozenges such that each lozenge in $T$ shares each of its sides with some other lozenge in $T$.
\end{definition}

We record here some immediate consequences of the above definition: 
\begin{rem}[Properties of trees of scalloped regions] \label{rem:scalloped}
Let $T$ be a tree of scalloped regions.  Then:
\begin{itemize}
\item $T$  is a maximal chain of lozenges.  
\item Each lozenge in $T$ is contained in exactly two distinct scalloped regions; one with the $\cF^+$ sides of the lozenge in its boundary and one with the $\cF^-$ sides in the boundary.  
\item Two distinct scalloped regions in $T$ are either disjoint or intersect in a unique lozenge.
\item Each $p$-prong corner of a lozenge in $T$ is the corner of exactly $2p$ distinct lozenges in $T$ (where $p=2$ when the corner is regular).  
\item Finally, Axiom \ref{Anosov_like_periodic_non-separated} implies that some nontrivial element $g\in G$ fixes every corner of the tree.
\end{itemize}  
\end{rem}

We conclude this section by describing what feature of flows gives rise to a tree of scalloped regions in the orbit space.

\begin{definition}[Barbot, Fenley] 
A {\em periodic Seifert piece} of a pseudo-Anosov flow on $M$ is a Seifert fibered piece of the JSJ decomposition of $M$ such that, up to finite
powers, a regular fiber of some Seifert fibration is freely homotopic to a closed orbit of the flow.   If such a piece exists, the bounding tori can always be taken transverse to the flow.  
\end{definition}

The reason for saying ``some Seifert fibration" is that there do exist Anosov flows on manifolds with a JSJ piece obtained as a neighborhood of an embedded one-sided Klein  
bottle. Such pieces admit two distinct Seifert fibrations; we say the piece is periodic if either one of the fibers is freely homotopic to a closed orbit.

\begin{proposition} \label{prop:tree_iff_periodic} 
The orbit space of a pseudo-Anosov flow contains a tree of scalloped regions if and only if the flow has a periodic Seifert piece $M'$ such that, for each boundary torus $T_i$ of $M'$, $T_i$ can be isotoped to be transverse to the flow and $\pi_1(T_i)$ is generated by elements freely homotopic to periodic orbits.  
\end{proposition}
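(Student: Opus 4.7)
The plan is to match the algebraic/dynamical structure supplied by a tree of scalloped regions in $P_\varphi$ with the topological structure of a periodic Seifert piece in $M$, in both directions.

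For the ``only if'' direction, suppose $T$ is a tree of scalloped regions in $P_\varphi$. By Lemma~\ref{lem:Z2stabilizer}, each scalloped region $U \subset T$ has a virtually $\bZ^2$ stabilizer in $\pi_1(M)$, and by Remark~\ref{rem:scalloped}, some nontrivial element $g \in \pi_1(M)$ fixes every corner of $T$. By Axiom~\ref{Anosov_like_A1}, a power of $g$ represents a periodic orbit $\alpha$ of $\varphi$; this $\alpha$ is my candidate Seifert fiber. Since each lozenge of $T$ lies in two distinct scalloped regions (one through its $\cF^+$ sides, one through its $\cF^-$ sides), the various stabilizers of the scalloped regions in $T$ generate a subgroup $H \le \pi_1(M)$ in which (a power of) $g$ is central. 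I would then invoke the standard correspondence from JSJ theory---the identification of maximal subgroups of $\pi_1(M)$ of this form with Seifert pieces, used by Barbot~\cite{Barbot96} and Barbot--Fenley~\cite{BF_totally_per}---to realize $H$ as $\pi_1(M')$ for a Seifert piece $M' \subset M$ with regular fiber freely homotopic to $\alpha$. For each boundary torus $T_i$ of $M'$, the corresponding $\bZ^2 \le \pi_1(M)$ stabilizes a scalloped region at the ``boundary'' of the tree; both generators of this $\bZ^2$, namely the fiber class and a generator fixing some tree corner on that boundary, are then freely homotopic to periodic orbits. Transversality of $T_i$ to $\varphi$ follows from the standard arguments in \cite{Barbot96, BF_totally_per}.

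For the ``if'' direction, let $M' \subset M$ be a periodic Seifert piece with each boundary torus $T_i$ transverse to $\varphi$ and with $\pi_1(T_i)$ generated by elements freely homotopic to periodic orbits. The fiber generator $\tau \in \pi_1(M')$ is freely homotopic to a periodic orbit $\alpha$, so $\tau$ fixes each lift of $\alpha$ to $P_\varphi$. For each $i$, pick a second generator $\mu_i \in \pi_1(T_i)$ independent from $\tau$; by hypothesis $\mu_i$ is freely homotopic to a periodic orbit, so it too has fixed points in $P_\varphi$. Since $\tau$ and $\mu_i$ commute and have distinct fixed points on generically chosen lifts, Proposition~\ref{proposition:cofixed_lozenge} produces a chain of lozenges containing both fixed points. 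The $\bZ^2$-action forces infinitely many non-separated leaves in this chain, and Lemma~\ref{lemma_scalloped_ideal_corner} promotes the resulting configuration to a scalloped region $U_i$ with $\pi_1(T_i) \subset \mathrm{Stab}(U_i)$. Finally, the gluings between distinct boundary tori within $M'$ correspond to sharing of lozenges between adjacent scalloped regions, closing up into a tree.

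The main obstacle is the forward direction's realization of the abstract subgroup $H$ as the fundamental group of an actual Seifert submanifold with the right boundary behavior; this is the content of the JSJ-type arguments in \cite{Barbot96, BF_totally_per} and must be invoked carefully to also obtain transversality of each $T_i$. A secondary technical point, in both directions, is the matching of the ``every side shared'' condition defining a tree with the hypothesis that \emph{every} generator of each $\pi_1(T_i)$ (and not merely the fiber) is freely homotopic to a periodic orbit, which ensures the tree structure is non-trivial on both the $\cF^+$ and $\cF^-$ sides.
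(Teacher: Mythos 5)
First, note that the paper does \emph{not} prove Proposition~\ref{prop:tree_iff_periodic}: the authors explicitly postpone its proof to a forthcoming paper with Fenley, and only establish the strictly weaker Proposition~\ref{prop_conditions_no_tree} (namely that a tree of scalloped regions forces a transverse torus and at least one periodic Seifert piece). So there is no paper proof to compare your argument against; nonetheless your proposal has concrete problems.

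In the ``only if'' direction, your ingredients overlap with the paper's proof of the weaker statement (Lemma~\ref{lem:Z2stabilizer}, Remark~\ref{rem:scalloped}, the element $g$ fixing all corners), but the key step is handled more loosely than the paper's actual argument. You propose to assemble a subgroup $H$ generated by all scalloped-region stabilizers and then match it with a Seifert piece ``by JSJ theory''; the paper instead notes that the centralizer $C(g)$ must be neither cyclic nor $\bZ^2$ (since one can find two noncommuting elements $h$ and $h_1$ centralizing $g$ coming from two scalloped regions sharing a lozenge fixed by $g$), and then applies the Aschenbrenner--Friedl--Wilton characterization of centralizers in $3$-manifold groups to conclude $C(g)=\pi_1(M')$ for a Seifert piece with $g$ a regular fiber. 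The centralizer route is cleaner and avoids having to show your $H$ is the full stabilizer of anything. Your claim that each boundary torus of $M'$ stabilizes a scalloped region ``at the boundary of the tree'' is also problematic: by definition of a tree of scalloped regions, every side of every lozenge is shared, so the tree has no boundary lozenges, and this geometric picture needs to be replaced by an algebraic correspondence between peripheral $\bZ^2$'s of $\pi_1(M')$ and scalloped regions up to the $\pi_1(M')$-action.

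In the ``if'' direction there is a genuine gap in your first real step. Proposition~\ref{proposition:cofixed_lozenge} applies to a \emph{single} nontrivial element $g$ fixing two distinct points; it does not apply to two different commuting elements $\tau$ and $\mu_i$ each with their own fixed points. Commuting does give that $\tau$ permutes the fixed set of $\mu_i$, but obtaining a common fixed point or a common chain requires an additional argument (and may well be how one gets the scalloped region: if $\tau$ acts freely on the infinite fixed set of $\mu_i$, the fixed set is the side of a scalloped region). As written, the invocation is incorrect. The subsequent claims---``the $\bZ^2$-action forces infinitely many non-separated leaves'' and ``gluings between distinct boundary tori \dots closing up into a tree''---are stated as if obvious but are exactly where the substance lies: one must show every side of every lozenge in the emerging chain is shared, which uses the full strength of the hypothesis that \emph{every} $\pi_1(T_i)$ is generated by periodic classes (not merely the fiber). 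This needs an actual argument, and indeed the authors chose to defer the full biconditional precisely because it is not a short corollary of the material in this paper.
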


We postpone the proof of this proposition to our forthcoming paper with Fenley \cite{BFM_in_preparation} as we do not need the full strength of it here.  Instead, we will just prove the weaker statement given in Proposition \ref{prop_conditions_no_tree}, i.e, we will show that if an orbit space for a flow $\varphi$ contains a tree of scalloped regions, then there must be at least one torus transverse to $\varphi$ in $M$ and at least one periodic Seifert piece. 

\begin{proof}[Proof of Proposition \ref{prop_conditions_no_tree}] 
By Lemma \ref{lem:Z2stabilizer}, the stabilizer of a scalloped region $U$ is virtually isomorphic to $\bZ^2$.  Thus, we may apply 
work of Barbot and Fenley (\cite[Th\'eor\`eme B]{Barbot_MPOT} in the Anosov case, \cite{BarbFen_pA_toroidal} for the generalization), saying that $U$ corresponds to a torus transverse to the flow.  Precisely, the scalloped region is the projection to the orbit space of a lift of a transverse torus with fundamental group the associated $\bZ^2$.  Thus, condition (3), and hence also condition (1) of  Proposition \ref{prop_conditions_no_tree} prohibits the existence of scalloped regions in the orbit space.  

To see that (2) and (4) each prohibit trees of scalloped regions, we now analyze how the existence of a tree of scalloped regions in the leaf space constrains the structure of the fundamental group of $M$.  Suppose that $T$ is a tree of scalloped regions and fix one such scalloped region $U_0$ in $T$. As noted above, its stabilizer is virtually isomorphic to $\bZ^2$, generated by some $g$ and some $h$ in $\pi_1(M)$, and we can take $g$ to be the element fixing every corner of the tree.  
Let $U_1$ be a scalloped region intersecting $U_0$ in $T$ along a lozenge fixed by $g$.  Then $g$ stabilizes $U_1$ (but $h$ does not) and so there exists $h_1$ stabilizing $U_1$ that commutes with $g$.  Moreover, it is easily verified that $h_1$ does not commute with $h$.  Thus, the centralizer of $g$ in $\pi_1(M)$ 
is neither cyclic nor isomorphic to $\bZ^2$.  
Therefore, by \cite[Theorem 3.1]{AFW}, the centralizer $C(g)$ of $g$ is the fundamental group of a Seifert piece of the JSJ   decomposition in $M$, and thus $g$ represents a regular fiber and is freely homotopic to a periodic orbit of the flow.  By definition, this means the flow is periodic in that Seifert piece.
\end{proof}

\subsection{Reduction of Theorem \ref{thm_main_Anosov} to \ref{thm_main_general}}
We end this section by the proof that Theorem \ref{thm_main_Anosov} is implied by Theorem \ref{thm_main_general}.
The first step is to show that having one of the flow be transitive is enough to know that both flows are transitive, hence both flows give a transitive Anosov-like action. 

\begin{proposition}\label{prop_1transitive_both_transitive}
 Let $\varphi$ and $\psi$ be two Anosov flows on a closed $3$-manifold $M$. Suppose that $\varphi$ is transitive and that $\fix(\varphi)= \fix(\psi)$. Then $\psi$ is also transitive. 
\end{proposition}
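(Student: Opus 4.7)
The plan is to argue by contradiction: if $\psi$ were non-transitive, I would produce an element of $\cP(\varphi)$ that cannot lie in $\cP(\psi)$. The first step is to invoke the structure theory for non-transitive Anosov flows on closed orientable $3$-manifolds (due to Brunella, with refinements by Béguin--Bonatti, Barbot and others): if $\psi$ is non-transitive then its non-wandering set $\Omega(\psi)$ is a proper closed subset of $M$, and $M$ contains an embedded incompressible torus $T$ transverse to $\psi$ with $\Omega(\psi) \cap T = \emptyset$. Consequently every periodic orbit of $\psi$ is entirely disjoint from $T$.

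Since $T$ is incompressible, it induces a nontrivial Bass--Serre splitting of $\pi_1(M)$ over $\pi_1(T)\cong \bZ^2$, with associated tree $\mathcal{T}$. Any loop in $M\smallsetminus T$ is conjugate into a vertex subgroup of this splitting, so every element of $\cP(\psi)$ acts elliptically on $\mathcal{T}$. It therefore suffices to exhibit an element of $\cP(\varphi)$ that acts hyperbolically on $\mathcal{T}$. For this, use that $\varphi$ is transitive, hence has a dense orbit $O$. Since $T$ is codimension one and transverse to $\psi$ (hence has two local sides in $M$), and $O$ visits neighborhoods of points on both sides, $O$ must cross $T$ transversely at some point $p\in T$. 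Applying the Anosov closing lemma to a sufficiently long recurrent segment of $O$ passing through $p$ yields a periodic orbit $\alpha$ of $\varphi$ which shadows this segment and, by continuity of transverse intersections under $C^0$ perturbation, crosses $T$ transversely near $p$. Lifting $\alpha$ to an axis $\tilde{\alpha}\subset \tilde{M}$ fixed by the generator $g\in \pi_1(M)$ of its free homotopy class, the transverse crossing in $M$ forces $\tilde{\alpha}$ to cross some lift $\tilde{T}_i$ of $T$. Then $g$ cannot preserve any connected component of $\tilde{M}\smallsetminus \bigcup_j \tilde{T}_j$, so $g$ is hyperbolic on $\mathcal{T}$. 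This gives $[g]\in \cP(\varphi)\smallsetminus \cP(\psi)$, contradicting $\cP(\varphi)=\cP(\psi)$.

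The main technical ingredient is the structural theorem producing the transverse incompressible torus $T$; everything else is a short application of Bass--Serre theory and the Anosov closing lemma. The delicate point to verify is that the closing lemma genuinely yields a periodic orbit whose transverse crossing of $T$ survives: one must shadow $O$ closely enough on a time window containing the crossing at $p$ so that transversality is preserved, which is standard from uniform hyperbolicity. An alternative phrasing avoids Bass--Serre entirely and instead argues that an axis confined to a single complementary component of $\tilde T$ in $\tilde M$ represents a class whose minimal loop representative can be homotoped off $T$, which the constructed $\alpha$ cannot.
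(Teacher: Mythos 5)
Your setup via Brunella and the Bass--Serre tree $\mathcal T$ is clean, and the reduction is correct up to the last step: elements of $\cP(\psi)$ are elliptic on $\mathcal T$, so it suffices to exhibit a hyperbolic element of $\cP(\varphi)$. The gap is in the final deduction. You conclude that $g$ is hyperbolic on $\mathcal T$ from the fact that the lift $\tilde\alpha$ of the closed $\varphi$-orbit $\alpha$ crosses some lift $\tilde T_i$ of $T$. But a closed loop meeting $T$ transversely can perfectly well be freely homotopic to a loop disjoint from $T$: pushing a small arc across $T$ and back produces two transverse crossings that cancel under homotopy. In tree language, $\tilde\alpha$ may enter and re-exit a complementary component (after possible backtracking), so the path it traces in $\mathcal T$ from $v_0$ to $g\cdot v_0$ can collapse, and $g$ may still fix a vertex or an edge. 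The shadowing controls one crossing near $p$ but says nothing about what the rest of the closed orbit does, so the implication ``$\tilde\alpha$ crosses a lift $\Rightarrow g$ does not stabilize any component'' is unjustified. Your alternative phrasing --- that the constructed $\alpha$ ``cannot be homotoped off $T$'' --- asserts exactly what has to be proved; no homotopy-theoretic obstruction has been produced.

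The paper closes this hole by arranging more algebraic structure before applying the closing lemma, and this is genuinely where the two approaches diverge. It chooses periodic orbits $\beta_1\subset M_1$, $\beta_2\subset M_2$ of $\psi$ that are each already \emph{not} freely homotopic into $T$ (the attractor and repeller in each side supply such orbits), uses $\cP(\varphi)=\cP(\psi)$ to find $\varphi$-orbits $\alpha_1, \alpha_2$ in the same free homotopy classes, and then invokes the specification property to build a single closed $\varphi$-orbit $\alpha$ shadowing high powers of both $\alpha_1$ and $\alpha_2$. In your Bass--Serre language, $[\alpha]$ is essentially a product $g_1\, h g_2 h^{-1}$ where $g_1=[\alpha_1]^{k_1}$ fixes a unique vertex on the $M_1$ side of the bipartite tree, $h g_2 h^{-1}$ fixes a unique vertex on the $M_2$ side, and these two vertices are necessarily distinct; a product of two elliptics with disjoint fixed-point sets is automatically hyperbolic. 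That is the missing ingredient in your argument: to rule out the ``cancelling crossings'' scenario, one needs the two pieces being concatenated to individually fix unique, distinct vertices, which a single shadowed crossing of the dense orbit does not provide.
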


\begin{proof}
Let $\psi$ be a non-transitive flow.  
 Then $\psi$ has a (finite) number of basic sets, with at least one attractor and one repeller (see, e.g., \cite{FH_book}). 
 Then, by Brunella \cite{Brunella} (see also the proof of Proposition 2.7 of Mosher in \cite{Mosher_homologynormI} for the pseudo-Anosov case), the basics sets are separated by disjoint tori transverse to $\psi$. In particular, there exists a set $T$, comprised of a disjoint union of embedded, essential tori transverse to $\psi$, such that $M\smallsetminus T = M_1 \sqcup M_2$, where $M_1$ contains an attractor and $M_2$ contains a repeller (for instance we can take $M_1$ containing only one of the attractors of $\psi$ and $M_2$ containing all the other basic pieces).
 As a consequence, no periodic orbits intersect $T$.  

To prove the proposition, we need to show that no transitive flow $\varphi$ can have the same free homotopy data as $\phi$.  Let $\varphi$ be any transitive flow on $M$.  We will show there exists $\gamma \in \pi_1(M)$ whose free homotopy class is represented by a periodic orbit of $\varphi$, but that is \emph{not} freely homotopic to a curve inside either $M_1$ or $M_2$.  
 
Since each of $M_1$ and $M_2$ contains an attractor or repeller of $\psi$, they contain 
periodic orbits that are not freely homotopic to a curve in $T$. Choose such orbits $\beta_1 \in M_1$ and $\beta_2 \in M_2$.  
Since we assumed $\fix(\varphi)= \fix(\psi)$, there exists orbits $\alpha_1$, $\alpha_2$ of $\varphi$ representing the same free homotopy classes (up to reversing orientation) in $M$ as $\beta_1$ and $\beta_2$, respectively.   
 Since $\varphi$ is transitive, the specification property together with the Anosov closing lemma (see, e.g., \cite{FH_book}) allows us to build a periodic orbit $\alpha$ of $\varphi$ that shadows both $\alpha_1$ and $\alpha_2$ for some time.
 Since neither $\alpha_1$ nor $\alpha_2$ are freely homotopic into $T$, such an orbit $\alpha$ cannot be freely homotopic to an element of $\pi_1(M_1)$ or $\pi_1(M_2)$, as desired.   
\end{proof}

In the introduction, we used the notation $\fix(\varphi)$ for the set of conjugacy classes $[\gamma ]$ of elements $\gamma \in \pi_1(M)$ such that either $\gamma$ or $\gamma^{-1}$ is represented by a periodic orbit of $\varphi$.  We extend this to the context of Anosov-like actions.  
 \begin{notation}
  Given an Anosov-like action $\rho$ of a group $G$ on a bifoliated plane $(P,\cF^+, \cF^-)$, denote by $\fix(\rho)$ the set of elements $g\in G$ such that $\rho(g)$ has at least one fixed point in $P$.
 \end{notation}
Thus, if $\rho$ is the action on the orbit space of a transitive pseudo-Anosov flow $\varphi$, then $\fix(\rho) = \fix(\varphi)$.   We now can proceed with the main reduction.

\begin{proof}[Proof of Theorem \ref{thm_main_Anosov} assuming Theorem \ref{thm_main_general}]  
Suppose that $\varphi$ and $\psi$ are two pseudo-Anosov flows on a given compact manifold $M$ such that $\Phi(\cP(\varphi)) = \cP(\psi)$ for some automorphism $\Phi$ of $\pi_1(M)$.  We assume at least one flow is transitive. 
Since $M$ is a $K(\pi, 1)$ space, $\Phi = f_\ast$ for some homotopy equivalence $f\colon M \to M$. Since we are working in dimension 3, $f$ can be upgraded to a homeomorphism, and even a diffeomorphism, thanks to work of Waldhausen \cite{Waldhausen} and Gabai--Meyerhoff--Thurston \cite{GMT}.  After conjugating $\varphi$ by $f$, we can assume that $\cP(\varphi) = \cP(\psi)$ and we need to prove that these flows are now isotopically equivalent.  

Proposition \ref{prop_1transitive_both_transitive} implies that both flows are transitive, provided one is.
Thus, by Theorem \ref{thm:pA_is_anosov_like}, the actions of $\pi_1(M)$ on the orbit spaces of $\varphi$ and $\psi$ are Anosov-like actions on bifoliated planes.
The set of conjugacy classes of elements of $\pi_1(M)$ fixing a point in the orbit space of $\varphi$ is exactly $\cP(\varphi)$. Since $\cP(\varphi) = \cP(\psi)$, we deduce that the two actions have the same set of elements with fixed points.    Thus, the notion of {\em signs} of trees of scalloped regions in their orbit spaces is well-defined (see Remark \ref{rem:def_sign_tree_flow}), and these signs agree if and only if the actions on the orbit spaces are conjugate.

Thus, provided that neither of the actions are affine actions on a trivial bifoliated plane, then the conclusion follow directly from the statement of Theorem \ref{thm_main_general} together with the fact, proven by Barbot \cite[Theorem 3.4]{Bar_caracterisation} that conjugacy of the actions on the orbit space is equivalent to isotopy equivalence of the flow.

If, on the other hand, the orbit space of one of the flows, say $\varphi$, is a trivial bifoliated plane, then $\varphi$ is a suspension flow. Hence the manifold $M$ is the mapping torus of an Anosov diffeomorphism, and any other Anosov flow on $M$ is orbit equivalent to $\varphi$ by \cite{Plante81}.
\end{proof}

 \section{Nontrivial bifoliated planes and the ideal circle} \label{sec:ideal_circle}
 
 In \cite{Fen_ideal_boundaries}, Fenley defined a circle at infinity associated to the orbit space of a pseudo-Anosov flow. Frankel generalized this construction in \cite{Fra_Mobiuslike} to any \emph{generalized unbounded decomposition} of a topological plane.  As a special case of this, to any bifoliated plane $(P,\cF^+, \cF^-)$, one can associate an ideal circle $\Pbound$ by taking the \emph{end compactification} of the union of the foliations $\cF^+\cup \cF^-$ as in \cite{Fra_Mobiuslike}.   
 
 Following \cite{Fen_ideal_boundaries}, the ideal circle of a bifoliated plane has a topology that can be specified in terms of the foliations.  Specifically, a neighborhood basis of the topology at a point $\eta \in \Pbound$ can be taken to consist of all {\em convex regions bounded by polygonal paths}, where a {\em polygonal path} is a properly embedded, bi-infinite path made up of a finite collection of segments alternately in $\cF^+$ and $\cF^-$, with rays of $\cF^\pm$ at the ends, and convex means any leaf intersecting the boundary of the region does so in at most two points.   In particular, endpoints of leaves represent a dense subset of $\Pbound$. Homeomorphisms of $(P,\cF^+, \cF^-)$ preserving the foliations extend to homeomorphisms of the compactification.  
  
 We recall here for future reference a few basic results about this end compactification taken from section 3 of \cite{Fen_ideal_boundaries} (see in particular Lemma 3.6 and Lemma 3.14 therein).  
 \begin{proposition}\label{prop_ideal_circle} \cite{Fen_ideal_boundaries}
  Let $(P,\cF^+, \cF^-)$ be a bifoliated plane and $\Pbound$ its associated ideal circle. Then the following hold: 
  \begin{enumerate}[label=(\roman*)]
   \item The distinct ends of any leaf determine distinct points in $\Pbound$.
   \item If $r^+,r^-$ are two half leaves of $\cF^+$ and $\cF^-$ (respectively) that make a perfect fit, then their ends determine the same point on $\Pbound$.
   \item If $r$ and $r'$ are two half-leaves of either $\cF^+$ or $\cF^-$ that determine the same point on $\Pbound$, then there exist half-leaves $r_1, \dots r_n$ such that $r=r_1$, $r_n= r'$, and each consecutive pair $(r_i, r_{i+1})$ makes a perfect fit.
   \item If $r_n$ is a sequence of half-leaves converging to a single half-leaf $r$, then the endpoints of $r_n$ converge to the endpoint of $r$.  
  \end{enumerate}
 \end{proposition}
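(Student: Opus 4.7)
The plan is to work directly from the definition of the topology on $\Pbound$ in terms of convex polygonal neighborhoods: a neighborhood basis at an ideal point consists of the open regions bounded by a properly embedded polygonal path with two rays in $\cF^+ \cup \cF^-$ as its unbounded arms, where \emph{convex} means every leaf meets the bounding path in at most two points. Each of the four claims is then established by either constructing or analyzing such polygonal neighborhoods.

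For (i), given a leaf $l$ with ends $\eta_1, \eta_2$, I would pick a regular point $x \in l$ and a short arc $\sigma$ through $x$ in the opposite foliation. Extending $\sigma$ at both endpoints by two rays of $\cF^\pm$, chosen transverse to the two ends of $l$ far out, produces a polygonal path meeting $l$ only at $x$. The two complementary components are convex polygonal neighborhoods of $\eta_1$ and $\eta_2$ respectively, witnessing $\eta_1 \neq \eta_2$. For (ii), suppose $r^+ \in \cF^+$ and $r^- \in \cF^-$ make a perfect fit, and let $\eta$ be the endpoint of $r^+$. A basic convex neighborhood $U$ of $\eta$ contains a sub-ray of $r^+$; the perfect-fit condition says that all $\cF^+$-leaves meeting a small transverse arc $\tau^+$ based near the end of $r^+$ cross $r^-$, and for points of $\tau^+$ sufficiently close to infinity these leaves lie inside $U$. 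Convexity of $U$ then forces a sub-ray of $r^-$ to enter $U$ and stay there, so the end of $r^-$ also converges to $\eta$. The same argument iterated along a chain of perfect fits establishes one direction of (iii).

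Item (iii) is the main obstacle. Define the equivalence relation $\sim$ on half-leaves generated by perfect fits, and suppose two half-leaves $r, r'$ share the endpoint $\eta$. The argument goes by contradiction: if no chain of perfect fits joins $r$ to $r'$, I aim to construct a polygonal path neighborhood separating their ends, contradicting that both define $\eta$. The construction proceeds by alternately following a half-leaf and then turning along a transverse leaf; at each turn one either finds a perfect fit, extending the chain from $r$, or one can close up the turn by a convex polygonal arc that leaves $r'$ on the outside. Properness of leaves in $P$ ensures the process terminates in finitely many steps with one of the two outcomes. The expected subtlety lies in routing the chain through pseudo-Anosov singularities: here one uses the finite $n$-prong structure at each singular point to restrict to finitely many candidate half-leaves and conclude.

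For (iv), let $r_n$ be half-leaves converging to $r$, uniformly on compact subsets of $P$. Any basic convex polygonal neighborhood $U$ of the endpoint $\eta$ of $r$ contains all of $r$ past some compact initial segment, so for $n$ large the $r_n$ enter $U$ near where $r$ does. Convexity of $U$ then prevents them from leaving, placing their endpoints in $\overline{U} \cap \Pbound$. Since this holds for every basic neighborhood of $\eta$, the endpoints of $r_n$ converge to $\eta$, giving the stated continuity.
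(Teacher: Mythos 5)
The paper does not prove this proposition; it is a citation to Fenley \cite{Fen_ideal_boundaries} (Lemmas~3.6 and~3.14 therein), so there is no in-paper argument to compare against. Evaluating your proposal on its own terms: the strategy of working directly from the convex polygonal-neighborhood basis is sound in spirit, but two of the four items have real gaps. Item~(iii) is a plan, not a proof---you acknowledge it is ``the main obstacle,'' and it is, being the substantive content of Fenley's Lemma~3.14---but the separating-path construction is left at the level of ``one can close up the turn by a convex polygonal arc,'' with the termination argument and the handling of singular prongs only gestured at.

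For~(iv) there is a concrete error: ``convexity of $U$ prevents them from leaving'' is not true as stated, since convexity allows $r_n$ to meet $\partial U$ in up to two points, one entering near where $r$ does and one exiting along an unbounded ray of the bounding polygonal path. Ruling out the second crossing requires the hypothesis that $r_n$ converges to a \emph{single} half-leaf rather than to a union of non-separated leaves, which your argument never invokes; without it the statement is false---a sequence accumulating on a non-separated family can have ideal points converging to an endpoint of a different leaf in the family or to a corner of a scalloped region (cf.~Corollary~\ref{cor:ideal_scalloped}), not to the endpoint of $r$. In~(ii), ``for points of $\tau^+$ sufficiently close to infinity'' does not parse ($\tau^+$ is a fixed compact arc); the correct input is that $\cF^+$-leaves $k^+$ near $r^+$ meet $r^-$ at points going to infinity along $r^-$, but you must still show those intersection points eventually lie inside $U$, which is not a one-line convexity observation since $k^+$ could exit $U$ before reaching $r^-$.
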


 In the case where $(P,\cF^+, \cF^-)$ is trivial, the ideal circle contains exactly four points which do not correspond to endpoints of leaves; and each complementary interval can be identified either with the stable or unstable leaf space.  In the skew case, there are exactly two points that are not endpoints of leaves (the two ``ends" of the infinite strip), and the sides of the infinite strip are naturally identified with the remaining ideal points.    We will show later in Proposition~\ref{prop_4_2_0_global_fixed_points},  for planes with an Anosov-like action, having a $G$-invariant finite set in $\Pbound$ exactly characterizes the trivial and skew case.   For now, we establish some necessary results on the structure of an Anosov-like action on the boundary of a {\em nontrival} bifoliated plane.

 \begin{observation} \label{obs:disjoint_ideal_closures}
Suppose $a \in P$ is a non-corner fixed point of $\alpha$, and $b \in P$ a fixed point of $\beta$.  If $\xi$ is a common fixed point of $\alpha$ and $\beta$ on $\Pbound$, then $b = a$ is the unique fixed point of $\beta$ in $P$. 
\end{observation}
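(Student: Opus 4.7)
The plan is to show that $\beta$ fixes $a$; the conclusion then follows quickly. Indeed, once $\beta(a)=a$, the non-corner hypothesis together with Proposition~\ref{proposition:cofixed_lozenge} imply that $a$ is the unique fixed point of $\beta$ in $P$: any other fixed point of $\beta$ would form, together with $a$, the corners of a chain of lozenges, contradicting that $a$ is non-corner. In particular $b=a$.

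After replacing $\alpha$ and $\beta$ by suitable powers---which does not affect the fixed points nor the hypotheses---I may assume each preserves every half-leaf through its own fixed point in $P$. The same application of Proposition~\ref{proposition:cofixed_lozenge} shows that $a$ is already the unique fixed point of $\alpha$ in $P$.

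The key step is then to identify $\xi$ as the ideal endpoint of a half-leaf through $a$. Supposing $\xi$ is the endpoint of some half-leaf, Proposition~\ref{prop_ideal_circle}(iii) assembles all half-leaves ending at $\xi$ into a chain connected by perfect fits. A further power of $\alpha$ preserves each half-leaf in this chain, hence fixes each of their $P$-endpoints; since $a$ is the unique $\alpha$-fixed point in $P$, every such $P$-endpoint equals $a$. But distinct half-leaves making a perfect fit have distinct $P$-endpoints by definition, so the chain consists of a single half-leaf $r$, whose $P$-endpoint is $a$. Since $\beta$ fixes $\xi$, preserves the foliations, and $r$ is the only half-leaf ending at $\xi$, $\beta$ must preserve $r$ setwise, and hence fix its $P$-endpoint $a$, completing the argument.

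The most delicate piece is handling the case where $\xi$ is not the endpoint of any half-leaf. In a nontrivial bifoliated plane---the setting of this section---such non-leaf ideal points are associated to scalloped regions or trees thereof, whose $\bZ^2$ stabilizers (Lemma~\ref{lem:Z2stabilizer}) would force corners inconsistent with the non-corner hypothesis on $a$; making this dichotomy precise is the main technical obstacle of the proof.
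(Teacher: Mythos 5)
Your argument for the case where $\xi$ is the endpoint of a half-leaf is correct and essentially the same as the paper's: the chain-of-perfect-fits structure from Proposition~\ref{prop_ideal_circle}(iii) forces every half-leaf ending at $\xi$ to pass through $a$ (since a power of $\alpha$ preserves each one and $a$ is the unique $\alpha$-fixed point), and the non-corner hypothesis then collapses the chain to a single half-leaf $r$, which $\beta$ must preserve, giving $\beta(a)=a$ via Lemma~\ref{lem_same_fixed_leaf_same_fixed_point}. Your preliminary reductions (passing to powers, uniqueness via Proposition~\ref{proposition:cofixed_lozenge}) are also fine.

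The gap you acknowledge in the final paragraph is genuine, and the route you sketch to close it would not work: a point of $\Pbound$ that is not the endpoint of any leaf need not be a corner of a scalloped region (see Observation~\ref{obs:escape_endpoints}), so an appeal to $\bZ^2$ stabilizers via Lemma~\ref{lem:Z2stabilizer} does not apply. The fact you actually need is that the fixed set of $\alpha$ on $\Pbound$ is exactly $\idealpts(a)$; this is precisely part (1) of Proposition~\ref{prop:boundary_action_general}, whose proof analyzes the dynamics of $\alpha$ on each arc of $\Pbound \smallsetminus \idealpts(a)$ and shows $\alpha$ acts there without fixed points (using the absence of infinite product regions and the topological hyperbolicity of $\alpha$ at $a$). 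The paper's own proof of this observation is terse on the same point---it asserts without elaboration that $\xi$ is the endpoint of a half-leaf through $a$---so the underlying dependence is shared; you should either invoke Proposition~\ref{prop:boundary_action_general}(1) directly (its proof does not depend on this observation, so there is no circularity) or reproduce that dynamical argument inline rather than leaving it open.
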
 

\begin{proof} 
Since $a$ is assumed to be a non-corner, one of the half-leaves through $a$ is the only leaf ending at $\xi$, by Proposition \ref{prop_ideal_circle}. Thus $\beta$ fixes this leaf through $a$, so by Lemma \ref{lem_same_fixed_leaf_same_fixed_point}, $a$ itself is fixed by $\beta$. 
\end{proof} 

 \begin{observation} \label{obs:escape_endpoints}
If $(P,\cF^+, \cF^-)$  is a nontrivial bifoliated plane and $l_n$ is a sequence of leaves in $\cF^+$ that escapes to infinity, then the endpoints of $l_n$ converge to a single point. 
\end{observation}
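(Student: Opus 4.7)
The plan is to compactify $P$ to $\overline{P} := P \cup \Pbound$, a topological closed disk, and study Hausdorff limits of the arcs $\overline{l_n} \subset \overline{P}$. By compactness of $\overline{P}$, pass to a subsequence so that $\overline{l_n}$ converges to some compact set $L$ in Hausdorff topology. Since $l_n$ escapes every compact subset of $P$, we have $L \subset \Pbound$; and as a Hausdorff limit of connected compact sets in the Peano continuum $\overline{P}$, the set $L$ is connected. Hence $L$ is either a single point (in which case we are done) or a closed arc of $\Pbound \cong S^1$. Assume for contradiction that $L$ has more than one point, so that after further subsequences we may take $\xi_n^+ \to \xi_1$ and $\xi_n^- \to \xi_2$ with $\xi_1 \neq \xi_2 \in L$.

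By density of leaf-endpoints in $\Pbound$, I choose leaves $m, m' \in \cF^-$ with endpoints $\{\mu, \mu'\}$ and $\{\nu, \nu'\}$ respectively, where $\mu', \nu'$ are distinct points in the interior of the subarc of $L$ between $\xi_1$ and $\xi_2$, and $\mu, \nu$ lie in the complementary arc of $\Pbound \smallsetminus \{\xi_1, \xi_2\}$ (chosen outside $L$ whenever possible). Each pair separates $\{\xi_n^+, \xi_n^-\}$ on $\Pbound$ for $n$ large, forcing the intersections $p_n := m \cap l_n$ and $q_n := m' \cap l_n$ to exist. Since $l_n$ escapes in $P$ and $p_n \in m$, the sequence $p_n$ must escape along $m$ to one of its two endpoints; but $p_n \in \overline{l_n}$ and $\overline{l_n}$ Hausdorff-converges to $L$, so every accumulation point of $p_n$ lies in $L \cap \{\mu, \mu'\}$, forcing $p_n \to \mu'$ after a further subsequence. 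Analogously $q_n \to \nu'$.

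The final step is to construct from this data an infinite product region in $P$, contradicting Proposition~\ref{prop:no_product}. The natural candidate is the open region $R_n$ bounded in $P$ by the compact segment $I^+ := [p_n, q_n] \subset l_n$ together with the two half-leaves $r_1^- \subset m$ (from $p_n$ toward $\mu'$) and $r_2^- \subset m'$ (from $q_n$ toward $\nu'$). The main technical point, which I expect to be the principal obstacle, is verifying the product property of Definition~\ref{def_product_region}: that every $\cF^+$-leaf passing through $R_n$ meets both $r_1^-$ and $r_2^-$. The potential obstruction is a $\cF^+$-leaf in $R_n$ whose two endpoints both lie on the ideal arc of $L$ between $\mu'$ and $\nu'$, which would bypass both half-leaves. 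I expect to handle this by passing to the sub-region $R_n^* \subset R_n$ consisting of the union of $\cF^+$-leaf segments that cross both $r_1^-$ and $r_2^-$; this $R_n^*$ is bounded by (possibly truncated) sub-half-leaves of $r_1^-, r_2^-$ together with a sub-segment of $I^+$, and satisfies the product property by construction, yielding the desired contradiction.
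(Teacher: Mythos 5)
Your overall strategy — Hausdorff limit in $\overline P$, selecting a pair of $\cF^-$-leaves whose intersection points with $l_n$ escape toward two distinct ideal points, and manufacturing an infinite product region to contradict Proposition~\ref{prop:no_product} — is the same as the paper's sketch. However, there is a genuine gap exactly at the step you flag as "the principal obstacle," and the fix you propose does not close it.

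Passing to $R_n^*$ (the union of $\cF^+$-segments crossing both rays) does not obviously produce an \emph{infinite} product region: $R_n^*$ is a sub-region of $R_n$, but nothing you have said forces it to be unbounded, or to have genuine half-leaves (rather than compact sub-segments of $r_1^-$ and $r_2^-$) in its boundary. If there were in fact a $\cF^+$-leaf $k$ interior to $R_n$ with both ends in the arc $(\mu',\nu')\subset\Pbound$, then $k$ (or a union of non-separated leaves) would cut $R_n^*$ off into a region whose two negative sides are \emph{compact} segments, and this is not an infinite product region in the sense of Definition~\ref{def_product_region}. So the "by construction" claim is not justified.

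What you actually need — and this is the content of the paper's argument — is the observation that the open arc $\mathring L=(\xi_1,\xi_2)$ contains \emph{no endpoints of $\cF^+$-leaves at all}. This follows directly from the Hausdorff convergence of $\overline{l_n}$ to $L$: if a $\cF^+$-leaf $k$ had an endpoint $\zeta$ in $\mathring L$, then either its other endpoint lies in the complementary arc, in which case the endpoints of $k$ separate $\xi_1$ from $\xi_2$ and hence separate $\{\xi_n^+,\xi_n^-\}$ for $n$ large, forcing $k\cap l_n\neq\emptyset$; or its other endpoint also lies in $\overline L$, in which case $k$ together with an ideal arc inside $L$ bounds a disk $D$ in $\overline P$ avoiding $\xi_1,\xi_2$, and for $n$ large $l_n$ has points in $D$ (by Hausdorff convergence) but endpoints outside $\overline D\cap\Pbound$, forcing $l_n$ to cross $k$. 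In either case two distinct leaves of the same foliation $\cF^+$ would intersect, which is impossible. Once this is established, your worry vanishes: no $\cF^+$-leaf through $R_n$ can have an end on the ideal arc $(\mu',\nu')$, so every $\cF^+$-leaf meeting $R_n$ must enter and exit through $r_1^-$ and $r_2^-$ (it cannot cross $I^+\subset l_n$), and $R_n$ itself is already an infinite product region with no need for a sub-region.
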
 

Since this follows from \cite{Fen_ideal_boundaries}, we only sketch the proof. 
\begin{proof} 
Suppose $l_n$ has endpoints $\eta_n, \xi_n$ and $l_n$ escapes to infinity.  Pass to a subsequence so $\eta_n \to \eta$ and $\xi_n \to \xi$ in $\Pbound$.   Since $l_n$ escapes to infinity, one interval between $\eta$ and $\xi$ on $\Pbound$ contains no endpoints of leaves of $\cF^+$.  Thus, it consists of endpoints of half-leaves of $\cF^-$, which necessarily intersect $l_n$ for all $n$ sufficiently large, producing an infinite product region. 
\end{proof} 

Using this, Proposition \ref{prop_ideal_circle}, and the definition of scalloped region, we obtain the following.   See \cite[Lemma 3.27]{Fen_ideal_boundaries}. 

\begin{corollary}  \label{cor:ideal_scalloped} 
Let $l^{i,\pm}_k$ be any one of the four families of leaves comprising the boundary of a scalloped region.  Then, as $k \to \infty$, the endpoints of $l^{i,\pm}_k$ converge to a point $\xi$, and as $k \to -\infty$ to another point $\nu \neq \xi$.  Varying the families, exactly four points are obtained this way, each point the limit in one direction of two different families.  These four points are called the {\em corners} of the scalloped region. 
\end{corollary}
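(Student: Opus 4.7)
The plan is to apply Observation \ref{obs:escape_endpoints} to each of the four boundary families $\{l_k^{i,\pm}\}$, and then use the perfect-fit structure of the scalloped region together with items (ii) and (iv) of Proposition \ref{prop_ideal_circle} to identify the resulting limit points pairwise.

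First I would verify that, for each of the four families, the leaves escape every compact subset of $P$ as $|k| \to \infty$. Since the leaves in a single family are pairwise non-separated and pairwise distinct, they cannot accumulate onto any leaf of $\cF^+$ or $\cF^-$ in $P$: such a limit leaf would be non-separated from all of them, but by the analysis in Lemma \ref{lemma_scalloped_ideal_corner} (and maximality) the only leaves non-separated from all $l_k^{i,\pm}$ are the other leaves of the same family, which are themselves distinct. Because the plane is nontrivial (scalloped regions cannot occur in the trivial case), Observation \ref{obs:escape_endpoints} then implies that \emph{both} endpoints of $l_k^{i,\pm}$ converge to a single point of $\Pbound$ as $k \to +\infty$, and to a single point as $k \to -\infty$. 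This yields a priori eight limit points, two per family.

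Next I would identify these eight limit points in pairs via the perfect-fit structure. By item (iii) of Definition \ref{def_scalloped}, for each $k$ the leaf $f_k^{1,-}$ makes a perfect fit with both $l_k^{1,+}$ and $l_{k+1}^{1,+}$, so by Proposition \ref{prop_ideal_circle}(ii) it shares one ideal endpoint with each of them. As $k \to \infty$ the leaves $f_k^{1,-}$ converge to the family $\bigcup_i l_i^{1,-}$, so by Proposition \ref{prop_ideal_circle}(iv) the shared endpoint between $f_k^{1,-}$ and $l_k^{1,+}$ tends to the common limit endpoint of $l_i^{1,-}$ in the corresponding direction. This identifies the ``$k\to +\infty$'' limit of the family $\{l_k^{1,+}\}$ with one of the two limits of the family $\{l_k^{1,-}\}$. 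The same argument at the three other junctions of the scalloped region pairs up the remaining limit points, so the eight a priori endpoints collapse to exactly four distinct ideal points, each being the limit in one direction of precisely two of the four boundary families.

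Finally I would verify that these four points are pairwise distinct. If two of them coincided, then two distinct families of boundary leaves would accumulate to the same point of $\Pbound$ from the same side, forcing either an infinite product region (contradicting Proposition \ref{prop:no_product}) or a violation of the non-separation structure of the scalloped region. The main mild obstacle is bookkeeping: at each stage one must keep track of which of the two endpoints of a leaf is being considered (the one on the side of the perfect fit), but this is routine using Proposition \ref{prop_ideal_circle}(ii) and (iv).
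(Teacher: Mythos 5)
Your overall plan matches what the paper intends — the paper's ``proof'' of this statement is simply a citation of Observation~\ref{obs:escape_endpoints}, Proposition~\ref{prop_ideal_circle}, the definition of scalloped region, and a reference to Fenley's Lemma~3.27. Your Step~1 (each boundary family escapes to infinity, so Observation~\ref{obs:escape_endpoints} yields a single ideal limit in each direction) is essentially right, though the non-accumulation argument deserves one more word: one must rule out the sequence $l_k^{i,\pm}$ converging to one of the leaves \emph{in} the family, which holds because a neighborhood of $l_{j_0}^{i,\pm}$ in the leaf space excludes the other branches of the non-separated family.

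The genuine gap is in Step~2. Proposition~\ref{prop_ideal_circle}(iv) is a statement about a sequence of half-leaves converging to a \emph{single} half-leaf. By item (iii) of Definition~\ref{def_scalloped} the sequence $(f_k^{1,-})$ converges, as $k \to \infty$, to the \emph{infinite} union $\bigcup_i l_i^{1,-}$ of pairwise non-separated leaves, and the half-leaves of $f_k^{1,-}$ based near a point of some $l_{i_0}^{1,-}$ do not converge to a single half-leaf of $l_{i_0}^{1,-}$: following them, they drift along the whole non-separated family, crossing the $\cF^+$-leaves that bridge consecutive $l_i^{1,-}$. So the cited proposition does not apply, and the inference that the shared endpoint of $f_k^{1,-}$ and $l_k^{1,+}$ tends to an ideal accumulation point of the $l^{1,-}$-family is not justified as written. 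The conclusion is of course true, but it needs a different argument; for instance, one can observe that the closures $\overline{f_k^{1,-}} \subset P \cup \Pbound$ are arcs whose Hausdorff limit is a connected closed set containing $\bigcup_i l_i^{1,-}$ together with both ideal accumulation points of that family, and then use connectedness together with the already-established convergence of the $l_k^{1,+}$-endpoint to $\xi_1$ to force $\xi_1$ to coincide with one of those two points. Your Step~3 (pairwise distinctness of the four limits) is likewise left as a sketch; it can be made precise using Proposition~\ref{prop:no_product} and Proposition~\ref{prop_ideal_circle}(i), but only after the pairing from Step~2 is correctly secured.
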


We will make frequent use of the following short lemma on convergence of ideal boundary points in nontrivial planes.  
 
 \begin{lemma}\label{lem_convergence_or_trivially_product} 
 Let $(P,\cF^+, \cF^-)$ be a nontrivial bifoliated plane with an Anosov like action of a group $G$.
 Let $(l_n)_{n \in \mathbf{N}}$ be a sequence of leaves of $\cF^+$.  Suppose that there exist ideal points $\xi_n, \eta_n\in \Pbound$ of $l_n$ such that $\xi_n$ converges to $\xi$ and $\eta_n$ converges to some point $\eta \neq \xi$.  Then the sequence $(l_n)$ converges to a union (finite or infinite) of leaves in $\cF^+$.   The same holds for a sequence of leaves of $\cF^-$. 
 \end{lemma}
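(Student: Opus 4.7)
My plan is to first use Observation \ref{obs:escape_endpoints} to rule out the sequence $(l_n)$ escaping every compact subset of $P$, and then to analyze its accumulation set. By Observation \ref{obs:escape_endpoints}, in a nontrivial bifoliated plane any sequence of leaves of $\cF^+$ that escapes every compact set has all its ideal endpoints converging to a single point of $\Pbound$. Since $\xi \neq \eta$, this cannot happen here, so some compact $K \subset P$ meets infinitely many $l_n$; picking $p_n \in l_n \cap K$ and extracting a subsequence, I obtain $p_n \to p \in P$. Let $L \subset P$ be the set of accumulation points of $(l_n)$, i.e.\ all limits of sequences $q_{n_k} \in l_{n_k}$; it is closed and, by the above, nonempty. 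Since any closed $\cF^+$-saturated subset of $P$ is automatically a union of leaves (finite or infinite), it then suffices to show that $L$ is $\cF^+$-saturated.

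For saturation, I fix $x \in L$ with $x_n \in l_n$, $x_n \to x$, and an arbitrary $y \in \cF^+(x)$; I then cover a compact arc $\gamma \subset \cF^+(x)$ from $x$ to $y$ by finitely many foliation charts of the bifoliation $(\cF^+, \cF^-)$, using regular product charts at regular points and standard prong charts at the finitely many singular points of $\cF^+$ met by $\gamma$. In each regular chart, the local product structure forces any leaf of $\cF^+$ entering the chart sufficiently close to $\cF^+(x)$ to traverse the chart and exit close to $\cF^+(x)$ on the other side; chaining across the charts produces $y_n \in l_n$ with $y_n \to y$, whence $y \in L$. Proposition \ref{prop:no_product} plays a key auxiliary role here, ensuring that $l_n$ cannot drift away from $\cF^+(x)$ between consecutive charts, as that would create an infinite product region.

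The main technical obstacle I anticipate is the analysis at a singular point $s$ of $\cF^+$ lying on $\gamma$. Near $s$, leaves of $\cF^+$ live inside individual sectors of the prong, and I must argue that $l_n$ accumulates on the full prong-leaf $\cF^+(x)$, not just on the two prongs bounding the particular sector that $l_n$ enters. Here I plan to combine the fact that a leaf inside a sector of a prong chart asymptotes to both of its bounding prongs, Proposition \ref{prop:no_product} (which rigidifies how $l_n$ and $\cF^+(x)$ can differ globally), and Axiom \ref{Anosov_like_prongs_are_fixed} (which gives an element of $G$ fixing $s$ and acting hyperbolically on $\cF^+(s)$, cyclically permuting prongs) to propagate accumulation across the singularity and then inductively along the whole of $\cF^+(x)$. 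The argument for a sequence in $\cF^-$ is identical after exchanging the roles of $+$ and $-$.
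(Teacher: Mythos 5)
Your opening step (using Observation \ref{obs:escape_endpoints} to rule out escape to infinity because $\xi\neq\eta$) matches the paper's. After that, however, the approaches diverge, and yours has two genuine gaps.

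First, showing that the accumulation set $L\subset P$ is closed and $\cF^+$-saturated would not establish the lemma's conclusion. A generic sequence of leaves that does not escape to infinity can have a large, disconnected accumulation set in the leaf space (e.g.\ oscillating between two far-apart clusters of leaves); being a closed saturated subset of $P$ is far from saying that $(l_n)$ \emph{converges} to a single finite chain of non-separated leaves or to the boundary family of a scalloped region, which is what the lemma asserts. The hypothesis $\xi_n\to\xi$, $\eta_n\to\eta$ is doing real work here beyond ruling out escape: the paper uses it a second time (via Proposition \ref{prop_ideal_circle}) to pin down the limit as the unique chain or scalloped family whose extreme ideal points are $\xi$ and $\eta$, and to force every subsequential limit to coincide with it, giving convergence of the whole sequence. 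Your argument never returns to $\xi,\eta$ after the first step and so cannot recover this.

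Second, the saturation claim itself is false at singular leaves, and the workaround you propose does not fix it. A sequence of regular leaves approaching a $p$-pronged singularity $s$ from one sector accumulates on the two prongs bounding that sector (together with $s$) but need not accumulate on the remaining $p-2$ prongs at all, so $L$ need not contain the whole singular leaf $\cF^+(s)$. Axiom \ref{Anosov_like_prongs_are_fixed} gives a nontrivial $g$ fixing $s$, but applying $g$ only transports statements about $(l_n)$ to statements about the different sequence $(g(l_n))$; it does not make $(l_n)$ itself accumulate on the other prongs, and the axiom does not even promise an element that cyclically permutes prongs (a power may fix each prong). The auxiliary appeal to Proposition \ref{prop:no_product} to prevent ``drift'' between charts is also unwarranted: a leaf leaving a chart early does not produce an infinite product region. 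In short, the chart-chaining strategy neither yields saturation at singularities nor whole-sequence convergence, whereas the paper's argument obtains both by exploiting the ideal-endpoint data throughout.
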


 \begin{proof}
  If $l_n$ escapes to infinity in the leaf space (even along some subsequence) then the endpoints cannot converge to two distinct points, by Observation \ref{obs:escape_endpoints}.   Thus, for some subsequence we have that $l_{n_j}$ converges to a finite or infinite union of leaves.   If finite, the extreme leaves of this union have endpoints $\xi$ and $\eta$ by Proposition \ref{prop_ideal_circle}.  We claim $l_n$ converges to this finite union as well.  For indeed, that the endpoints converge to $\xi$ and $\eta$ forces all leaves $l_n$ to eventually lie in the same connected component of the finite union and converge to the limit.  
  
In the case where the union is infinite, it forms one side of a scalloped region, say for concreteness $l^{1,-}_k$; where as $k \to \pm\infty$ the endpoints of the leaves converge to corners of the scalloped region, necessarily $\xi$ and $\eta$. 
This also forces all leaves $l_{n_j}$ (for $j$ sufficiently large) to intersect the scalloped region.  Since the endpoints of $l_n$ also converge to the corners, $\xi$ and $\eta$, these leaves also intersect the scalloped region and converge to the family  $l^{1,-}_k$ as well. 
\end{proof}

 \subsection{Fixed points on the ideal circle}
In this section, we collect some results about the structure of fixed points on the ideal circle, primarily on nontrivial planes.  
 
 \begin{lemma}\label{lem_leaves_to_ideal_are_fixed}
 Let $(P,\cF^+, \cF^-)$ be a nontrivial bifoliated plane with Anosov like action of $G$.
 Suppose $g\in G$ has some fixed point in $P$ and preserves the orientation of $\Pbound$, as well as fixing $\xi \in \Pbound$.  If  $\xi$ is the endpoint of a leaf $l$, then $g(l) = l$.   
 \end{lemma}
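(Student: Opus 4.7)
Plan: The argument proceeds by contradiction. Suppose $g(l) \neq l$, so that both $l$ and $g(l)$ are distinct leaves of $\cF^+$ ending at $\xi$; by Proposition~\ref{prop_ideal_circle}(iii), their half-leaves at $\xi$ are connected by a finite chain of perfect fits alternating between $\cF^+$ and $\cF^-$. The strategy is to produce a leaf $L$ ending at $\xi$ that is fixed by $g$, then invoke Observation~\ref{obs:fix_perf_fit} iteratively along this perfect-fit chain to deduce that $g$ fixes every leaf ending at $\xi$, in particular $l$, giving the contradiction.

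Let $\eta \neq \xi$ denote the other endpoint of $l$. First I show $g(\eta) \neq \eta$: otherwise $l$ and $g(l)$ share both endpoints, and the region between them in $P$ would be entirely foliated by $\cF^+$-leaves sharing those two endpoints, yielding an infinite product region bounded by a segment of $l$ together with two transverse $\cF^-$ half-leaves, contradicting Proposition~\ref{prop:no_product}. Since $g$ preserves the orientation of $\Pbound$ and fixes $\xi$, the orbit $\{g^n(\eta)\}_{n \in \bZ}$ is monotone on one side of $\xi$ and converges to fixed points $\eta_\pm$ of $g$ as $n \to \pm \infty$; at least one of these, say $\eta_+$ (after possibly replacing $g$ by $g^{-1}$), differs from $\xi$. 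By Lemma~\ref{lem_convergence_or_trivially_product}, the sequence $g^n(l)$ then converges to a $g$-invariant union $L_+$ of $\cF^+$-leaves whose extreme leaves have endpoints $\xi$ and $\eta_+$. By the uniqueness of a leaf with two prescribed endpoints (argued as above), in the generic case when $L_+$ is a finite union the extreme leaf $L$ with endpoints $\{\xi,\eta_+\}$ is uniquely determined and hence individually fixed by $g$.

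With a $g$-fixed leaf $L$ ending at $\xi$ in hand, Axiom~\ref{Anosov_like_A1} yields a fixed point $q \in L$ of $g$ on which $g$ acts with topological hyperbolicity, preserving the orientation of $L$ and hence fixing each half-leaf through $q$. Since $g$ preserves the orientation of $\Pbound$, equivalently of $P$, Observation~\ref{obs:fix_perf_fit} implies that $g$ fixes every leaf making a perfect fit with the half-leaf of $L$ from $q$ to $\xi$. Iterating this observation along the finite perfect-fit chain at $\xi$ connecting $L$ to $l$, I conclude that $g$ fixes every leaf in the chain, contradicting $g(l) \neq l$.

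The main technical obstacle is the degenerate case when $L_+$ is an infinite union; by Lemma~\ref{lemma_scalloped_ideal_corner} and Corollary~\ref{cor:ideal_scalloped} this forces $\xi$ to be a corner of a scalloped region $U$, and orientation preservation then forces $g$ to fix all four corners of $U$ and hence to lie in its virtually $\bZ^2$ stabilizer (Lemma~\ref{lem:Z2stabilizer}). To produce the required $g$-fixed leaf ending at $\xi$ in this case, one must use the hypothesis that $g$ has a fixed point $p \in P$: combining the rigidity of the stabilizer action on $U$ with Axiom~\ref{Anosov_like_A1} forces $p$ to lie in $U$, whereupon the leaves $\cF^\pm(p)$, being interior leaves of the trivially foliated scalloped region, end at two of its four corners and thus supply $L = \cF^+(p)$ or $\cF^-(p)$ as a fixed leaf through $\xi$, reducing this case to the previous paragraph.
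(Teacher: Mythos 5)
The overall architecture of your argument matches the paper's: suppose $g(l)\neq l$, examine the accumulation of $g^n(l)$ via Lemma~\ref{lem_convergence_or_trivially_product}, and split into the finite and infinite (scalloped) limit-union cases. Your treatment of the finite case is essentially the paper's: both get a $g$-fixed leaf with endpoint $\xi$ and push fixedness along the perfect-fit chain of Proposition~\ref{prop_ideal_circle}(iii) via Observation~\ref{obs:fix_perf_fit}. Two small cleanups are needed there. First, "at least one of $\eta_\pm$ differs from $\xi$" requires justification; the paper supplies it by noting that since $g$ preserves orientation, fixes $\xi$, and fixes some $x\in P$, it cannot permute the quadrants at $x$, so it fixes at least four points of $\Pbound$. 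Second, "the extreme leaf $L$ with endpoints $\{\xi,\eta_+\}$" is only literally a single leaf when the limiting union is a singleton; in general you should take the extreme leaf $f_0$ with endpoint $\xi$ and argue $g$ fixes it because $g$ fixes $\xi$, preserves orientation, and preserves the linearly-ordered finite chain (so it cannot swap the two ends). Your preliminary step showing $g(\eta)\neq\eta$ via an infinite product region is both hand-waved and unnecessary for the argument to proceed.

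The genuine gap is in the scalloped case. The paper disposes of it in one line: since the $g^n(l)$ converge to an infinite pairwise-nonseparated family forming one side of a scalloped region $U$ with corner $\xi$, they must enter $U$, which is incompatible with all of them having the corner $\xi$ as an ideal endpoint. Your alternative route — showing $g$ stabilizes $U$, then trying to produce a $g$-fixed leaf through $\xi$ by locating a fixed point of $g$ in $U$ and using $\cF^\pm(p)$ — has two unjustified and problematic steps. The claim that the fixed point $p$ "must lie in $U$" does not follow from the stabilizer being virtually $\bZ^2$: the element $g^2$ built in the proof of Lemma~\ref{lem:Z2stabilizer} lies in the stabilizer of $U$ yet has its fixed points at the corners of the lozenges whose sides lie in the boundary family $\{l_k^{1,+}\}$, and those corners sit on $\partial U$, not in the open set $U$. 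And the claim that interior leaves of $U$ end at the corners of $U$, so that $\cF^+(p)$ or $\cF^-(p)$ supplies a fixed leaf through $\xi$, runs directly against the fact the paper's own argument relies on: a leaf meeting the scalloped region cannot have the corner $\xi$ as an endpoint. So this paragraph needs to be replaced, most simply by the paper's direct incompatibility argument.
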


 \begin{proof}
 Suppose $g$ has some fixed point $x \in P$.  Since $g$ preserves orientation of $\Pbound$ and has a fixed point on $\Pbound$, it cannot nontrivially permute the ends of the half-leaves through $x$.  Thus $g$ has at least {\em four} fixed points on $\Pbound$.  
Assume for contradiction $g(l) \neq l$.  Then the leaves $g^n(l)$ are all distinct, but share a common endpoint $\xi$.  Either as $n \to \infty$ or as $n \to -\infty$, the other endpoints of $g^n(l)$ will accumulate on some other fixed point $\eta \neq \xi$ for $g$.  

Thus, by Lemma \ref{lem_convergence_or_trivially_product}, $g^n(l)$ must accumulate to a finite or infinite union of pairwise non-separated leaves $\{f_k\}$, the closure of whose endpoints contain $\xi$. If this union is infinite it is contained in the boundary of a scalloped region with ideal corner $\xi$, and $g^n(l)$ will intersect the scalloped region, but this is incompatible with these leaves having endpoint $\xi$.  

If the union is finite, then the $f_k$ must all be fixed by $g$ and one of them, say $f_{0}$, has endpoint $\xi$. But then item \emph{(iii)} of Proposition \ref{prop_ideal_circle} implies that $f_{0}$ and $l$ are joined by a sequence of perfect fits, and, as $g$ fixes $f_{0}$ it must also fix $l$, contradicting our starting assumption. 
 \end{proof}

The following lemma gives a trichotomy for the dynamics of the action of an element of $G$ on the boundary. It will be useful in the later parts of this work, especially in the proof of Proposition \ref{prop_charac_tot_linked}, and the proof that a conjugacy between induced boundary actions can be extended to a conjugacy between actions on bifoliated planes.  

\begin{proposition}[Boundary action of elements on a nontrivial bifoliated plane] \label{prop:boundary_action_general}
Let $(P,\cF^+, \cF^-)$ be a nontrivial bifoliated plane with Anosov like action of $G$, and $g \neq 1 \in G$.  
\begin{enumerate} 
\item If $g$ fixes a non-corner point $x$, then the only points of $\Pbound$ fixed by $g$ are endpoints of $\cF^\pm(x)$. 

\item If $g$ fixes all corners in a maximal chain of lozenges $\cC$, then the set of fixed points of $g$ on $\Pbound$ is the closure of the set of endpoints of the sides of lozenges in $\cC$.  

\item If $g$ acts freely on $P$, then it either has at most two fixed points on $\Pbound$, or has exactly four fixed points and preserves a scalloped region.  
\end{enumerate} 
\end{proposition}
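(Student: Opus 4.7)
The plan is to treat the three cases in turn, exploiting a uniform toolkit in each: Proposition~\ref{proposition:cofixed_lozenge}, which forces two fixed points in $P$ to sit as corners of a common chain of lozenges; Axiom~\ref{Anosov_like_A1}, which produces a unique fixed point on every fixed leaf; Lemma~\ref{lem_leaves_to_ideal_are_fixed}, which promotes a fixed ideal point at the end of a leaf back to a fixed leaf (provided $g$ has some fixed point in $P$); the description of $\partial P$ via leaf endpoints and ideal corners of scalloped regions coming from Corollary~\ref{cor:ideal_scalloped} and Lemma~\ref{lem_convergence_or_trivially_product}; and Lemma~\ref{lem:Z2stabilizer} controlling stabilizers of preserved scalloped regions. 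Throughout, I may replace $g$ by $g^{2}$ when orientation preservation on $\partial P$ is needed.

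For case (1), the non-corner hypothesis combined with Proposition~\ref{proposition:cofixed_lozenge} immediately shows that $x$ is the unique fixed point of $g$ in $P$. Given any fixed $\xi \in \partial P$ that is the endpoint of a leaf $l$, Lemma~\ref{lem_leaves_to_ideal_are_fixed} gives $g(l)=l$, so Axiom~\ref{Anosov_like_A1} produces a fixed point on $l$; by uniqueness this must equal $x$, forcing $l=\cF^\pm(x)$. If $\xi$ is not the endpoint of any leaf, then Lemma~\ref{lem_convergence_or_trivially_product} applied to any sequence of leaf endpoints converging to $\xi$, together with Corollary~\ref{cor:ideal_scalloped}, exhibits $\xi$ as an ideal corner of some scalloped region $U$; then $g$ permutes the boundary leaves of $U$ accumulating at $\xi$ and hence preserves $U$ setwise, so by Lemma~\ref{lem:Z2stabilizer} a power of $g$ fixes corners of lozenges in $U$, contradicting the uniqueness of $x$ as a non-corner. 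For case (2), the forward containment is direct, since $g$ preserves each lozenge in $\mathcal{C}$ (its corners and hence its quadrants are fixed) and therefore preserves each half-leaf side, so the endpoints of sides on $\partial P$ are fixed and the closure is automatically fixed by continuity. For the reverse direction, a fixed $\xi$ at the end of a leaf $l$ yields, via Lemma~\ref{lem_leaves_to_ideal_are_fixed} and Proposition~\ref{proposition:cofixed_lozenge} together with the maximality of $\mathcal{C}$, that the unique fixed point on $l$ is a corner in $\mathcal{C}$, so $\xi$ is realized as the endpoint of a side (using Proposition~\ref{prop_ideal_circle}(iii) to match perfect-fit identifications); a non-endpoint fixed $\xi$ is an ideal corner of a scalloped region which by the same maximality argument must be contained in $\mathcal{C}$.

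Case (3) is the main obstacle, since Lemma~\ref{lem_leaves_to_ideal_are_fixed} is unavailable when $g$ acts freely on $P$. The plan is first to observe that if a fixed $\xi \in \partial P$ is the endpoint of a unique leaf $l$, then necessarily $g(l)=l$, and Axiom~\ref{Anosov_like_A1} would yield a fixed point in $P$, contradicting free action. Hence any fixed endpoint must be shared by several leaves via a chain of perfect fits (Proposition~\ref{prop_ideal_circle}(iii)), and $g$ must permute this chain without fixing any individual leaf. In a nontrivial plane, Lemma~\ref{lemma_scalloped_ideal_corner} implies such a configuration of shared endpoints occurs only along one of the boundary families $l^{i,\pm}_{k}$ of a scalloped region, where $g$ acts by a nontrivial shift of the indexing. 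This already forces $g$ to preserve the scalloped region $U$ and pins down its fixed set on $\partial P$ as the four ideal corners of $U$ (Corollary~\ref{cor:ideal_scalloped}). The final, most delicate step is the upper bound: preserving two distinct scalloped regions would place $g$ in the intersection of two virtually-$\bZ^{2}$ subgroups (Lemma~\ref{lem:Z2stabilizer}), and I expect the hardest bookkeeping to be ruling out this configuration together with three- or five-fixed-point possibilities using the alternation of attracting and repelling behavior at fixed points on the ideal circle forced by the Anosov-like dynamics, yielding the dichotomy between at most two fixed points and exactly four preserving a single scalloped region.
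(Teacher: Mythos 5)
Your proposal has a genuine gap in cases (1) and (2). You claim that a fixed ideal point $\xi$ which is not the endpoint of any leaf must be an ideal corner of a scalloped region, citing Lemma~\ref{lem_convergence_or_trivially_product} and Corollary~\ref{cor:ideal_scalloped}. But Lemma~\ref{lem_convergence_or_trivially_product} requires a sequence of leaves whose \emph{two} endpoints converge to \emph{distinct} limits; a generic sequence of leaves with one endpoint approaching $\xi$ may simply escape to infinity with both endpoints converging to $\xi$ (Observation~\ref{obs:escape_endpoints}), in which case no scalloped region is produced. You never supply a sequence satisfying the hypothesis of the lemma, and the claim that every non-endpoint ideal point is a scalloped corner is neither stated nor used in the paper (and is the kind of statement one should be suspicious of in a plane with nontrivial branching). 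Even granting the claim, your contradiction via Lemma~\ref{lem:Z2stabilizer} does not follow: that lemma says the stabilizer of a scalloped region is virtually $\bZ^2$, but a power of $g$ landing in this stabilizer could act by a translation on both bi-infinite boundary families, in which case it fixes no corners of lozenges in $U$ and your argument stalls. The paper sidesteps all of this by proving a cleaner, more local lemma: between any two \emph{adjacent} fixed endpoints $\xi,\eta$ of leaves, the interval $(\xi,\eta)$ contains no fixed point of $g$. This is established by a direct contraction argument (tracking $g^{\pm n}(\zeta)$ for $\zeta$ in the interval and invoking the absence of infinite product regions), which never requires classifying non-endpoint ideal points.

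Case (3) is also on a genuinely different and incomplete track. The paper's proof goes through the \emph{axis} of the free action of $g$ on the leaf space of $\cF^+$, picks a leaf $l$ on the axis, and examines the convergence of the endpoints of $g^n(l)$; this either yields at most two fixed points or manufactures a $g$-invariant scalloped region via Lemma~\ref{lem_convergence_or_trivially_product}. Your approach instead tries to analyze fixed endpoints directly via perfect-fit chains, but (a) you do not treat fixed boundary points that are not endpoints of any leaf, (b) the appeal to Lemma~\ref{lemma_scalloped_ideal_corner} is a mismatch, since that lemma concerns infinite families of pairwise \emph{non-separated} leaves, not infinite families sharing an ideal endpoint through perfect fits (which alternate between $\cF^+$ and $\cF^-$), and (c) you explicitly leave the final upper bound on the number of fixed points as an expectation rather than an argument. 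The axis is the right tool here and its absence is what leaves the case incomplete.
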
 

\begin{proof} 
Assume first that $g$ fixes some point in $P$, so we are in the setting of cases 1 or 2 of the statement.   If this is a unique fixed point, by Lemma \ref{lem_power_fixes_max_chain} we may pass to a power to assume that $g$ fixes all half-leaves through the point. (This is automatically satisfied in the setting of case 2.)  Note that this does not affect the conclusion in case 1, since the set of fixed points of an element is a subset of the fixed set of any of its powers.   Let $\partial(\cF_g)$ denote the set of endpoints of leaves fixed by $g$. 

To establish the conclusion in these cases, it suffices to prove the following. 

\begin{lemma} If $\xi, \eta \in \partial(\cF_g)$ are adjacent points, i.e. we have $(\xi, \eta) \cap  \partial(\cF_g) = \emptyset$, then the interval $(\xi, \eta)$ contains no points fixed by $g$.  
\end{lemma}
\begin{proof}
Let $\xi$ and $\eta$ be two such points.  Consider first the case where $g$ has a unique fixed point $x$ in $P$.  Then $\xi$ and $\eta$ are the endpoints of leaves of $\cF^\pm(x)$, necessarily bounding a quadrant.  Let $l^+ \in \cF^+$ be a leaf intersecting this quadrant and $\cF^-(x)$, and let $\zeta$ denote the endpoint of the ray of $l^+$ in the quadrant.   Then, we have that $g^n(\zeta)$ must converge to a point that is fixed by $g$ and in the interval $[\xi, \eta]$, i.e., it must converge to either $\xi$ or $\eta$. Thus, up to replacing $g$ with $g^{-1}$, we may assume that $g^n(\zeta) \to \xi$.
Now choose any leaf $l^-$ that intersects both $l^+$ and  $\cF^+(x)$.  Since there are no infinite product regions, we must have that $g^{-n}(l^+) \cap l^- = \emptyset$ for sufficiently large $n$.  Since $l^-$ can be chosen as close as desired to a line of $\cF^-(x)$ with $\eta$ as an endpoint, this shows that $g^{-n}(\zeta)$ approaches $\eta$ as $n \to \infty$, so $g$ has no fixed points on $(\xi, \eta) \subset \Pbound$.

Now consider the case where $g$ does not have a unique fixed point, but instead preserves each corner of a maximal chain $\cC$ of lozenges.  Consider the set of leaves of $\cC$ which have $\xi$ as an endpoint.  We claim that among this set, there is one whose other endpoint is closest to $\eta$.  
To see this, note that $\eta$ cannot be the endpoint of a leaf from $\xi$ (by Lemma \ref{lem_leaves_to_ideal_are_fixed}, this leaf would contain a fixed point $y$ for $g$, and thus $\cF^\pm(y)$ would have an endpoint in $(\xi, \eta)$, contradicting our assumption). If our claim was false, i.e. 
if we had a sequence of leaves $l_i$ with endpoints $\eta_i$ approaching some closest point $\eta_\infty \neq \eta$, then the $l_i$ would limit onto a $g$-invariant union of leaves by Lemma \ref{lem_convergence_or_trivially_product}.  If the union were infinite, it would be contained in the boundary of a $g$-invariant scalloped region with $\xi$ as a corner, contradicting the fact that $\xi$ was an endpoint of a fixed leaf of $g$.  If it were finite, then the leaf with endpoint $\xi$ would have other endpoint equal to $\eta_\infty$ or closer to $\eta$ than $\eta_\infty$.   Thus, the claim holds and we have a closest leaf $l$.  

Let $x$ be the unique fixed point of $g$ on $l$.  Then the quadrant of $x$ containing $\eta$ in its boundary has no other lozenges of $\cC$ with corner $x$.  Since $\eta$ is the endpoint of a leaf fixed by $g$, we conclude that $\eta$ is the endpoint of a leaf through $x$.  
Thus, we are in the previous setting, and can use exactly the same argument to show that the dynamics of $g$ on $(\xi, \eta)$ is fixed point free.  This concludes the proof of the lemma. 
\end{proof}

We return to prove the third statement of the proposition, where $g$ acts freely.
Suppose $g$ does not fix any point in $P$.  Then it acts freely on the leaf spaces of $\cF^+$ and $\cF^-$ by \ref{Anosov_like_A1}.   Since $g$ acts freely on $\cF^+$, it has an {\em axis} $A$ consisting of leaves $l$ such that $l$ separates $g^{-1}(l)$ from $g(l)$.  Let $l \in A$ be a nonsingular leaf.  (See e.g.  \cite{Barb_ActionGroupe,BarbFen_pA_toroidal} for details on axes). 

 Let $\xi, \zeta$ be the endpoints of $l$.   If $g^n(\xi)$ and $g^n(\zeta)$ approach each other as $n \to \infty$, and also approach each other as $n \to -\infty$, then $g$ can have at most two fixed points on $\Pbound$.  So we can now assume that $g$ has more than two fixed points, and thus conclude that in at least one case (say, without loss of generality as $n \to \infty$) the endpoints of  $g^n(l)$ converge to distinct points.  Call  the two distinct limits
 $\xi_\infty,\zeta_\infty \in \Pbound$.   

 By Lemma \ref{lem_convergence_or_trivially_product}, the sequence $g^n(l)$ converges to a union of leaves, either finite or infinite.  
  If the union is finite, then a unique leaf, say $l_1$ has one of its endpoint equal to $\xi_\infty$, which is fixed by $g$. Thus $l_1$ must itself be fixed by $g$ and so $g$ has a fixed point in $P$, contrary to our assumption.  
Otherwise, the limiting union is infinite. In particular, $\xi_\infty$ is a fixed point of $g$ which is accumulated by endpoints of pairwise non-separated leaves.  By Corollary \ref{cor:ideal_scalloped} it is the corner of a scalloped region, which is by construction $g$-invariant.  

It remains to note that $g$ cannot have additional fixed points other than the corners of the scalloped region.
For this, take $\eta \in \Pbound$ not a corner of the scalloped region $U$ preserved by $g$.  Then $\eta$ is separated from the scalloped region by a unique leaf $l'$ in the boundary of $U$.  Since $g$ acts freely on $P$, $g(U) \neq U$, i.e. it is a different leaf in the boundary of $U$, and thus $g(\eta) \neq \eta$.  
 \end{proof} 
 
 The following immediate consequence will be an essential tool in the proof of Proposition \ref{prop_charac_tot_linked}.  
 \begin{corollary} \label{cor_fixed_points_are_joined}
Suppose some nontrivial $g\in G$ has a fixed point in $P$ and fixes two points $\xi,\eta\in \Pbound$.
  Then, for any neighborhoods $U$, $V$ of $\xi$ and $\eta$ respectively, there exists a sequence of leaves $l_1, \dots l_n$, all fixed by $g$, such that one end of $l_1$ is in $U$, one end of $l_n$ is in $V$, and for all $i$, either $l_i$ and $l_{i+1}$ intersect at a fixed point of $g$ or make a perfect fit.
\end{corollary}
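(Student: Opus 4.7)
The plan is to deduce the corollary directly from Proposition~\ref{prop:boundary_action_general}. Since $g$ has a fixed point in $P$, case (3) of that proposition is ruled out, and we are in either case (1) or case (2). I will handle these two cases separately, first establishing existence of the endpoints of $l_1$ and $l_n$ in $U$ and $V$, then building the connecting chain.

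First, in case (1), $g$ fixes a non-corner point $x$, and the only fixed points of $g$ on $\Pbound$ are among the (at most four) endpoints of $\cF^\pm(x)$. Hence $\xi$ itself is an endpoint of some leaf $l_\xi$ through $x$ (so $\xi \in U$ trivially), and likewise $\eta$ is an endpoint of some leaf $l_\eta$ through $x$. Both $l_\xi$ and $l_\eta$ are fixed by $g$ since they pass through the fixed point $x$. If $l_\xi = l_\eta$, the chain is a single leaf; otherwise $l_\xi \in \cF^+$ and $l_\eta \in \cF^-$ (or vice versa) and they intersect at the fixed point $x$, so the two-leaf chain $l_\xi, l_\eta$ works.

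Next, in case (2), the set of fixed points of $g$ on $\Pbound$ is the closure of the endpoints of sides of lozenges in some maximal chain $\cC$ all of whose corners are fixed by $g$. Since $\xi$ lies in this closure and $U$ is a neighborhood of $\xi$, I can pick a side $s_1$ of some lozenge $L^{(0)}\in\cC$ whose endpoint lies in $U$; let $l_1$ be the leaf containing $s_1$, which is fixed by $g$ as it passes through a corner of $\cC$. Similarly pick $l_n$ with an endpoint in $V$, containing a side of some lozenge $L^{(m)}\in\cC$.

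The remaining step is to connect $l_1$ and $l_n$ through a chain of leaves fixed by $g$, consecutive leaves of which either meet at a fixed point or make a perfect fit. The key observations are: (a) for a single lozenge with corners $p,q$, its four extended-side leaves $\cF^\pm(p), \cF^\pm(q)$ are pairwise related by either sharing a corner ($p$ or $q$, both fixed) or making a perfect fit (the opposite-side perfect fits that define the lozenge); (b) if $L, L'\in\cC$ share a corner $z$, then leaves through $z$ from sides of $L$ intersect leaves through $z$ from sides of $L'$ at $z$, which is a fixed point. Since $\cC$ is connected as a chain of lozenges, I choose a sequence $L^{(0)}, L^{(1)}, \ldots, L^{(m)}$ with consecutive lozenges sharing a corner $z_i$, then piece together the chain lozenge by lozenge: inside each $L^{(i)}$ move from the incoming side to a side through $z_i$ using (a), and transition to $L^{(i+1)}$ using (b).

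The main obstacle is purely bookkeeping in case (2): one must verify that inside a single lozenge any two of its four extended-side leaves can be joined by a length-one step of the required type (which follows from the definition of a lozenge via perfect fits), and then chain these together carefully across the finite sequence of lozenges joining $L^{(0)}$ to $L^{(m)}$. Neither step uses anything beyond Proposition~\ref{prop:boundary_action_general} and the definition of a lozenge, so the result really is an ``immediate consequence''.
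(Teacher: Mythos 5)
Your proof is correct and takes essentially the same approach as the paper's: use Proposition~\ref{prop:boundary_action_general} to locate fixed points on $\Pbound$ near $\xi$ and $\eta$, then extract a chain of $g$-fixed leaves from the lozenge chain $\cC$ (the paper cites Proposition~\ref{proposition:cofixed_lozenge} for this last step, which is exactly what your case-(2) bookkeeping unwinds). One minor overstatement in (a) and in your closing paragraph: the ``opposite'' pairs of extended sides of a lozenge ($\cF^+(p)$ with $\cF^+(q)$, and $\cF^-(p)$ with $\cF^-(q)$) neither share a corner nor make a perfect fit, so joining two arbitrary extended sides of one lozenge may require a chain of length two rather than one; this is harmless because the four extended sides form a $4$-cycle under the desired relation, but ``pairwise related by a length-one step'' is not literally true.
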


 \begin{proof}
  By Proposition \ref{prop:boundary_action_general}, the neighborhoods $U$ and $V$ contain the endpoints of leaves fixed by $g$. The conclusion then follows directly from Proposition \ref{proposition:cofixed_lozenge} .
 \end{proof}

We conclude this section with a characterization of trivial and skewed planes in terms of the boundary action.  

\begin{proposition}\label{prop_4_2_0_global_fixed_points} Let $P$ be a bifoliated plane (possibly trivial) with an Anosov-like action of a group $G$.  
\begin{enumerate} 
\item 
If $P$ is skewed, then exactly two points on $\Pbound$ have finite orbits under $G$.
\item
If $P$ is trivial, then exactly four points $\Pbound$ have finite orbits under $G$.
\item
Otherwise, every point has an infinite orbit. 
\end{enumerate}  
\end{proposition}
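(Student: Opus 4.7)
The plan is to analyze the three cases separately. In cases (i) and (ii), the finite orbits will be forced by the combinatorial structure of $\Pbound$, and the absence of further finite orbits will follow from density of $G$-orbits of leaves (Lemma \ref{lem_orbit_of_leaves_are_dense}). Case (iii) is the substantive content, and will be handled using density of non-corner fixed points together with the description of boundary fixed sets in Proposition \ref{prop:boundary_action_general}. Specifically, in the trivial case Proposition \ref{prop:trivial_affine} identifies $P$ with $\R \times \R$ foliated by horizontal and vertical lines, with $G$ acting by affine transformations. Then $\Pbound$ contains exactly four ``corner at infinity'' points that are not endpoints of any leaf, and affine maps preserve this set, so it forms a finite $G$-invariant set. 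Every other point is the endpoint of a unique leaf (trivially foliated planes have no perfect fits), so a finite $G$-orbit on $\Pbound$ would force a finite $G$-orbit of leaves, contradicting Lemma \ref{lem_orbit_of_leaves_are_dense}. In the skewed case, the two ends of the strip are the only non-leaf-endpoint ideal points and are $G$-invariant; every other point is the endpoint of exactly two leaves (one from each foliation, making a perfect fit), so the same density argument applied to the $\cF^+$ leaf through $\xi$ rules out any additional finite orbits.

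For case (iii), I will first apply Theorem \ref{thm_trivial_skewed_or_nowheredense} to conclude that $P$ has a singular point or non-separated leaves, so Lemma \ref{lem_nowhere_dense} together with Axiom \ref{Anosov_like_dense_fixed_points} shows that non-corner fixed points are dense in $P$. Suppose for contradiction that some $\xi \in \Pbound$ has finite $G$-orbit, and let $G_\xi$ denote its finite-index stabilizer. For any non-corner fixed point $x$ of some nontrivial $g \in G$, a suitable power $g^k$ lies in $G_\xi$ and fixes both $x$ and $\xi$. Proposition \ref{prop:boundary_action_general}(1) applied to $g^k$ then forces $\xi$ to be an endpoint of one of the leaves in $\cF^\pm(x)$. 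Because $x$ is non-corner, Lemma \ref{lem:periodic_pf_corner} rules out any perfect fit along a half-leaf through $x$, so Proposition \ref{prop_ideal_circle}(iii) implies that this is the \emph{only} leaf of $\cF^+ \cup \cF^-$ ending at $\xi$. Call this leaf $l_\xi$; crucially it depends only on $\xi$, not on $x$. Thus every non-corner fixed point must lie on the single leaf $l_\xi$, contradicting density of non-corner fixed points since $l_\xi$ is nowhere dense in $P$.

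The main obstacle is the argument in case (iii), and specifically the step showing that the leaf $l_\xi$ is independent of the choice of non-corner fixed point $x$. This depends on the precise interaction between Lemma \ref{lem:periodic_pf_corner} (non-corner fixed points admit no perfect fits along half-leaves through them) and Proposition \ref{prop_ideal_circle}(iii) (any two leaves sharing an ideal endpoint are joined by a chain of perfect fits). One must also verify that passing from $g$ to the power $g^k$ does not invalidate the appeal to Proposition \ref{prop:boundary_action_general}(1): since Axiom \ref{Anosov_like_A1} guarantees that nontrivial powers of $g$ retain $x$ as a fixed point with the same hyperbolic structure, $g^k$ is nontrivial and its fixed set on $\Pbound$ remains contained in the endpoints of $\cF^\pm(x)$, which is all that is needed.
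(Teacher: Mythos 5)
Your proof is correct and uses the same core tools as the paper (case analysis by Theorem~\ref{thm_trivial_skewed_or_nowheredense}, density of non-corner fixed points, and Proposition~\ref{prop:boundary_action_general}), but the execution of each case is packaged a bit differently. For cases (i) and (ii), where the paper argues directly that a hyperbolic element not fixing $\xi$ generates an infinite orbit of $\xi$, you instead invoke density of $G$-orbits of leaves (Lemma~\ref{lem_orbit_of_leaves_are_dense}): a finite orbit of a leaf endpoint forces a finite orbit of leaves, which is nowhere dense. Both are fine; the paper's version avoids one reference but yours is arguably cleaner logically. For case (iii), the paper's argument is shorter: it simply picks two distinct non-corner, non-singular fixed points $x_1, x_2$ and observes that (by Proposition~\ref{prop:boundary_action_general}(1)) the boundary fixed sets of the corresponding elements are disjoint four-point sets, so powers of them landing in any finite-index stabilizer $G_\xi$ could not both fix $\xi$. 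Your argument instead derives that a unique leaf $l_\xi$ ends at $\xi$ (via the no-perfect-fit property of non-corner points and Proposition~\ref{prop_ideal_circle}(iii)) and hence that \emph{all} non-corner fixed points lie on $l_\xi$, contradicting density. This is a valid and slightly more structural route that makes the mechanism explicit, whereas the paper's is more economical. You also correctly flag, and correctly resolve, the subtlety that passing to a power $g^k$ does not disturb nontriviality or the boundary fixed set.
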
 

We note that this statement is equivalent to the following:  $P$ is trivial iff a finite index subgroup of $G$ fixes four boundary points, $P$ is skewed iff it is nontrivial and a finite index subgroup fixes two boundary points; and otherwise no finite index subgroup of $G$ fixes any point of $P$.  

\begin{proof} 
Suppose $P$ is skewed.  There are exactly two points of $\Pbound$ that are not the endpoints of any leaf, giving an invariant set of cardinality $2$ that is invariant by $G$.  We claim every other point has an infinite orbit: Indeed each other point $\xi$ is the endpoint of exactly two leaves, one of $\cF^+$ and one of $\cF^-$. Taking $g\in \fix(G)$ to be an element that preserves orientation and does not fix $\xi$, hyperbolicity implies that the orbit of $\xi$ by $g$ is infinite.

For a trivial bifoliated plane, there are exactly four points of $\Pbound$ that are not the endpoints of any leaf, and the argument is almost exactly as above (in this case, any element $\xi\in \Pbound$ that is not among the four non-endpoint leaves is the endpoint of a unique leaf, not two). 

Finally, for a nontrivial, non-skewed plane, consider two elements $g_1,g_2 \in G$ that fix distinct non-corner, non-singular points $x_1,x_2 \in P$ and their half-leaves (existence of such $g_i$ follows from Theorem \ref{thm_trivial_skewed_or_nowheredense}). Then Proposition \ref{prop:boundary_action_general} implies that each $g_i$ fixes exactly $4$ points on $\Pbound$, which must all be distinct. So the group generated by $g_1$ and $g_2$ has no global fixed points.
\end{proof}

 
 \section{Linking of fixed points and consequences} \label{sec:linking} 

Recall that our goal is to recover a conjugacy class of an action of $G$ from the set of elements acting with fixed points.  In this section, we do the first step towards the proof, by showing that the relative position of the fixed points of two elements $\alpha, \beta \in G$ can be understood in terms of which elements of the form $\alpha^n \beta^m$ act with fixed points in $P$.  This allows us to promote knowledge of which elements have fixed points to information about the position of fixed points in $P$.  We will later leverage this information to essentially reconstruct the plane $P$.   

\subsection{Total and partial linking} 
 
 \begin{definition}
Let $a,b\in P$.  We say that $a$ and $b$ are:
\begin{itemize}
 \item \emph{totally linked} if $\cF^+(a)\cap \cF^-(b) \neq \emptyset$ and $\cF^+(b)\cap \cF^-(a) \neq \emptyset$;
 \item \emph{unlinked} if $\cF^+(a)\cap \cF^-(b) = \emptyset$ and $\cF^+(b)\cap \cF^-(a) = \emptyset$ and 
 \item \emph{partially linked} otherwise. 
\end{itemize}
 \end{definition}
 
 The linkage of points can easily be read on the ideal circle, as follows: given two points $a$ and $b$, let $\idealpts(a)$ and $\idealpts(b)$ be the sets of endpoints on $\Pbound$ of the leaves $\cF^{\pm}(a)$ and $\cF^{\pm}(b)$, respectively.  Note that a point $a$ is totally linked with itself and any point that shares a leaf with $a$, but the much more interesting situation is for points $a$ and $b$ such that the ideal points of their leaves on the ideal circle are disjoint. When $\idealpts(a) \cap \idealpts(b) = \emptyset$, then it is immediate from the definition that the points $a$ and $b$ are:
 \begin{itemize}
  \item totally linked if and only $\idealpts(a)$ intersects exactly three of the connected components of $\Pbound\smallsetminus \idealpts(b)$,
  \item partially linked if and only if $\idealpts(a)$ intersects exactly two of the connected components of $\Pbound\smallsetminus \idealpts(b)$, and
 \item unlinked if and only if $\idealpts(a)$ is contained in a single connected component of $\Pbound\smallsetminus \idealpts(b)$.
 \end{itemize}
 
 For Affine planes, all points are totally linked with all others.  Since this case is special and will be treated separately, we make the following standing assumption: 

\putinbox{
 \begin{convention}
For the remainder of this section, $P$ denotes a \textbf{nontrivial} bifoliated plane, and $G$ a group with a (non-affine) Anosov-like action on $P$.   
 \end{convention}
}

The following lemma characterizes when a point is linked with some corner of a chain of lozenges.  Here and in what follows, an {\em endpoint of a chain} $\cC$ means any ideal point of a leaf of a corner of $\cC$ and the {\em endpoints of $\cC$}, denoted $\partial( \cC)$, is the union of all such points.   
 \begin{lemma}\label{lem_3_connected_components}
  Let $a \in P$, and let $\cC$ be a chain of lozenges. 
  $\cC$ has a corner $b$ that is totally linked with $a$ if and only $\partial( \cC)$ meets at least three connected components of $\Pbound\smallsetminus \idealpts(a)$.  
   \end{lemma}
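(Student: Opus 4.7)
The plan is to prove the two implications separately, paying special attention to the case where $a$ shares ideal endpoints with some corner of $\cC$.

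For the forward direction, I would first treat the generic case $\idealpts(a)\cap\idealpts(b)=\emptyset$: the bullet-point characterization recalled just before the lemma immediately gives that $\idealpts(b)\subseteq\partial(\cC)$ meets exactly three components of $\Pbound\setminus\idealpts(a)$. When $a$ shares an ideal endpoint with $b$, Proposition~\ref{prop_ideal_circle} implies that $a$ and $b$ share a leaf or are joined by a sequence of perfect fits; handling the representative subcase $\cF^+(a)=\cF^+(b)$, the two endpoints of $\cF^-(b)$ land in two distinct components of $\Pbound\setminus\idealpts(a)$, and the opposite corner $b'$ of a lozenge in $\cC$ with $b$ as a corner contributes ``far'' endpoints of $\cF^{\pm}(b')$ that must lie in a third component, as forced by the perfect-fit structure of the lozenge together with non-crossing of same-foliation leaves. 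A short case analysis of the resulting cyclic order of the eight ideal points involved completes this direction.

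For the reverse direction, I would argue the contrapositive: assuming no corner of $\cC$ is totally linked with $a$, show that $\partial(\cC)$ meets at most two components. Each $\idealpts(b)$ already lies in at most two components by assumption. The key claim I would establish is that for two corners $b, b'$ of a common lozenge in $\cC$, the union $\idealpts(b)\cup\idealpts(b')$ still lies in at most two components. I would prove this using that $\idealpts(b)\cap\idealpts(b')$ contains the two perfect-fit endpoints of the lozenge, combined with non-crossing of same-foliation leaves: these two inputs constrain the cyclic arrangement of the four non-shared endpoints of $b'$ to lie in the same pair of components already hosting $\idealpts(b)$, since any escape into a third component would make one of $b$ or $b'$ totally linked with $a$. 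Chain connectivity then propagates the bound to $\partial(\cC)=\bigcup_{b \text{ corner}}\idealpts(b)$.

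The main difficulty lies in establishing the pairwise claim in the reverse direction. The combinatorial constraints come from $\idealpts(a)$ consisting of only four points alternating between the two foliations (partitioning $\Pbound$ into only four arcs) and from the non-crossing/alternation properties of same-foliation leaf endpoints, and a case split on whether $b$ and $b'$ share only a corner or also a side of the lozenge may be required to complete the argument cleanly.
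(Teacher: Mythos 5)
Your forward direction is fine, and is actually slightly more careful than the paper's one-line argument: the paper just invokes the "preceding characterization," which was stated only under the hypothesis $\idealpts(a)\cap\idealpts(b)=\emptyset$, while you explicitly handle the degenerate case of shared ideal endpoints.

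The reverse direction has a genuine gap, in two places. First, the pairwise claim --- that for two corners $b,b'$ of a common lozenge, neither totally linked with $a$, the union $\idealpts(b)\cup\idealpts(b')$ meets at most two components --- is asserted rather than proved. Your justification, ``since any escape into a third component would make one of $b$ or $b'$ totally linked with $a$,'' is exactly the statement to be established, so the argument is circular. A real proof requires tracking the cyclic order of the six ideal points of $b$ and $b'$ on $\Pbound$ and checking that the two perfect-fit identifications together with the non-crossing of same-foliation leaves make an escape combinatorially impossible; this is a nontrivial case analysis, not a one-sentence observation.

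Second, and more seriously, the step ``chain connectivity then propagates the bound to $\partial(\cC)$'' does not follow from the pairwise claim. If two lozenges $L_1,L_2$ of $\cC$ share a corner $c$ that is \emph{unlinked} with $a$, so that $\idealpts(c)$ lies in a \emph{single} component $J_1$, then the pairwise claim only gives $\partial L_1\subset J_1\cup J_2$ and $\partial L_2\subset J_1\cup J_3$, and a priori $J_2\neq J_3$, in which case $\partial L_1\cup\partial L_2$ meets three components with no totally linked corner. Ruling this out requires an additional argument about how two lozenges meeting at a shared unlinked corner constrain each other's far ideal endpoints, which you do not supply. The paper avoids this global propagation issue entirely with a localized argument: choose a half-leaf $r$ from $a$ that meets $\cC$, take the lozenge side $l$ that $r$ meets closest to $a$, and show the corresponding corner $b$ is totally linked by a short contradiction (a lozenge with corner $b$ on the side toward $a$ would contradict the ``closest'' choice, once the configurations permitted by non-crossing are enumerated). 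Your strategy may be salvageable, but as written it is missing these two essential pieces, and filling them in would amount to doing comparable work to the paper's direct argument.
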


 \begin{proof}
First suppose $b$ a corner in $\cC$ is totally linked with $a$. Then $\idealpts(b) \subset \partial( \cC)$, so $\partial( \cC)$ meets at least three connected components of $\Pbound\smallsetminus \idealpts(a)$ by the preceding characterization of totally linked points.

For the other direction, suppose that $\partial( \cC)$ meets at least three connected components of $\Pbound\smallsetminus \idealpts(a)$. Choose a half-leaf $r$ based at $a$ that intersects $\cC$, and let $l$ denote the leaf containing a side of lozenge in $\cC$ that intersects $r$ closest to $a$.  Let $b$ be the corner of $\cC$ on $r$. We claim that $b$ and $a$ are totally linked. To see this, let $l'$ be the other leaf through $b$. Suppose that $b$ and $a$ are not totally linked. Then $l'$ does not cross the other leaf through $a$. Since the ends of $\cC$ intersect at least three complementary components of $a$, there must be a lozenge of $\cC$ with one corner equal to $b$ and the other corner some point $b'$ that lies on the same quadrant of $b$ as $a$, as illustrated in Figure~\ref{fig_linked_lozenge}(a). The fact that positive and negative leaves cannot cross restricts which quadrant of $a$ can contain $b$, eliminating all but one option. The remaining quadrant is eliminated by the fact that $l$ was chosen closest to $a$ along $r$. See Figure~\ref{fig_linked_lozenge}(b). 
\end{proof} 

 \begin{figure}[h]  
   \labellist 
   \pinlabel (a) at 150 0
   \pinlabel (b) at 440 0
  \small\hair 2pt
     \pinlabel $a$ at 77 87 
    \pinlabel $b$ at 124 170
    \pinlabel $r$ at 120 70 
 \pinlabel $l$ at 142 120 
 \pinlabel $l'$ at 90 155 
 \pinlabel $b'$ at 350 55
   \endlabellist
     \centerline{ \mbox{
\includegraphics[scale=.5]{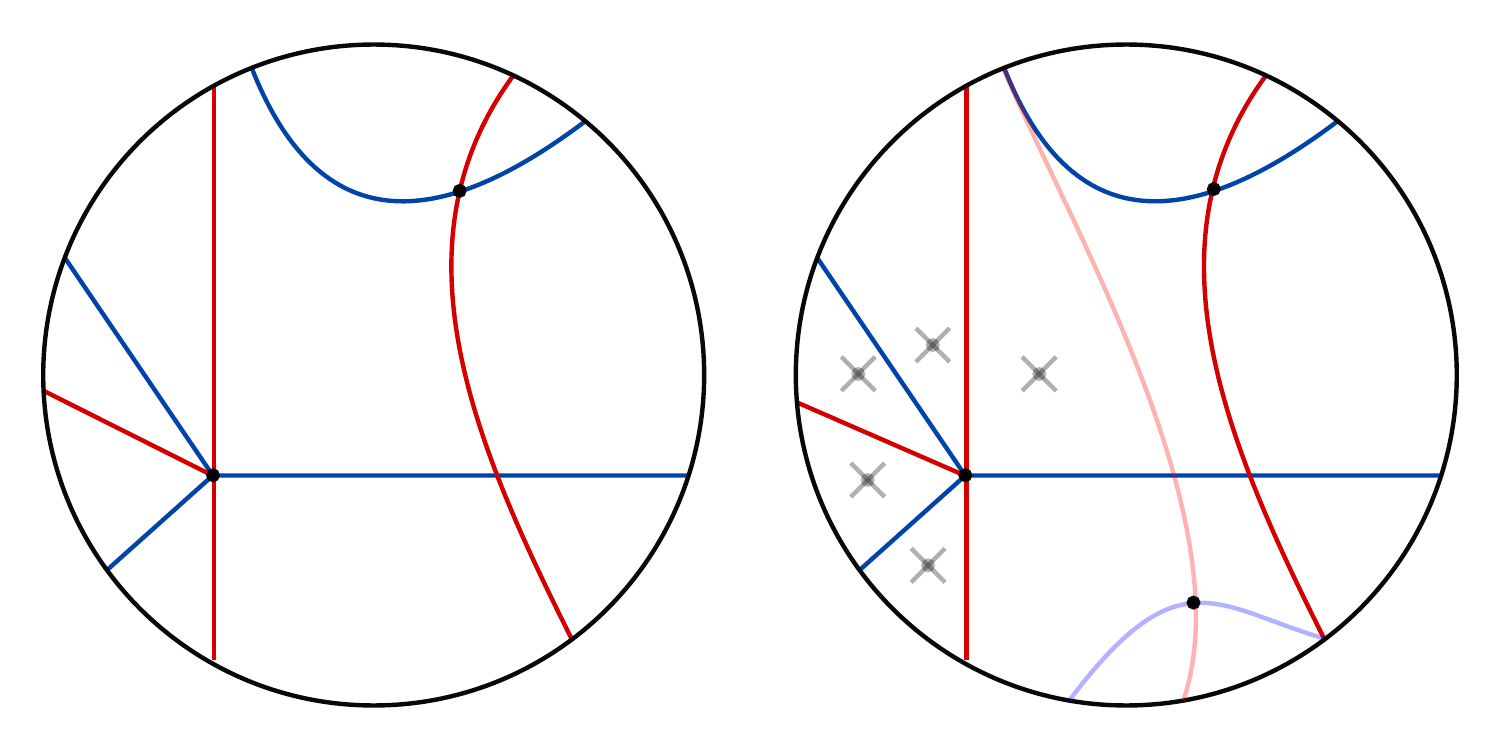} }}
\caption{ }
\label{fig_linked_lozenge}
\end{figure}

\begin{corollary} \label{cor:4_connected_comp}
Let $\cC$ be a chain of lozenges.  A point $a \in P$ is in the interior of a lozenge
of $\cC$ if and only if $\partial( \cC)$ is disjoint from $\idealpts(a)$ and meets four connected components of $\Pbound\smallsetminus \idealpts(a)$.
\end{corollary}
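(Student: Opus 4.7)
The plan is to derive the corollary as a refinement of Lemma \ref{lem_3_connected_components}, distinguishing the case of a single corner of $\cC$ being totally linked with $a$ (three arcs met) from $a$ actually being surrounded by a lozenge of $\cC$ (all four arcs met, and no accidental coincidences with $\idealpts(a)$).

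For the forward implication, I would start from $a$ interior to a lozenge $L\in\cC$ with corners $x,y$ and carry out a cyclic-order analysis on $\Pbound$. By Proposition~\ref{prop_ideal_circle}(ii), the four sides of $L$ determine only two ideal points $\mu_L,\mu_R$ (the ``ideal corners''), while the four remaining half-leaves of $\cF^\pm(x)$ and $\cF^\pm(y)$ give four further points $\nu_x^\pm,\nu_y^\pm\in\partial(L)\subset\partial(\cC)$. Since $a$ is interior to $L$, its leaf $\cF^+(a)$ crosses both $\cF^-$-sides of $L$, and $\cF^-(a)$ crosses both $\cF^+$-sides; a direct check shows that the four endpoints of $\idealpts(a)$ then lie on $\Pbound$ in a cyclic order in which each of the four complementary arcs contains exactly one of $\mu_L$, $\mu_R$, the pair $\{\nu_x^\pm\}$, or the pair $\{\nu_y^\pm\}$. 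This already gives the four-component condition. For disjointness, I would argue that if some $\eta\in\idealpts(a)\cap\partial(\cC)$, Proposition~\ref{prop_ideal_circle}(iii) would produce a sequence of perfect fits joining a half-leaf of $\cF^\pm(a)$ to a leaf through a corner $c$ of $\cC$; combined with the fact that $a$ sits inside the trivially foliated interior of $L$ and is not a corner of any lozenge of $\cC$, this would force $\cF^\pm(a)$ to coincide with or be perfect-fit-connected to a side of a lozenge, pushing $a$ onto the boundary of a lozenge of $\cC$ and contradicting interiority.

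For the backward implication, I would invoke Lemma~\ref{lem_3_connected_components} on the three-component condition (which is implied by the four-component condition) to obtain a corner $b\in\cC$ totally linked with $a$. Retracing the construction in that lemma's proof, $b$ is the corner of a lozenge $L_b\in\cC$ situated in the quadrant $Q$ of $b$ containing $a$; let $b'$ be the other corner of $L_b$. It remains to show $a\in L_b^{\mathrm{int}}$. The disjointness hypothesis $\partial(\cC)\cap\idealpts(a)=\emptyset$ rules out $a$ lying on any boundary leaf of $L_b$, so the only remaining possibility to exclude is that $a$ lies in $Q$ but ``past'' $b'$ in either the $\cF^+$- or the $\cF^-$-direction. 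In either case, one of the four arcs of $\Pbound\smallsetminus\idealpts(a)$ (the one ``pointing outward'' from $L_b$ past $b'$) would be confined to a region of $\Pbound$ swept out by ideal points of leaves not reaching $\cC$, and hence would contain no point of $\partial(\cC)$, contradicting the four-component hypothesis. Thus $a$ is forced into the interior of $L_b$.

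The main obstacle I expect is the disjointness part of the forward direction: ruling out the a priori possibility that $\cF^+(a)$ (say) shares an ideal endpoint with $\cF^\pm(c)$ for some corner $c\in\cC$ not in $L$. This will require exploiting carefully that the trivially foliated interior of $L$ contains no leaves through corners of $\cC$ other than $\cF^\pm(x)$ and $\cF^\pm(y)$, together with the perfect-fit chain characterization of shared ideal endpoints from Proposition~\ref{prop_ideal_circle}(iii), to show that any such coincidence would propagate back and place $a$ on a boundary leaf of a lozenge of $\cC$.
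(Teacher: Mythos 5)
Your proposal follows essentially the same route as the paper: derive the backward implication from Lemma~\ref{lem_3_connected_components}, and handle the forward implication by a direct cyclic-order argument (which the paper simply calls ``immediate''). Your more detailed treatment of the forward direction is a reasonable elaboration.

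There is, however, a gap in the backward direction. You assert that ``retracing the construction'' in the proof of Lemma~\ref{lem_3_connected_components} yields a lozenge $L_b \in \cC$ with corner $b$ sitting inside the quadrant $Q$ of $b$ containing $a$. That lemma's proof does not produce such a lozenge: it only constructs a corner $b$ on the closest side leaf and establishes that $b$ is totally linked with $a$; the lozenge mentioned in the lemma's proof appears only inside a proof-by-contradiction branch (the case where $b$ is \emph{not} totally linked with $a$), so it is not something you inherit. In the paper, the existence of a lozenge of $\cC$ in $Q$ with corner $b$ is deduced from the full four-component hypothesis together with the connectedness of $\cC$ (the fourth component of $\Pbound\smallsetminus\idealpts(a)$ must contain a point of $\partial(\cC)$, and connectedness forces the chain to enter $Q$ through a lozenge with $b$ as a corner). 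You instead spend the four-component condition only at the very end, to exclude $a$ lying ``past $b'$'', leaving the existence of $L_b$ in $Q$ unjustified. To fix this you would need to establish the existence of $L_b$ in $Q$ explicitly, as the paper does, before worrying about the position of $a$ relative to $b'$.

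A smaller point: your disjointness argument for the forward direction (via Proposition~\ref{prop_ideal_circle}(iii)) is a bit loose — ``pushing $a$ onto the boundary of a lozenge'' does not follow immediately from the existence of a perfect-fit chain out of $\cF^\pm(a)$. A cleaner observation is that $\cF^\pm(a)$ actually crosses the two $\cF^\mp$-sides of $L$, so it cannot share an endpoint with either by a perfect fit, and $\cF^\pm(a)$ separates $\cF^\pm(x)$ from $\cF^\pm(y)$ in the respective leaf space so its endpoints land strictly between the endpoints of the sides of $L$; one then argues that any further coincidence with $\partial(\cC)$ coming from corners elsewhere in the chain would force a chain of perfect fits through the trivially foliated interior of $L$, which is impossible. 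The paper handles all of this by the opening observation that $a$ lies on a side leaf of $\cC$ if and only if $\idealpts(a)\cap\partial(\cC)\neq\emptyset$, which is what you should be proving here.
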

\begin{proof} 
Let $a \in P$.  By definition, $a$ is on a leaf forming the side of a lozenge in $\cC$ if and only if $\idealpts(a)$ contains some endpoint of $\cC$.  
So we assume this is not the case.  Suppose that the endpoints of $\cC$ meet four connected components of $\Pbound\smallsetminus \idealpts(a)$. 
Consider a corner $b$ of $\cC$ totally linked with $a$ as in Lemma \ref{lem_3_connected_components}, so $\idealpts(b)$ intersects three connected components $\Pbound\smallsetminus \idealpts(a)$, and $\cF^\pm(b)$ intersects two of the half-leaves of $\cF^\pm(a)$.   
Consider the quadrant of $b$ containing $a$. It must contain some lozenge in $\cC$, since $\partial( \cC)$ meets four connected components of $\Pbound\smallsetminus \idealpts(a)$.   Since $\cC$ is connected, $b$ must be the corner of a lozenge in this quadrant.  Such a lozenge necessarily contains the point $a$, as its other corner must lie in a quadrant of $a$ opposite to the quadrant containing $b$. 

The converse is immediate: if $a$ lies in a lozenge, then $a$ is nonsingular and the leaves of this lozenge meet all four connected components of $\Pbound\smallsetminus \idealpts(a)$.
\end{proof}

We now make a sign convention to distinguish between two kinds of fixed points. 
 
 \begin{definition}[Positive fixed points]
  Let $x$ be a fixed point of some nontrivial $g \in G$. We say that $x$ is a \emph{positive fixed point of $g$} if $g$ fixes each half-leaf through $x$ and is topologically expanding on $\cF^+(x)$ (hence topologically contracting on $\cF^-(x)$). It is a \emph{negative fixed point of $g$} if it is a positive fixed point for $g^{-1}$.
   \end{definition}

Note that if $g$ fixes both corners of a lozenge, then one corner of the lozenge is a positive fixed point for $g$ while the other is a negative fixed point.  While, in general, $g$ may fix a point $x$ but permute the half-leaves of $\cF^+(x)$, there is always a minimum $k>0$ such that $x$ is either a positive or negative fixed point of $g^k$.  

For the remainder of this section, we break with our usual convention of using $g$ to denote a group element, and instead use $\alpha$ and $\beta$ so as to give an easy mnemonic that $\alpha$ fixes point $a$, while $\beta$ fixes $b$ or $b'$, etc.  

 \begin{observation} \label{obs_easy_tot_linked}
If $\alpha$ and $\beta$ have totally linked positive fixed points, then $\beta^n\alpha^m$ has a positive fixed point for all $m,n>0$.  
 \end{observation}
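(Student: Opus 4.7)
The plan is a Brouwer fixed-point argument on a well-chosen topological disk $D\subset P$ built from the heteroclinic rectangle associated to the linked positive fixed points $a$ and $b$. Set $g:=\beta^n\alpha^m$.

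I would begin by setting up the heteroclinic picture. Total linkage of $a$ and $b$ gives (unique nearby) intersection points $p\in\cF^+(a)\cap\cF^-(b)$ and $q\in\cF^-(a)\cap\cF^+(b)$; together with $a$ and $b$ they form the four corners of a topological rectangle $R$ bounded by four half-leaves $r_a^+, r_b^-, r_b^+, r_a^-$ (each named for the leaf containing it). Because $a$ is a positive fixed point of $\alpha$, the iterate $\alpha^m$ preserves each half-leaf at $a$, acts as a topological expansion on $r_a^+$ (so $\alpha^m(p)$ lies strictly past $p$), and as a topological contraction on $r_a^-$ (so $\alpha^m(q)$ lies strictly between $a$ and $q$); the symmetric statement holds for $\beta^n$ at $b$. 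Note also that positivity is inherited by all positive iterates, so these facts hold for any $m,n\ge 1$.

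I would then enlarge $R$ to a closed topological disk $D$ bounded by: the segment of $r_a^+$ from $a$ to $\alpha^m(p)$, a short segment of $\cF^-$ running from $\alpha^m(p)$ transversely to a point on $r_b^+$, the segment of $r_b^+$ from there back through $q$ to $\beta^n(q)$, and a symmetric closing arc returning to $a$ through $\beta^n(\alpha^m(q))$ on $r_a^-$. The key property to verify is $g(D)\subseteq D$: along the expanding sides of $D$ the subsequent contraction by the other factor of $g$ pulls images back inside $D$, while along the transverse closing arcs the expansion by the appropriate factor sends images deep into the ``long'' direction of $D$. Brouwer's theorem then produces a fixed point $x\in D$ of $g$.

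Finally, axiom \ref{Anosov_like_A1} forces $g$ to be topologically hyperbolic at $x$, expanding on one of $\cF^\pm(x)$ and contracting on the other. Tracking the composition, $\alpha^m$ expands $\cF^+$ near $a$ and $\beta^n$ expands $\cF^+$ near $b$, so the net effect of $g$ on a small $\cF^+$-arc through $x$ is expansion; hence $x$ is a positive fixed point of $g$. The main obstacle is the precise construction of $D$ together with the verification that $g(D)\subseteq D$ for every $m,n\ge 1$. This depends on handling the interaction of two saddle-like local dynamics at $a$ and $b$ in the topological (rather than smooth) category, where Hartman--Grobman-type linearization is unavailable and one must work purely from the topological expansion/contraction provided by \ref{Anosov_like_A1}, together with Proposition \ref{prop:no_product} to rule out degenerate configurations of leaves inside the candidate disk.
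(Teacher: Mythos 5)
Your Brouwer-based approach has a genuine and fundamental gap: there is no compact disk $D$ with $g(D)\subseteq D$, because $g=\beta^n\alpha^m$ is itself a topologically hyperbolic (saddle-like) map. Indeed, once you have produced the fixed point $x$, Axiom \ref{Anosov_like_A1} forces $g$ to expand one leaf through $x$ and contract the other; a map with a topologically expanding direction at a fixed point admits no compact forward-invariant neighborhood of that fixed point. Your heuristic --- that ``along the expanding sides of $D$ the subsequent contraction by the other factor of $g$ pulls images back inside $D$'' --- goes wrong precisely because the two factors $\alpha^m$ and $\beta^n$ both expand roughly the same direction: $\alpha^m$ expands $\cF^+(a)$ and $\beta^n$ expands $\cF^+(b)$, so their composition continues to expand the $\cF^+$-direction rather than reverse it. A concrete affine toy model makes this visible: if $\alpha$ and $\beta$ both act as $(x,y)\mapsto(2x,y/2)$ centered at $a$ and $b$ respectively, then $\beta\alpha$ is $(x,y)\mapsto(4x+c_1,\,y/4+c_2)$, which stretches by a factor of $4$ along the $\cF^+$-direction and therefore leaves no compact disk invariant.

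The paper avoids this difficulty by working one dimension down. Let $I^-\subset\cF^-(b)$ be the segment from $b$ to $\cF^-(b)\cap\cF^+(a)$, and let $I$ be the closed interval in the $\cF^+$-leaf space consisting of leaves meeting $I^-$, with boundary leaves $\cF^+(a)$ and $\cF^+(b)$. On this one-dimensional leaf space, $\alpha^m$ fixes $\cF^+(a)$ and contracts $I$ towards it (because $\alpha^m$ contracts $\cF^-(a)$); then $\beta^n$ fixes $\cF^+(b)$ and contracts towards it. Hence $\beta^n\alpha^m(I)\subset I$, giving a fixed leaf $l^+$, and Axiom \ref{Anosov_like_A1} promotes $l^+$ to a fixed point, whose positivity follows from the direction of contraction along $\cF^-$. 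In other words, the contraction happens only in the quotient leaf space, not on a two-dimensional domain. If you want a two-dimensional argument instead, you would need the ``rectangle crossing'' (Conley--Moser / topological degree) form of the fixed-point theorem --- where $g(D)$ stretches across $D$ in one direction and is squeezed inside in the other --- not Brouwer applied to $g(D)\subseteq D$. As written, the key invariance claim is false and the proposal does not yield a proof.
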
 
 
 \begin{proof}
 Suppose that $a$ and $b$ are totally linked positive fixed points of $\alpha$ and $\beta$, respectively. Let $I^- \subset \cF^{-}(b)$ be the $\cF^-$ leaf segment that runs from $b$ to $\cF^+(a)$. The set of positive leaves that intersect $I^-$ form a closed interval $I$ in the $\cF^+$ leaf space. For $n, m >0$, the map $\beta^n\alpha^m$ takes $I$ into itself, thus has a fixed leaf $l^+ \in I$. By Axiom \ref{Anosov_like_A1}, $\beta^n\alpha^m$ has a fixed point $x$ in $l^+$. Furthermore, since the action of $\beta^n\alpha^m$ on $I$ is by contraction, the action on $\cF^-(x)$ is by contraction, and hence $x$ is a positive fixed point of $\beta^n\alpha^m$. 
 \end{proof} 
 
 In Section \ref{sec:main_proof} we will make many arguments regarding sequences of non-corner points.  The following lemma allows us to produce some such sequences, and will be used crucially in Lemma \ref{lem:positive_corner}.

 \begin{lemma} \label{lem:make_NC_points}
  Suppose that $a$ and $b$ are totally linked positive fixed points of $\alpha$ and $\beta$, respectively, and assume $a$ is not a corner.  
  Then for all $m, n$ sufficiently large, $\beta^n \alpha^m$ has a unique non-corner fixed point in $P$.  Moreover, for fixed $m$, as $n \to \infty$ the $\cF^+$ leaf of this fixed point approaches $\cF^+(\beta)$.
 \end{lemma}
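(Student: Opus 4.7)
The plan is to refine the positive fixed point produced by Observation \ref{obs_easy_tot_linked}. Set $c:=\cF^+(a)\cap \cF^-(b)$ and $I^-:=[b,c]\subset \cF^-(b)$, let $I$ be the interval of positive leaves crossing $I^-$ in the $\cF^+$-leaf space, and let $x=x_{m,n}$ be the positive fixed point of $\beta^n\alpha^m$ produced by that observation, lying on a fixed leaf $l^+_{m,n}\in I$. For the convergence statement, I would use the fact that $I$ is parameterized by the transversal segment $I^-\subset\cF^-(b)$, on which $\beta$ acts as a topological contraction toward $b$ (which is exactly the contraction on $\cF^-(b)$ coming from $b$ being a positive fixed point of $\beta$). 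With $m$ fixed, $\alpha^m I$ is a specific subinterval of the $\cF^+$-leaf space; the identity $\beta^n\alpha^m l^+_{m,n}=l^+_{m,n}$ rewrites as $\alpha^m l^+_{m,n}=\beta^{-n}l^+_{m,n}$, and since the left side stays in the bounded set $\alpha^m I$ while $\beta^{-n}$ expands leaves away from $\cF^+(b)$, the only way this identity can hold for large $n$ is if $l^+_{m,n}\to\cF^+(b)$.

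The non-cornerness claim follows immediately once uniqueness is known: if $x_{m,n}$ were a corner of a lozenge, then because $x_{m,n}$ is a positive fixed point (so $\beta^n\alpha^m$ preserves each half-leaf through it), Lemma \ref{lem_power_fixes_max_chain} would give a second fixed point at the opposite corner of that lozenge, contradicting uniqueness. So the substantive task is uniqueness of the fixed point of $\beta^n\alpha^m$ in $P$ for $m,n$ large.

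The main obstacle is this uniqueness step. Suppose for contradiction that $\beta^n\alpha^m$ has a second fixed point $y_{m,n}$; Proposition \ref{proposition:cofixed_lozenge} together with Lemma \ref{lem_power_fixes_max_chain} then show that $\beta^n\alpha^m$ fixes every corner of the maximal chain $\cC_{m,n}$ through $x_{m,n}$. Let $L_{m,n}$ be the lozenge of $\cC_{m,n}$ with $x_{m,n}$ as a corner in the quadrant toward $y_{m,n}$; its $\cF^+$-side lies on $\cF^+(x_{m,n})\to\cF^+(b)$. The strategy is to extract a subsequential limit using Lemma \ref{lem_convergence_or_trivially_product} applied to the opposite $\cF^+$-side of $L_{m,n}$: in the limit $b$ must appear either as a corner of a $\beta^n\alpha^m$-stabilized lozenge, or on the boundary of a $\beta^n\alpha^m$-stabilized scalloped region. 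In the first case $\beta^n\alpha^m$ fixes $b$, so $\alpha^m b=b$; but since $a$ is not a corner, two distinct fixed points of $\alpha$ would lie on a chain of lozenges (Proposition \ref{proposition:cofixed_lozenge}), making $a$ a corner, so $\alpha$ has only the fixed point $a$, forcing $b=a$ and contradicting total linkage. The scalloped-region case is ruled out using Lemma \ref{lem:Z2stabilizer}, which pins down the stabilizer of such a region at $b$ as virtually $\bZ^2$ generated by a concrete element fixing all corners of the region, combined with the fact that for large $m$, $\alpha^m$ pushes any pre-image of this configuration far from $\cF^+(a)$, incompatible with $\beta^n\alpha^m$ also preserving it; the non-corner criterion (Lemma \ref{lem_no_corner_criterion}) at $a$ is what prevents such a configuration from being compatible with the contraction dynamics of $\alpha^m$ near $a$.
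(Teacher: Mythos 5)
The convergence statement of your argument looks workable — the identity $\alpha^m l^+_{m,n}=\beta^{-n}l^+_{m,n}$ with bounded left side and $\beta^{-n}$ expanding away from $\cF^+(b)$ is a cleaner way to see that $l^+_{m,n}\to\cF^+(b)$ than the paper's boundary-tracking. And the reduction of non-cornerness to uniqueness (via Lemma~\ref{lem_power_fixes_max_chain}) is correct and matches the paper.

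The uniqueness argument, however, has a genuine gap, and it is the heart of the lemma. You suppose a second fixed point $y_{m,n}$ exists and obtain a lozenge $L_{m,n}$ with $x_{m,n}$ as a corner, then propose to extract a limit of the opposite $\cF^+$-side of $L_{m,n}$ as $n\to\infty$. But the element $\beta^n\alpha^m$ changes with $n$, so ``the limit is a $\beta^n\alpha^m$-stabilized lozenge (or scalloped region)'' is not a coherent statement — there is no single group element stabilizing the limiting configuration. Moreover, there is no reason the opposite side of $L_{m,n}$ converges to a leaf through $b$ at all; knowing $\cF^+(x_{m,n})\to\cF^+(b)$ says nothing about where the far side of the lozenge goes, and Lemma~\ref{lem_convergence_or_trivially_product} requires the endpoints of the sequence to converge to two distinct ideal points, which you haven't established. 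Even granting the limit, deducing ``$\beta^n\alpha^m$ fixes $b$ for some (which?) large $n$'' does not follow from a statement about a subsequential limit. The subsequent scalloped-region disposal is similarly schematic: invoking Lemma~\ref{lem:Z2stabilizer} and the non-corner criterion does not, as stated, produce a contradiction with ``$\alpha^m$ pushing pre-images far from $\cF^+(a)$.''

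The paper handles uniqueness by a more careful quantifier ordering and by working directly on $\Pbound$: one fixes small neighborhoods $U^\pm_i$ of the attracting/repelling ideal points of $\alpha$, bounded by endpoints of auxiliary leaves passing near $a$ and near $b$; then chooses $m$ large so $\alpha^m(\Pbound\smallsetminus U^-)\subset U^+$, and then chooses $n$ large (depending on $m$) so that $\beta^n(U^+_i)$ and $\beta^{-n}(U^-_i)$ are squeezed between consecutive ideal endpoints of $\cF^\pm(a)$ and $\cF^\pm(b)$. This pins down exactly where the fixed endpoints of $\beta^n\alpha^m$ can lie on $\Pbound$, and then a lozenge fixed by $\beta^n\alpha^m$ would require one of its sides to run between two ideal intervals that are separated by $\cF^\pm(a)$ or $\cF^\pm(b)$, which is impossible. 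Your proposal is missing this boundary-localization step, and I don't see how to repair the limit argument without essentially introducing it. Also note a minor point: $\beta^n\alpha^m$ fixing $b$ would force $a=b$, which is not actually a contradiction but rather the trivial case where the lemma already holds.
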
 
 
 \begin{proof} 
 Let $a$, $b$ be as in the statement.
 Consider the action of $\beta$ on $\Pbound$ -- it either fixes the ideal points of two leaves, or fixes all ideal points of a chain $\cC_\beta$ of lozenges.  Abusing notation slightly, write $\partial(\cC_\beta) \subset \Pbound$ for the set of fixed points of $\beta$ on $\Pbound$, with the convention that $\cC_\beta = \cF^\pm(b)$ and $\partial(\cC_\beta) = \idealpts(b)$ if $\beta$ has a unique fixed point $b$ in $P$. 
 
Since $a$ is non-corner, $\alpha$ does not fix any corners, so by Observation \ref{obs:disjoint_ideal_closures} either 
$a = b$, or we have $\idealpts(a) \cap \overline{\partial \cC_\beta} = \emptyset$.  If $a=b$, we are already done, so we assume $\idealpts(a) \cap \overline{\partial \cC_\beta} = \emptyset$. 

Let $U^+_1$ and $U^+_2 \subset \Pbound$ be neighborhoods of the attracting fixed points of $\alpha$, disjoint from $\overline{\partial \cC_\beta}$. Because $a$ is non-corner, we may choose these neighborhoods to be bounded on either side by endpoints of leaves $l$ and $l'$ that pass through $\cF^-(a)$ very close to $a$.  
Similarly let $U^-_1$ and $U^-_2$ be neighborhoods of the repelling fixed points of $\alpha$, disjoint form $\overline{\partial \cC_\beta}$ corresponding to endpoints of leaves $f$ and $f'$ passing through $\cF^+(b)$ close to $b$.  See Figure \ref{new_fig} for an illustration. 
Choose $M$ large enough so that $\alpha^m(\Pbound \smallsetminus U^-) \subset U^+$ for all $m>M$, and fix any $m>M$ . 

The argument from Observation \ref{obs_easy_tot_linked} shows that $\beta^n\alpha^m$ has a fixed leaf of $\cF^+$ between $\cF^+(b)$ and $\cF^+(a)$.  Applying the same argument to its inverse $\alpha^{-n}\beta^{-m}$ shows that it also has a fixed leaf of $\cF^-$ between $\cF^-(a)$ and $\cF^-(b)$, and hence $\beta^n\alpha^m$ has a fixed point $x = x_{n,m}$ in the interior of the compact square of $P$ bounded by segments of $\cF^\pm(a)$ and $\cF^\pm(b)$. Moreover, all the half-leaves of that fixed point are also fixed by $\beta^n\alpha^m$.

We claim that, provided that $n$ is chosen sufficiently large, this point is the unique fixed point of $\beta^n\alpha^m$, which (since the half-leaves are preserved) implies that $x$ is indeed non-corner.
To show this claim, consider first a connected component $J$ of $\Pbound \setminus (U^-_1 \cup U^-_2)$.  By our choice of $m$, we have $\beta^m(J) \subset U_i^+$, and so $\beta^n\alpha^m(J) \subset \alpha^n(U_i^+)$, i.e. the only fixed endpoints of leaves in $J$ lie in $\beta^n(U_i^+)$.  
We require $n$ large enough so that both $\beta^n(l)$ and $\beta^n(l')$ will intersect $\cF^-(a)$ between $\cF^+(a)$ and $\cF^+(b)$.  

Now consider the action of $\alpha^{-m}\beta^{-n}$ on $U^-$.  For all $n$ sufficiently large, we will similarly have that 
$\beta^{-n}(f)$ and $\beta^{n}(f')$ intersect $\cF^{-}(a)$ between $\cF^+(a)$ and $\cF^+(b)$, and thus the same holds for $\alpha^{-m}\beta^{-n}(f)$ and $\alpha^{-m}\beta^{-n}(f')$.  Thus, any endpoint of a leaf in $U^-$ fixed by $\alpha^{-m}\beta^{-n}$ in fact lies between the pairs of consecutive endpoints of $\cF^{-}(a)$ and $\cF^{-}(b)$ on $\Pbound$.

Now recall that $x$ is a fixed point of $\beta^n\alpha^m$ in the square bounded by segments of $\cF^\pm(a)$ and $\cF^\pm(b)$. If it is not the unique fixed point then it is the corner of a lozenge fixed by $\beta^n\alpha^m$.  However, our restriction on fixed endpoints means that one of the other sides of this lozenge must cross from an interval of $\Pbound$ between consecutive points of $\cF^{-}(a)$ and $\cF^{-}(b)$ to an interval between consecutive points of $\cF^+(a)$ and $\cF^+(b)$, which is absurd. 

It remains only to remark that as $n \to \infty$, we have that $\cF^+(x_{n,m})$ approaches $\cF^+(b)$, but this follows from the fact that its positive endpoints lie in $\beta^n(U_i^+)$. 
 \end{proof}

  \begin{figure}  
   \labellist 
  \small\hair 2pt
     \pinlabel $b$ at 65 190
    \pinlabel $a$ at 150 100
    \pinlabel $U^+_1$ at 170 275 
  \pinlabel $U^+_2$ at 170 -5 
 \pinlabel $U^-_1$ at -5 90 
 \pinlabel $U^-_2$ at 285 90
   \endlabellist
     \centerline{ \mbox{
\includegraphics[width=6cm]{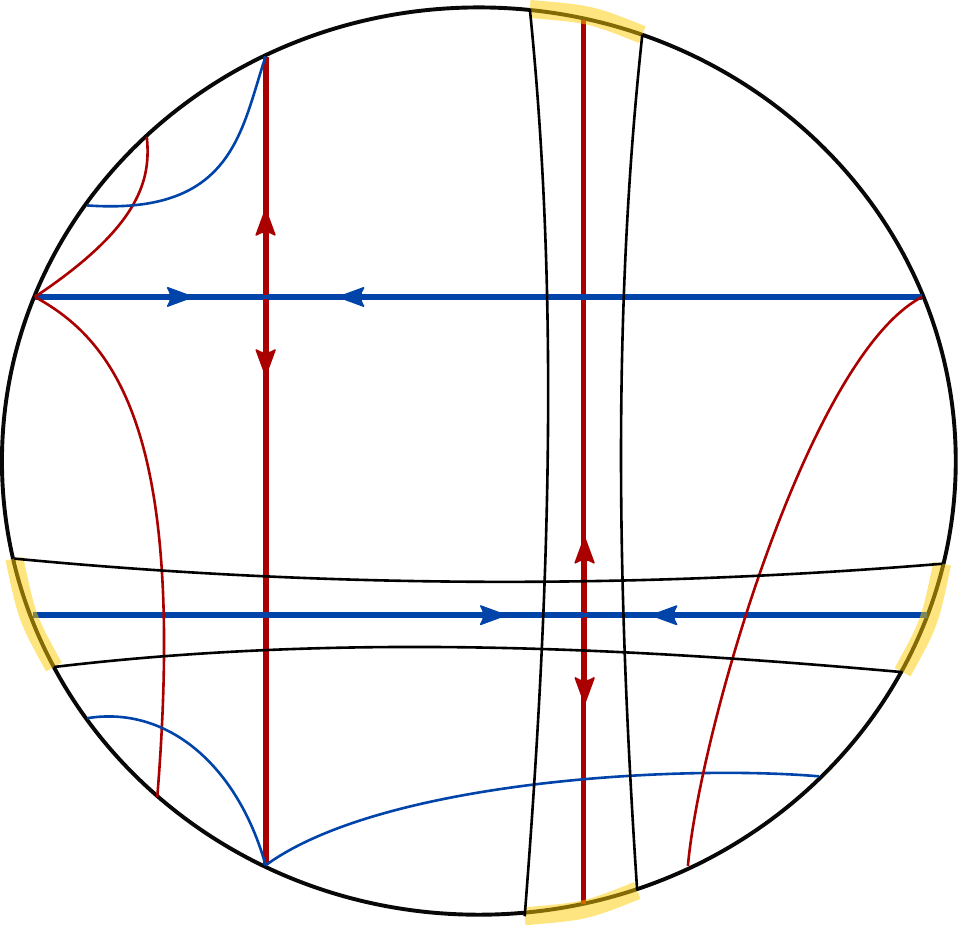} }}
\caption{Proof of Lemma \ref{lem:make_NC_points}. $\beta^n$ moves $U_i^+$ between the consecutive endpoints of $\cF^+$ (vertical) leaves of $a$ and $b$, and $\beta^{-n}$ moves $U_i^-$ between consecutive endpoints of $\cF^-$ leaves of $a$ and $b$.  As shown, $b$ is a corner, but the argument holds generally.}
\label{new_fig}
\end{figure}

 The following proposition gives a partial converse to Observation \ref{obs_easy_tot_linked}.
 
 \begin{proposition}[Characterization of totally linked points]\label{prop_charac_tot_linked}
 Let $\alpha, \beta \in G$ be elements that have either positive or negative fixed points (i.e. we assume only that they do not permute half-leaves through these fixed points), and assume $\alpha$ does not fix a corner.  
If there exist $n_i, m_i \to \infty$, such that $\alpha^{n_i} \beta^{m_i}$ has a fixed point for all $i$, then $\alpha$ and $\beta$ have fixed points that are totally linked. 
\end{proposition}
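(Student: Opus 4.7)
The plan is to argue by contradiction. Let $a$ be the unique non-corner fixed point of $\alpha$ (unique by Axiom \ref{Anosov_like_A1}), and suppose no fixed point of $\beta$ is totally linked with $a$. After replacing $\alpha, \beta$ by $\alpha^2, \beta^2$ and passing to a subsequence of $(n_i, m_i)$ (which preserves the hypothesis after reparametrization), and possibly replacing $\beta$ by $\beta^{-1}$ so that $\beta$ has a positive fixed point, I may assume both $\alpha$ and $\beta$ preserve each half-leaf through their fixed points and act with positive sign at some fixed point. Proposition \ref{prop:boundary_action_general}(1) then identifies the fixed set of $\alpha$ on $\Pbound$ with $\idealpts(a)$: an attracting pair $A^+\subset\idealpts(\cF^+(a))$ alternates with a repelling pair $A^-\subset\idealpts(\cF^-(a))$. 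Writing $B^\pm$ for the attractor/repeller sets of $\beta$ on $\Pbound$, Lemma \ref{lem_3_connected_components} and the contradiction hypothesis imply that the closure of the fixed set of $\beta$ on $\Pbound$ meets at most two of the four arcs of $\Pbound\setminus\idealpts(a)$.

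For each fixed point $x_i\in P$ of $\alpha^{n_i}\beta^{m_i}$, the four leaf endpoints $\xi_i^{(j)}$ ($j=1,\ldots,4$) of $\cF^\pm(x_i)$ are fixed by $\alpha^{n_i}\beta^{m_i}$ on $\Pbound$ and alternate cyclically between endpoints of $\cF^+$- and $\cF^-$-leaves. Passing to a subsequence, $\xi_i^{(j)}\to\xi^{*(j)}$. The identity $\beta^{m_i}(\xi_i^{(j)}) = \alpha^{-n_i}(\xi_i^{(j)})$ together with the uniform attracting/repelling dynamics of large powers of $\alpha^{\pm 1}$ and $\beta^{\pm 1}$ on $\Pbound$ forces each $\xi^{*(j)}$ to lie in $A^+\cup B^-$: either directly, or the common value $\beta^{m_i}(\xi_i^{(j)})$ accumulates on the exceptional set $\overline{B^+}\cap\overline{A^-}$ (contained in at most two points of $\idealpts(a)$ under our hypothesis), and a symmetric analysis applied to the inverse map $\beta^{-m_i}\alpha^{-n_i}$ eliminates this case. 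Since $\beta$ is positive and $A^+$ consists of $\cF^+$-endpoints while $B^-$ consists of $\cF^-$-endpoints, the cyclic alternation of the $\xi_i^{(j)}$ forces both endpoints of $\cF^+(x_i)$ into $A^+$ and both endpoints of $\cF^-(x_i)$ into $B^-$.

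By Lemma \ref{lem_convergence_or_trivially_product} (together with Lemma \ref{lemma_scalloped_ideal_corner} in the case of infinite accumulation), $\cF^+(x_i)$ converges to $\cF^+(a)$, a leaf joined to it by perfect fits, or the bi-infinite family bounding a scalloped region with corners $A^+$; and $\cF^-(x_i)$ converges to an analogous $\cF^-$-structure through some fixed point $b$ of $\beta$. Because no fixed point of $\beta$ is totally linked with $a$, any $\cF^+$-leaf in the first limit union does not intersect any $\cF^-$-leaf in the second, so $x_i$ cannot stabilize in a compact region of $P$. If instead $x_i\to\infty$ in $P$, Observation \ref{obs:escape_endpoints} forces both endpoints of $\cF^+(x_i)$ to coalesce to a single ideal point, contradicting their convergence to the two distinct points of $A^+$. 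This completes the contradiction.

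The main obstacle will be the two-sided boundary analysis needed to exclude limit points of fixed points in $\overline{B^+}\cap\overline{A^-}$, particularly when $\beta$ fixes an infinite chain of lozenges containing scalloped regions. In these cases the precise description of the boundary action in Proposition \ref{prop:boundary_action_general}(2)--(3) together with the rigid structure of scalloped regions (Lemma \ref{lemma_scalloped_ideal_corner}) must be invoked to control the possible accumulation of $B^+$ on points of $\idealpts(a)$ and rule out fixed points clustering there via the symmetric forward/backward dynamics.
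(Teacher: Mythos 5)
Your high-level strategy matches the paper's: translate "no totally linked fixed point" into "$\overline{\partial(\cC_\beta)}$ meets at most two complementary arcs of $\idealpts(a)$" via Lemma~\ref{lem_3_connected_components}, then derive a contradiction from the boundary dynamics. The execution, however, differs and has a couple of gaps worth flagging.

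The paper's argument, after splitting into the one-arc and two-arc cases, invokes Corollary~\ref{cor_fixed_points_are_joined} to produce leaves $l_i$ \emph{fixed} by $\gamma_i=\alpha^{n_i}\beta^{m_i}$ running between the two regions where $\gamma_i$'s boundary fixed points concentrate (near an attractor of $\alpha$ and near a repeller of $\beta$). Lemma~\ref{lem_convergence_or_trivially_product} then forces these leaves to accumulate onto a leaf with one ideal endpoint at an endpoint of $\cF^\pm(a)$, impossible since $a$ is a non-corner and hence (via Lemma~\ref{lem:periodic_pf_corner} and Proposition~\ref{prop_ideal_circle}(iii)) has no perfect fit there. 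Your proposal instead tracks the four ideal endpoints $\xi_i^{(j)}$ of $\cF^\pm(x_i)$ directly. This is a workable dual viewpoint, but it introduces case analysis (singular $x_i$, $x_i$ a corner of a chain fixed by $\gamma_i$, etc.) that the paper avoids entirely by passing through Corollary~\ref{cor_fixed_points_are_joined}, which makes no regularity assumption on the fixed point.

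Two steps as written are not justified. First, the claim that ``cyclic alternation of the $\xi_i^{(j)}$ forces both endpoints of $\cF^+(x_i)$ into $A^+$ and both of $\cF^-(x_i)$ into $B^-$'' does not follow from alternation: an ideal point in $\Pbound$ need not carry a well-defined foliation type, so nothing a priori stops one endpoint of $\cF^+(x_i)$ from limiting to $A^+$ and the other to $B^-$. The correct reason mixed limits are excluded is that $a$ is non-corner: by Lemma~\ref{lem_convergence_or_trivially_product} the sequence $\cF^+(x_i)$ would then converge to a union of leaves containing $\cF^+(a)$ together with at least one other leaf (since the opposite endpoint of $\cF^+(a)$ is not in $B^-$), making $\cF^+(a)$ non-separated and hence $a$ a corner by Axiom~\ref{Anosov_like_periodic_non-separated} --- a contradiction. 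Second, the concluding ``cannot stabilize / cannot escape'' step is imprecise; Observation~\ref{obs:escape_endpoints} handles only sequences of leaves escaping to infinity, not the case where $x_i$ diverges along a leaf whose $\cF^\pm(x_i)$ themselves converge. The cleaner contradiction, once you know $\cF^+(x_i)\to\cF^+(a)$ and $\cF^-(x_i)$ converges to a disjoint $\cF^-$-structure fixed by $\beta$, is that the two families of leaves must eventually fail to intersect at all, yet $x_i$ is in their intersection. Finally, you should dispatch explicitly the degenerate case where $\beta$ itself fixes $a$ (the paper handles this at the outset via Observation~\ref{obs:disjoint_ideal_closures}); otherwise the claim that $\overline{\partial(\cC_\beta)}\cap\idealpts(a)=\emptyset$ is unjustified.
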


Note that the assumption that $\alpha$ does not fix a corner implies that $P$ is not skewed, and $\alpha$ has a unique fixed point.  

\begin{proof}
Suppose $a$ is a non-corner (positive or negative) fixed point of $\alpha$ and assume $\alpha^{m_i}\beta^{n_i}$ has a fixed point for some $m_i, n_i \to \infty$.  We will find a fixed point of $\beta$ that is totally linked with $a$. 

To first deal with a degenerate case, note that if $\alpha^j = \beta^k$ for some $j, k$ then $\beta$ also has $a$ as its unique fixed point.  Thus we will assume from now on that $\alpha^j \neq \beta^k$ for all $j, k$.

By assumption $\alpha$ fixed half-leaves through $a$, so pointwise fixes $\idealpts(a)$.  
As in the proof of Lemma \ref{lem:make_NC_points}, we abuse notation slightly and let $\overline{\partial(\cC_\beta)} \subset \Pbound$ denote the set of fixed points of $\beta$ on $\Pbound$. 
If $\idealpts(a) \cap \overline{\partial(\cC_\beta)} \neq \emptyset$, Observation \ref{obs:disjoint_ideal_closures} implies that $a$ is fixed by $\beta$ and we are done by the previous argument.

Thus, we need only analyze the situation where $\idealpts(a) \cap \overline{\partial(\cC_\beta)} = \emptyset$.   We will show that $\partial(\cC_\beta)$ meets at least three connected components of $\idealpts(a)$ by eliminating the other possibilities.  

\begin{case} Suppose that $\partial(\cC_\beta)$ (and therefore also its closure) is contained in a single complementary component of $\idealpts(a)$. Then we can partition the ideal circle $\Pbound$ into two closed intervals $I = [s, t], J = [t, s]$ that meet only at their endpoints in such a way that $\idealpts(a) \subset \mathring{I}$ and $\overline{\partial(\cC_\beta)} \subset \mathring{J}$. Let $a^+$ and $a^-$ be the points in $\idealpts(a)$ that are closest to $s$ and $t$, labeled so that $a^+$ (resp. $a^-$) is attracting (resp. repelling) for $\alpha$. Similarly, let $b^+$ and $b^-$ be the points in $\partial(\cC_\beta)$ that are closest to $s$ and $t$, labelled so that $b^+$ (resp. $b^-$) is attracting (repelling) for $\beta$. See Figure~\ref{fig:1component}. Note that $a^\pm$ may be reversed, as may $b^\pm$, but this does not affect the argument.  

Let $\gamma_i = \alpha^{m_i}\beta^{n_i}$. Observe that for $i$ sufficiently large $\beta^{n_i}(I)$ is contained in $J$, and lies arbitrarily close to $b^+$. After further increasing $i$, we also have $\alpha^{m_i}\beta^{n_i}(I)= \gamma_i(I) \subset I$, and will lie arbitrarily close to $a^+$. Similarly, for $i$ large $\gamma_i^{-1}(J)$ lies in $J$, arbitrarily close to $b^-$. It follows $\gamma_i$ has fixed points in both $\gamma_i(I)$ and $\gamma_i^{-1}(J)$, and that all of the fixed points of $\gamma_i$ are contained in these two intervals. By Corollary \ref{cor_fixed_points_are_joined}, (and since there are no fixed points of $\gamma_i$ outside of these intervals) it follows that $\gamma_i$ fixes some leaf $l_i$ that runs from $\gamma_i(I)$ to $\gamma_i^{-1}(J)$. Passing to a convergent subsequence, $l_i$ accumulates on a leaf $l_\infty$ with ideal endpoints $a^+$ and $b^-$. 
This again contradicts the fact, using Proposition \ref{prop_ideal_circle}, that there is no perfect fit at $a^+$.

 \begin{figure}
	\begin{overpic}{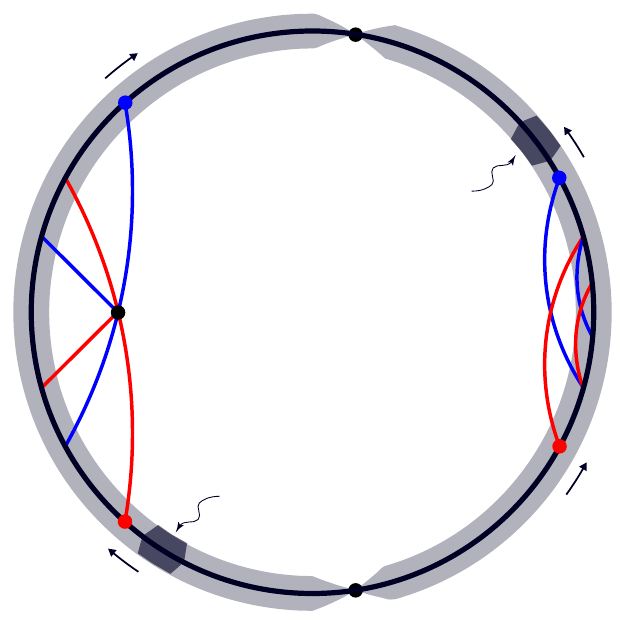} \small
		\put(56,1){$t$}
		\put(56,96){$s$}
		
		\put(27,82){$I = [s,t]$}
		\put(79,9){$J = [t,s]$}
		
		\put(21, 49){$a$}
		\put(14, 82){\textcolor{blue}{$a^-$}}
		\put(14, 12){\textcolor{red}{$a^+$}}
		
		\put(93, 70){\textcolor{blue}{$b^-$}}
		\put(93, 26){\textcolor{red}{$b^+$}}
		
		\put(35, 20){$\alpha^n\beta^n(I)$}
		\put(50, 68){$\beta^{-n}\alpha^{-n}(J)$}
	\end{overpic}
	\caption{Case 1}\label{fig:1component}
	
\end{figure} 
\end{case}

\begin{case}The case where $\partial(\cC_\beta)$ is contained in two connected components of $\Pbound \smallsetminus \idealpts(a)$ (and vice versa) is similar.  Figure \ref{fig:2components} illustrates all possible cyclic orderings (up to reversing the orientation of the circle) of $\partial(\cC_\beta)$ and points of $\idealpts(a)$, in the case where $a$ is a regular point.  The illustration for the case where $a$ is singular is easily obtained from this by inserting additional points of $\idealpts(a)$ in the intervals containing no points of $\partial(\cC_\beta)$, and the argument proceeds in exactly the same way.
   The colored intervals in the figure represent intervals, possibly degenerate (i.e. equal to a single point), containing points of $\partial(\cC_\beta)$, and a $\oplus$ or $\ominus$ sign indicates if the endpoint of the region is an attracting (respectively, repelling) fixed point for the action of $\beta$ on $\Pbound$.  

The first configuration cannot occur because a leaf of $\cC_\beta$ would need to cross both a leaf of $\cF^+(a)$ and of $\cF^-(a)$.  
For the other two, again using the fact that, for any set $U \subset \Pbound$ whose closure contains no repelling fixed points of $\alpha$ we have that   $\alpha^n(U)$ converges to an attracting fixed points of $\alpha$ as $n \to \infty$ (and the analogous statements for $\alpha^{-1}$ and $\beta^\pm$), we can argue as in the previous case that for $n, m$ sufficiently large, all fixed points of $\alpha^{m_i}\beta^{n_i}$ lie between the adjacent pairs of attracting and repelling points of $\alpha$ and $\beta$ on the boundary, as shaded in grey in the figure.   Thus, as in the previous argument we find leaves $l_{i}$ with endpoints approaching an attracting ideal point for $\alpha$ and a repelling ideal point for $\beta$ and can derive a contradiction as above.  

{\em Remark:} The reader may find it enlightening to attempt this style of argument when $\partial(\cC_\beta)$ meets three connected components of $\Pbound \smallsetminus \idealpts(a)$, in this case the cyclic orderings of fixed points allows leaves $l_i$ to approach a leaf {\em through} $a$, thus, giving no contradiction.    

 \begin{figure}[h] 
	\labellist 
	\small\hair 2pt
	\pinlabel $\partial(\cC_\beta)$ at 20 30 
	\pinlabel $a$ at 95 95
	\pinlabel $\oplus$ at 89 170 
	\pinlabel $\oplus$ at 89 6 
	\pinlabel $\ominus$ at 7 89
	\pinlabel $\ominus$ at 170 89 
	\pinlabel $l_i$ at 300 118 
	\pinlabel $a$ at 276 95
	\pinlabel $\oplus$ at 270 170 
	\pinlabel $\oplus$ at 270 6 
	\pinlabel $\ominus$ at 189 89
	\pinlabel $\ominus$ at 351 89 
	\pinlabel $\oplus$ at 308 161
	\pinlabel $\ominus$ at 343 128
	\pinlabel $\ominus$ at 308 16
	\pinlabel $\oplus$ at 343 50
	\pinlabel $l_i$ at 480 118 
	\pinlabel $a$ at 459 95
	\pinlabel $\oplus$ at 452 170 
	\pinlabel $\oplus$ at 452 6 
	\pinlabel $\ominus$ at 372 89
	\pinlabel $\ominus$ at 534 89 
	\pinlabel $\oplus$ at 490 161
	\pinlabel $\oplus$ at 525 128
	\pinlabel $\ominus$ at 490 16
	\pinlabel $\ominus$ at 525 50
	\endlabellist
	\centerline{ \mbox{
			\includegraphics[width=11cm]{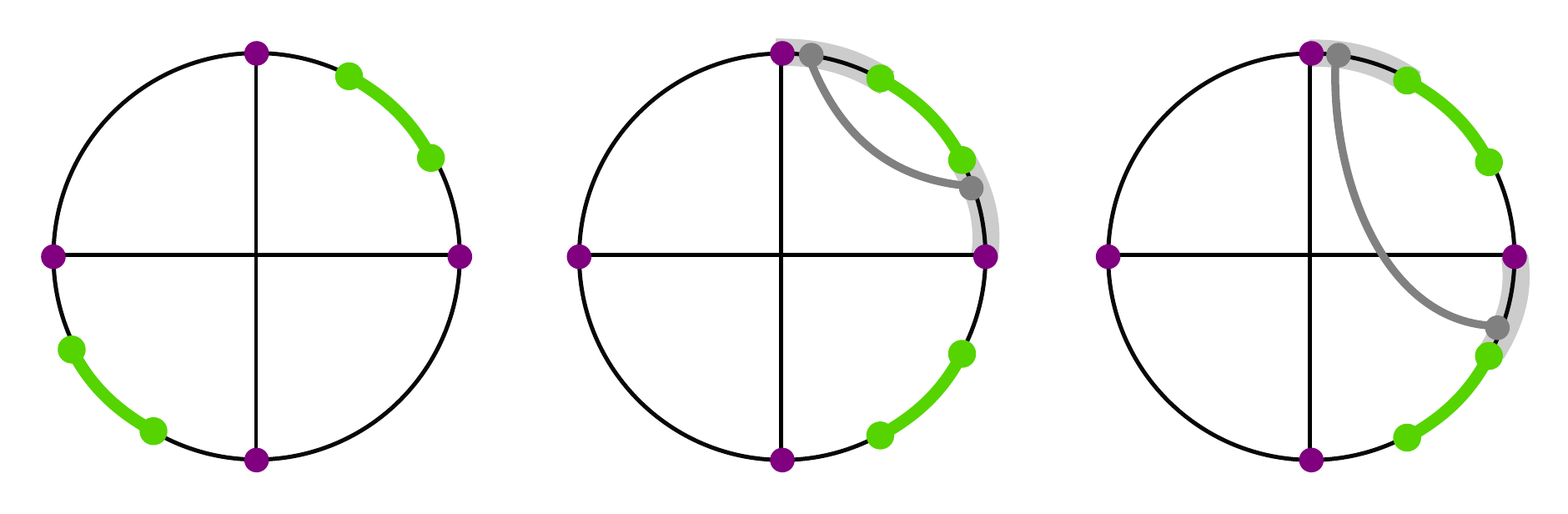} }}
	\caption{Configurations of fixed sets}
	\label{fig:2components}
\end{figure}
\end{case} 

\noindent \textit{Conclusion.} 
We conclude that $\partial(\cC_\beta)$ meets at least three connected components of $\idealpts(a)$. Thus, by Lemma \ref{lem_3_connected_components}, we conclude that $\beta$ has a fixed point that is totally linked with $a$, proving the proposition.  
\end{proof} 

The next proposition builds on the previous by taking positive and negative signs into account.  

\begin{proposition}[Characterization of totally linked positive points] \label{prop_charac_tot_linked_pos}
Suppose $a \neq b$ are totally linked, positive fixed points of $\alpha, \beta$, and assume $\alpha$ fixes no corner.  
Either 
\begin{enumerate}
\item $\beta$ has a negative fixed point totally linked with $a$ (in which case $\alpha^m \beta^{-n}$ has a fixed point for all $m, n>0$), \textbf{or} 
\item for all $n, m$ sufficiently large $\alpha^m \beta^{-n}$ has no fixed point in $P$.  
\end{enumerate} 
\end{proposition}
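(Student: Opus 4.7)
The plan is to first dispatch the parenthetical in case~(1), then argue the dichotomy by contrapositive.

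If $\beta$ has a negative fixed point $b'$ totally linked with $a$, then $b'$ is a positive fixed point of $\beta^{-1}$ totally linked with the positive fixed point $a$ of $\alpha$. Observation~\ref{obs_easy_tot_linked} applied to the pair $(\alpha,\beta^{-1})$ then gives $\beta^{-n}\alpha^m$ a positive fixed point for every $m,n>0$; since $\alpha^m\beta^{-n}$ is conjugate to $\beta^{-n}\alpha^m$ via $\alpha^m$, the same holds for $\alpha^m\beta^{-n}$.

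For the main dichotomy, assume there exist $m_i,n_i\to\infty$ with $\alpha^{m_i}\beta^{-n_i}$ having a fixed point $x_i\in P$, and aim to produce a negative fixed point of $\beta$ totally linked with $a$. The first move is to apply Proposition~\ref{prop_charac_tot_linked} to $(\alpha,\beta^{-1})$---its hypotheses hold since $a$ and $b$ are half-leaf-preserving fixed points of $\alpha$ and $\beta^{-1}$ respectively and $\alpha$ fixes no corner. This yields a fixed point $b'$ of $\beta$ (equivalently of $\beta^{-1}$) totally linked with the unique fixed point $a$ of $\alpha$. The remaining and principal task is to show that $b'$ may be chosen to be negative for $\beta$.

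For this I will extract the sign of $b'$ from the boundary dynamics of $\alpha^{m_i}\beta^{-n_i}$. By Proposition~\ref{prop:boundary_action_general}, for large $m_i,n_i$ its attractors on $\Pbound$ cluster near $\idealpts(\cF^+(a))$, while its repellers cluster near the attractors of $\beta$ on $\Pbound$---which are endpoints of $\cF^+$-half-leaves from positive corners of $\beta$ (such as $b$) together with endpoints of $\cF^-$-half-leaves from negative corners of $\beta$. The crucial geometric rigidity is that like-type leaves in a nontrivial plane cannot have linked ideal endpoints; consequently $\idealpts(\cF^+(a))$ and $\idealpts(\cF^+(b))$ do not alternate on $\Pbound$. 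To reconcile the existence of a plane fixed point $x_i$ with the trichotomy of Proposition~\ref{prop:boundary_action_general}(3), an alternating attractor/repeller configuration on $\Pbound$ is required, and this can only be supplied by the $\cF^-$-endpoints of a negative corner $b'$ of $\beta$. Lemma~\ref{lem_3_connected_components} applied to this $b'$ then certifies it is totally linked with $a$.

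The principal obstacle is precisely this sign-extraction step. Proposition~\ref{prop_charac_tot_linked} produces a totally linked fixed point of $\beta$ but is sign-blind; refining it here requires combining the boundary trichotomy of Proposition~\ref{prop:boundary_action_general} with the geometric rigidity that like-type leaves cannot link on $\Pbound$. Care must also be taken when $\beta$ fixes a nontrivial chain of lozenges, since then its boundary fixed set is richer than $\idealpts(\cF^\pm(b))$ alone, and one must track which corners in the chain contribute alternating boundary data for $\alpha^m\beta^{-n}$.
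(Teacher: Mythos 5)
The parenthetical in case~(1) is correctly dispatched, using Observation~\ref{obs_easy_tot_linked} and conjugation, exactly as one would want.

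The main dichotomy, however, has a genuine gap at the sign-extraction step, which you rightly flag as the crux. The paper's proof casework on how many complementary components of $\idealpts(a)$ are met by $\overline{\partial(\cC_\beta)}$. When it meets all four, $a$ lies inside a $\beta$-lozenge and case~(1) holds; when it meets one or two, the contradiction argument from the proof of Proposition~\ref{prop_charac_tot_linked} (applied to $\alpha$ and $\beta^{-1}$) already gives case~(2). The hard case, and the whole content of the proposition, is the case of exactly three components: there $\alpha$ and $\beta^{-1}$ genuinely \emph{do} have totally linked fixed points (namely $b$ itself, which is negative for $\beta^{-1}$), and one must nevertheless show that $\alpha^m\beta^{-n}$ has no fixed point in $P$ for large $m,n$. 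Your first move, applying the \emph{conclusion} of Proposition~\ref{prop_charac_tot_linked} to $(\alpha,\beta^{-1})$, therefore produces no new information: the totally linked fixed point $b'$ it yields can simply be $b$, which is positive for $\beta$. You cannot then "choose" $b'$ negative; you must instead show the hypothesis (some $\alpha^{m_i}\beta^{-n_i}$ has a fixed point) is incompatible with the three-component configuration.

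The sign-extraction reasoning that follows is not a proof. The clustering of attractors/repellers of $\alpha^{m_i}\beta^{-n_i}$ near $\idealpts(\cF^+(a))$ and near $\beta$'s attractors is not a consequence of Proposition~\ref{prop:boundary_action_general}, which concerns the fixed set of a single group element, not a composition; it is a dynamical fact that must be established, as the paper does in Case~1 of Proposition~\ref{prop_charac_tot_linked}. Moreover, the assertion that alternation of the boundary fixed set of $\alpha^m\beta^{-n}$ can "only be supplied by $\cF^-$-endpoints of a negative corner" is not justified by the no-linking rigidity alone: when $\beta$ fixes a chain of lozenges, $\cF^+$-endpoints of \emph{different} positive corners lying on opposite sides of $\cF^+(a)$ could in principle separate the points of $\idealpts(\cF^+(a))$. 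You note this concern but do not resolve it.

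The paper's treatment of the three-component case instead pins down the full cyclic order of the attracting and repelling fixed points of $\alpha$ and $\beta^{-1}$ on $\Pbound$ (separately in the non-lozenge and lozenge sub-cases; in the latter the constraint that the chain of lozenges lies in three quadrants of $b$ forces the ordering), shows that for large $m,n$ all boundary fixed points of $\alpha^m\beta^{-n}$ lie in two small intervals, and then invokes Corollary~\ref{cor_fixed_points_are_joined}: a plane fixed point forces a fixed leaf connecting those two intervals, whose endpoints would have to link with $\cF^+(a)$ or $\cF^-(a)$, which is impossible. The appeal to Corollary~\ref{cor_fixed_points_are_joined} --- turning a putative plane fixed point into a fixed leaf that can be ruled out geometrically --- is the missing lever in your proposal.
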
  

\begin{proof} 
As in the proof of the previous proposition, we use $\partial(\cC_\beta)$ to denote the endpoints of leaves fixed by $\beta$, either endpoints of a chain of lozenges or the leaves through a single fixed point.   If $\overline{\partial(\cC_\beta)} \cap \cF^\pm(a) \neq \emptyset$, as observed at the start of the proof of Proposition \ref{prop_charac_tot_linked}, we have $a$ fixed by $\beta$, which is impossible.  If $\overline{\partial(\cC_\beta)}$ (and thus $\partial(\cC_\beta)$) meets all four connected components of  $\Pbound \smallsetminus \idealpts(a)$, then by Corollary \ref{cor:4_connected_comp} $a$ lies in a lozenge fixed by $\beta$, so $\beta$ has a negative fixed point totally linked with $a$, and the first condition holds. 
If $\Pbound \smallsetminus \overline{\partial(\cC_\beta)}$ meets one or two connected components of $\cF^\pm(a)$, then we can apply case 1 or 2 of the proof of  Proposition \ref{prop_charac_tot_linked} to $\alpha$ and $\beta^{-1}$ and conclude $\alpha^m \beta^{-n}$ has no fixed point in $P$.  

Thus, we need to treat the case where $\partial(\cC_\beta)$ meets exactly three connected components of $\Pbound \setminus \idealpts(a)$. 
This is illustrated in Figure \ref{fig:3components}, on the left where $\beta$ does not fix a lozenge, and on the right an example of a chain fixed by $\beta$.  As before, we have drawn $a$ as a nonsingular point; if singular, $\cF^\pm(a)$ would simply have additional half-leaves in the upper left quadrant, which does not affect the argument. The dynamics of $\alpha$ and $\beta^{-1}$ on the boundary are shown with attracting points labeled $\oplus$ and repelling labeled $\ominus$. 
 \begin{figure}[h]
   \labellist 
  \small\hair 2pt
\pinlabel $a$ at 80 96
\pinlabel $b$ at 120 68
    \pinlabel $\oplus$ at 89 170 
        \pinlabel $\oplus$ at 89 6 
    \pinlabel $\ominus$ at 7 89
        \pinlabel $\ominus$ at 170 89 
     \pinlabel $\ominus$ at 133 157 
        \pinlabel $\ominus$ at 133 22 
    \pinlabel $\oplus$ at 10 60
        \pinlabel $\oplus$ at 165 60     
\pinlabel $a$ at 296 95
\pinlabel $b$ at 335 72
    \pinlabel $\oplus$ at 305 170 
        \pinlabel $\oplus$ at 305 6 
    \pinlabel $\ominus$ at 222 89
        \pinlabel $\ominus$ at 388 89 
 \pinlabel $\ominus$ at 342 161
        \pinlabel $\ominus$ at 380 123
 \pinlabel $\ominus$ at 328 10
        \pinlabel $\oplus$ at 385 62
         \pinlabel $\oplus$ at 228 62 
         \pinlabel $\oplus$ at 250 30
 \endlabellist
     \centerline{ \mbox{
\includegraphics[width=9cm]{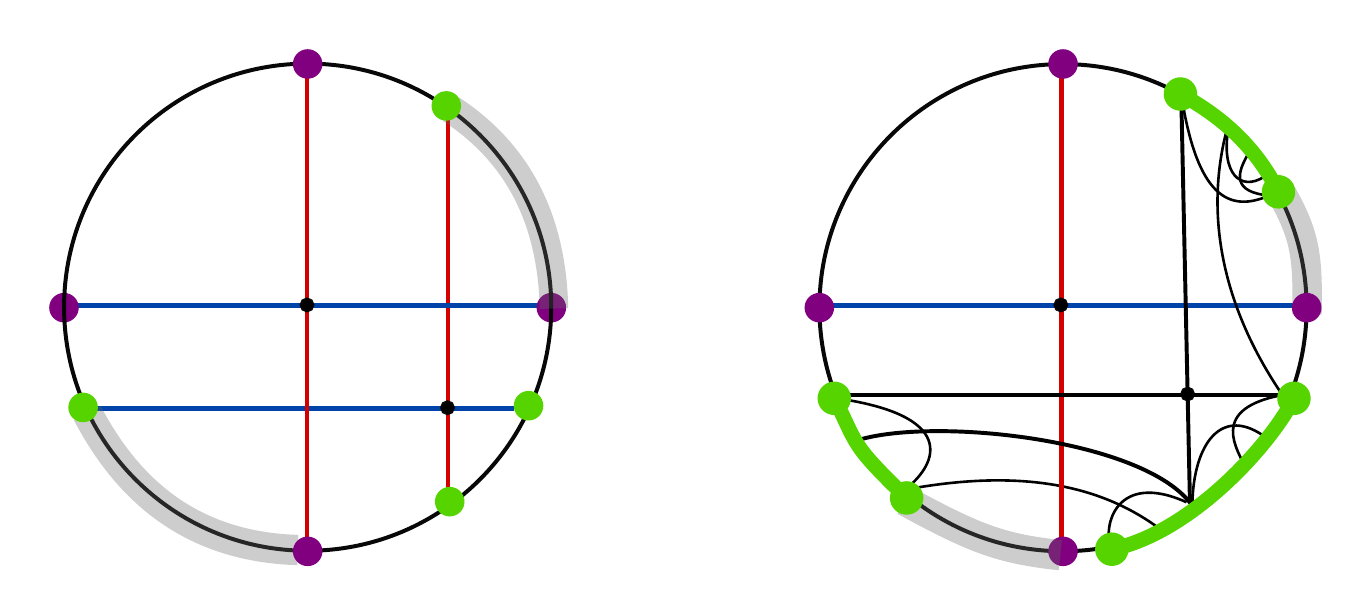} }}
\caption{Configurations of fixed sets for the action of $\alpha$ and $\beta^{-1}$}
\label{fig:3components} 
\end{figure}

In the non-lozenge case, for all $m,n > 0$, $\alpha^m\beta^{-n}$ contracts the interval between the adjacent attracting fixed points of $\alpha$ and $\beta^{-1}$ (and similarly for $(\alpha^m\beta^{-n})^{-1}$ with the interval between adjacent repelling fixed points), and for $m,n$ sufficiently large has no fixed points elsewhere.  Thus, if $\alpha^m\beta^{-n}$ had a fixed point in $P$, then
some leaf preserved by $\alpha^m\beta^{-n}$ must cross both $\cF^+(a)$ and of $\cF^-(a)$, a contradiction as in the previous Proposition.  

Now consider the case where $\beta$ preserves a chain of lozenges.  Let $b$ denote a corner that is totally linked with $a$. Up to simultaneously switching all $\oplus$ and $\ominus$ signs, the configuration of fixed points for $\alpha$ and $\beta^{-1}$ is illustrated in Figure \ref{fig:3components} on the right.  
Since no negative fixed point of $\beta$ is totally linked with $a$, the chain of lozenges stays in the bottom right three quadrants of $b$.  This forces $\beta^{-1}$ to have an attracting fixed point directly to the left of the lower attracting fixed point of $\alpha$ and thus a repelling fixed point directly to the right. (Indeed, $\beta^{-1}$ acts on $\cF^+(a)$ by pushing it to the left). Similarly,  $\beta^{-1}$ has a repelling fixed point on $\Pbound$ immediately above the repelling fixed point of $\alpha$ on the right, forcing the configuration shown. 
Again, we can conclude that all fixed points of $\alpha^m\beta^{-n}$ lie in the union of the region between the adjacent $\oplus$ points and the adjacent $\ominus$ points, forcing any leaf preserved by $\alpha^m\beta^{-n}$ to cross both $\cF^+(a)$ and of $\cF^-(a)$, and thus $\alpha^m\beta^{-n}$ cannot have a fixed point or fixed leaf.  
\end{proof}

We will also use the following variation on Proposition \ref{prop_charac_tot_linked}, which allows both $\alpha$ and $\beta$ to fix corners.  As before, we use the notation $\partial(\cC_\alpha)$ to denote the endpoints of a chain of lozenges fixed by $\alpha$.  

 \begin{proposition}[Linked chains of lozenges]\label{prop:linked_loz}
Let $\alpha, \beta \in G$ be elements such that $\alpha$ and $\beta$ have corner fixed points $a$ and $b$ in $P$, and assume that the points of $\overline{\partial(\cC_\alpha)}$ lie in exactly two connected components of $\Pbound  \smallsetminus \overline{\partial(\cC_\beta)}$.  
Then for all $n, m$ sufficiently large, $\alpha^n\beta^m$ and  $\alpha^{-n}\beta^m$ have no fixed points in $P$.  
 \end{proposition}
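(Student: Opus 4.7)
The plan is to adapt the strategy of Case 2 of Proposition~\ref{prop_charac_tot_linked} to the setting where both $\alpha$ and $\beta$ fix chains of lozenges, replacing the ``perfect fit at a non-corner point'' contradiction with one coming from the disjointness of the two boundary sets. First, I would observe that the ``exactly two components'' hypothesis forces $\overline{\partial(\cC_\alpha)} \cap \overline{\partial(\cC_\beta)} = \emptyset$, since any common point could not lie in a complementary component of $\overline{\partial(\cC_\beta)}$. By Proposition~\ref{prop:boundary_action_general}(2), the fixed sets of $\alpha$ and $\beta$ on $\Pbound$ are exactly $\overline{\partial(\cC_\alpha)}$ and $\overline{\partial(\cC_\beta)}$, respectively. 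After passing to powers preserving orientation of $\Pbound$, I would choose small pairwise disjoint neighborhoods $U^{\pm}_\alpha, U^{\pm}_\beta$ of the attracting and repelling fixed points. Standard boundary dynamics then yield, for all $n, m$ sufficiently large, that every fixed point of $\alpha^n\beta^m$ on $\Pbound$ lies in $U^+_\alpha \cup U^-_\beta$, with attractors in $U^+_\alpha$ and repellers in $U^-_\beta$. For $\alpha^{-n}\beta^m$ the analogous statement holds with $U^-_\alpha$ in place of $U^+_\alpha$.

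To derive a contradiction, assume $\alpha^{n_i}\beta^{m_i}$ has a fixed point $x_i \in P$ for some sequence $n_i, m_i \to \infty$. By Axiom~\ref{Anosov_like_A1}, this gives $\alpha^{n_i}\beta^{m_i}$ fixed points on $\Pbound$ in both the attractor and repeller regions (endpoints of the half-leaves through $x_i$). Applying Corollary~\ref{cor_fixed_points_are_joined} and following the construction in Case 2 of the proof of Proposition~\ref{prop_charac_tot_linked}, I would extract a sequence of leaves $l^{(i)}$ fixed by $\alpha^{n_i}\beta^{m_i}$ with one ideal endpoint in $U^+_\alpha$ (near an attractor $\xi$ of $\alpha$) and the other in $U^-_\beta$ (near a repeller $\eta$ of $\beta$). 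Passing to a convergent subsequence, these accumulate onto a leaf $l_\infty$ with endpoints $\xi \in \overline{\partial(\cC_\alpha)}$ and $\eta \in \overline{\partial(\cC_\beta)}$. By Lemma~\ref{lem_leaves_to_ideal_are_fixed}, both $\alpha$ and $\beta$ fix $l_\infty$; then by Lemma~\ref{lem_same_fixed_leaf_same_fixed_point} there is a common fixed point $p \in l_\infty$ of both. This $p$ is a corner of both $\cC_\alpha$ and $\cC_\beta$, so the ideal endpoints of leaves through $p$ lie in $\overline{\partial(\cC_\alpha)} \cap \overline{\partial(\cC_\beta)}$, contradicting the disjointness established at the start. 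The case of $\alpha^{-n}\beta^m$ follows by the symmetric argument applied to $\alpha^{-1}$.

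The hard part will be the extraction of the limit leaf $l_\infty$ with ``mixed-type'' endpoints (attractor of $\alpha$, repeller of $\beta$) from the sequences provided by Corollary~\ref{cor_fixed_points_are_joined}. Individual fixed leaves of $\alpha^{n_i}\beta^{m_i}$ have endpoints of the same dynamical type, so the attractor-to-repeller bridging must proceed through a chain of fixed leaves connected by perfect fits and shared fixed points, and one must verify that as $i\to\infty$ this chain collapses (rather than limits to a nontrivial finite chain) to a single leaf with endpoints $\xi$ and $\eta$. The ``exactly two components'' hypothesis plays the essential role here, constraining the cyclic arrangement of the attractor and repeller sets of $\alpha$ and $\beta$ on $\Pbound$ so that the bridging leaves exist and the limit behaves as claimed; a finite nontrivial limit chain would itself involve a shared ideal endpoint between leaves fixed by $\alpha$ and by $\beta$, giving the same contradiction.
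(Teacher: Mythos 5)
Your proposal follows essentially the same strategy as the paper: establish that $\overline{\partial(\cC_\alpha)}$ and $\overline{\partial(\cC_\beta)}$ are disjoint, constrain the fixed set of $\alpha^n\beta^m$ on $\Pbound$ to lie near the attracting fixed points of $\alpha$ and the repelling fixed points of $\beta$, and derive a contradiction with Corollary~\ref{cor_fixed_points_are_joined} and a limit-of-leaves argument. This is precisely the skeleton of the paper's proof, which in turn is modeled on Case 1 of Proposition~\ref{prop_charac_tot_linked}. You also correctly identify the delicate step (getting a leaf, or a chain, whose limit connects $\overline{\partial(\cC_\alpha)}$ to $\overline{\partial(\cC_\beta)}$), and your fallback observation that a nontrivial limit chain would still produce a shared ideal endpoint is sound, via Lemmas~\ref{lem_leaves_to_ideal_are_fixed} and~\ref{lem_same_fixed_leaf_same_fixed_point} and Proposition~\ref{proposition:cofixed_lozenge}.

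The one organizational difference: the paper makes the extraction step tractable by explicitly enumerating the four possible cyclic arrangements of attracting/repelling signs at the endpoints of the four complementary intervals $I_1,\dots,I_4$ (Figure~\ref{fig:configurations}). In one of these configurations, the attracting set $U^+_\alpha$ and the repelling set $U^-_\beta$ live in non-interleaving arcs, so the two leaves $\cF^{\pm}(p)$ through any hypothetical fixed point $p$ cannot even cross and no limiting argument is needed; in the other three, one runs the accumulating-leaf argument. Your approach would discover this dichotomy on the way, but you do not spell it out, and the claim ``attractors in $U^+_\alpha$ and repellers in $U^-_\beta$'' (which is the refined fact making this work) deserves an explicit justification: one can check it using the chain rule together with the disjointness of the fixed sets, as the derivative of $\alpha^n\beta^m$ at a fixed point $\zeta$ factors as $(\alpha^n)'(\alpha^{-n}\zeta)\cdot(\beta^m)'(\zeta)$. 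Given this, the two leaves $\cF^+(p^{(i)})$ and $\cF^-(p^{(i)})$ through a hypothetical fixed point have both endpoints in $U^+_\alpha$ (resp.\ $U^-_\beta$); letting $i\to\infty$, Lemma~\ref{lem_convergence_or_trivially_product} gives limits fixed by $\alpha$ and by $\beta$ respectively, and since the sequences of leaves always intersect, their limits intersect or make a perfect fit, giving the contradiction. This is a slight repackaging of your ``chain collapse'' paragraph but avoids any chain-length bookkeeping. Overall, correct and in the same spirit as the paper.
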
 

\begin{proof} 
Our assumption means that exactly four connected componoents of $\Pbound - (\overline{\partial(\cC_\alpha)} \cup \overline{\partial(\cC_\beta)})$ have one endpoint fixed by $\alpha$ and one by $\beta$.  Recall as before these endpoints lie in $\partial(\cC_\alpha)$ and $\partial(\cC_\beta)$ respectively.  
Label these intervals $I_1, \ldots I_4$ in cyclic order. The possible configurations of attracting and repelling endpoints of the intervals $I_i$ in $\partial(\cC_\alpha)$ and $\partial(\cC_\beta)$ (up to symmetry) are depicted in Figure \ref{fig:configurations}.

 \begin{figure}[h]
	\labellist 
	\small\hair 2pt
	\pinlabel $\partial(\cC_\alpha)$ at 87 145 
	 \pinlabel $\partial(\cC_\beta)$ at -6 91 
	\pinlabel $I_1$ at 155 140 
	 \pinlabel $I_2$ at 22 140 
	\pinlabel $I_3$ at 22 38 
	 \pinlabel $I_4$ at 155 38 
	\pinlabel $\ominus$ at 80 172
	\pinlabel $\ominus$ at 98 172 
	\pinlabel $\oplus$ at 80 6
	\pinlabel $\oplus$ at 98 6 
	\pinlabel $\oplus$ at 32 81
	\pinlabel $\oplus$ at 32 97
	\pinlabel $\ominus$ at 145 81
	\pinlabel $\ominus$ at 145 97 
	\pinlabel $l_k$ at 285 90 
         \pinlabel $\ominus$ at 261 172
	\pinlabel $\ominus$ at 281 172 
	\pinlabel $\oplus$ at 261 6
	\pinlabel $\oplus$ at 281 6 
	\pinlabel $\ominus$ at 215 81
	\pinlabel $\oplus$ at 215 97
	\pinlabel $\oplus$ at 330 81
	\pinlabel $\ominus$ at 330 97
	\pinlabel $l_k$ at 460 80 
         \pinlabel $\ominus$ at 445 172
	\pinlabel $\oplus$ at 465 172 
	\pinlabel $\oplus$ at 445 6
	\pinlabel $\ominus$ at 465 6 
	\pinlabel $\oplus$ at 400 81
	\pinlabel $\oplus$ at 400 97
	\pinlabel $\ominus$ at 512 81
	\pinlabel $\ominus$ at 512 97
	\pinlabel $l_k$ at 660 85 
         \pinlabel $\ominus$ at 632 172
	\pinlabel $\oplus$ at 650 172 
	\pinlabel $\oplus$ at 632 6
	\pinlabel $\ominus$ at 650 6 
	\pinlabel $\oplus$ at 584 81
	\pinlabel $\ominus$ at 584 97
	\pinlabel $\ominus$ at 697 81
	\pinlabel $\oplus$ at 697 97
	\endlabellist
	\centerline{ \mbox{
			\includegraphics[width=14cm]{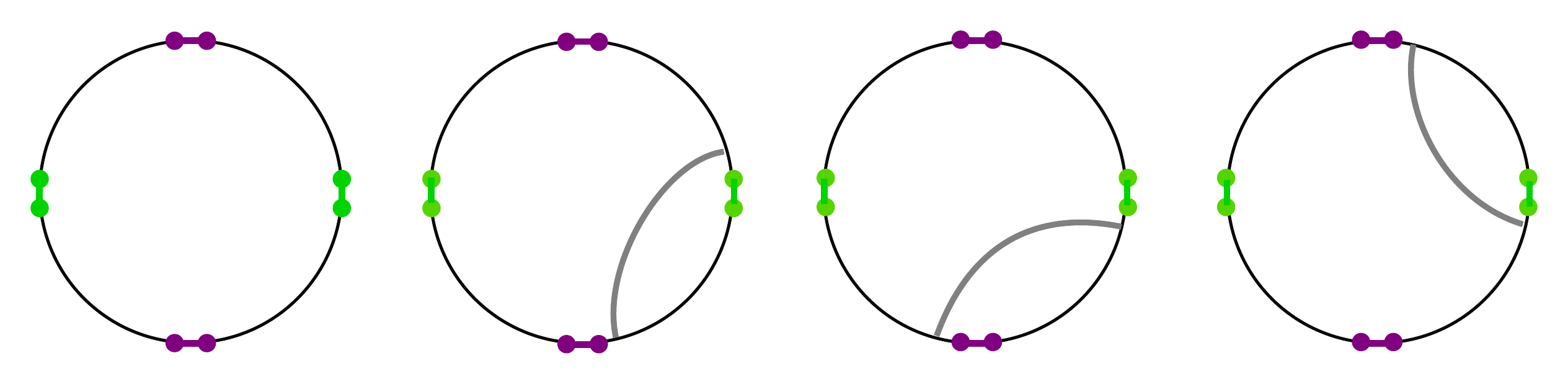} }}
	\caption{Configurations of fixed sets for linked lozenges: points of $\partial(\cC_\alpha)$ (resp. $\partial(\cC_\beta)$) lie in the small intervals between the adjacent purple (resp. green) points.}
	\label{fig:configurations}
\end{figure}

The proof follows the same arguments as in case 1 of the proof of Proposition \ref{prop_charac_tot_linked}, so we will be brief.  
In the leftmost case of Figure \ref{fig:configurations}, for $n, m$ large and positive, $\alpha^n\beta^m$ will have fixed points on the boundary in intervals $I_1$ and $I_3$ but no others. If $\alpha^n\beta^m$ had an interior fixed point, this would contradict Corollary \ref{cor_fixed_points_are_joined}. For $n$ negative, $m$ positive, the same argument applies with intervals $I_2$ and $I_4$. 

The remaining three cases are dealt with again exactly as in Proposition \ref{prop_charac_tot_linked}.  Supposing for some sequences $n_k$ and $m_k$ tending to $+\infty$, $\alpha^{n_k} \beta^{m_k}$ had a fixed point.  Then we would find a sequence of fixed leaves $l_k$ with endpoints approaching an attracting fixed point of $\alpha$ and repelling fixed point of $\beta$, in the positions indicated.  Then $l_k$ would accumulate on a leaf fixed by both $\alpha$ and $\beta$, giving a contradiction.   The case where $n_k$ is negative is symmetric.  
\end{proof}

\subsection{Determining skewedness and non-corners} 
As another important preliminary step for Theorem \ref{thm_main_general}, we now use the tools developed above to show that $\fix(\rho)$ distinguishes skewed from non-skewed planes.   We also show that if two actions have the same elements with fixed points, then they also have the same elements with fixed points that are noncorners.  

\begin{lemma}  \label{lem_skew_linked}
Suppose $(P,\cF^+,\cF^-)$ is a nontrivial  bifoliated plane with Anosov-like action of a group $G$.  If the plane is not skewed, 
then there exist a pair of non-corner points fixed by elements of $G$ that are partially but not totally linked.  
\end{lemma}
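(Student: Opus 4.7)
The plan is to fix any non-corner fixed point $a$ and produce a second non-corner fixed point $b$ satisfying $\cF^+(b) \cap \cF^-(a) \neq \emptyset$ and $\cF^-(b) \cap \cF^+(a) = \emptyset$; such a pair is automatically partially but not totally linked. The hypothesis that $(P,\cF^+,\cF^-)$ is nontrivial and not skewed ensures, via Theorem \ref{thm_trivial_skewed_or_nowheredense}, that $P$ has either a singular point or non-separated leaves, so Lemma \ref{lem_nowhere_dense} provides density of non-corner fixed points, and such an $a$ exists.

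First I produce a $\cF^-$-leaf $m$ disjoint from $\cF^+(a)$. By Axiom \ref{Anosov_like_topologically_transitive} some $\gamma\in G$ does not stabilize $\cF^+(a)$, so $\cF^+(a)$ and $\gamma\cdot\cF^+(a)$ are distinct leaves of $\cF^+$; Lemma \ref{lem_distinct_leaves_distinct_saturations} then yields a $\cF^-$-leaf meeting exactly one of the two, and replacing it by its $\gamma^{-1}$-image if necessary gives a leaf $m$ with $m\cap\cF^+(a)=\emptyset$. By making a further choice of $\gamma$ if needed, I may ensure that $m$ lies in the interior, in the $\cF^-$-leaf space, of the set of $\cF^-$-leaves that miss $\cF^+(a)$.

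Second, I argue that the two open $\cF^+$-saturations $S_1=\{x\in P : \cF^+(x)\cap\cF^-(a)\neq\emptyset\}$ and $S_2=\{x\in P : \cF^+(x)\cap m\neq\emptyset\}$ meet. If $S_1\cap S_2=\emptyset$, then $P$ would split as a union of disjoint $\cF^+$-saturated open sets separating $\cF^-(a)$ from $m$; combined with Proposition \ref{prop:no_product} and the density and transitivity Axioms \ref{Anosov_like_topologically_transitive} and \ref{Anosov_like_dense_fixed_points}, this $G$-invariant bi-saturated decomposition would force the plane to be trivial or skewed, contrary to hypothesis. Hence there is $z\in S_1\cap S_2$ with $\cF^-(z)=m$, and $l:=\cF^+(z)$ crosses both $\cF^-(a)$ and $m$.

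Finally, openness of $S_1$ and of the interior chosen in the first step ensure that on an open neighborhood $U$ of $z$ every $b\in U$ has $\cF^+(b)\cap\cF^-(a)\neq\emptyset$ and $\cF^-(b)\cap\cF^+(a)=\emptyset$. Lemma \ref{lem_nowhere_dense} then supplies a non-corner fixed point $b\in U$, completing the construction of the partially linked pair. The principal obstacle is the second step — the non-emptiness of $S_1\cap S_2$ — whose justification requires the detailed structural argument excluding any $G$-invariant $\cF^+$-saturated decomposition of $P$ compatible with the nontrivial and non-skewed axioms.
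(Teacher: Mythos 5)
Your overall plan --- produce a second non-corner fixed point $b$ satisfying $\cF^+(b)\cap\cF^-(a)\neq\emptyset$ and $\cF^-(b)\cap\cF^+(a)=\emptyset$, then appeal to density of fixed points and nowhere-density of corners --- is a reasonable route, but the central claim, that $S_1\cap S_2\neq\emptyset$, has a genuine gap. You are asserting that some $\cF^+$-leaf crosses both $\cF^-(a)$ and $m$, and this need not hold for an arbitrary pair of $\cF^-$-leaves in a non-trivial bifoliated plane: branching (non-separated leaves or a singularity) lying between $\cF^-(a)$ and $m$ can make their $\cF^+$-saturations disjoint. The justification you gesture at does not work: $S_1$ and $S_2$ depend on the auxiliary choices $a$ and $m$ and are \emph{not} $G$-invariant, $S_1\cup S_2$ need not cover $P$, and a pair of disjoint $\cF^+$-saturated open sets is a perfectly ordinary occurrence in non-trivial, non-skewed planes rather than an obstruction to them. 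You flag this as ``the principal obstacle,'' correctly --- but the resolution you propose (ruling out $G$-invariant bi-saturated decompositions) is aimed at the wrong target. A secondary unjustified step is the assertion that $m$ can be taken in the interior of the set of $\cF^-$-leaves missing $\cF^+(a)$; this is exactly where the non-corner hypothesis on $a$ (via Lemma~\ref{lem:periodic_pf_corner}: no perfect fits at $\cF^+(a)$) ought to be invoked, but your proof never uses it.

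The paper's proof avoids the difficulty by not fixing the pair of fixed points in advance. Starting from a non-corner, non-singular fixed point $c$, it picks a non-singular $l\in\cF^+$ meeting $\cF^-(c)$ at $z$, and uses the absence of infinite product regions (Proposition~\ref{prop:no_product}) to produce a non-singular $l_1^-\in\cF^-$ that meets $\cF^+(c)$ at $y$ but misses $l$. Because $y$ and $z$ sit on the two leaves through the ``hub'' $c$, the intersection $\cF^+(y)\cap\cF^-(z)=\{c\}$ comes for free, while $\cF^-(y)\cap\cF^+(z)=l_1^-\cap l=\emptyset$ by construction, so $y,z$ are automatically partially but not totally linked; the absence of perfect fits at $c$ (Lemma~\ref{lem:periodic_pf_corner}) then makes the configuration stable under small perturbation, and density of fixed points together with Lemma~\ref{lem_nowhere_dense} supplies non-corner fixed points near $y$ and $z$. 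This route needs only that some $\cF^-$-leaf through $\cF^+(c)$ misses some $\cF^+$-leaf through $\cF^-(c)$ --- which Proposition~\ref{prop:no_product} gives immediately --- rather than a common $\cF^+$-transversal to two pre-chosen $\cF^-$-leaves, which is precisely the step your argument cannot supply.
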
 

\begin{proof} 
Suppose $P$ is not skewed.  By Theorem \ref{thm_trivial_skewed_or_nowheredense}, there exists some fixed point $c \in P$ which is not a corner and is non-singular.  
By Lemma \ref{lem:periodic_pf_corner}, no half-leaf through $c$ makes a perfect fit with any other leaf.  
Pick a non-singular leaf $l \in \cF^+$ that intersects $\cF^-(c)$ at a point $z$.
Proposition \ref{prop:no_product} implies that there are no infinite product regions, thus we can find some non-singular leaf $l_1^-\in \cF^-$ such that $l_1^-$ intersects $\cF^+(c)$ but not $l$.    
Let $y= l_1^-\cap \cF^+(c)$.  Then $y$ and $z$ are partially linked, and the fact that $c$ has no perfect fits means that 
the leaves of $y$ and $z$ have no common endpoints. 
Since all the leaves involved here are non-singular, this will remain true for any points sufficiently close to $y$ and $z$, i.e. any pairs of sufficiently nearby points are not totally linked. Since corners are nowhere dense  (Theorem \ref{thm_trivial_skewed_or_nowheredense}) and fixed points are dense, we can pick $a$ near $y$ and $b$ near $z$ that are fixed by some elements of the action, and such that $a$ and $b$ are partially, not totally, linked.   
\end{proof}

\begin{corollary}\label{cor_skewed_implies_skewed}
 Let $G$ be a group with Anosov-like actions $\rho_i$ on nontrivial bifoliated planes $(P_i,\cF^+_i,\cF^-_i)$, $i=1,2$  and suppose that $\fix(\rho_1)=\fix(\rho_2)$.    The plane $(P_1,\cF^+_1,\cF^-_1)$ is skewed if and only if $(P_2,\cF^+_2,\cF^-_2)$ is skewed.  
\end{corollary}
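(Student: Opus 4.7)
The proof proceeds by contradiction. By symmetry of the hypothesis in $\rho_1$ and $\rho_2$, it suffices to show that ``$P_1$ not skewed'' implies ``$P_2$ not skewed'', so suppose for contradiction that $P_1$ is not skewed while $P_2$ is skewed.

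First, apply Lemma \ref{lem_skew_linked} to $(P_1, \rho_1)$ to obtain $\alpha, \beta \in G$ whose $\rho_1$-fixed points $a, b \in P_1$ are non-corner and partially but not totally linked. By Proposition \ref{proposition:cofixed_lozenge}, $a$ is the unique $\rho_1$-fixed point of $\alpha$, so $\alpha$ fixes no corner in $P_1$. Applying the contrapositive of Proposition \ref{prop_charac_tot_linked} to the four pairs $(\alpha^{\pm 1}, \beta^{\pm 1})$ (each of which has the same fixed-point sets as $(\alpha,\beta)$ and hence satisfies the same non-corner and partially-not-totally-linked hypotheses), we obtain $N$ such that for every $n, m > N$ and every choice of signs $\epsilon_1, \epsilon_2 \in \{\pm 1\}$, the element $\alpha^{\epsilon_1 n}\beta^{\epsilon_2 m}$ does not lie in $\fix(\rho_1) = \fix(\rho_2)$.

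To reach a contradiction, I will show that on the skewed side $P_2$ at least one of the products $\alpha^{\epsilon_1 n}\beta^{\epsilon_2 m}$ must lie in $\fix(\rho_2)$ for arbitrarily large $n, m$. Both $\alpha$ and $\beta$ have fixed points in $P_2$ since they lie in $\fix(\rho_2)$, and because every half-leaf in the strip model of Definition \ref{def:skewed} makes a perfect fit with a leaf of the opposite foliation, Lemma \ref{lem:periodic_pf_corner} ensures these fixed points are corners of lozenges, with associated maximal preserved chains $\cC_\alpha, \cC_\beta$. Two rigid features of skewed planes drive the argument: (i) the two globally $G$-fixed ideal points $\xi_1, \xi_2 \in \Pbound_2$ furnished by Proposition \ref{prop_4_2_0_global_fixed_points} lie in $\overline{\partial(\cC_\alpha)} \cap \overline{\partial(\cC_\beta)}$ (a consequence of Proposition \ref{prop:boundary_action_general} case 2, since the only way $\alpha$ can fix the global boundary points of $P_2$ is for them to be in the closure of the chain endpoints), and (ii) chains of lozenges in a skewed plane are ``long,'' with their endpoints accumulating on both $\xi_1$ and $\xi_2$. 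Combined, these features constrain the cyclic configuration of $\overline{\partial(\cC_\alpha)}$ and $\overline{\partial(\cC_\beta)}$ on $\Pbound_2$ enough that Corollary \ref{cor_fixed_points_are_joined}, together with arguments parallel to the proofs of Propositions \ref{prop_charac_tot_linked} and \ref{prop:linked_loz}, produces a fixed leaf (and hence a fixed point) of $\alpha^{\epsilon_1 n}\beta^{\epsilon_2 m}$ in $P_2$ for appropriate signs and arbitrarily large $n, m$, contradicting the conclusion of the preceding paragraph.

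The main obstacle is this final step: extracting fixed points of $\alpha^{\epsilon_1 n}\beta^{\epsilon_2 m}$ in $P_2$ from the combinatorial analysis on $\Pbound_2$. A naive Brouwer fixed-point argument on a neighborhood of a corner fixed point of $\alpha$ fails, because corners in skewed planes are topological saddles, so iterates of $\alpha$ stretch small boxes along the unstable leaf rather than contracting them. The correct argument proceeds combinatorially on the ideal circle, exploiting the rigidity imposed by the globally $G$-fixed ideal endpoints of the strip to force the desired linking configuration of fixed chains and, through perfect-fit bookkeeping, a fixed leaf for a suitable product.
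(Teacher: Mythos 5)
Your proposal reproduces the paper's strategy for the first half: Lemma \ref{lem_skew_linked} yields non-corner fixed points $a$, $b$ of $\alpha$, $\beta$ in $P_1$ that are partially but not totally linked, and the contrapositive of Proposition \ref{prop_charac_tot_linked}, applied to the pairs $(\alpha^{\pm 1}, \beta^{\pm 1})$, shows that $\alpha^{\epsilon_1 m}\beta^{\epsilon_2 n} \notin \fix(\rho_1) = \fix(\rho_2)$ for all large $m, n$ and all sign choices.

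However, you explicitly leave the final step unfinished --- producing a fixed point of some $\alpha^{\epsilon_1 m}\beta^{\epsilon_2 n}$ in $P_2$ --- and the route you sketch through the ideal circle is considerably more involved than what is actually needed. The missing ingredient is a simple structural property of skewed planes that you have half-identified in noting that every fixed point is a corner: a maximal chain of lozenges in a skewed plane, together with the leaves through its corners, covers the entire strip. Hence if $a' \in P_2$ is a fixed point of $\rho_2(\alpha)$, it lies in the closure of some $\rho_2(\beta)$-lozenge, and therefore $\beta$ has a fixed point in $P_2$ totally linked with $a'$. Choosing $k$ so that $a'$ is a positive fixed point of $\alpha^k$, and replacing $\beta$ by a suitable power $\beta^{\pm j}$ so that the totally-linked $\beta$-fixed point is positive, Observation \ref{obs_easy_tot_linked} gives a fixed point of $\rho_2(\alpha^{km}\beta^{\pm jn})$ for every $m, n > 0$, contradicting the first paragraph. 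No ideal-circle bookkeeping is required; the contraction in Observation \ref{obs_easy_tot_linked} is a one-dimensional interval argument in the leaf space, and the saddle behavior you were concerned about in $P_2$ plays no role.
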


\begin{proof}
By symmetry, we need only prove one direction, so we assume $P_1$ is not skewed and show that this implies $P_2$ is not skewed.   
By Lemma \ref{lem_skew_linked}, there are non-corner fixed points $a$ and $b$ in $P_1$ that are not totally linked.  Let  $\alpha$ and $\beta$ be elements having $a$ and $b$, respectively, as their (unique) positive fixed points.  By the (contrapositive to) Proposition \ref{prop_charac_tot_linked}, for all sufficiently large $m, n>0$, both $\rho_1(\alpha^{\pm m}\beta^{\pm n})$ and $\rho_1(\alpha^{\pm m}\beta^{\mp n})$
have no fixed point.  

Since $\fix(\rho_1)=\fix(\rho_2)$, we have also that $\rho_2(\alpha)$ and $\rho_2(\beta)$ have fixed points, but $\rho_2(\alpha^{\pm m}\beta^{\pm n})$ and $\rho_2(\alpha^{\pm m}\beta^{\mp n})$ do not, for $m, n$ sufficiently large.   
Let $a'$ be a fixed point for $\rho_2(\alpha)$ and let $k \in \bZ$ be such that $a'$ is a {\em positive} fixed point for $\alpha^k$.  
Now if $P_2$ were skewed, then (again up to passing to powers) $\beta$ would have either a positive or negative (possibly both) fixed point totally linked with $a'$, which means that for $n, m$ large, either $\rho_2(\alpha^{km}\beta^{n})$ or $\rho_2(\alpha^{km}\beta^{-n})$ would have a fixed point by Observation \ref{obs_easy_tot_linked}.  Since this is not the case, we conclude $P_2$ is not skewed.  
\end{proof}

\begin{definition}
 Given an Anosov-like action $\rho$ of $G$ on a bifoliated plane, define $\fixnc(\rho)$ to be the set of elements $g\in G$ such that $\rho(g)$ fixes a (necessarily unique) non-corner point.  
\end{definition}

Note that it is possible for an element to have a unique corner fixed point, for instance if it rotates an invariant chain of lozenges about a corner.  However, such an element will have a nontrivial power that fixes more than one point in $P$, whereas all powers of elements in $\fixnc(\rho)$ have unique fixed points in $P$.  

The next proposition states that if two Anosov-like actions have the same elements with fixed points, then they have the same elements with non-corner fixed points. This property will allow us to start building a homeomorphism between two bifoliated planes with the same sets of elements with fixed points. 
\begin{proposition}\label{prop_non-corners_are_equal}
 Let $\rho_1$ and $\rho_2$ be Anosov-like actions on nontrivial bifoliated planes $(P_1,\cF^+_1,\cF^-_1)$ and $(P_2,\cF^+_2,\cF^-_2)$ respectively. If $\fix(\rho_1)=\fix(\rho_2)$, then $\fixnc(\rho_1)=\fixnc(\rho_2)$.
\end{proposition}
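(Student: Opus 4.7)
First I reduce to the essential case. By Corollary \ref{cor_skewed_implies_skewed}, the assumption $\fix(\rho_1)=\fix(\rho_2)$ forces the two planes to be simultaneously skewed or non-skewed. In the skewed case every fixed point is a corner of a lozenge (a skewed plane is tiled by adjacent lozenges, and every fixed point has perfect fits in each of its four quadrants, so is a corner by Lemma \ref{lem:periodic_pf_corner}), so $\fixnc(\rho_1)=\fixnc(\rho_2)=\emptyset$ and nothing needs to be proved. I therefore assume both $P_i$ are nontrivial and non-skewed, so corners are nowhere dense in each $P_i$ by Lemma \ref{lem_nowhere_dense}, and non-corner fixed points are dense in each $P_i$ by Axiom \ref{Anosov_like_dense_fixed_points}.

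By symmetry it suffices to prove $\fixnc(\rho_1)\subseteq\fixnc(\rho_2)$. Suppose for contradiction that $g\in\fixnc(\rho_1)\setminus\fixnc(\rho_2)$ and let $a_1\in P_1$ be the unique non-corner fixed point of $\rho_1(g)$. After replacing $g$ by a power we may assume $\rho_1(g)$ fixes every half-leaf at $a_1$, so $a_1$ is a positive fixed point. By Lemma \ref{lem_power_fixes_max_chain} applied in $P_2$ (where $\rho_2(g)$ fixes only corners), after passing to a further power we may in addition assume that $\rho_2(g)$ fixes every corner of some maximal chain $\cC\subset P_2$, and since $g\notin\fixnc(\rho_2)$ the chain $\cC$ contains at least one full lozenge.

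The core of the argument is to produce a ``witness'' $\beta\in G$ whose product structure with $g$ in $\fix$ is inconsistent between $\rho_1$ and $\rho_2$. Using density of non-corner positive fixed points in $P_1$, I choose $\beta$ with positive fixed point $b_1\in P_1$ non-corner and totally linked with $a_1$. Observation \ref{obs_easy_tot_linked} gives $\beta^n g^m\in\fix(\rho_1)$ for all $n,m>0$; applying Proposition \ref{prop_charac_tot_linked_pos} with $\alpha=g$ (non-corner, so $a_1$ lies in no lozenge and $\beta$ can have no negative fixed point totally linked with $a_1$) yields $g^m\beta^{-n}\notin\fix(\rho_1)$ for all sufficiently large $m,n$. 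Taking inverses, the complete picture transferred to $\fix(\rho_2)$ reads:
\begin{itemize}
\item $g^n\beta^m\in\fix$ whenever $\mathrm{sign}(n)=\mathrm{sign}(m)$ and both are nonzero;
\item $g^n\beta^m\notin\fix$ for all sufficiently large $|n|,|m|$ with $\mathrm{sign}(n)\neq\mathrm{sign}(m)$.
\end{itemize}
I then contradict this in $P_2$ by splitting on the nature of $\beta$'s fixed set there. If $\beta$ fixes only corners in $P_2$, it preserves a chain $\cC_\beta$, and I apply Proposition \ref{prop:linked_loz} to the pair $(g,\beta)$: the sign-symmetric conclusion ``$g^n\beta^m\notin\fix$ and $g^{-n}\beta^m\notin\fix$ for all large $n,m$'' in the two-component case directly contradicts the data, while the remaining configurations (three or four components) force totally linked corner pairs, which via Proposition \ref{prop_charac_tot_linked_pos} applied to a positive corner of $\cC$ and a positive corner of $\cC_\beta$ again produce the sign-symmetric behavior. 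If instead $\beta\in\fixnc(\rho_2)$ with fixed point $b_2\in P_2$, then Proposition \ref{prop_charac_tot_linked} applied to the non-corner $\beta$ and $g$ forces $b_2$ to be totally linked with some corner of $\cC$, and Proposition \ref{prop_charac_tot_linked_pos} applied to this pair yields the dichotomy that $b_2$ either lies in an interior lozenge of $\cC$ (incompatible with the asymmetric data above) or not (in which case I vary the choice of $\beta$).

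\textbf{Main obstacle.} The delicate point is the sub-case where $\beta\in\fixnc(\rho_2)$ and $b_2$ does not lie in a lozenge of $\cC$: the data listed above is a priori consistent with this configuration with respect to the single element $\beta$. To close the contradiction I expect to need to vary $\beta$, using density of non-corner positive fixed points in each of the quadrants of $a_1$ in $P_1$ to produce $\beta,\beta'$ whose fixed points are totally linked with $a_1$ from distinct sides, and then to observe that no single chain $\cC$ in $P_2$ can simultaneously accommodate the combined linkage constraints imposed in $P_2$ on $b_2,b_2'$ and the corners of $\cC$. The combinatorics of this joint constraint, together with the ideal-circle dynamics of Proposition \ref{prop:boundary_action_general} used to track endpoints of the fixed leaves of all the elements involved, is where I expect the bulk of the technical work to lie.
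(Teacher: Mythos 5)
Your reduction to the non-skewed case and the opening setup (pass to a power of $g$ so that $\rho_1(g)$ fixes all half-leaves at its non-corner fixed point $a_1$ and $\rho_2(g)$ fixes all corners of a maximal chain $\cC$ containing a full lozenge) match the paper. After that the approaches diverge, and yours has two genuine gaps that together leave the core of the argument incomplete.

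First, the direction of witness choice. You choose $\beta$ with a non-corner positive fixed point $b_1$ in $P_1$ totally linked with $a_1$, transfer the resulting fixed/non-fixed data for $g^n\beta^m$ over to $\rho_2$, and try to contradict it against the structure of $\cC$ in $P_2$. The paper goes the other way: it chooses $\beta$ with a non-corner positive fixed point $b_2$ \emph{inside a lozenge $L$ of $\cC$ in $P_2$}. The point of that choice is that $b_2$ is totally linked with \emph{both} corners $a_2^+$ and $a_2^-$ of $L$, one positive and one negative for $\rho_2(g)$, so the $\rho_2$-fixed spectrum of the products $g^{\pm n}\beta^{-n}$ is known with all four sign combinations in $\fix(\rho_2)$. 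Transferring this to $P_1$ and invoking Propositions \ref{prop_charac_tot_linked} and \ref{prop_charac_tot_linked_pos} then forces $a$ to lie in the interior of a $\rho_1(\beta)$-lozenge $L'$ — a much sharper conclusion than you get from your direction, because in $P_1$ both propositions genuinely apply ($\rho_1(g)$ is non-corner).

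Second, within your proposed case analysis in $P_2$, the step ``the remaining configurations (three or four components) force totally linked corner pairs, which via Proposition \ref{prop_charac_tot_linked_pos} applied to a positive corner of $\cC$ and a positive corner of $\cC_\beta$ again produce the sign-symmetric behavior'' misapplies Proposition \ref{prop_charac_tot_linked_pos}: that proposition requires $\alpha$ to fix \emph{no corner}, and in $P_2$ the element $g$ does fix corners. So the three-/four-component sub-cases of your corner branch are not actually handled. Together with the sub-case you explicitly flag as open ($\beta$ non-corner in $P_2$ with $b_2$ outside the lozenges of $\cC$), this means the contradiction is never actually closed.

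The idea you are missing is the conjugation trick, which is what lets the paper avoid your case analysis altogether. Once the paper has shown that $a$ lies inside a $\rho_1(\beta)$-lozenge $L'$ in $P_1$, it sets $\gamma_n := g^n\beta g^{-n}$. In $P_2$, since $\rho_2(g)$ preserves the lozenge $L$, the point $\rho_2(g^n)b_2$ stays in $L$ and remains totally linked with $b_2$; hence $\rho_2(\gamma_n^m\beta^m)\in\fix(\rho_2)$ for all $m,n>0$. But in $P_1$, as $n\to\infty$ the ideal fixed set of $\rho_1(\gamma_n)$ collapses onto the attracting ideal fixed points of $\rho_1(g)$; since $a$ is non-corner, these lie in exactly two complementary components of the ideal fixed set of $\rho_1(\beta)$, so Proposition \ref{prop:linked_loz} gives $\rho_1(\gamma_n^m\beta^m)\notin\fix(\rho_1)$ for $m$ large — a contradiction with $\fix(\rho_1)=\fix(\rho_2)$. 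This conjugation step, applied \emph{in $P_1$} where the hypotheses of the relevant propositions actually hold, is the missing ingredient; without it (or something playing the same role) the combinatorics you sketch in the ``main obstacle'' paragraph is not enough to finish.
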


\begin{proof}
If one of the bifoliated planes is skewed, then there are no non-corner points and the result follows from Corollary \ref{cor_skewed_implies_skewed}.
So we assume that the bifoliated planes are not skewed.

 Let $\alpha \in \fixnc(\rho_1)$ and suppose for contradiction that $\alpha \notin \fixnc(\rho_2)$. Then, up to replacing $\alpha$ by a power of $\alpha$, there is some lozenge $L$ in $P_2$ whose two corners are fixed by $\rho_2(\alpha)$; call these corners $a_2^\pm$.
By Theorem \ref{thm_trivial_skewed_or_nowheredense} and Axiom \ref{Anosov_like_dense_fixed_points}, we can find a non-corner point $b_2$ inside $L$ that is a positive fixed point of $\rho_2(\beta)$ for some $\beta \in G$.
 Then $b_2$ is totally linked with $a_2^+$ and $a_2^-$. By Observation \ref{obs_easy_tot_linked}, the elements 
 $\rho_2(\beta^n\alpha^{n})$, and $\rho_2(\beta^n\alpha^{-n})$
have positive fixed points for all $n>0$, and hence their inverses $\rho_2(\alpha^n \beta^{-n})$ and $\rho_2(\alpha^{-n} \beta^{-n})$ also have fixed points.  
Thus, $\rho_1(\alpha^n \beta^{-n})$ and $\rho_1(\alpha^{-n} \beta^{-n})$ have fixed points as well (which may be negative or positive).  

Since $\rho_1(\alpha)$ has no corner fixed points, Proposition \ref{prop_charac_tot_linked} implies that $\rho_1(\alpha)$ and $\rho_1(\beta)$ have totally linked fixed points, say $a$ and $b$.  
For simplicity, we relabel $\cF^\pm_1$ if needed so that $b$ is a  positive fixed point of $\rho_1(\beta)$.  
Our first goal is to show that $a$ lies inside a lozenge fixed by $\rho_1(\beta)$.  

 Consider first the case where $a$ is a positive fixed point of $\rho_1(\alpha)$.  Then by Proposition \ref{prop_charac_tot_linked_pos}, either $\rho_1(\beta)$ has a negative fixed point $b'$ totally linked with $a$, or $\rho_1(\alpha^{n}\beta^{-n})$ has no fixed points for all $n$ large; but the latter option contradicts our observation above.  We conclude 
$\rho_1(\beta)$ has a negative fixed point $b'$ totally linked with $a$.  Since $b$ is also totally linked with $a$, it follows that $b$ and $b'$ are two corners of a lozenge. 
In the case where $a$ is a negative fixed point of $\rho_1(\alpha)$, it is a positive fixed point for $\rho_1(\alpha^{-1})$ and the same argument using $\rho_1(\alpha^{-n} \beta^{-n})$ in the place of $\rho_1(\alpha^{n}\beta^{-n})$ shows that $a$ is inside of a lozenge, with corner $b$ and another corner we may denote by $b'$.  
In either case, we let  $L'$ in $P_1$ denote the lozenge with corners $b, b'$ fixed by $\rho_1(\beta)$ with $a \in L'$.

We now consider the actions of conjugates of $\beta$ by powers of $\alpha$.   
Let $\gamma_n = \alpha^n \beta \alpha^{-n}$.    Since $\rho_2(\alpha)$ fixes the corners $a_2^+$ and $a_2^-$ of $L$, it fixes $L$, thus, for all $n\in \bZ$, $\rho_2(\alpha^n)b_2 \in L$.  In particular, $\rho_2(\alpha^n)b_2$ is totally linked with $b_2$.  
Since $\rho_2(\alpha^n)b_2$ is a positive fixed point of $\rho_2(\gamma_n)$ totally linked with $b_2$,  we conclude that $\rho_2(\gamma_n^m \beta^m)$ has a (positive) fixed point in $P_2$ for all $m, n>0$.  

Since  $\fix(\rho_1)=\fix(\rho_2)$, we have that $\rho_1(\gamma_n^m \beta^m)$ has a fixed point in $P_1$.  But, by choosing $n$ sufficiently large, the ideal fixed points of the conjugate $\rho_1(\gamma_n) = \rho_1(\alpha^n \beta \alpha^{-n})$ in $\Pbound_1$ can be taken to lie as close as we like to the attracting fixed points of $\rho_1(\alpha)$ on $\Pbound_1$.  In particular, they will lie in exactly two connected components of the complement of the set of ideal points fixed by $\rho_1(\beta)$.   Thus, by Proposition \ref{prop:linked_loz}, for $m$ large, $\rho_1(\gamma_n^m \beta^m)$, has no fixed points in $P_1$, a contradiction.  
\end{proof}

\section{Proof of Theorem \ref{thm_main_general}} \label{sec:main_proof} 

We are now ready for the proof of spectral rigidity for Anosov-like actions.  We first treat the skewed case, which is essentially covered by the work in \cite{BMB}.  In Section \ref{sec:tool_nonskew} we establish some general tools to use for the non-skewed case, then complete the proof in Section \ref{sec:nonskewed_proof}.

\subsection{Proof of Theorem \ref{thm_main_general} in the skew case.} 
Let $\rho_i$, $i=1,2$ be Anosov-like actions of a group $G$ on nontrivial bifoliated planes $(P_i,\cF^\pm_i)$.  We suppose that $\fix(\rho_1)=\fix(\rho_2)$, and we wish to produce a conjugacy between these actions.   
Suppose first that one of the bifoliated planes is skewed.  By Corollary \ref{cor_skewed_implies_skewed}, this means that the other plane is also skewed.  Now we can apply the main result in \cite{BMB}.  While stated for Anosov flows, the proof holds without modification for Anosov-like actions, as it only relies on the Anosov-like property of the action of $\pi_1(M)$ on the orbit space, there framed in terms of the ``hyperbolic-like" action on the leaf spaces of $\cF^\pm_i$.   We conclude that the two actions are conjugate.

\subsection{General toolkit for the non-skewed case.} \label{sec:tool_nonskew}
It remains to treat the case where one (and hence both) of the planes is non-skewed.  The general strategy is as follows:  by Theorem~\ref{thm_trivial_skewed_or_nowheredense}, the sets $\fixnc(\rho_1)$ and $\fixnc(\rho_2)$ of elements with non-corner fixed points are non-empty; by Proposition \ref{prop_non-corners_are_equal}, they are equal.  Thus to each non-corner point $x \in P_1$ fixed by some $\rho_1(g)$, we may associate the unique point of $P_2$ fixed by $\rho_2(g)$.  We wish to show that this map (defined on a dense subset) extends to a homeomorphism from $P_1$ to $P_2$.    
This is quite technical, so we first establish some preliminary tools, allowing us to detect lozenges and also detect convergence of certain sequences using fixed point data and the tools from Section \ref{sec:linking}.  

Since the material in this subsection is generally applicable to Anosov-like actions, we suppress $\rho$ from the notation as in the earlier sections.

\putinbox{
\begin{convention} 
We assume in this subsection that $(P, \cF^\pm)$ is a nontrivial, non-skewed, bifoliated plane with an Anosov-like action of a group $G$.  
\end{convention} 
}

\begin{observation}\label{obs_sequence_of_totally_linked}
Suppose $(x_n)$ is a sequence in $P$ that converges to $x$, and $y$ is totally linked with each point $x_n$.  Then $\cF^+(x)$ either 
 \begin{enumerate}
  \item  intersects $\cF^-(y)$, 
  \item makes a perfect fit with $\cF^-(y)$, or 
  \item is non-separated with a leaf $l^+$ that intersects or makes a perfect fit with $\cF^-(y)$.
 \end{enumerate}
 The same holds reversing the role of $+$ and $-$.  
\end{observation}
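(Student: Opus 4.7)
The plan is to exploit the intersection points $z_n := \cF^+(x_n) \cap \cF^-(y)$, which exist for every $n$ by total linking, and analyze their limiting behavior along the properly embedded leaf $\cF^-(y)$. The argument splits naturally into two cases according to whether $z_n$ remains in a compact subset of $\cF^-(y)$ or escapes to infinity along it; these will deliver conclusions (1)/(3) and (2)/(3) respectively.

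In the bounded case, I would pass to a subsequence with $z_{n_k} \to z \in \cF^-(y)$. Since $x_{n_k}$ and $z_{n_k}$ both lie on $\cF^+(x_{n_k})$ with respective limits $x$ and $z$, foliation charts of $\cF^+$ near $x$ and near $z$ exhibit both $\cF^+(x)$ and $\cF^+(z)$ as limits of the sequence $\cF^+(x_{n_k})$ in the leaf space $\cL^+$. If these two leaves coincide, the point $z$ witnesses $\cF^+(x)\cap\cF^-(y)\neq\emptyset$ and conclusion~(1) holds; otherwise two distinct limits of a single sequence are by definition non-separated in $\cL^+$, and $\cF^+(z)$ (which meets $\cF^-(y)$ at $z$) plays the role of the leaf $l^+$ in conclusion~(3).

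In the escaping case, I would introduce the open connected interval $I := \{l \in \cL^+ : l \cap \cF^-(y) \neq \emptyset\}$; every $\cF^+(x_n)$ lies in $I$, and $\cF^+(x)$ lies in its closure. If $\cF^+(x) \in I$ conclusion~(1) already holds, so assume $\cF^+(x)$ is a boundary leaf of $I$. Using a foliation chart at $x$ together with the interval structure of $I$, I would isolate a one-sided transverse sub-arc $\tau^+$ at $\cF^+(x)$ whose leaves all lie in $I$, and argue that their intersection points with $\cF^-(y)$ must escape along $\cF^-(y)$ -- otherwise the bounded-case analysis would force $\cF^+(x)\in I$. Combining this with Proposition~\ref{prop:no_product} to rule out infinite product regions, one obtains the symmetric transverse-arc condition at $\cF^-(y)$, certifying that $\cF^+(x)$ makes a perfect fit with $\cF^-(y)$, which is conclusion~(2). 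Conclusion~(3) enters this case when $\cF^+(x)$ is only reached by $\cF^+(x_n)$ indirectly via a non-separated companion $l^+$ lying on the boundary of $I$: the same analysis applied to $l^+$ shows that $l^+$ either meets $\cF^-(y)$ or makes a perfect fit with it. The main obstacle I expect is precisely this non-Hausdorff leaf-space bookkeeping in $\cL^+$, cleanly distinguishing a direct one-sided limit at $\cF^+(x)$ from a limit funnelled through a non-separated branch; the symmetric version of the observation with $+$ and $-$ exchanged follows from the parallel argument applied to $w_n := \cF^-(x_n) \cap \cF^+(y)$.
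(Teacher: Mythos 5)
Your proof follows essentially the same approach as the paper's: both reduce the problem to analyzing how $\cF^-(y)$ relates to the leaf-space limit of $\cF^+(x_n)$ (a union of non-separated leaves containing $\cF^+(x)$), and both arrive at the intersection/perfect-fit/non-separated trichotomy from there. The paper's proof simply asserts this final step in one sentence, whereas you make it explicit by tracking the intersection points $z_n = \cF^+(x_n) \cap \cF^-(y)$ and splitting into the bounded and escaping cases, which is a reasonable and correct way to fill in the same argument.
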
 

\begin{proof} 
Suppose $x_n \to x$ and $y$ is totally linked with each $x_n$ i.e. $\cF^\pm(y)\cap \cF^\mp(x_n)\neq \emptyset$.  
The sequence of leaves $\cF^+(x_n)$ converges either to a unique leaf (which must be $\cF^+(x)$) or to a union of non-separated leaves containing $\cF^+(x)$. Since $\cF^-(y)$ intersects each $\cF^+(x_n)$, it either makes a perfect fit with or intersects some leaf in the limit. 
The same holds reversing the roles of $\cF^+$ and $\cF^-$. 
\end{proof}

We will frequently use sequences of pairwise totally linked points in our construction of maps between planes.  The following lemma lets us essentially ignore singular points in such arguments. 

\begin{lemma}[Pairwise linked points are eventually nonsingular]   \label{lem:sing_not_linked}
Suppose $x_1, \ldots, x_5$ are five distinct singular points in $P$.  Then there exist $i, j$ such that $x_i$ is not totally linked with $x_j$. 
\end{lemma}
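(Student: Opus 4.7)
I argue by contradiction: suppose $x_1,\dots,x_5$ are pairwise totally linked singular points, each a $p_i$-prong with $p_i\ge 3$. The proof has three parts: a local sector constraint, a pigeonhole/combinatorial concentration step, and a Gauss--Bonnet/Euler--Poincar\'e contradiction.

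The local input is a sector constraint. For a singular $p$-prong $x$ with $p\ge 3$ and a point $y$ totally linked with $x$, disjointness of distinct leaves of the same foliation forces $\cF^+(y)$ (resp.\ $\cF^-(y)$) to be contained in a single $\cF^+$-sector (resp.\ $\cF^-$-sector) of $x$; hence $y$ lies in exactly one of the $2p$ quadrants of $x$. If $y\in Q_i$ and $z\in Q_j$ are both totally linked with $x$ and with each other, then for $\cF^+(y)\cap \cF^-(z)\ne\emptyset$ and $\cF^-(y)\cap \cF^+(z)\ne\emptyset$, the corresponding sectors must overlap in a common quadrant of $x$. A direct enumeration using $p\ge 3$ shows that this forces the cyclic distance $|i-j|$ in $\bZ/2p\bZ$ to be at most $1$.

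Applying this with $x=x_1$, the four other points $x_2,\dots,x_5$ lie in at most two consecutive quadrants of $x_1$; by pigeonhole at least two of them share a quadrant. Iterating this constraint at each of the five points and combining the cyclic-distance bounds at every perspective, I would show that after relabeling, three of the singular points, say $x_3,x_4,x_5$, must lie in the open rectangle $R_{12}$ bounded by four half-leaf segments of $\cF^\pm(x_1)$ and $\cF^\pm(x_2)$ that meet at the mutual quadrants containing the others. Finally, I apply Gauss--Bonnet to the singular foliation $\cF^+$ on the compact topological disk $\overline{R_{12}}$: the boundary is a quadrilateral whose four corners are right-angle intersections of half-leaves of $\cF^+$ and $\cF^-$, so the total tangent turn of $\cF^+$ along $\partial R_{12}$ is $2\pi$, and hence the sum of indices of interior $\cF^+$-singularities equals $\chi(\overline{R_{12}})-1=0$. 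But each interior $p_k$-prong $x_k$ with $p_k\ge 3$ contributes index $1-p_k/2\le -1/2$, so three such singularities give a total of at most $-3/2\ne 0$, a contradiction.

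The main obstacle is the middle step: showing that three of the singular points must lie in the interior of a single rectangle $R_{12}$. Pigeonhole alone guarantees only two in a common quadrant of $x_1$; adjacent-quadrant (``zigzag'') configurations can put some $x_k$ outside every pairwise rectangle while still satisfying the sector constraints from any one perspective. Ruling out such configurations requires using the full force of the pairwise cyclic-distance bounds at all five perspectives simultaneously, plus topological input from the fact that the $\cF^\pm$-leaves at different $x_i$'s cannot cross---zigzag arrangements of three singular $\ge 3$-prongs would yield cyclic quadrant-distance $2$ from the third perspective, violating the sector constraint, so with five pairwise linked points these constraints accumulate and force a nested configuration on at least three of them.
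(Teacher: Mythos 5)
Your plan is a genuinely different route from the paper's, but it has a gap that you yourself flag, and that gap is substantial. The local sector constraint at a $p$-prong with $p\ge 3$ is correct, and pigeonhole does put two of $x_2,\dots,x_5$ in a common quadrant of $x_1$; but upgrading this to \emph{three} of the five singularities inside a single compact rectangle $R_{12}$ is the whole difficulty, and the ``accumulate constraints from all five perspectives'' heuristic does not obviously close it. Even if it did, the Gauss--Bonnet step needs more care than you give it: the rectangle $\overline{R_{12}}$ has $x_1$ and $x_2$ as two of its four corners, and at a $p$-prong the corner is not a right-angle transverse crossing, so the boundary turning is not the usual $4\times\pi/2$. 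The correct bookkeeping does eventually force a negative total index when $\ge3$-prongs sit inside, but it is not the naive identity $\chi-1=0$, and nothing in the paper supplies a ready-made index formula for topological singular foliations with singular corners on the boundary, so you would have to develop one.

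The paper's proof avoids all of this. It uses only the separation structure: for any three pairwise totally linked points, one of the $\cF^+$-leaves separates the other two, giving a betweenness relation; after ordering, the analogous observation for $\cF^-$ picks out some $x_p$ with $p\in\{2,3,4\}$ whose $\cF^-$-leaf separates the others. If $x_p$ were singular, only one line of $\cF^-(x_p)$ can meet all the $\cF^+(x_j)$, and a spare half-leaf $r_p$ then has an ideal endpoint that separates $\partial\cF^+(x_1)$ from $\partial\cF^+(x_5)$; whichever middle point $x_m$ lies on $r_p$'s side of that line is then cut off from $x_1$ or $x_5$ by $\cF^-(x_p)$, contradicting total linking. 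This is short, purely combinatorial, uses no pigeonhole or index theory, and works directly from the axioms already in play. If you want to pursue your index-theoretic idea, the first order of business is to settle the concentration step rigorously and to write down and verify the boundary-with-singular-corners index formula, rather than treat either as a black box.
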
 

\begin{proof} 
We will prove the equivalent statement that, given any five pairwise totally linked points $x_1, \ldots, x_5$, at least one is non-singular.  

As a first step, we observe the following: for any three distinct, pairwise totally linked points $x_i, x_j, x_k$ (singular or not), after possibly re-indexing, we have that $\cF^+(x_j)$ separates $\cF^+(x_{i})$ from $\cF^+(x_{k})$.   To see this, note that if the conclusion is not immediately satisfied then $x_i$ and $x_k$ lie in the same connected component of $P \smallsetminus \cF^+(x_{j})$.  There is only one half-leaf, say $r$, of $\cF^-(x_{j})$ in this component, so $r$ must meet both $\cF^+(x_{i})$ and $\cF^+(x_{k})$ because all points are pairwise totally linked.  Whichever $\cF^+$ leaf is met first separates the other from $\cF^+(x_{j})$.

Now suppose $x_1, x_2, \ldots x_5$ are five distinct totally linked points in $P$.   
Note that the ``betweenness" relation $B$ defined by $(x_i, x_j, x_k) \in B$ if $\cF^+(x_j)$ separates $\cF^+(x_{i})$ from $\cF^+(x_{k})$ satisfies the usual axioms for a betweenness relation and contains some permutation of every triple, so we may re-index the points so that $\cF^+(x_j)$ separates $\cF^+(x_{i})$ from $\cF^+(x_{k})$ whenever $i < j< k$.  See Figure \ref{fig:separates} for a suggestive illustration, with $\cF^+$ leaves in red.  

\begin{figure}[h]
   \labellist 
  \small\hair 2pt
     \pinlabel $\cF^+(x_1)$ at 10 40
     \pinlabel $\cF^+(x_2)$ at 52 15 
     \pinlabel $\cF^+(x_3)$ at 110 5
 \pinlabel $\cF^+(x_4)$ at 165 15
 \pinlabel $\cF^+(x_5)$ at 210 40     
 \pinlabel $\cF^-(x_p)$ at -15 132
  \pinlabel $x_m$ at 70 150
 \pinlabel $x_p$ at 140 118
   \pinlabel $r_p$ at 175 183
  \pinlabel $x_m$ at 440 122
 \pinlabel $x_p$ at 395 92
 \endlabellist
     \centerline{ \mbox{
\includegraphics[width=12cm]{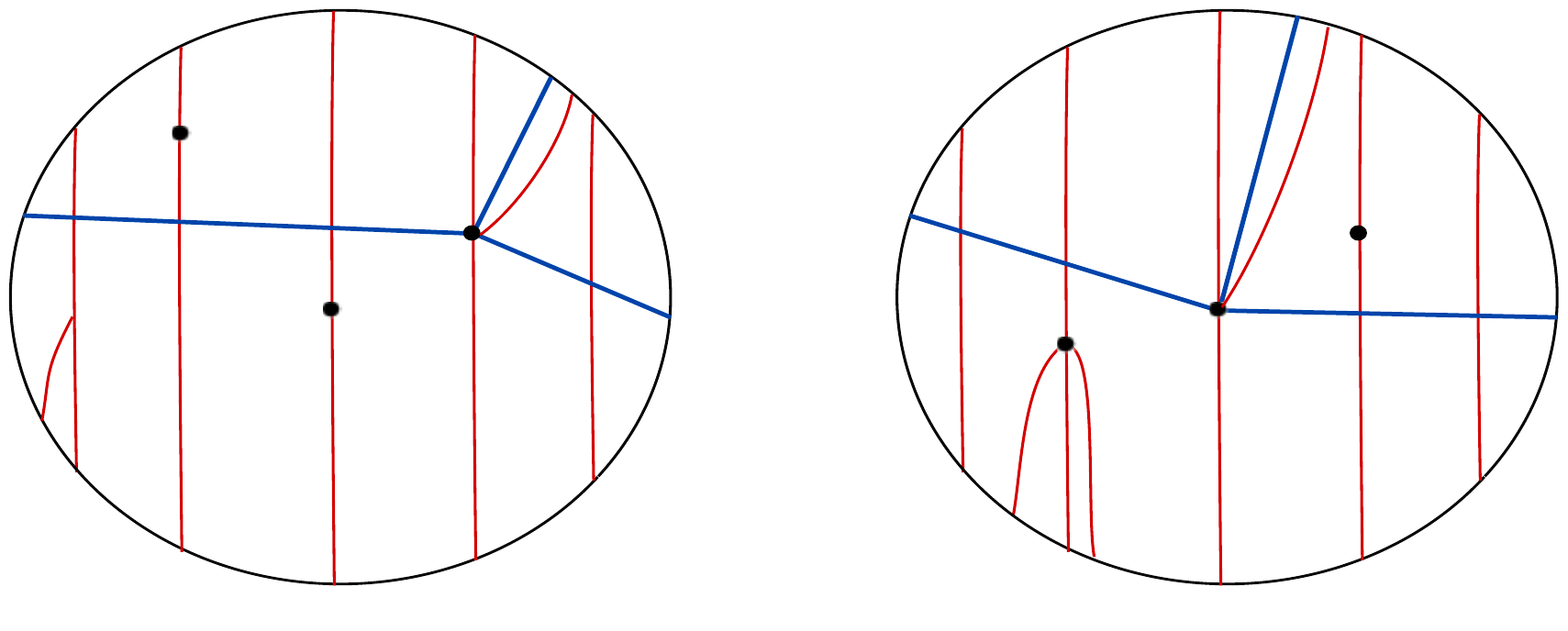} }}
\caption{Two possible configurations in the proof of Lemma \ref{lem:sing_not_linked}, with $x_p=x_4$ left, and $x_p=x_3$ right. In each, $x_m$ cannot be totally linked with $x_1$.}
\label{fig:separates}
\end{figure}

We will show at least one of $x_2, x_3$ or $x_4$ is nonsingular.  First note that our initial observation also holds 
 with $\cF^+$ replaced by $\cF^-$, so for some $p \in \{2, 3, 4\}$, we have that $\cF^-(x_p)$ separates the unstable leaves of the other points.  
Assume for contradiction that this $x_p$ is singular.   Since $x_p$ is totally linked with all the other points, there is some line (i.e. a union of two half-leaves) $l_p$ in $\cF^-(x_p)$ that intersects each leaf $\cF^+(x_j)$ and the other 
 half-leaves of $\cF^-(x_p)$ cannot intersect any $\cF^+(x_j)$.  Fix some such half leaf $r_p$ not intersecting any $\cF^+(x_j)$. Now, in the connected component of $\Pbound_1\smallsetminus \partial l_p$ that contains the ideal endpoint $\partial r_p$ of $r_p$, we notice that $\partial r_p$ separates $\partial\cF^+(x_1)$ and $\partial \cF^+(x_5)$.    Let $x_m \in \{x_2, x_3, x_4\}$ be the point on the same side of $l_p$ as $r_p$, which exists by our choice of $p$.  Then 
 $\cF^-(x_p)$ separates $x_m$ either from $x_1$ or $x_5$,  and thus $x_m$ cannot be totally linked with both $x_1$ or $x_5$.   
\end{proof}

\begin{lemma}[Behavior of pairwise totally linked points]\label{lem:TL_limit_trichotomy}
 Let $(z_n)$ be a sequence of pairwise totally linked points in $P$. Suppose the $z_n$ are not on the same $\cF^{\pm}$-leaf.
 Then $(z_n)$ admits a subsequence that either:
 \begin{enumerate}[label=(\roman*)]
  \item \label{item_accumulate_corner_scalloped} lies in a scalloped region $U$ and converges to an ideal corner of $U$,
  \item \label{item_accumulate_perfect_fit} converges to the ideal point of a perfect fit made by leaves $l^+, l^-$ and all elements of the subsequence satisfy $\cF^{\pm}(z_{n_k}) \cap l^{\mp}\neq \emptyset$, 
or
  \item \label{item_accumulate_point} converges to a point $z \in P$, and each $z_n$ in the subsequence is totally linked with $z$.   \end{enumerate}
\end{lemma}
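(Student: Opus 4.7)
By Lemma \ref{lem:sing_not_linked}, at most four $z_n$ can be singular, so I pass to a subsequence of regular points; each $z_n$ then has four distinct endpoints on $\Pbound$, which I label $a_n,b_n$ (for $\cF^+(z_n)$) and $c_n,d_n$ (for $\cF^-(z_n)$), in cyclic order $a_n,c_n,b_n,d_n$. Compactness of $\Pbound$ lets me extract a further subsequence with $a_n\to a$, $c_n\to c$, $b_n\to b$, $d_n\to d$, and pairwise total linkage forces these to appear in the weak cyclic order $(a,c,b,d)$. I then split into two cases according to the behavior of $(z_n)$ in $P$.

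\emph{Case A: $(z_n)$ subconverges to some $z\in P$.} I claim that a further subsequence is totally linked with $z$, which gives conclusion \ref{item_accumulate_point}. If $z$ is regular, then in a foliation chart around $z$ each $\cF^-(z_n)$ (for $n$ large) is a nearby ``vertical'' leaf crossing $\cF^+(z)$, and symmetrically for $\cF^+(z_n)$ and $\cF^-(z)$, so total linkage holds. If $z$ is an $n$-pronged singular point, the hypothesis that the $z_n$ are not contained in a common $\cF^\pm$-leaf allows me to discard the $z_n$ lying on $\cF^+(z)$ or $\cF^-(z)$; the remaining $z_n$ eventually lie in some sector at $z$, and the local prong model shows that each $\cF^\pm(z_n)$ crosses the $\cF^\mp$-prong bounding its sector, so meets $\cF^\mp(z)$.

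\emph{Case B: $(z_n)$ leaves every compact subset of $P$.} Now the information is carried by the limits $a,b,c,d$. If all four are distinct, Lemma \ref{lem_convergence_or_trivially_product} shows that $\cF^+(z_n)$ and $\cF^-(z_n)$ each subconverge to a union of pairwise non-separated leaves; neither union can be finite, for otherwise $z_n\in\cF^+(z_n)\cap\cF^-(z_n)$ would subconverge in $P$, contradicting escape. Hence both unions are infinite and, by Lemma \ref{lemma_scalloped_ideal_corner}, lie in the boundary of a scalloped region. The endpoint pattern $(a,c,b,d)$ combined with Corollary \ref{cor:ideal_scalloped} identifies these two families as belonging to a single scalloped region $U$ with corners $\{a,b,c,d\}$, and $z_n\in U$ eventually converges to one of these corners, yielding conclusion \ref{item_accumulate_corner_scalloped}. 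If a coincidence among $a,b,c,d$ occurs, the cyclic order constraint forces it to involve adjacent points -- say $a=c$ -- so that a half-leaf of $\cF^+(z_n)$ and a half-leaf of $\cF^-(z_n)$ share a common ideal limit. When the limiting unions from Lemma \ref{lem_convergence_or_trivially_product} are both finite, the innermost $\cF^\pm$-leaves at $a$ make a perfect fit at $a$ (Proposition \ref{prop_ideal_circle}(iii)), and $z_n$ lies in the resulting wedge with $\cF^\pm(z_n)\cap l^\mp\neq\emptyset$, giving conclusion \ref{item_accumulate_perfect_fit}. When one limiting union is infinite, the scalloped-region argument from the all-distinct case applies and again produces conclusion \ref{item_accumulate_corner_scalloped}.

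\emph{Main obstacle.} The delicate step is Case B. Converting escape together with boundary convergence into a genuine geometric structure in $P$ requires ruling out the ``finite union'' alternative of Lemma \ref{lem_convergence_or_trivially_product} when all four limits are distinct -- this uses the escape hypothesis together with the absence of infinite product regions (Proposition \ref{prop:no_product}) -- and, in the coincidence subcases, carefully distinguishing whether the emerging structure is a scalloped region or a perfect fit while verifying that $z_n$ accumulates precisely at the expected ideal corner or perfect-fit ideal point.
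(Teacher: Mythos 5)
Your proposal takes a genuinely different route from the paper. Where you split on whether the four limiting ideal points $a,b,c,d$ are pairwise distinct, the paper (after first establishing that at least three limiting endpoints are distinct, so $\xi^+\neq\zeta^+$ and $\xi^-\neq\zeta^-$) splits on whether the limiting leaf unions supplied by Lemma~\ref{lem_convergence_or_trivially_product} are finite or infinite. Your Case~A is fine, and a bit more explicit than the paper's terse ``drop the first terms.''

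The gap is in Case B, ``all four distinct'' subcase, and it is exactly the step you flag as the ``main obstacle'' without then carrying it out. You assert that neither limiting union can be finite, ``for otherwise $z_n\in\cF^+(z_n)\cap\cF^-(z_n)$ would subconverge in $P$,'' citing Proposition~\ref{prop:no_product}. But that proposition is already absorbed into Lemma~\ref{lem_convergence_or_trivially_product} (it is what guarantees the $\cF^\pm(z_n)$ converge to \emph{unions of leaves} rather than escaping to infinity); it does not show that finite limiting unions force the intersection points $z_n$ to remain bounded. When $\cF^+(z_n)$ and $\cF^-(z_n)$ each converge to finite unions, the $z_n$ can still escape to $\Pbound$: the limit $\eta$ of $z_n$ is then an ideal endpoint of some leaf $l^+_i$ in the $\cF^+$-union and of some $l^-_j$ in the $\cF^-$-union, and those two leaves make a perfect fit at $\eta$. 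The crucial point is that $\eta$ need not be one of the \emph{extreme} endpoints $a,b,c,d$; it can be an inner endpoint of a union containing more than one leaf. To close your argument you would need to prove that the interleaved cyclic order $(a,c,b,d)$ forces some $l^+_i$ to cross some $l^-_j$, so that the intersection points subconverge. That is a genuine combinatorial fact about the configuration of non-separated leaves and their ideal endpoints, not a consequence of Proposition~\ref{prop:no_product}, and you do not supply it. Without it, the ``all four distinct'' case can land in conclusion~\ref{item_accumulate_perfect_fit} rather than conclusion~\ref{item_accumulate_corner_scalloped} as you claim. The paper sidesteps the question entirely: in the finite-union case it observes directly that $\eta$ is an endpoint of some leaf in each union, producing a perfect fit at $\eta$, and uses pairwise total linking to place the $z_n$ in the wedge --- which works regardless of whether any of $a,b,c,d$ coincide.

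A secondary point: in the infinite-union subcase the paper explicitly notes that, after passing to a subsequence, the $z_n$ may all lie inside a single lozenge of the scalloped region and converge to a perfect-fit corner of that lozenge, giving conclusion~\ref{item_accumulate_perfect_fit}. In your coincidence subcase with one union infinite, you conclude~\ref{item_accumulate_corner_scalloped} outright; you should check that a further pass to a subsequence does not collapse you to the finite-union / single-lozenge picture.
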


Convergence here means in the topological closed disc $P \cup \Pbound$. 
\begin{rem}
 Note that if the $z_n$ are on the same leaf $l$, then we are either in case \ref{item_accumulate_point} or a subsequence converges to an ideal point of $l$.
\end{rem}

\begin{proof} 
Using the remark above, we may pass to a subsequence so all the $(z_n)$ are on distinct leaves.  By Lemma \ref{lem:sing_not_linked}, we may further assume all $(z_n)$ are nonsingular. 
Pass to a further subsequence so that $(z_n)$ converges in $P \cup \Pbound$.  If the limit point is in $P$, we may drop the first few terms to ensure that all the remaining ones are totally linked with the limit point, and we are done. So we assume the limit is on the boundary. 
 Let $\xi_n^+,\zeta_n^+$ and $\xi_n^-,\zeta_n^-$  be the endpoints of $\cF^+(z_n)$ and $\cF^-(z_n)$ respectively.  Pass again to a subsequence so  $(\xi_n^+,\zeta_n^+,\xi_n^-,\zeta_n^-)$  converges to some 4-tuple $(\xi^+,\zeta^+,\xi^-,\zeta^-)$.  We first show at least three of the points of this 4-tuple are distinct, as follows: since each point $z_n$ is totally linked with $z_1$, if three elements of the limiting 4-tuple agree, all three must agree with an endpoint of a leaf of $z_1$.  But each point $z_n$ is also totally linked with $z_2$, which has distinct endpoints from $z_1$, giving an immediate contradiction.  Thus, $\{\xi^+,\zeta^+,\xi^-,\zeta^-\}$ contains at least 3 points.  
 
 Now, since the $z_n$ are pairwise totally linked, the cyclic order of their endpoints on the boundary remains constant. Reading anti-clockwise we will see in order either 
 $\xi_n^+, \xi^-_n,\zeta_n^+,\zeta_n^-$ or the reverse of this, with the same ordering for all $n$.   Thus, $\xi^+ \neq \zeta^+$, and $\xi^- \neq \zeta^-$.  By Lemma~\ref{lem_convergence_or_trivially_product}, this means that $\cF^+(z_n)$ converges to a union, finite or infinite, of leaves.  (The same argument applies to the sequence $\cF^-(z_n)$ as well.)
 
Suppose first that  $\cF^+(z_n)$ converges to an infinite union of leaves.  Then by Lemma~\ref{lemma_scalloped_ideal_corner}, this infinite union forms one side of the boundary of a scalloped region.    
If the points $z_n$ do not eventually lie inside the scalloped region, the fact that $\cF^+(z_n)$ converges to a side of the region forces {\em both} endpoints of $\cF^-(z_n)$ to converge to a corner of the scalloped region, which contradicts our observation above that three of the limiting endpoints cannot coincide.  Thus, after dropping the first terms of the sequence, we may assume that all $z_n$ lie inside the scalloped region.  The scalloped region is tiled in two directions by bi-infinite families of lozenges indexed by $\bZ$.  Since $(z_n)$ converges to a point in $\Pbound$, the limit point is either a corner of the scalloped region as in case (i), or we may pass to a further subsequence such that all $(z_n)$ lie in a single lozenge, and converge to one of its corners, which puts us in case (ii). 

The same argument applies if $\cF^-(z_n)$ converges to an infinite union of leaves.  Thus, we assume that both converge to finite unions.  Let $l_1, l_2,\dots, l_k$ be the leaves in the limit of $\cF^+(z_n)$.  Since $(z_n)$ converges to a point, say $\eta$, on $\Pbound$, this limit point $\eta$ must be an endpoint of some leaf $l^+ \in \{l_1, \ldots l_k\}$.   Similarly for $\cF^-(z_n)$.  Thus, one of the limiting leaves of $\cF^-(z_n)$, say $l^-$, makes a perfect fit with $l^+$ at $\eta$. 
To finish the proof, we just need to argue that (after a further subsequence) all $z_n$ have leaves intersecting both $l^-$ and $l^+$. To see this, it suffices to show that   
$z_n$ all lie in the connected component of $P \smallsetminus (l^+ \cup l^-)$ bounded by both leaves, since the fact that $l^-$ and $l^+$ make a perfect fit means that any point $z_n$ with leaves sufficiently close to $l^+$ and $l^-$ will intersect both.  This fact is a direct consequence of the property that all $z_n$ are pairwise totally linked: if $z_k$ were in the component bounded only by $l^-$, say, then $\cF^{-}(z_k)$ shares no endpoint with $l^+$, and hence for $n$ sufficiently large we would have $\cF^{+}(z_n) \cap \cF^{-}(z_k) = \emptyset$.  
 \end{proof}

Our next goal is to characterize when a sequence converges to a point that shares a leaf with a non-corner fixed point.   We begin with the following observation.  Though the condition in the statement appears technical, it is exactly capturing the behavior shown in Figure~\ref{fig:powersg}.

\begin{observation} \label{obs:powers_g}
Suppose $g \in G$ has a non-corner, positive fixed point $w \in P$.  
Assume $(z_n)$ is a sequence of pairwise totally linked points, $z_n\notin \cF^-(w)$, and $(z_n)$ converges to a point $z \in \cF^-(w)$, $z\neq w$, then the following holds:
\begin{enumerate}[label=$(\star)$]
 \item\label{item_star} 
For each sufficiently large $i$, there exists $K$ such that:  if $k>K$, then $g^{-k}(z_i)$ is not totally linked with $z_j$ for $j\leq i$ and there exists $N$ such that $g^{-k}(z_i)$ is totally linked with $z_n$ for all $n>N$.
\end{enumerate}
 \end{observation}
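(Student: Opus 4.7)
The plan is to analyze the dynamics of $g^{-k}$ on the ideal circle $\Pbound$ and locate the endpoints of the leaves through $g^{-k}(z_i)$ there. Since $w$ is a non-corner fixed point of $g$, Proposition~\ref{prop:boundary_action_general} says that the fixed points of $g$ on $\Pbound$ are exactly the ideal endpoints of the prongs of $\cF^\pm(w)$, with those of $\cF^+(w)$ attracting and those of $\cF^-(w)$ repelling for $g$; so under $g^{-1}$ the endpoints of $\cF^-(w)$ are attracting and those of $\cF^+(w)$ repelling. Let $\zeta \in \Pbound$ denote the endpoint of the prong of $\cF^-(w)$ containing $z$. Passing to a subsequence, I may assume all $z_i$ lie in a single fixed quadrant $Q$ of $w$ adjacent to this prong, and I let $\xi$ and $\zeta'$ denote respectively the endpoints of the $\cF^+(w)$- and $\cF^-(w)$-prongs bounding the other two sides of $Q$.

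The key is to establish, for fixed large $i$ as $k \to \infty$, the following three asymptotic statements: (i) $g^{-k}(z_i)\to \zeta$ in $P\cup\Pbound$; (ii) both endpoints of $\cF^+(g^{-k}(z_i))$ converge to $\zeta$, approaching from opposite sides of $\zeta$ on $\Pbound$; and (iii) the two endpoints of $\cF^-(g^{-k}(z_i))$ converge respectively to $\zeta$ and $\zeta'$, each from the quadrant-$Q$ side. For (i), the leaf $\cF^+(z_i)$ meets $\cF^-(w)$ in a point $p_i$ near $z$ (for $i$ large); the contraction of $g^{-1}$ in the leaf space of $\cF^-$ at $\cF^-(w)$ (dual to the expansion of $g$ on $\cF^+(w)$) keeps $g^{-k}(z_i)$ arbitrarily close to $\cF^-(w)$, while $g^{-k}(p_i)$ escapes along the prong toward $\zeta$, together forcing $g^{-k}(z_i)\to\zeta$. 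For (ii), the leaves $g^{-k}(\cF^+(z_i))$ escape in the leaf space of $\cF^+$ (since $g^{-1}$ expands there with $\cF^+(w)$ the only fixed leaf), so by Observation~\ref{obs:escape_endpoints}, which uses that $P$ is nontrivial, their endpoints converge to a common point on $\Pbound$, necessarily $\zeta$ by (i). For (iii), $g^{-k}(\cF^-(z_i))$ converges to $\cF^-(w)$ from the $Q$ side in the leaf space, and Proposition~\ref{prop_ideal_circle}(iv) applied to each half-leaf shows that its two ends converge to the two endpoints of $\cF^-(w)$ bounding $Q$, namely $\zeta$ and $\zeta'$.

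Granting (i)--(iii), both halves of $(\star)$ follow from the cyclic-order characterization of total linking on $\Pbound$. For the non-linking half: fix $j \leq i$; the four endpoints of $z_j$ lie at fixed positions, and one may assume none equals $\zeta$ or $\zeta'$ (as those are fixed by $g$ while $\idealpts(z_j)$ is not, modulo a degenerate case discussed below). For $k$ sufficiently large, three of the four endpoints of $g^{-k}(z_i)$ (both of $\cF^+$ and one of $\cF^-$) lie in a small arc around $\zeta$ avoiding $\idealpts(z_j)$, while the fourth lies near $\zeta'$; hence $\idealpts(g^{-k}(z_i))$ meets at most two complementary components of $\Pbound\smallsetminus\idealpts(z_j)$, so $g^{-k}(z_i)$ is not totally linked with $z_j$. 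Taking the maximum over $j\leq i$ of the required thresholds yields the uniform $K$. For the linking half: as $n\to\infty$, the endpoints of $\cF^-(z_n)$ converge to $\zeta, \zeta'$ from the $Q$ side while those of $\cF^+(z_n)$ converge to the endpoints of $\cF^+(z)$; a direct reading of the cyclic order on $\Pbound$, comparing with (ii)--(iii), shows that for $N$ sufficiently large (depending on $k$), the four endpoints of $g^{-k}(z_i)$ alternate with those of $z_n$, so total linking holds by the endpoint criterion.

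The main delicate point will be establishing the asymptotics (i)--(iii) rigorously in the possibly singular setting at $w$ or along $\cF^\pm(w)$: since the endpoint analysis is local near a single prong of $\cF^-(w)$, the argument reduces to the regular case. The degenerate situations (an endpoint of some $z_j$ coinciding with $\zeta$ or $\zeta'$, forcing a perfect fit or a nonseparated pair with a prong of $\cF^\pm(w)$) are handled by a straightforward case analysis, noting there are only finitely many such $j$ and the corresponding $\idealpts(z_j)$ remain disjoint from the small arcs around $\zeta,\zeta'$ that we work in.
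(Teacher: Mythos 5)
Your strategy is sound and your asymptotics (i)--(iii) about where the ideal endpoints of $g^{-k}(z_i)$ accumulate are correct, but you've taken a considerably longer route than the paper, which argues directly in the plane rather than through the ideal circle. For the non-linking half, the paper considers $S = \{ y \in r^- : \cF^+(y)\cap\cF^-(z_j)\neq\emptyset \}$ and shows it is bounded: an unbounded $S$ would yield an infinite product region between $r^-$ and $\cF^-(z_j)$ (the non-corner hypothesis on $w$ removes the perfect-fit escape hatch), contradicting Proposition~\ref{prop:no_product}. Once $g^{-k}(p_i)$ leaves $S$, $\cF^+(g^{-k}(z_i))$ misses $\cF^-(z_j)$ and non-linking is immediate, without any analysis of where the other three endpoints go. For the linking half, the paper notes simply that $g^{-k}(z_i)$ stays totally linked with $z$ for every $k$ once $i$ is large, and passes to nearby $z_n$ by openness of total linking. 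Your boundary accounting buys more information than $(\star)$ needs --- items (ii) and (iii) are stronger statements in the spirit of Section~\ref{sec:ideal_circle} --- but the price is a cyclic-order verification with several cases, particularly at singular $w$, which you leave at a sketch.

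Two concrete points to repair. (a) You cannot ``pass to a subsequence'' at the outset: $(\star)$ quantifies over \emph{all} $z_j$ with $j\le i$ and all $z_n$ with $n>N$, and the linking conclusion in particular is destroyed by restricting to a subsequence. Instead fix the quadrant of $z_i$ once $i$ is fixed, and allow $z_j$, $z_n$ to lie in either quadrant adjacent to $r^-$; the alternation count still gives exactly three complementary components, but when $w$ is a prong the point you call $\zeta'$ depends on the quadrant, so this needs to be addressed rather than reduced away. (b) The ``degenerate situations'' you flag at the end cannot occur: since $w$ is a non-corner, $\cF^-(w)$ makes no perfect fit and is non-separated with no other leaf, so by Proposition~\ref{prop_ideal_circle}(iii) the only leaf with $\zeta$ (or $\zeta'$) as an ideal endpoint is $\cF^-(w)$ itself, and the hypothesis $z_n\notin\cF^-(w)$ rules out $\idealpts(z_j)$ meeting $\{\zeta,\zeta'\}$. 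That case analysis can be deleted.
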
  
 
 See Figure \ref{fig:powersg} for an illustration of this condition.  

  \begin{figure}[h]
   \labellist 
  \small\hair 2pt
     \pinlabel $w$ at 20 20 
          \pinlabel $\cF^+(w)$ at 15 135 
    \pinlabel $\cF^-(w)$ at -15 27  
    \pinlabel $z$ at 55 20 
 \pinlabel $g^{-k}(z)$ at 270 20
\pinlabel $z_j$ at 75 75
 \pinlabel $g^{-k}(z_i)$ at 225 60
 \endlabellist
     \centerline{ \mbox{
\includegraphics[width=10cm]{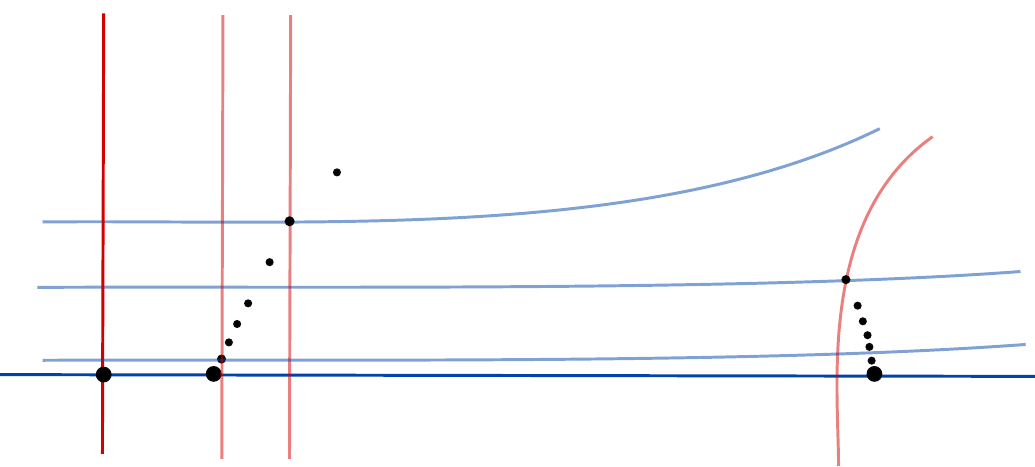} }}
\caption{The idea of Observation \ref{obs:powers_g}: $g^{-k}(z_i)$ will be totally linked with $z_n$ sufficiently close to $z$.  If $g^{-k}(z_i)$ were totally linked with $z_i$ for all $k$ (rather than partially linked as shown here), then we would have an infinite product region, as opposed to the behavior shown here.}
\label{fig:powersg} 
\end{figure}

 \begin{proof}[Proof of Observation \ref{obs:powers_g}]
 Since $w$ is a positive fixed point of $g$, we have that, for any sufficiently large $i$, as $k \to \infty$ the sequence $g^{-k}(z_i)$ converges (in $P\cup \Pbound$) to the endpoint $\xi$ of the ray $r^-$ of $\cF^-(w)$ containing $z$. Since $w$ is non-corner, this ray does not make a perfect fit.  
Fix such $i$ and fix $j\leq i$.  Let $R$ denote the intersection of the quadrant of $z_j$ that meets the side of $\cF^-(w)$ with endpoint $\xi$, and the quadrant of $w$ containing $z_j$.  It is bounded by a bounded segment of $\cF^+(z_j)$, and infinite segments of $\cF^-(w)$ and 
$\cF^-(z_j)$.  

Consider the set 
$S = \{ y \in  r^- : \cF^+(y) \cap \cF^-(z_j) \neq \emptyset \}$.  Since there are no infinite product regions, and $w$ is non-corner, $S$ cannot be unbounded.
Consequently, any $\cF^+$ leaf through $r^-$ sufficiently close to $\xi$ cannot meet $\cF^-(z_j)$, and thus for all $k$ sufficiently large $g^{-k}(z_i)$ is not totally linked with $z_j$.  
 
For the second part of the statement, note that for sufficiently large $i$, we have that $z_i$ is totally linked with $z$ and for any fixed $k$, $g^{-k}(z_i)$ is also totally linked with $z$. Thus, there exists $N$ such that $g^{-k}(z_i)$ is totally linked with $z_n$ for all $n>N$.
 \end{proof}

 The following lemma says the condition above cannot happen if $z_n$ converges to a perfect fit or a corner of a scalloped region.  
 
\begin{lemma} \label{lem:new_simpler}
Suppose $(z_n)$ is a sequence of pairwise totally linked points that satisfies condition \ref{item_accumulate_corner_scalloped} or \ref{item_accumulate_perfect_fit} of Lemma \ref{lem:TL_limit_trichotomy} 
and suppose $w$ is a non-corner positive fixed point of $g$ that is totally linked with all $z_n$.   
Then \ref{item_star} is not satisfied by $g$ and $(z_n)$.  
\end{lemma}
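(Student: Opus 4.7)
The plan is to argue by contradiction from $(\star)$, tracking ideal endpoints on $\Pbound$. The guiding idea is that because $w$ is non-corner, Lemma~\ref{lem:periodic_pf_corner} forbids any half-leaf through $w$ from making a perfect fit; combined with Proposition~\ref{prop_ideal_circle}(iii), this rigidifies which points of $\Pbound$ can be endpoints of leaves coincident with or limiting on $\cF^\pm(w)$.

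Write $\xi = \lim z_n \in \Pbound$, and label the ideal endpoints of $\cF^\pm(w)$ cyclically as $\xi_E, \xi_N, \xi_W, \xi_S$, with $\xi_E, \xi_W$ the endpoints of $\cF^+(w)$ (attracting fixed points of $g$ on $\Pbound$) and $\xi_N, \xi_S$ the endpoints of $\cF^-(w)$ (attracting for $g^{-1}$). The first step is to show $\xi \notin \{\xi_E, \xi_N, \xi_W, \xi_S\}$. If, say, $\xi = \xi_N$, then in case (ii) both $\cF^-(w)$ and $l^-$ have a half-leaf ending at $\xi$, so Proposition~\ref{prop_ideal_circle}(iii) connects them by a chain of perfect fits of alternating $\cF^\pm$ type. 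The non-corner hypothesis forces the chain to have length one, yielding $\cF^-(w) = l^-$—but then $\cF^-(w)$ itself makes the perfect fit with $l^+$, contradicting Lemma~\ref{lem:periodic_pf_corner}. Case (i) is analogous using a $\cF^-$-boundary leaf of the scalloped region whose endpoints approach $\xi$. The cases $\xi \in \{\xi_E, \xi_W\}$ are symmetric with $\cF^+(w)$ in place of $\cF^-(w)$, and $\xi = \xi_S$ is analogous.

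Thus $\xi$ lies in one of the four open arcs of $\Pbound \setminus \{\xi_E, \xi_N, \xi_W, \xi_S\}$; without loss of generality assume $\xi \in (\xi_E, \xi_N)$. Then for $i$ sufficiently large, $z_i$ lies in the quadrant of $w$ whose ideal boundary is $(\xi_E, \xi_N)$, and since $z_i$ is totally linked with $w$ the leaf $\cF^+(z_i)$ crosses $\cF^-(w)$ but not $\cF^+(w)$; its two ideal endpoints therefore lie in $(\xi_E, \xi_N) \cup (\xi_N, \xi_W)$. Both arcs are attracted to $\xi_N$ under $g^{-k}$, so both endpoints of $\cF^+(g^{-k}(z_i)) = g^{-k}(\cF^+(z_i))$ converge to $\xi_N$ as $k \to \infty$. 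Meanwhile, I will show that the endpoints of $\cF^-(z_n)$ stay bounded away from $\{\xi_N, \xi_S\}$ for large $n$: a subsequence with an endpoint converging to (say) $\xi_N$ would force, via Lemma~\ref{lem_convergence_or_trivially_product} together with Observation~\ref{obs:escape_endpoints}, either $z_n \to \xi_N$ (contradicting $\xi \neq \xi_N$) or that $\cF^-(z_n)$ converges to a union of $\cF^-$-leaves containing some $l$ with $\xi_N$ as endpoint; the Step~1 argument would then give $\cF^-(w) = l$, and since any leaf in a nontrivial limit union makes a perfect fit with an adjacent leaf (either as a side of a scalloped region or across a non-Hausdorff branching point), we again contradict the non-corner property of $w$.

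Combining these: for $k$ and $n$ both sufficiently large, the two endpoints of $\cF^+(g^{-k}(z_i))$ lie in a neighborhood of $\xi_N$ disjoint from the endpoints of $\cF^-(z_n)$, so both lie in the single component of $\Pbound$ minus the endpoints of $\cF^-(z_n)$ that contains $\xi_N$. The endpoints of $\cF^+(g^{-k}(z_i))$ and of $\cF^-(z_n)$ are therefore unlinked on $\Pbound$, so the two leaves cannot intersect—contradicting the total linkage of $g^{-k}(z_i)$ and $z_n$ that $(\star)$ requires. The main obstacle I expect is the limit-of-leaves analysis in Step~3, which requires carefully combining Lemma~\ref{lem_convergence_or_trivially_product} with the structure of perfect-fit chains and scalloped-region boundaries to exclude the degenerate possibility that $\cF^-(z_n)$ accumulates onto $\cF^-(w)$.
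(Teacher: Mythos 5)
Your proof takes a genuinely different route from the paper's. The paper splits into two quite asymmetric cases: for case \ref{item_accumulate_corner_scalloped} it observes that $w$ must lie inside the scalloped region $U$, so large powers $g^{\pm k}(z_i)$ eventually leave $U$; for case \ref{item_accumulate_perfect_fit} it does a subcase analysis on the position of $w$ relative to $l^+\cup l^-$. Your proposal instead pursues a single argument tracking ideal endpoints on $\Pbound$: show $\xi=\lim z_n$ lies strictly inside one of the four arcs cut out by $\idealpts(w)$, note that $g^{-k}$ pushes both endpoints of $\cF^+(z_i)$ toward $\xi_N$, and show the endpoints of $\cF^-(z_n)$ stay away from $\xi_N,\xi_S$, so the leaves eventually unlink. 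This is a nice attempt at unification, and for case \ref{item_accumulate_perfect_fit} with finite limit-unions it is essentially correct.

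However, there is a real gap, and it appears in two places. First, your claim that ``Case (i) is analogous'' is not earned: an ideal corner of a scalloped region is \emph{not} the endpoint of any leaf --- it is only the accumulation point of endpoints of the boundary families, which escape to infinity. So Proposition~\ref{prop_ideal_circle}(iii) cannot be invoked to produce a chain of perfect fits from $\cF^-(w)$, and the boundary leaf you suggest using has endpoints that merely \emph{approach} $\xi$, which yields no perfect-fit contradiction. Second, the same issue recurs in your Step~3: you assert ``$\cF^-(z_n)$ converges to a union of $\cF^-$-leaves containing some $l$ with $\xi_N$ as endpoint.'' When the limit union is infinite, Lemma~\ref{lemma_scalloped_ideal_corner} says it is the boundary of a scalloped region whose corners are where the endpoints of $\cF^-(z_n)$ accumulate --- and again no leaf $l$ in that union actually has $\xi_N$ as an endpoint. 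Both gaps can be closed, but by a different mechanism than perfect fits: if $\xi_N$ were a corner of a scalloped region $U'$, then by Axiom~\ref{Anosov_like_periodic_non-separated} and Lemma~\ref{lem_power_fixes_max_chain} some nontrivial $h$ fixes all corners of $U'$, hence fixes $\xi_N\in\Pbound$; Lemma~\ref{lem_leaves_to_ideal_are_fixed} then forces $h(\cF^-(w))=\cF^-(w)$, so by Lemma~\ref{lem_same_fixed_leaf_same_fixed_point} $h$ fixes $w$, and Proposition~\ref{proposition:cofixed_lozenge} then makes $w$ a corner --- a contradiction. You need something like this supplementary argument to make your unified approach go through; as written, the proposal does not cover the scalloped-region cases.
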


\begin{proof} 
Suppose first that $(z_n)$ lies inside and converges to an ideal corner of a scalloped region $U$.  Since $w$ is totally linked with each $z_n$, we must have that $w \in U$.  But then for any given $i$, for all large enough $K$, $g^{K}(z_i)$ and $g^{-K}(z_i)$ will be outside of the scalloped region. 
In particular, $g^{\pm K}(z_i)$ cannot be totally linked with $z_n$ when $n$ is large, so condition \ref{item_star} fails.  

As a second case, suppose instead that $(z_n)$ lies inside a lozenge $L$ and converges to an ideal point $\eta$ of $L$, where leaves $l^+$ and $l^-$ make a perfect fit.  
Since $w$ is a non-corner, it does not lie on $l^\pm$. We deal separately with the different possible positions of $w$ with respect to $l^+$ and $l^-$.

Suppose first that $l^-$ separates $w$ from the $z_i$, thus $\cF^-(w)$ does not intersect $l^+$. 
As $w$ is totally linked with all $z_i$, the leaves $\cF^+(z_i)$ all intersect $\cF^-(w)$. Since $\cF^-(w)$ does not intersect $l^+$, the leaves $\cF^+(z_i)$ must converge to a union $\{l^+_j\}$ of non-separated leaves containing $l^+$, one of which intersects $\cF^-(w)$.  
Let $l_w^+ \in \{l^+_j\}$ be the leaf that intersects $\cF^-(w)$.  

If $l^+_w$ separated $w$ from $z_i$ then we would have $\cF^+(w) \cap \cF^-(z_i) = \emptyset$, contradicting that these points are totally linked. Thus, $z_i$ and $l^+_w$ are on the same side of $\cF^+(w)$,
as in Figure~\ref{fig:converge_pf}, left.  Since $w$ is a positive fixed point for $g$, we also have that $l_w^+$ separates all $z_n$ from $g^{-k}(z_i)$, so \ref{item_star} fails.

The case where $l^+$ separates $w$ from the $z_i$ is very similar: here $\cF^-(z_i)$ converges to a union $\{l^-_j\}$ of non-separated leaves containing $l^-$, with some leaf $l_w^- \neq l^-$ intersecting $\cF^+(w)$.  As before, $w$ cannot be separated from the $z_i$ by $l_w^-$, as it would contradict their total linking.
Then, for some large $i$ fixed, if the first part of  \ref{item_star} were to hold, we would have $z_i$ not totally linked with $g^{-k}(z_i)$, for $k$ sufficiently large.  But this implies that $\cF^+(g^{-k}(z_i)) \cap l^- = \emptyset$, and therefore $g^{-k}(z_i)$ is also not totally linked with $z_n$ for $n$ large, as shown in Figure \ref{fig:converge_pf} right.

  \begin{figure}[h]
   \labellist 
  \small\hair 2pt
    \pinlabel $w$ at 111 30 
   \pinlabel $z_i$ at 57 65 
    \pinlabel $l^+$ at 15 90 
   \pinlabel $l^-$ at 70 12    
   \pinlabel $l^+_w$ at 134 70
   \pinlabel $g^{-1}(z_i)$ at 125 85 
     \pinlabel $w$ at 305 28 
   \pinlabel $l^-$ at 232 90      
\pinlabel $l^-_w$ at 333 90
   \pinlabel $z_i$ at 250 65
      \pinlabel $g^{-k}(z_i)$ at 203 50     
 \endlabellist
     \centerline{ \mbox{
\includegraphics[width=13cm]{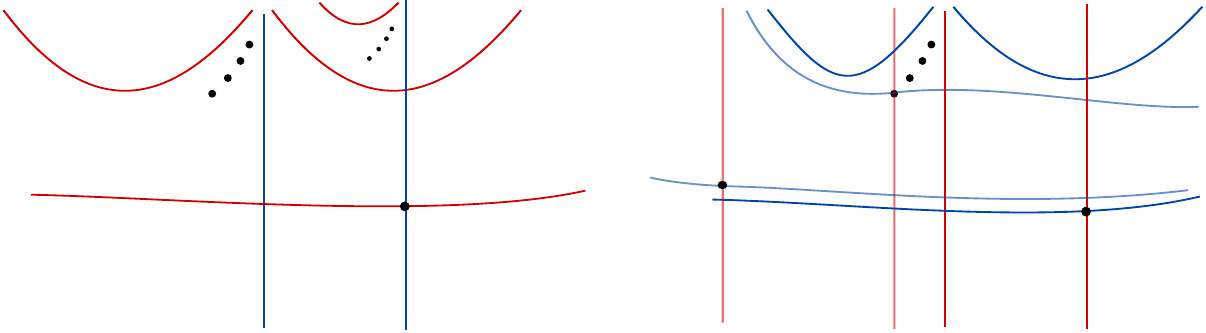} }}
\caption{When $z_i$ converges to the ideal point of a perfect fit, \ref{item_star} fails.}
\label{fig:converge_pf} 
\end{figure}

Thus, 
 we may now without loss of generality pass to a subsequence so that $l^+ \cup l^-$ does not separate $w$ from any $z_i$. 

As $w$ is a positive non-corner fixed point for $g$ and $P$ is nontrivial, both endpoints of $g^n(l^-)$ approach an endpoint of $\cF^-(w)$ as $n \to \infty$.  Thus, for $n$ sufficiently large, $g^n(l^-)$ will not intersect or make a perfect fit with any leaf nonseparated with $l^+$.  Fix such $n$.  Since $\cF^-(z_i) \to l^-$, for $i$ sufficiently large and $K>n$, we will have that $g^{K}(\cF^-(z_i))$ is separated from the union of leaves nonseparated with $l^+$ by $g^n(l^-)$.  Thus, for large $i$, we will have $g^{K}(\cF^+(z_i)) \cap \cF^+(g_i) = \emptyset$, and again condition \ref{item_star} fails.
\end{proof} 

\begin{corollary}[Convergence to non-corner leaves]  \label{cor:detect_non-corner_leaves}
Let $(z_n)$ be a sequence of pairwise totally linked points converging to some $z \in P \cup \Pbound$, and let $w$ be a non-corner positive fixed point of $g$ totally linked with all $z_n$.   Then \ref{item_star} holds if and only if $z \in \cF^-(w)$.  
\end{corollary}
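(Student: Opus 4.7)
The corollary asserts the equivalence between \ref{item_star} and $z \in \cF^-(w)$. The forward implication is essentially a restatement of Observation~\ref{obs:powers_g}: when $z \in \cF^-(w)$ with $z \neq w$ and cofinitely many $z_n$ lie off $\cF^-(w)$, the observation applies verbatim. The remaining edge cases (infinitely many $z_n \in \cF^-(w)$, or $z = w$) are dispatched by the same style of argument, choosing $r^-$ to be any ray of $\cF^-(w)$ meeting a quadrant of $w$ containing infinitely many $z_i$ and repeating the proof of Observation~\ref{obs:powers_g}.

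For the converse, suppose \ref{item_star} holds. I first observe that \ref{item_star} passes to any subsequence $(z_{n_m})$: applying the original statement with $i = n_m$ (for $m$ large, hence $n_m$ large) covers non-linking of $g^{-k}(z_{n_m})$ with each $z_{n_{m'}}$ for $m' \leq m$, and the second clause yields total linking of $g^{-k}(z_{n_m})$ with $z_{n_{m''}}$ whenever $n_{m''} > N$. Applying Lemma~\ref{lem:TL_limit_trichotomy}, extract a subsequence converging (necessarily to $z$) that falls into one of its three cases. Lemma~\ref{lem:new_simpler} rules out cases \ref{item_accumulate_corner_scalloped} and \ref{item_accumulate_perfect_fit}, leaving case \ref{item_accumulate_point}: $z \in P$ and $z$ is totally linked with every $z_n$ of the subsequence.

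Assume for contradiction that $z \in P \smallsetminus \cF^-(w)$, so that $\cF^-(z) \neq \cF^-(w)$. For $i$ sufficiently large the $z_i$ lie in a single quadrant of $w$ determined by $z$; let $r^-$ be the ray of $\cF^-(w)$ bounding that quadrant. Since $w$ is a positive fixed point, $g^{-1}$ expands $\cF^-(w)$ away from $w$, so setting $p_0 := \cF^+(z_i) \cap r^-$ the points $p_k := g^{-k}(p_0)$ escape along $r^-$ to its ideal endpoint. Consider
\[ S := \{\, y \in r^- : \cF^+(y) \cap \cF^-(z) \neq \emptyset \,\}. \]
If $S$ were unbounded, a subsegment of $\cF^-(w)$ together with a subsegment of $\cF^-(z)$ and two connecting $\cF^+$ leaves would bound an infinite product region, contradicting Proposition~\ref{prop:no_product} (applicable because $P$ is nontrivial). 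Hence $S$ is bounded, so for all $k$ sufficiently large $p_k \notin S$, i.e.\ $\cF^+(g^{-k}(z_i)) \cap \cF^-(z) = \emptyset$. Passing to the ideal circle, both endpoints of $\cF^-(z)$ lie in a single component of $\Pbound \smallsetminus \partial \cF^+(g^{-k}(z_i))$. By Proposition~\ref{prop_ideal_circle}(iv) the ideal endpoints of $\cF^-(z_n)$ converge to those of $\cF^-(z)$, so for $n$ large they too lie in that arc, forcing $\cF^-(z_n) \cap \cF^+(g^{-k}(z_i)) = \emptyset$. Thus $g^{-k}(z_i)$ is not totally linked with $z_n$, contradicting the second clause of \ref{item_star}. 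We conclude $z \in \cF^-(w)$.

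The core technical point is the robust conversion of leaf non-intersection in $P$ to non-intersection with nearby leaves, which relies essentially on the boundary convergence of Proposition~\ref{prop_ideal_circle}(iv); the rest is a structural combination of the trichotomy in Lemma~\ref{lem:TL_limit_trichotomy}, the obstructions in Lemma~\ref{lem:new_simpler}, and the bounded-$S$ technique borrowed from Observation~\ref{obs:powers_g}.
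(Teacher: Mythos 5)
Your proof follows the same skeleton as the paper's: cite Observation~\ref{obs:powers_g} for the easy direction, note that \ref{item_star} passes to subsequences, invoke Lemma~\ref{lem:TL_limit_trichotomy} together with Lemma~\ref{lem:new_simpler} to force $z \in P$, and then derive a contradiction from $z \notin \cF^-(w)$ via the no-infinite-product-region mechanism of Observation~\ref{obs:powers_g}. Where you diverge is in the final contradiction. The paper splits into two cases --- $z \in \cF^+(w)\smallsetminus\{w\}$ and $z \notin \cF^\pm(w)$ --- and in the second runs the Observation~\ref{obs:powers_g} argument directly against a pair of sequence terms $z_1, z_2$. You instead give a single unified argument: you bound $S = \{y \in r^- : \cF^+(y) \cap \cF^-(z) \neq\emptyset\}$ using the \emph{limit} leaf $\cF^-(z)$, conclude $\cF^+(g^{-k}(z_i)) \cap \cF^-(z) = \emptyset$ for $k$ large, and then transfer non-intersection from $\cF^-(z)$ to the $\cF^-(z_n)$ via ideal-circle convergence. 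This is a genuine, and arguably cleaner, reorganization: it sidesteps a quantifier-uniformity issue that the paper's version with individual $z_j$'s leaves implicit. The cost is that your ideal-circle step needs two things you should flag explicitly: (a) $\cF^+(g^{-k}(z_i))$ must not make a perfect fit with $\cF^-(z)$ --- this holds for $k$ large because the endpoints of $\cF^+(g^{-k}(z_i))$ converge to the ideal endpoint of $r^-$, which cannot be an endpoint of $\cF^-(z)$ since $w$ is non-corner; and (b) $\cF^-(z_n)$ must converge to $\cF^-(z)$ as a \emph{single} leaf, not to a union of non-separated leaves, for Proposition~\ref{prop_ideal_circle}(iv) to apply as written --- if it converges to a union, one should instead observe that $\cF^-(z)$ separates $\cF^+(g^{-k}(z_i))$ from any non-separated companion, so the conclusion still holds. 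With those two sentences added, your proof is complete.
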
  
Note that by convention we do {\em not} consider the ideal boundary points of $\cF^-(w)$ to lie in $\cF^-(w)$.

\begin{proof} 
The reverse direction is Observation \ref{obs:powers_g}.  For the forward direction, assume $(z_n)$ converges to $z \in P \cup \Pbound$ and \ref{item_star} holds.  
Passing to a subsequence, we may assume that $(z_n)$ satisfies one of the conditions of Lemma \ref{lem:TL_limit_trichotomy}.  Note that \ref{item_star} still holds for the subsequence. 
By Lemma \ref{lem:new_simpler}, we have $z \in P$.  First we eliminate the possibility that $z \in \cF^+(w) \smallsetminus \{w\}$. If this is the case, then, for large enough $k$, $g^{-k}(z_j)$ will \emph{not} be totally linked with any $z_n$ for $n$ large, so condition \ref{item_star} fails.   Now, for the remaining case, note that if $z \notin \cF^\pm(w)$, then we may assume without loss of generality (by passing to a further subsequence) that $z_n \notin \cF^\pm(w)$ for all $n$.  Thus, in particular, $g^{-k}(z_2)$ will diverge to infinity as $k \to \infty$, approaching the endpoint of a fixed leaf of $g$.  Following the same argument as we used in the proof of Observation \ref{obs:powers_g}, since $w$ is a non-corner and there are no infinite product regions, $g^{-k}(z_2)$ cannot remain totally linked with $z_1$ for arbitrarily large $k$, so again the condition fails.  
\end{proof} 

\begin{rem}  \label{rem:pos_neg_star}
As an immediate consequence (allowing $g^{-1}$ to play the role of $g$ above), we also have the following: \\ 
Let $(z_n)$ be a sequence of pairwise totally linked points converging to some $z \in P \cup \Pbound$, and suppose is $w$ a non-corner {\em negative} fixed point of $g$ that is totally linked with all $(z_n)$.   Then \ref{item_star} holds if and only if $z \in \cF^+(w)$.  
\end{rem}

Our next goal is to give a characterization of lozenges and adjacent lozenges in terms of linking properties and the action.  This will occupy the remainder of this section.  
For convenience we introduce some terminology and notation.  

\begin{definition}[Extended side]
An {\em extended side} of a lozenge $L$ is a leaf containing a side of $L$.  The {\em extended sides} of a chain $C$ is the set of all extended sides of lozenges in $C$. 
 \end{definition}

\begin{notation} 
We call a lozenge whose corners are fixed by some nontrivial $g \in G$ a {\em $g$-lozenge}, and we abbreviate ``$x$ and $y$ are totally linked" by $x \TL y$. 
\end{notation} 

\begin{lemma}[Characterization of interior of lozenges]\label{lem_charac_in_lozenge}
 Let $g \neq \id$ be and assume that $g$ fixes each half-leaf through some point.  
\begin{enumerate}[label=(\roman*)]
\item If $x \in P$ lies in the interior of a $g$-lozenge, then $g^{n}x\TL g^{m}x$ for all $n,m\in \bZ$.
\item If $x$ is neither in a $g$-lozenge nor on a leaf fixed by $g$, then there exists a neighborhood $U$ of $x$ and an integer $N$ such that, for all $n\in \bZ$ with $|n|>N$, $g^{n}x$ is not totally linked with any point of $U$.
\end{enumerate}
\end{lemma}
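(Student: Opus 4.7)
My plan is to prove (i) directly from the definition of a lozenge and (ii) by contradiction using the ideal circle. For (i), if $x$ lies in the interior of a $g$-lozenge $L$, then $g$ preserves $L$ (fixing both of its corners), so $g^n x \in L$ for every $n \in \bZ$. By the definition of a lozenge the bifoliation inside $L$ is trivial, so any two points of $L$ are totally linked, yielding $g^n x \TL g^m x$ for all $n,m \in \bZ$.

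For (ii), assume $x$ lies in no $g$-lozenge and on no leaf fixed by $g$. Since $g$ has an interior fixed point and $x$ is not on any fixed leaf, Lemma~\ref{lem_leaves_to_ideal_are_fixed} gives $\idealpts(x) \cap F_g = \emptyset$, where $F_g \subset \Pbound$ denotes the $g$-fixed set. Choose disjoint open arcs $V_1,\dots,V_k \subset \Pbound$ about the endpoints of $\cF^\pm(x)$ with $\overline{V_i}\cap \overline{F_g} = \emptyset$, and take a neighborhood $U \ni x$ inside a small bifoliated rectangle, disjoint from all fixed leaves and all $g$-lozenges, such that $\idealpts(u) \subset V_1 \cup \dots \cup V_k$ for every $u \in U$; this last property uses Proposition~\ref{prop_ideal_circle}(iv) applied to leaves through a transversal at $x$. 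Now suppose, for contradiction, that $g^{n_j}x \TL u_j$ for some $u_j \in U$ and $n_j \to +\infty$. Using the attracting boundary dynamics from Proposition~\ref{prop:boundary_action_general}, the ideal endpoints of $g^{n_j}\cF^\pm(x)$ accumulate only on $F_g$ and so eventually lie outside $\bigcup V_i$. Total linkage of $g^{n_j}x$ and $u_j$ is characterized by a cyclic interleaving of their ideal endpoints on $\Pbound$ (Lemma~\ref{lem_3_connected_components} and Corollary~\ref{cor:4_connected_comp}); passing to the limit along a subsequence, this forces a half-leaf of $\cF^\pm(x)$ to share its endpoint with a $g$-fixed leaf and hence, by Proposition~\ref{prop_ideal_circle}(iii), to make a perfect fit with one. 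By Lemma~\ref{lem:periodic_pf_corner} this makes $x$ a corner of a $g$-lozenge, contradicting our assumption.

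The main obstacle is making the limit/interleaving argument rigorous when $g$ fixes a nontrivial chain of lozenges, since then $F_g$ is infinite and may accumulate on ideal corners of scalloped regions. I expect to control this by invoking Lemma~\ref{lemma_scalloped_ideal_corner} (structure of scalloped regions) together with Proposition~\ref{prop:no_product} (no infinite product regions) to reduce to a finite configuration of ideal endpoints. A secondary subtlety is the possibility that two endpoints of $g^{n_j}\cF^\pm(x)$ collapse to a single point of $F_g$ rather than spreading; such a subcase should fail the total linkage condition directly, since a short ideal span cannot straddle endpoints of $u_j$ lying in the $V_i$ disjoint from $\overline{F_g}$.
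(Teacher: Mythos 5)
Part (i) is fine and matches the paper's approach (the bifoliation inside a lozenge is trivial, so all points in a lozenge are pairwise totally linked).

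Part (ii) has two genuine gaps.

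First, the construction of $U$ with $\idealpts(u) \subset V_1\cup\dots\cup V_k$ for all $u\in U$ is not available in general. You invoke Proposition~\ref{prop_ideal_circle}(iv), but that only controls endpoints of half-leaves converging to a \emph{single} half-leaf. Even though $x$ is not on a $g$-fixed leaf, one of $\cF^{\pm}(x)$ may still be non-separated with other leaves (which by Axiom~\ref{Anosov_like_periodic_non-separated} are then fixed by some \emph{other} element $h \neq g$; nothing in your hypotheses prevents this). Then for $u$ approaching $x$ from the side that ``sees'' the non-separated family, $\cF^{\pm}(u)$ converges to the whole union, and its endpoints converge to endpoints of the extreme leaves of that union, which can lie far from $\idealpts(x)$. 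So no neighborhood $U$ of $x$ (open in $P$, hence containing points on both sides of $\cF^{\pm}(x)$) has the property you need.

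Second, and more seriously, the ``passing to the limit'' step is unjustified. You claim that persistent total linkage of $g^{n_j}x$ and $u_j$ forces a half-leaf of $\cF^\pm(x)$ to share its endpoint with a $g$-fixed leaf, but this is not what happens: the endpoints of $g^{n_j}\cF^\pm(x)$ accumulate on $F_g$, which by Lemma~\ref{lem_leaves_to_ideal_are_fixed} is \emph{disjoint} from $\idealpts(x)$, so no collision is forced. In fact, an analysis purely on $\Pbound$ is insufficient here: after the pigeonhole observation (two endpoints $\eta_1,\eta_2$ of $\cF^\pm(x)$ lie in a single $g$-invariant component $I$ of $\Pbound\smallsetminus F_g$), the four limit points of $\idealpts(g^n x)$ may \emph{a priori} still spread over three components of $\Pbound\smallsetminus\idealpts(u_j)$, so the cyclic interleaving criterion for total linkage need not be violated at the boundary level alone. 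The paper's proof closes this gap with a \emph{planar} argument: for $n$ large one sees $\eta_1,\dots,\eta_k,g^n(\eta_1),\dots,g^n(\eta_k)$ in cyclic order within $I$, and $g^n(x)$ lies in a quadrant of $x$ bounded by the ray $r_1$ ending at $\eta_1$; if $g^n x$ and $x$ were totally linked, a leaf through $g^n x$ would have to cross $r_1$ but not $r_2$, giving it an endpoint strictly between $\eta_1$ and $\eta_2$ in $I$ — impossible, since the only endpoints of $\cF^\pm(g^n x)$-leaves in $I$ are among $g^n(\eta_1),\dots,g^n(\eta_k)$, which lie beyond $\eta_k$. This localized geometric obstruction is the missing ingredient in your sketch; without it, the boundary dynamics alone do not yield a contradiction.
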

\begin{rem}
One could additionally characterize points on $g$-invariant leaves, but the situation is more complicated: there are three possible behaviors for the linking of $g^{n}x$ with points in neighborhoods of $x$, depending on whether the invariant half-leaf containing $x$ is a side of $0$, $1$, or $2$ lozenges. Since we do not need such a precise statement for our main result, we do not pursue this here.
\end{rem}

\begin{proof}[Proof of Lemma \ref{lem_charac_in_lozenge}]
Note that if $x$ lies in the interior of a $g$-lozenge, then the first condition (and, vacuously, also the second) is immediately satisfied.
So from now on, we suppose that $x$ is neither in a $g$-lozenge nor on a leaf fixed by $g$.

Let $c$ be a fixed point of $g$.
Then, either $c$ is a non-corner fixed point or, $c$ is the corner of some lozenge in a maximal chain of lozenges $\cC$;  by Lemma \ref{lem_power_fixes_max_chain} $g$ preserves $\cC$ and fixes all of its corners.  To streamline the proof, we will slightly abuse notation here and say $\cC = c$ if $c$ is non-corner, the ``extended sides" of $\cC$ being then just the leaves of $c$.

By Proposition \ref{prop:boundary_action_general}, the set of fixed points of $g$ on $\Pbound$ is equal to the closure 
$\overline{\partial( \cC)}$ of endpoints of extended sides of $\cC$.  
Since we assumed that $x$ is not on any extended side of $\cC$, nor in a $g$-lozenge, Lemma \ref{lem_leaves_to_ideal_are_fixed} implies that $\partial \cF(x) \cap \overline{\partial (\cC)} = \emptyset$.  
By Corollary \ref{cor:4_connected_comp}, $\partial \cF(x)$ meets at most three distinct connected components of $\Pbound \smallsetminus \overline{\partial (\cC)}$.

Since $\partial \cF(x)$ has at least four points, by the pigeonhole principle, there exists a connected component $I$ of $\Pbound \smallsetminus \overline{\partial (\cC)}$ that contains at least two points of $\partial \cF(x)$.   Enumerate the points of $\partial \cF(x) \cap I$ in cyclic order $\eta_1, \ldots, \eta_k$.  

After possibly replacing $g$ with $g^{-1}$, for $n$ sufficiently large we will see the following points in $I$, in cyclic order 
\[ \eta_1, \ldots, \eta_k, g^n(\eta_1), \ldots, g^n(\eta_k) \]
and the point $g^n(x)$ lies in a quadrant of $x$ bounded by the half-leaf $r_1$ ending in $\eta_1$.  
See Figure \ref{fig:pointsonbound2} for an illustration.  

Note additionally that no other ideal point of $\cF^\pm(g^n (x))$ lies in $I$, since the interval $I$ is $g$-invariant.  
But this means that $g^n(x)$ and $x$ cannot be totally linked, for if they were, some leaf of $g^n(x)$ would need to intersect $r_1$.  Since it cannot also intersect the ray $r_2$ ending at $\eta_2$, its endpoint would lie between $\eta_1$ and $\eta_2$ in $I$, a contradiction.   
\end{proof}
  \begin{figure}[h]
   \labellist 
  \small\hair 2pt
   \pinlabel $\eta_3$ at 255 210 
 \pinlabel $\eta_2$ at 365 125
  \pinlabel $\eta_1$ at 375 100
 \pinlabel $g^{n}(\eta_1)$ at 145 228
  \pinlabel $g^{n}(\eta_2)$ at 75 220
  \pinlabel $g^{n}(\eta_3)$ at 10 190
       \pinlabel $x$ at 210 85 
  \pinlabel $g^{n}(x)$ at 100 135
\endlabellist 
    \centerline{ \mbox{
\includegraphics[width=7cm]{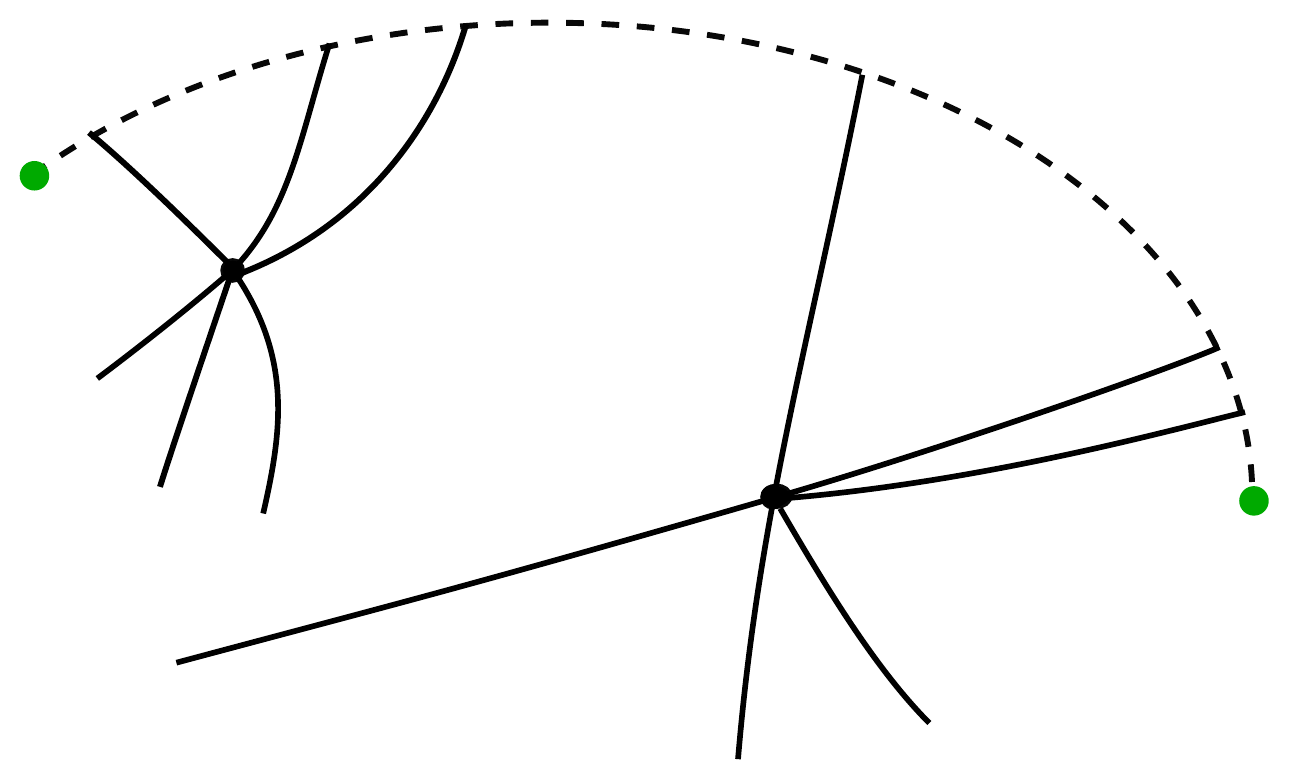} }}
\caption{Proof of Lemma \ref{lem_charac_in_lozenge}.  The interval $I \subset \Pbound$ is shown as a dotted line.  Whether $x$ is singular or regular here is irrelevant for the proof.} 
\label{fig:pointsonbound2} 
\end{figure}

Our next goal is to characterize when two points lie in the interior of the {\em same} lozenge.  For this we use the following definitions.

\begin{definition}[squares and lines]
A {\em square} of lozenges is a union of four lozenges with a common regular corner.   A {\em line} of lozenges is a chain (finite or infinite) 
of pairwise adjacent lozenges $L_i$ such that $L_i$ shares one side with $L_{i-1}$ and the opposite side with $L_{i+1}$.  
\end{definition} 

\begin{definition}[diagonal and adjacent] 
Let $L$ be a lozenge in a bifoliated plane $P$ bounded by leaves $l_1^\pm$ and $l_2^\pm$.  We say that a connected component of $P \smallsetminus \bigcup_{i=1,2} l_i^\pm$ is an {\em adjacent quadrant to} $L$ if its boundary shares a side with $L$, and a {\em diagonal quadrant} otherwise.  

For a chain, line, or square of lozenges, we say two complimentary regions of the union of their leaves are {\em adjacent} if their boundaries share a side, and \em{diagonal} if their boundaries share a corner.  
\end{definition}

\begin{lemma} \label{lem:line_or_square} 
Suppose $x_1$ and $x_2$ are in the interior of $g$-lozenges $L_1, L_2$ and we have $g^{n}(x_1) \TL g^{m}(x_2)$ for all $m, n$.  Then $L_1$ and $L_2$ lie in a line or square.  
\end{lemma}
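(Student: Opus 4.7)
The plan is to combine topological hyperbolicity of $g$ acting on each $L_i$ with an analysis of fixed ideal points on $\Pbound$, in the style of Lemma \ref{lem_charac_in_lozenge}.

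After replacing $g$ by a power, we may assume $g$ fixes each corner of $L_1$ and $L_2$ together with every half-leaf through these corners; this does not affect the hypotheses. By Proposition \ref{proposition:cofixed_lozenge} and Lemma \ref{lem_power_fixes_max_chain}, $L_1$ and $L_2$ both lie in a common maximal chain $\cC$ of lozenges whose corners are all fixed by $g$, and by Proposition \ref{prop:boundary_action_general}, the $g$-fixed ideal points on $\Pbound$ are exactly $\overline{\partial \cC}$, the closure of the endpoints of extended sides of $\cC$. The case $L_1 = L_2$ is trivial, so we assume $L_1 \neq L_2$.

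Because $g$ is topologically hyperbolic inside each $L_i$, the iterates $g^n(x_i)$ converge in $P \cup \Pbound$ to one ideal corner $\xi_i^+$ of $L_i$ as $n \to +\infty$, and to the opposite ideal corner $\xi_i^-$ as $n \to -\infty$; the ideal endpoints of $\cF^\pm(g^n(x_i))$ correspondingly cluster near $\xi_i^\pm$ in pairs. Passing to limits in the hypothesis $g^n(x_1) \TL g^m(x_2)$ for all $n,m$ forces an interleaving condition on the cyclic order of $\xi_1^\pm$ and $\xi_2^\pm$ on $\Pbound$: they appear in alternating order, exactly the condition characterizing linking of the limit configurations recalled just before Lemma \ref{lem_3_connected_components}.

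The remainder is a case analysis on the relative position of $L_1$ and $L_2$ within $\cC$. If $L_1$ and $L_2$ share a side then $\{L_1, L_2\}$ is a line and we are done. If they share only a regular corner $c$ with $L_1, L_2$ occupying diagonal quadrants of $c$, then the interleaving condition combined with Axiom \ref{Anosov_like_periodic_non-separated} forces the remaining two quadrants of $c$ to contain $g$-fixed lozenges of $\cC$, producing a square around $c$. In every other case---$L_1, L_2$ sharing only a singular corner; $L_1, L_2$ sharing nothing; or $L_1, L_2$ connected in $\cC$ only through intermediate lozenges not aligning into a common line with $L_1,L_2$---one locates an extended side $l \in \cC$ that separates $L_1$ from $L_2$ in $P$. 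By Lemma \ref{lem_leaves_to_ideal_are_fixed}, both endpoints of $l$ lie in $\overline{\partial \cC}$, and by construction they fall strictly between some $\xi_1^\pm$ and the adjacent $\xi_2^\pm$ in the cyclic order on $\Pbound$. A pigeonhole argument as in Lemma \ref{lem_charac_in_lozenge} then produces $|n|, |m|$ large with $g^n(x_1)$ and $g^m(x_2)$ failing to be totally linked, contradicting the hypothesis.

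The main obstacle is the third bullet: identifying the correct separating extended side $l$ in $\cC$ whose endpoints fall in the forbidden position on $\Pbound$, and then using the hyperbolic dynamics of $g$ to push appropriate iterates close enough to these endpoints so that the failure of total linking is actually visible in the cyclic order. Singular-corner configurations require particular care, since the $2p$ prongs at a $p$-corner introduce extra fixed ideal points in $\overline{\partial\cC}$ which must be shown to interrupt the interleaving pattern of $\{\xi_1^\pm, \xi_2^\pm\}$, even though $L_1$ and $L_2$ can sit in quadrants that look superficially compatible with linking.
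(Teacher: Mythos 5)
Your approach --- reconstructing the configuration from the $g$-dynamics on the ideal circle and then running a pigeonhole argument in the style of Lemma~\ref{lem_charac_in_lozenge} --- is genuinely different from the paper's, but it has real gaps, which you partly acknowledge yourself.

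The paper's proof is much more economical and sidesteps the ideal circle entirely. The key observation is that the two intersection points $z = \cF^+(x_1)\cap \cF^-(x_2)$ and $z' = \cF^-(x_1)\cap \cF^+(x_2)$ inherit the hypothesis of Lemma~\ref{lem_charac_in_lozenge} directly: since $\cF^+(g^n(z)) = \cF^+(g^n(x_1))$ and $\cF^-(g^m(z)) = \cF^-(g^m(x_2))$, the total linking of $g^n(x_1)$ with $g^m(x_2)$ \emph{for all pairs} $(n,m)$ immediately gives $g^n(z)\TL g^m(z)$ for all $n,m$, and symmetrically for $z'$. Neither $z$ nor $z'$ can lie on a $g$-invariant leaf (because $x_1, x_2$ are interior to lozenges), so by Lemma~\ref{lem_charac_in_lozenge} both are interior to $g$-lozenges. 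From there the line/square dichotomy falls out of a short position analysis: if $z$ or $z'$ is inside $L_1$ or $L_2$ one gets a line; if not, both intersection points lie in regions simultaneously adjacent to $L_1$ and $L_2$, which are therefore diagonal, and the $g$-lozenges containing $z, z'$ fill in the square. No ideal-circle dynamics and no pigeonhole are needed.

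Your write-up, by contrast, never actually closes. Two specific problems: First, in the regular-corner diagonal case you assert that ``the interleaving condition combined with Axiom~\ref{Anosov_like_periodic_non-separated} forces the remaining two quadrants of $c$ to contain $g$-fixed lozenges,'' but Axiom~\ref{Anosov_like_periodic_non-separated} only tells you that non-separated leaves share a fixing element --- it does not produce new lozenges, and the interleaving of four ideal points on $\Pbound$ is by itself compatible with many configurations that are not squares. The thing that actually forces the extra lozenges is precisely the observation about $z, z'$ above, which you do not invoke. Second, you flag the ``third bullet'' as the main obstacle yourself: you have not identified the separating extended side in the remaining cases, so the contradiction via the cyclic order is still a plan rather than a proof. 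Finally, the claimed alternating interleaving of $\xi_1^\pm, \xi_2^\pm$ is not in fact established; for adjacent lozenges sharing a side, the ideal corners are not interleaved in the naive sense, so the statement needs qualification before it can be used. If you want to salvage this approach, the cleanest fix is to abandon the ideal-circle route and import the paper's observation that $z$ and $z'$ satisfy the Lemma~\ref{lem_charac_in_lozenge} hypothesis.
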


\begin{proof} 
Let $z=\cF^+(x_1)\cap \cF^-(x_2)$. 
Then $\cF^+(g^n(z)) = \cF^+(g^n(x_1))$ and $\cF^-(g^m(z)) = \cF^-(g^m(x_2))$.  By assumption, $g^n(x_1)\TL g^m(x_2)$ for all $n,m$, so 
$\cF^+(g^n(z)) \cap \cF^-(g^m(z)) \neq \emptyset$.  The argument also holds after replacing $+$ with $-$, so we conclude  that $g^n(z) \TL g^m(z)$ for all $n,m$.   Since $x_i$ are in the interior of $g$-lozenges, $\cF^\pm(z)$ are not $g$-invariant, so 
by Lemma \ref{lem_charac_in_lozenge}, $z$ must be in the interior of a $g$-lozenge. The same argument applies reversing the role of $+$ and $-$, so we conclude 
both of the intersections $\cF^{\pm}(x_1)\cap \cF^{\mp}(x_2)$ are inside $g$-lozenges.  

If either point of $\cF^{\pm}(x_1)\cap \cF^{\mp}(x_2)$ lies in $L_1$, say for concreteness $\cF^{+}(x_1)\cap \cF^{-}(x_2) \in L_1$, then $L_1$ and $L_2$ lie in a line of lozenges all sharing common $\cF^{-}$ leaves (this follows easily from the fact that maximal chains are connected, as in Proposition \ref{proposition:cofixed_lozenge}.  Symmetrically, if some point of $\cF^{\pm}(x_1)\cap \cF^{\mp}(x_2)$ is in $L_2$, we conclude the lozenges are in a line.  If neither point of the intersection is in either $L_i$, then both intersection points must be in regions simultaneously adjacent to both $L_i$, 
 which implies that $L_1$ and $L_2$ are diagonal lozenges, and the $g$-lozenges containing the intersection points together with the $L_i$ form a square.  
 \end{proof}

We can now give the characterization of points in a common lozenge.  
By necessity, the statement is quite technical.  To help the reader parse this, we first give a statement of a special case where we assume the corners of lozenges are non-singular points, and then explain and prove the general version.

\begin{proposition}[Characterization of points in same nonsingular lozenge] \label{prop:same_lozenge_special}
Suppose $x_1$ and $x_2$ lie in the interior of $g$-lozenges $L_1$, $L_2$ with {\em non-singular} corners.  Assume $g^{n}(x_1) \TL g^{m}(x_2)$ for all $m, n$.
Then, $L_1= L_2$, if and only if the following condition holds: 
\\
There exists a non-corner fixed point $p$ such that $p \TL x_i$ for $i = 1, 2$, and for each $M \in \mathbb{N}$ there exists a non-corner fixed point $q_{M}$ unlinked with $p$ such that $q_M \TL g^{\pm m}(x_i)$ for $m< M$ and $i \in \{1, 2\}$.  
\end{proposition}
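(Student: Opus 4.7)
For the forward direction, I assume $L_1 = L_2 =: L$ with nonsingular corners $c_\pm$. Since $L$ is trivially foliated and the set of corners is nowhere dense (Lemma \ref{lem_nowhere_dense}), density of non-corner fixed points (Axiom \ref{Anosov_like_dense_fixed_points}) yields a non-corner fixed point $p$ inside $L$ off the leaves of $x_1$ and $x_2$; such $p$ is totally linked with both $x_1, x_2$ because $L$ is trivially foliated, establishing the first part of the condition. For $q_M$, the orbit segment
\[
S_M := \{g^{\pm m}(x_i) : 0 \le m < M,\ i \in \{1,2\}\}
\]
is a finite subset of $L$, and being totally linked with every element of $S_M$ is an open condition. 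This open set is nonempty inside $L$ (which does not suffice, since every point of $L$ off $p$'s leaves is totally linked with $p$), but by the nonsingular corner hypothesis it extends beyond $L$: at $c_\pm$ there are wedge regions (or lozenges) adjacent to $L$ whose $\cF^\pm$-leaves cross every $\cF^\mp$-leaf of $S_M$, and such a region can be positioned so that its points are simultaneously totally linked with $S_M$ and unlinked with $p$. Density of non-corner fixed points then yields the desired $q_M$.

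For the backward direction, I assume the condition and aim to prove $L_1 = L_2$. By the hypothesis that $g^n(x_1) \TL g^m(x_2)$ for all $n, m$ and Lemma \ref{lem:line_or_square}, $L_1$ and $L_2$ lie in a common line or square of $g$-lozenges. Suppose for contradiction that $L_1 \neq L_2$. Then the combined $g$-corners of $L_1 \cup L_2$ form a set of at least three distinct points in $P$, and the orbits $g^{\pm m}(x_i)$ accumulate on these corners as $m \to \pm\infty$. The hypothesis provides $q_M$ totally linked with every element of the (unboundedly growing) finite set $S_M$. Passing to a subsequential limit of $(q_M)$ in $P \cup \Pbound$ and applying Lemma \ref{lem:TL_limit_trichotomy} with the pairwise totally linked family consisting of $q_M$ together with $S_M$, the limit is constrained to lie in the ``linking hull'' carved out by the three or more $g$-corners. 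In each case, Corollary \ref{cor:detect_non-corner_leaves} and Proposition \ref{prop:boundary_action_general} then force the leaves of $q_M$ for large $M$ to intersect those of $p$, contradicting the unlinkedness of $q_M$ with $p$.

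The main obstacle is the backward direction: one must rigorously rule out the existence of $q_M$ for all $M$ in the case $L_1 \neq L_2$. This requires carefully controlling the subsequential limits of $(q_M)$ using the ideal-boundary machinery of Section \ref{sec:ideal_circle} — in particular Lemma \ref{lem:TL_limit_trichotomy}, Corollary \ref{cor:detect_non-corner_leaves}, and Proposition \ref{prop:boundary_action_general} — and then showing that the region simultaneously totally linked with all sufficiently long orbit segments and unlinked with $p$ becomes empty. The nonsingular corner hypothesis on $L_1, L_2$ is essential here, as it keeps the number of configurations bounded and guarantees that each corner carries the standard four-quadrant picture; the forward direction is more routine and relies mainly on density of non-corner fixed points.
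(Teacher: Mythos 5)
Your proof has significant gaps in both directions.

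\textbf{Forward direction.} Your choice of $p$ \emph{inside} the lozenge $L$ does not work. The problem is that for $q_M$ to be totally linked with the growing orbit segments $g^{\pm m}(x_i)$ (which approach the corners $c_\pm$), the point $q_M$ must lie close to one of the corners, say $c_+$, in a quadrant of $c_+$ adjacent to or diagonal from $L$. But then $q_M$ ``wraps around'' $c_+$: since $c_+$ is a nonsingular corner, for $q_M$ near $c_+$ in the diagonal quadrant both $\cF^+(q_M)$ (close to $\cF^+(c_+)$) and $\cF^-(q_M)$ (close to $\cF^-(c_+)$) cross the leaves of \emph{every} point of $L$, including $p$. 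One checks this explicitly on the ideal circle by tracking endpoints: the endpoints of $\cF^\pm(q_M)$ approach the ideal corners of $L$ and alternate with the endpoints of $\cF^\pm(p)$, forcing total linking. Thus no such $q_M$ unlinked with $p$ exists if $p \in L$. The paper resolves this by placing $p$ in the \emph{diagonal quadrant} at $c_-$ (outside $L$, near the opposite corner to the one $q_M$ approaches); then $p$ and $q_M$ lie in opposite diagonal quadrants and are genuinely unlinked, while closeness to $c_-$ still forces $p \TL x_i$.

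\textbf{Backward direction.} Your argument is not a proof. The notion of ``linking hull'' is undefined, and neither Corollary \ref{cor:detect_non-corner_leaves} (which concerns condition \ref{item_star} and non-corner fixed points, not collections of totally-linked points) nor Proposition \ref{prop:boundary_action_general} yields the stated conclusion that the leaves of $q_M$ eventually cross those of $p$. You also need the family $\{q_M\}$ itself to be pairwise totally linked to apply Lemma \ref{lem:TL_limit_trichotomy}, which is not part of the hypothesis. The actual argument (as in the paper's proof of Proposition \ref{prop:same_lozenge_II}) is more concrete: first rule out the diagonal-square configuration by observing that both $\cF^-(p)$ and $\cF^+(q_M)$ would have to cross the full square and hence each other; then, in the line case, show that $\cF^-(p)$ cannot meet the line of lozenges (else it would separate $\cF^-(g^m(x_1))$ from $\cF^-(g^{\pm m}(x_2))$ for large $m$, forcing $\cF^+(q_M)$ to cross it), so that $p$ sits in a quadrant adjacent to one $L_i$ and diagonal to the other at a regular corner; the analogous statement holds for $q_M$; and finally, choosing $m$ large so that $\cF^+(p)$ separates $\cF^+(g^{\pm m}(x_1))$ from $\cF^+(x_2)$ and taking $M > m$ forces $\cF^-(q_M)$ to cross $\cF^+(p)$, the desired contradiction. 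Your sketch contains none of these controlling steps and cannot be completed as stated.
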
 

For the forward direction of the proof, we will take points $p$ and $q_{M}$ in diagonal quadrants to the lozenge and close to opposite corners. See Figure \ref{fig:p_q} (left) for an illustration. The reader may find it helpful to verify that these points satisfy the condition, provided they are chosen close enough to the corners.   In the case where the lozenge has singular corners, we will need to replace the single points $p$ and $q_{M}$ with strings of points that ``go around" the corner, as shown in Figure \ref{fig:p_q}, right.  To formalize this, we make the following definition.

 \begin{figure}[h]
	\labellist 
	\small\hair 2pt
	\pinlabel $p$ at 12 115
	\pinlabel $x_2$ at 65 57
	\pinlabel $x_1$ at 42 67 
	\pinlabel $q_M$ at 97 20 
	\pinlabel $g^n(x_i)$ at 105 110
	\pinlabel $g^{-n}(x_i)$ at 0 10 
	\endlabellist
	\centerline{ \mbox{
			\includegraphics[width=9cm]{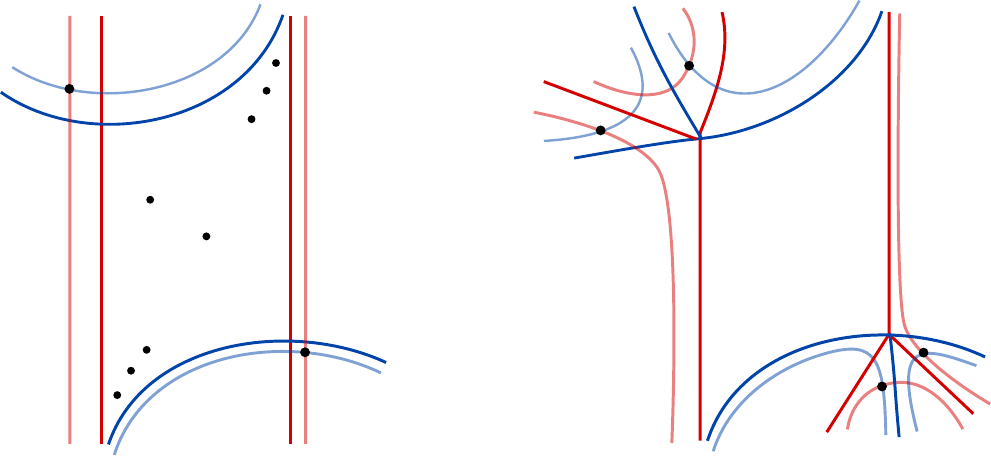}}}
	\caption{Points $p$ and $q_M$, and modification for singular lozenges}
	\label{fig:p_q}
\end{figure}

\begin{definition} 
A {\em $k$-web} is a collection of distinct noncorner points $x_0, x_2, \ldots, x_{k-1}$ in $P$ such that $\cF^+(x_i) \cap \cF^-(x_{i+1}) \neq \emptyset$ for all $i$ (with indices taken mod $k$), and $\cF^+(x_i) \cap \cF^-(x_{j}) = \emptyset$ for all $j \neq i+1$.  
\end{definition} 
Under this definition, a $2$-web is a pair of totally linked points.  
One example of a $k$-web can be obtained by taking points in distinct, alternating, quadrants surrounding a $k$-pronged singular point as shown in Figure \ref{fig:3web}.  

 \begin{figure}[h]
	\labellist 
	\small\hair 2pt
	\pinlabel $x_0$ at 120 25
	\pinlabel $x_2$ at 125 100
	\pinlabel $x_1$ at 42 63 
	\endlabellist
	\centerline{ \mbox{
			\includegraphics[width=6cm]{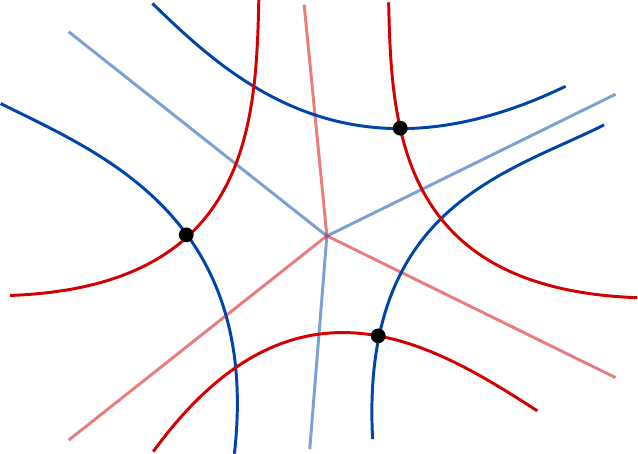} }}
	\caption{A 3-web of points}
	\label{fig:3web}
\end{figure}

The next lemma says that a $k$-web cannot cross both sides of a perfect fit.  
\begin{lemma} \label{lem:perf_web}
Suppose $l^+, l^-$ are two leaves making a perfect fit, dividing the plane into regions $R_1$, $R_2$ and $R_3$, with $R_2$ bounded by $l^+ \cup l^-$.  If $x_0, x_2, \ldots, x_{k-1}$ is a $k$-web with some point in $R_1$, then no point of the $k$-web lies in $R_3$. 
\end{lemma}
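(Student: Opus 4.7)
The plan is to argue by contradiction using the positions of leaf endpoints on the ideal circle $\Pbound$.

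First I would observe that because $l^+\in\cF^+$ and leaves of a singular foliation are either equal or disjoint, any $\cF^+$-leaf other than $l^+$ is disjoint from $l^+$; symmetrically for $\cF^-$ and $l^-$. Hence for $x\in R_1$, $\cF^+(x)\subset R_1$, and for $x\in R_3$, $\cF^-(x)\subset R_3$. In particular, a consecutive web pair $(x_i,x_{i+1})$ with $x_i\in R_1$ and $x_{i+1}\in R_3$ would force the disjoint leaves $\cF^+(x_i)\subset R_1$ and $\cF^-(x_{i+1})\subset R_3$ to meet, contradicting the web condition. So any $R_1\to R_3$ transition in the cyclic order of the web must pass through $R_2$; this already settles the case $k=2$.

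Assume for contradiction the web has a point in $R_3$. Reindex cyclically so $x_0\in R_1$, $x_1\in R_2$, and let $m\ge 2$ be minimal with $x_m\in R_3$; let $M$ be the next index cyclically with $x_M\in R_1$. By Proposition~\ref{prop_ideal_circle}, the three ideal points $b,b^+,b^-$ (the shared perfect-fit endpoint and the other endpoints of $l^+,l^-$) partition $\Pbound$ into three open arcs $\alpha_1,\alpha_2,\alpha_3$---the ideal boundaries of $R_1,R_2,R_3$. Since each $x_i$ is non-corner, Lemma~\ref{lem:periodic_pf_corner} forces its leaf endpoints to avoid $\{b,b^+,b^-\}$, and the observation above confines these endpoints to specific arcs. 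In particular, the consecutive conditions of the web imply that $\cF^-(x_1)$ crosses $l^+$ (producing an $\alpha_2$-endpoint $a_1^-$) and that $\cF^+(x_1)$ has an $\alpha_2$-endpoint $a_1^+$.

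The heart of the argument is to translate each non-intersection condition $\cF^+(x_i)\cap\cF^-(x_j)=\emptyset$ (for non-consecutive pairs) into an inequality on the relative positions of $\alpha_2$-endpoints, via the standard interlocking criterion on $\Pbound$ (two leaves cross iff their four ideal endpoints alternate cyclically). Chaining these inequalities around the cycle---using in particular $\cF^+(x_1)\cap\cF^-(x_0)=\emptyset$ and $\cF^+(x_m)\cap\cF^-(x_1)=\emptyset$, plus the bridging non-intersections coming from $\cF^\pm(x_M)$ (or further $R_2$ intermediates, if the return from $x_m$ to $x_M$ is not direct)---yields $a_1^+<a_1^-$ in $\alpha_2$ oriented from $b^-$ to $b^+$. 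Consequently the four endpoints of $\cF^\pm(x_1)$ appear on $\Pbound$ in cyclic order with signs $(\cF^-,\cF^+,\cF^+,\cF^-)$, which is \emph{not} alternating. Hence $\cF^+(x_1)$ and $\cF^-(x_1)$ fail to interlock on $\Pbound$, contradicting the fact that they cross at $x_1\in P$ when $x_1$ is regular, and violating the required $2p$-fold alternation of prong endpoints if $x_1$ is a $p$-prong singularity. The main technical obstacle is carrying out this chaining step in full generality: the number of intermediate $R_2$ points on either arc of the cycle can be arbitrary, so the inequality must be propagated step by step via the single-pair cyclic-order analysis on $\Pbound$.
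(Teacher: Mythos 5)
Your opening observation (choosing the labeling so that $R_1$ is bounded by $l^+$ and $R_3$ by $l^-$, hence $\cF^+(x)\subset R_1$ for $x\in R_1$ and $\cF^-(x)\subset R_3$ for $x\in R_3$, which rules out a consecutive jump $R_1\to R_3$) is correct and does settle $k=2$. Beyond that, however, what you have is a plan, not a proof: the decisive step --- translating each non-intersection $\cF^+(x_i)\cap\cF^-(x_j)=\emptyset$ into a cyclic-order inequality among $\alpha_2$-endpoints and chaining these around the web to force $a_1^+<a_1^-$ --- is never actually carried out, as you yourself acknowledge. This is not a routine omission. When several consecutive $x_i$ lie in $R_2$, the second endpoint of $\cF^+(x_i)$ may lie in $\alpha_2$ or in $\alpha_3$ (and likewise $\cF^-(x_i)$ may or may not enter $\alpha_1$); the form of each interlocking inequality changes depending on which sub-case holds. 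The return arc from $R_3$ back to $R_1$ can also be a single consecutive step with no $R_2$ intermediary (your observation only forbids $R_1\to R_3$, not $R_3\to R_1$), and that bridging contributes inequalities your sketch does not account for. Until these cases are worked through, the contradiction is not established. There is also a smaller gap: you cite Lemma~\ref{lem:periodic_pf_corner} to conclude that the leaf endpoints of the $x_i$ avoid $\{b,b^+,b^-\}$, but that lemma concerns points \emph{fixed by a nontrivial element of $G$}; the definition of a $k$-web only requires the $x_i$ to be non-corner, not fixed, so the lemma does not apply as stated.

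For comparison, the paper's proof avoids the ideal circle and the combinatorial chaining entirely. It picks, among the consecutive pairs $(x_i,x_{i+1})$ separated by $l^-$, the index $i_0$ for which $\cF^+(x_{i_0})$ meets $l^-$ closest to the perfect-fit ideal point, and uses that single extremal leaf as a separating object confining $l^+$ (hence $R_3$, in your labeling) away from every other web point at once. Extremality handles arbitrary $k$ and arbitrarily long runs of $R_2$-points in one stroke --- exactly the complexity your chaining would have to manage configuration by configuration.
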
 

\begin{proof} 
Suppose two points of the $k$-web lie in different regions, say without loss of generality these are separated by $l^-$.  Choose $i$ such that $x_i$ and $x_{i+1}$ are on opposite sides of $l^-$, then  $\cF^+(x_i) \cap l^- \neq \emptyset$.  Among all such points $x_i$, let $x_{i_0}$ be the one such that $\cF^+(x_{i_0})$ intersects $l^-$ at a point closest to the perfect fit.  Then any leaf $l^+$ making a perfect fit with $l^-$ is separated from all points $x_j$, $j \neq i_0$ by  $\cF^+(x_{i_0+1})$, so the points in the web lie in exactly two regions formed by the perfect fit. 
\end{proof} 

 \begin{figure}[h]
	\labellist 
	\small\hair 2pt
	\pinlabel $l^-$ at 155 100 
	\pinlabel $l^+$ at 160 125
	\pinlabel $x_{i_0}$ at 135 50
	\endlabellist
	\centerline{ \mbox{
			\includegraphics[width=6cm]{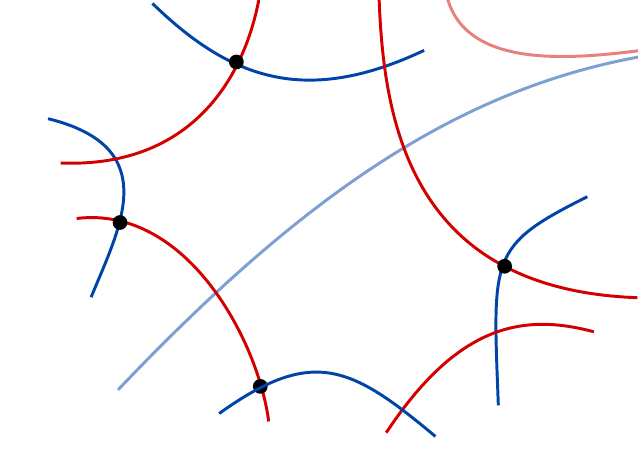} }}
	\caption{A $k$-web cannot cross both leaves of a perfect fit}
	\label{fig:kweb}
\end{figure}

Now we state and prove the general version of Proposition \ref{prop:same_lozenge_special}; which gives the special case above taking $k=l=2$, choosing $p_0$ and $q_0^{M}$ to be points in the lozenge, and taking $p_1 = p_{k-1} = p$, and $q_M = q_1^M = q_{l-1}^{M}$.

\begin{proposition}[Characterization of points in same lozenge] \label{prop:same_lozenge_II}
Suppose $x_1$ and $x_2$ lie in the interior of $g$-lozenges $L_1$, $L_2$, and assume $g^{n}(x_1) \TL g^{m}(x_2)$ for all $m, n$.
Then, $L_1= L_2$ if and only if the following condition holds: 
\\
There exists $k, l \in \bN$ and a $k-web$ $\{p_0, \ldots, p_{k-1} \}$ such that
\begin{itemize}
\item $\cF^-(p_1) \cap \cF^+(x_i) \neq \emptyset$ and $\cF^+(p_{k-1}) \cap \cF^-(x_i) \neq \emptyset$, for $i = 1, 2$ 
\item $p_j$ is unlinked with $x_i$ for all $1< j < k-1$ and $i=1,2$. 
\item For each $M \in \mathbb{N}$ there exists an $l$-web $\{q_0^{M}, \ldots, q_{l-1}^{M} \}$ where $p_i$ is unlinked with $q_j^{M}$ for all $i, j\neq 0$,
and $\cF^-(q_1^{M}) \cap \cF^+(g^{\pm m}(x_i)) \neq \emptyset$ and $\cF^+(q_{l-1}^{N}) \cap \cF^-(g^{\pm m}(x_i)) \neq \emptyset$, for all $m< M$ and $i \in \{1, 2\}$ and 
\item $q_j^{M}$ is unlinked with $x_i$ for all $1< j < l-1$ and $i=1,2$.  
\end{itemize} 
\end{proposition}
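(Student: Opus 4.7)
The plan is to establish the equivalence through explicit web constructions in the forward direction, and a limiting argument against the $q^M$-family in the reverse direction.

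\emph{Forward direction.} Suppose $L_1 = L_2 =: L$, with corners $c_0, c_\infty$ fixed by $g$ (after passing to a power so that each half-leaf through each corner is fixed). Let $k$ and $l$ be the prong numbers at $c_0$ and $c_\infty$ (with $k = 2$ at a regular corner). To build the $k$-web, I would place $p_0 \in L$ very close to $c_0$ and place $p_1, \ldots, p_{k-1}$, one each, in the ``alternating'' quadrants around $c_0$ (so the $p_j$ occupy every other quadrant at $c_0$, $p_0$ being in the quadrant of $L$). Cyclic linking of consecutive $p_j, p_{j+1}$ follows from proximity to the $k$-prong. The quadrants containing $p_1$ and $p_{k-1}$ neighbor $L$, so $\cF^-(p_1)$ crosses into $L$ near $c_0$, spans $L$ as a leaf of the trivial foliation inside $L$, and meets $\cF^+(x_i)$ for $i=1,2$; symmetrically for $\cF^+(p_{k-1})$. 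The middle $p_j$ lie in quadrants rotationally separated from $L$, so, when taken close enough to $c_0$, their leaves avoid $x_1, x_2$, giving unlinking. Construct the $l$-web at $c_\infty$ analogously for each $M$; since $L$ is trivially foliated and $\{g^{\pm m}(x_i) : |m|<M,\; i=1,2\}$ is a compact subset of $L$, choosing $q_0^M$ close enough to $c_\infty$ (depending on $M$) ensures the required linkings of $q_1^M, q_{l-1}^M$ with each $g^{\pm m}(x_i)$ for $|m|<M$. The unlinking of the $p$- and $q$-webs for indices $i, j \neq 0$ holds by clustering each web near its disjoint corner.

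\emph{Reverse direction.} Suppose the conditions hold and, for contradiction, $L_1 \neq L_2$. By Lemma \ref{lem:line_or_square}, $L_1, L_2$ lie in a line or square of $g$-lozenges, sharing at most one corner, so $L_1 \cup L_2$ has either three or four distinct corners, all fixed by $g$. The approach is to extract a limiting configuration from the family of $l$-webs $\{q_j^M\}$ as $M \to \infty$. The linking $\cF^-(q_1^M) \cap \cF^+(g^{\pm m}(x_i)) \neq \emptyset$ for $|m| < M$, $i=1,2$ forces, as $m$ ranges and $g^{\pm m}(x_i)$ approaches each distinct corner of $L_1 \cup L_2$, the leaf $\cF^-(q_1^M)$ to cross $\cF^+$-leaves arbitrarily close to every such corner. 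Symmetrically for $\cF^+(q_{l-1}^M)$. The unlinking conditions between $p$- and $q$-webs force the two webs to concentrate near disjoint corners.

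The main obstacle will be turning these abstract constraints into a geometric contradiction. I would extract a subsequence on which $(q_0^{M_n})_n$ is pairwise totally linked (using the web and unlinking structure) and apply Lemma \ref{lem:TL_limit_trichotomy} to identify the limit in $P \cup \Pbound$. In the \emph{square} case, $L_1$ and $L_2$ meet only at a regular corner $c$; a careful analysis using that $\cF^+$-leaves of points in $L_1$ and $L_2$ lie in disjoint $\cF^+$-sectors at $c$ shows that no single accumulating $\cF^-$-leaf can meet $\cF^+$-leaves arbitrarily close to corners of both $L_1$ and $L_2$. In the \emph{line} case with $L_1, L', \ldots, L_2$ sharing sides, the $\cF^-$-leaves through both $L_1$ and $L_2$ do exist, but the requirement that $\cF^-(q_1^M)$ witness linkings approaching every corner of the two extreme lozenges as $M \to \infty$ forces, via Lemma \ref{lem:TL_limit_trichotomy} and the absence of infinite product regions (Proposition \ref{prop:no_product}), a limiting leaf through multiple distinct $g$-fixed corners, contradicting the uniqueness of fixed points on a leaf in Axiom \ref{Anosov_like_A1}. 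Either way, we conclude $L_1 = L_2$.
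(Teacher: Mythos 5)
The forward direction of your proposal matches the paper: both construct a $k$-web around one corner of $L$ and an $l$-web (for each $M$) around the other, using the trivial foliation inside $L$ and compactness of $\{g^{\pm m}(x_i):|m|<M\}$ to arrange the required linkings; this part is fine.

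In the reverse direction you are on track to reduce to the line-or-square dichotomy via Lemma~\ref{lem:line_or_square}, but the way you close out each case departs from the paper and the line case has a genuine gap. In the \emph{square} case the contradiction is immediate from the $p$- and $q$-webs and needs no ``careful analysis'': since $x_1,x_2$ sit in diagonal lozenges, $\cF^-(p_1)$ must cross the square, and by the same token $\cF^+(q_{l-1}^1)$ must cross the square, so these two leaves intersect, directly contradicting the unlinking condition $p_i$ unlinked with $q_j^1$ for $i,j\neq 0$. Your formulation gestures at disjoint sectors but does not produce this clean clash. In the \emph{line} case, the step where you ``extract a subsequence on which $(q_0^{M_n})_n$ is pairwise totally linked'' is unjustified: the $l$-webs are chosen independently for each $M$, and the conditions impose no linking relation whatsoever between $q_j^M$ and $q_{j'}^{M'}$ for $M\neq M'$, so there is no reason such a subsequence exists and Lemma~\ref{lem:TL_limit_trichotomy} cannot be applied. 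Even if this were repaired, the claimed contradiction --- ``a limiting leaf through multiple distinct $g$-fixed corners'' --- does not obviously follow: the limit set could be a union of non-separated leaves, each passing through a single corner, which is consistent with Axiom~\ref{Anosov_like_A1}. The paper avoids these difficulties by a direct, non-limiting argument: it first shows $\cF^-(p_1)$ cannot intersect the line (else for large $m$ it would separate two of the $g^{\pm m}(x_i)$, forcing $\cF^+(q_{l-1}^M)$ to cross it), pins down $p_1$ to a quadrant adjacent to one $L_i$ and diagonal to the other, and uses Lemma~\ref{lem:perf_web} to show that $\cF^+(p_{k-1})$ passes through $L_1$ or $L_2$. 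Since $q_{l-1}^M$ is unlinked with $p_{k-1}$, the leaf $\cF^-(q_{l-1}^M)$ cannot enter the line, so $q_{l-1}^M$ is also confined to a quadrant adjacent to one $L_i$; taking $m$ large so that $\cF^+(p_{k-1})$ separates $\cF^+(g^{\pm m}(x_1))$ from $\cF^+(x_2)$, and $M>m$, forces $\cF^-(q_{l-1}^M)$ to cross $\cF^+(p_{k-1})$, the desired contradiction. You should replace your limit-extraction plan with this concrete positional argument.
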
 

Notice that we do not ask for any conditions on $p_0$ and $q_0^M$ in the characterization above, as both of them will end up having to be inside that one lozenge $L_1= L_2$.

\begin{proof}[Proof of Proposition \ref{prop:same_lozenge_II}]
Assume $x_i \in L_i$ and $g^{n}(x_1) \TL g^{m}(x_2)$ for all $m, n$.  
Suppose first that $x_1$ and $x_2$ lie in the same $g$-lozenge $L$.  
Let $c$ be a corner of $L$ which is a $k$-prong ($k=2$ being a regular point).  Choose a $k$-web $\{p_0 \ldots p_{k-1} \}$, with $p_0 \in L$, surrounding the corner $c$.  The points $p_j$, $j \neq 0$ are as shown in Figure \ref{fig:p_q}.  
By choosing these points sufficiently close to $c$, we can ensure that $\cF^+(p_{k}) \cap \cF^-(x_i) \neq \emptyset$, for $i = 1, 2$, and by construction $p_j$ is unlinked with $x_i$ for all $1< j < k-1$ and $i=1,2$. 

Similarly, if the other corner of $L$ is a $l$-prong, then given any $M$ we can choose an $l$-web $\{q_i^{M}\}$ around the corner, with $q_0^{M} \in L$, and chosen close enough to the corner point so that $\cF^-(q_1^{M}) \cap \cF^+(g^{\pm m}(x_i)) \neq \emptyset$ and $\cF^+(q_{l-1}^{M}) \cap \cF^-(g^{\pm m}(x_i)) \neq \emptyset$, for all $m< M$ and $i \in \{1, 2\}$. Again, we will have that $q_j^{M}$ is unlinked with $x_i$ for all $1< j < l-1$ and $i=1,2$, and for $j \neq 0$, $q_j^{M}$ is unlinked with each of $p_1, \ldots p_{k-1}$. 

For the reverse direction, we now suppose the condition holds for some $k,l$. 
By Lemma \ref{lem:line_or_square}, we have that $L_1$ and $L_2$ must be part of either a square or line of lozenges.   Suppose for a first contradiction that $L_1$ and $L_2$ are diagonal lozenges in a square.  Since $\cF^-(p_1) \cap \cF^+(x_i) \neq \emptyset$ for $i=1,2$ and $x_1$ and $x_2$ are in diagonal lozenges of the square, we have that $\cF^-(p_1)$ crosses the square and must intersect every leaf of $\cF^+$ in the square.  Similarly, $\cF^+(q_{l-1}^{1})$ must intersect every leaf of $\cF^+$ in the square, in particular it intersects $\cF^-(p_1)$, contradicting the given condition. 

We conclude that $L_1$ and $L_2$ lie in a line.  Now assume for contradiction that they are distinct. Up to switching the roles of $+$ and $-$ (and simultaneously reversing the ordering of the indexing of points in the alternating chains), we may assume that there is a leaf of $\cF^-$ passing through all lozenges in the line.  For simplicity, we assume going forward that no index switching was necessary.  

We first show that $\cF^{-}(p_1)$ cannot intersect the line of lozenges.  If $\cF^{-}(p_1)$ does intersect the line, then for sufficiently large $m$, the leaf $\cF^-(p_1)$ will separate $\cF^-(g^{m}(x_1))$ from either $\cF^-(g^m(x_2))$ or $\cF^-(g^{-m}(x_2))$, as shown in Figure \ref{fig:line_lozenge}, left.  
Since $\cF^+(q_{l-1}^{M})$ is assumed to intersect both $\cF^-(g^{m}(x_1))$ and $\cF^-(g^{\pm}(x_2))$ for sufficiently large $M$, it must therefore intersect $\cF^-(p_1)$, contradicting the condition given in the statement of the Proposition.   
 
Thus, $\cF^{-}(p_1) \cap L_1 = \emptyset$ and $\cF^{-}(p_1) \cap L_2 = \emptyset$.  Since $\cF^{-}(p_1) \cap \cF^+(x_i) \neq \emptyset$ for $i=1,2$, the point $p_1$ must be in a quadrant adjacent to $L_1$ or $L_2$, and diagonal to the other {\em at a nonsingular corner}.  In particular, the lozenges $L_1$ and $L_2$ are themselves adjacent.  For concreteness, we assume that $p_1$ is adjacent to $L_1$, as in figure \ref{fig:line_lozenge}.   

 \begin{figure}[h]
	\labellist 
	\small\hair 2pt
	\pinlabel  $\cF^{-}(p_1)$ at -20 118
	\pinlabel $\cF^-(g^m(x_1))$ at -10 155
	\pinlabel $\cF^-(g^m(x_2))$ at -10 55
	\pinlabel $L_1$ at 100 80
	\pinlabel $L_2$ at 250 80
	
	\pinlabel $L_1$ at 480 90
	\pinlabel $p_1$ at 480 172
	\pinlabel $L_2$ at 550 90
	\pinlabel $l_1$ at 433 70
	\pinlabel $q_{l-1}^M$ at 540 175
	\pinlabel $l_2$ at 599 70
	\endlabellist
	\centerline{ \mbox{
			\includegraphics[width=12cm]{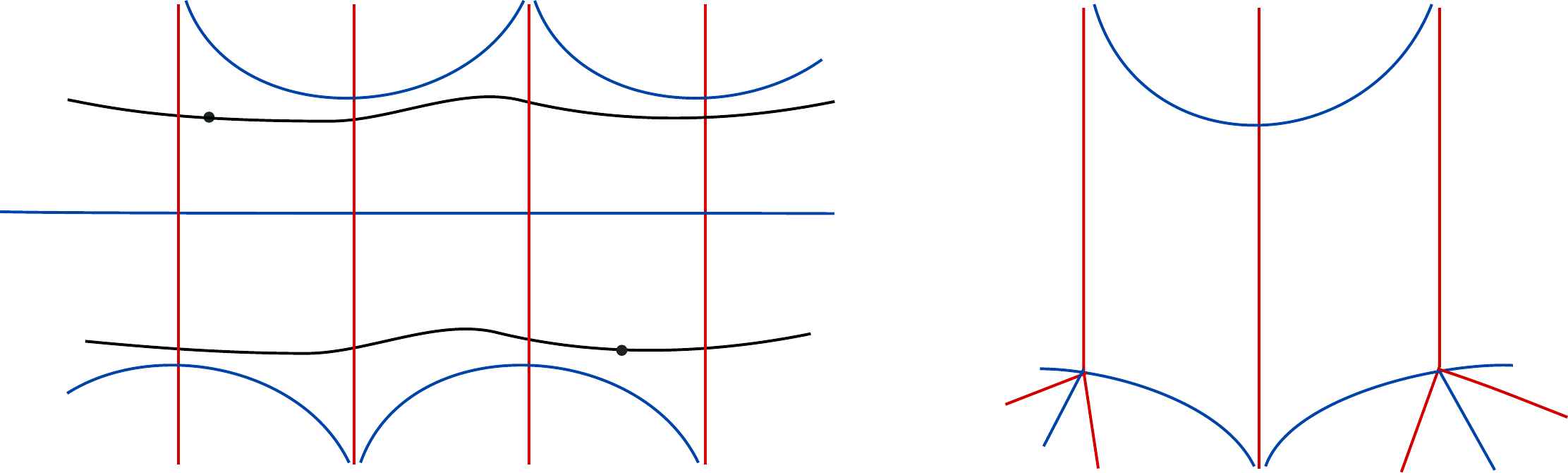} }}
	\caption{}
	\label{fig:line_lozenge}
\end{figure}

If $k=2$, then $p_1 = p_{k-1}$ and $\cF^{+}(p_1)$ must intersect $L_1$ since it intersects $\cF^-(x_1)$.  We next argue that, even for $k>2$, we always have that $\cF^{+}(p_{k-1})$ intersects either $L_1$ or $L_2$.  

Let $l_1$ and $l_2$, respectively, denote the sides of $L_1$ and $L_2$ in $\cF^+$ that are not common to both of them. 
If $k>2$, then Lemma \ref{lem:perf_web} implies that all points $p_0, \ldots p_{k-1}$ lie in the region bounded between $l_1$ and $l_2$.  
For $1<s<k-1$, we have by assumption that $p_s$ is unlinked with $x_1$, so $\cF^+(p_s)$ cannot intersect $L_1$ or $L_2$.  Thus, we in fact have that all $p_s, s \neq 0$, lie in the region above $L_1$ and $L_2$, i.e. separated from $L_1$ and $L_2$ by their shared $\cF^{-}$ boundary leaf.  This applies in particular to $p_{k-1}$.  Since  $\cF^{+}(p_{k-1})$ intersects $\cF^{-}(x_1)$, we conclude that $\cF^{+}(p_{k-1})$ passes through either $L_1$ or $L_2$.  

Since $q_{l-1}^{M}$ is unlinked with $p_{k-1}$ and $\cF^{+}(p_{k-1})$ intersects $L_1$ or $L_2$, it follows that $\cF^-(q_{l-1}^{M})$ cannot intersect the line of lozenges.  Thus, just as we argued in the case of $p_1$, we have that $q_{l-1}^{M}$ is adjacent to one $L_i$ and diagonal to the other at a nonsingular corner, i.e. as labeled in Figure \ref{fig:line_lozenge} right.  

Now we can conclude the argument by taking $M$ large.  As $m \to \infty$, the points $g^{\pm m}(x_i)$ approach the corners of $L_i$ and $\cF^+(g^{\pm m}(x_i))$ approach the sides of $L_i$
Choose $m$ large enough so that one of $\cF^+(g^{\pm m}(x_1))$ is separated from $\cF^+(x_2)$ by $\cF^+(p_{k-1})$, and fix $M>m$.  
By hypothesis, $\cF^-(q_{l-1}^{M})$ intersects both $\cF^+(g^{\pm m}(x_1))$ and $\cF^+(x_2)$, so must intersect $\cF^+(p_{k-1})$, a contradiction.  This concludes the proof. 
\end{proof}


\subsection{Proof of spectral rigidity in non-skew case.} \label{sec:nonskewed_proof}
Using our toolkit, we can now proceed with the proof of Theorem \ref{thm_main_general}.   
The definition of sign choice for (orbits of) trees of scalloped regions will appear at a natural point in the proof, see Definition \ref{def:same_sign_tree}.  

Assume that $P_1$ and $P_2$ are not skewed, thus the sets $\fixnc(\rho_1)$ and $\fixnc(\rho_2)$ are non-empty by Theorem \ref{thm_trivial_skewed_or_nowheredense}, and are equal by Proposition \ref{prop_non-corners_are_equal}. We start by defining a map $H\colon P_1 \to P_2$ on the non-corner fixed points, equivariant with respect to $\rho_1$ and $\rho_2$.  
We will extend this definition step-by-step to an orbit equivalence, only invoking the hypothesis on signs of scalloped regions in the last steps.

To start, if a point $x\in P_1$ is the (unique) fixed point of some element $g\in \fixnc(\rho_1)$,  define $H(x)$ to be the unique (by Corollary \ref{prop_non-corners_are_equal}) fixed point of $\rho_2(g)$. By Theorem \ref{thm_trivial_skewed_or_nowheredense} and Axiom \ref{Anosov_like_dense_fixed_points}, this map $H$ is a bijection from a dense subset of $P_1$ to a dense subset of $P_2$.  Our main goal is to show that $H$ extends continuously to a homeomorphism $P_1 \to P_2$.  (We will then easily verify that $H$ conjugates the two actions.)   Specifically, we wish to show that if $x_n \to x \in P_1$, then $H(x_n)$ converges to a unique point $y$ in $P_2$.   This requires several sub-steps, including an intermediate extension of the domain of definition of $H$ to include all points on the leaves of non-corner fixed points. We phrase all of these sub-steps in terms of $H$, but, by the symmetry of their constructions, they apply equally well to $H^{-1}$ and we will use these results for both directions.

\begin{lemma}[$H$ preserves total linkedness] \label{lem_linkedislinked}
Suppose that $a$ and $b$ are non-corner fixed points in $P_1$.  Then $H(a)$ and $H(b)$ are totally linked if and only if $a$ and $b$ are totally linked.  
\end{lemma}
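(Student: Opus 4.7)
The plan is to reduce the lemma to the spectral characterization of totally linked points developed in Section \ref{sec:linking}, specifically Observation \ref{obs_easy_tot_linked} and Proposition \ref{prop_charac_tot_linked}. Let $\alpha, \beta \in G$ be elements whose non-corner fixed points in $P_1$ are $a$ and $b$ respectively. By Proposition \ref{proposition:cofixed_lozenge}, any element fixing a non-corner point must have that point as its \emph{unique} fixed point (otherwise the non-corner would be a corner of a chain of lozenges). Thus $a$ is the unique fixed point of $\alpha$ in $P_1$, and similarly for $b$. By Proposition \ref{prop_non-corners_are_equal}, $\alpha, \beta \in \fixnc(\rho_2)$, and the same uniqueness argument shows that $H(a), H(b)$ are the unique non-corner fixed points of $\rho_2(\alpha), \rho_2(\beta)$ in $P_2$.

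For the forward direction, suppose $a \TL b$ in $P_1$. Replacing $\alpha$ and $\beta$ by appropriate powers (which preserves both their fixed points and the total-linkedness of those fixed points), we may assume that $a$ and $b$ are \emph{positive} fixed points of $\alpha$ and $\beta$. Observation \ref{obs_easy_tot_linked} then yields that $\rho_1(\beta^n\alpha^m)$ has a fixed point for all positive $m, n$. Since $\fix(\rho_1) = \fix(\rho_2)$ by hypothesis, $\rho_2(\beta^n\alpha^m)$ also has a fixed point for all such $m, n$. Because $H(a)$ is a non-corner fixed point of $\rho_2(\alpha)$, this element fixes no corner in $P_2$, so Proposition \ref{prop_charac_tot_linked} applies and concludes that $\rho_2(\alpha)$ and $\rho_2(\beta)$ have totally linked fixed points in $P_2$. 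By the uniqueness established in the previous paragraph, these must be $H(a)$ and $H(b)$.

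The converse implication is immediate by applying the identical argument to $H^{-1}\colon P_2 \to P_1$, which is well-defined by Proposition \ref{prop_non-corners_are_equal} and satisfies all the hypotheses by symmetry in $\rho_1, \rho_2$.

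No significant obstacle is anticipated: the lemma is essentially a direct translation of the dictionary between fixed-point data and linking geometry built up in Section \ref{sec:linking}. The one point requiring mild care is the passage to powers to arrange positive fixed points (so that Observation \ref{obs_easy_tot_linked} is available), which is handled routinely and without affecting the map $H$.
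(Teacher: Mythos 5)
Your proof is correct and follows essentially the same route as the paper's: replace $\alpha,\beta$ by elements (or powers) with positive fixed points, apply Observation \ref{obs_easy_tot_linked} in $P_1$, transfer to $P_2$ via $\fix(\rho_1)=\fix(\rho_2)$, and then apply Proposition \ref{prop_charac_tot_linked} together with uniqueness of the non-corner fixed point; the converse by symmetry through $H^{-1}$. You are slightly more explicit than the paper in spelling out the uniqueness of the fixed point (via Proposition \ref{proposition:cofixed_lozenge}) and in flagging the passage to powers, but the skeleton is identical.
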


\begin{proof}
Suppose $a$ and $b$ are totally linked non-corner fixed points and let $\alpha$ and $\beta$ be such that $a$ and $b$ are positive fixed points for $\alpha, \beta$ respectively.   Then $\rho_1(\alpha^n \beta^n)$ has a positive fixed point for all $n$ sufficiently large by Observation  \ref{obs_easy_tot_linked}.  Thus $\rho_2(\alpha^n \beta^n)$ has a fixed point also, for all $n$ sufficiently large.  
Proposition  \ref{prop_charac_tot_linked} then implies that $\rho_2(\alpha)$ and $\rho_2(\beta)$ have fixed points that are totally linked, which implies that $H(a)$ and $H(b)$ are totally linked.  
The converse direction holds by symmetry.  
\end{proof}

\begin{lemma}[Convergence to leaves of non-corner fixed points] \label{lem:leaves_of_non-corners}
Suppose $y$ is a positive, non-corner fixed point for $\rho_1(g)$, and $H(y)$ is a positive fixed point for $\rho_2(g)$.  
Let $x\in \cF_1^{-}(y)$ and suppose $(x_n)$ is a sequence of non-corner fixed points in $P_1$ converging to $x$. 
Then all accumulation points of $H(x_n)$ in $P_2 \cup \partial P_2$ lie in a bounded segment of $\cF^{-}_2(H(y))$. 

Consequently, if $x\in \cF_1^{-}(y) \cap \cF_1^{+}(z)$ for some non-corner, nonsingular fixed point $z$, then 
$\cF^{-}_2(H(y)) \cap \cF_2^{+}(H(z)) \neq \emptyset$ and  
 $H(x_n)$ converges to the (necessarily unique) point in their intersection.
\end{lemma}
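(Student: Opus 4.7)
The plan is to use Corollary~\ref{cor:detect_non-corner_leaves}, which reformulates ``the limit of a pairwise totally linked sequence lies on the $\cF^-$-leaf of a non-corner positive fixed point $w$ of $g$'' as the purely combinatorial condition~\ref{item_star} about linking between points of the form $g^{-k}(z_i)$ and the $z_j$. Since $H$ is $G$-equivariant on its domain---uniqueness of the non-corner fixed point of a conjugate $hgh^{-1}$ forces $H(h\cdot p)=h\cdot H(p)$---and preserves total linking by Lemma~\ref{lem_linkedislinked}, condition~\ref{item_star} transfers verbatim between $P_1$ and $P_2$, and Corollary~\ref{cor:detect_non-corner_leaves} can be invoked in both directions.

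In detail, fix any accumulation point $z'$ of $(H(x_n))$ in $P_2\cup\partial P_2$ and pass to a subsequence $H(x_{n_k})\to z'$. I would then extract a further subsequence of $(x_{n_k})$ that is pairwise totally linked and each totally linked with $y$: this is achievable because $x_n\to x\in\cF_1^-(y)$, so eventually the $x_n$ cluster near $x$, and by restricting to a subsequence lying in one half-plane bounded by $\cF_1^-(y)$ (or, if $x=y$, taking points close to $y$), local transversality produces the required linking. Applying Corollary~\ref{cor:detect_non-corner_leaves} in $P_1$ with $w=y$, condition~\ref{item_star} holds for $g$ and $(x_{n_k})$. Using equivariance and linking-preservation of $H$, condition~\ref{item_star} also holds for $g$ and $(H(x_{n_k}))$ in $P_2$; applying Corollary~\ref{cor:detect_non-corner_leaves} in reverse with $w=H(y)$ forces $z'\in\cF_2^-(H(y))$, and in particular $z'\notin\partial P_2$. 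A diagonal extraction then rules out accumulation at the ideal endpoints of $\cF_2^-(H(y))$: otherwise a sequence of accumulation points would escape to $\partial P_2$, and a diagonal subsequence of $(H(x_n))$ would yield an accumulation point in $\partial P_2$, contradicting what we just proved. Hence the accumulation set sits in a bounded segment of $\cF_2^-(H(y))$.

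For the consequence, assume $x\in\cF_1^-(y)\cap\cF_1^+(z)$ with $z$ a non-corner nonsingular fixed point of some $\rho_1(h)$; up to replacing $h$ by $h^{-1}$, $z$ is positive for $\rho_1(h)$, so $z$ is negative for $h^{-1}$ and $\cF_1^+(z)$ is the contracted direction of $h^{-1}$. Remark~\ref{rem:pos_neg_star} gives condition~\ref{item_star} for $h^{-1}$ and a suitable subsequence of $(x_n)$ in $P_1$; transferring via $H$ and applying the corresponding case of Corollary~\ref{cor:detect_non-corner_leaves} or Remark~\ref{rem:pos_neg_star} at $H(z)$ places every accumulation point of $(H(x_n))$ on one of the two leaves through $H(z)$. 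If the relevant leaf were $\cF_2^-(H(z))$, combining with the first part forces $\cF_2^-(H(y))=\cF_2^-(H(z))$ (since distinct leaves of the same foliation are disjoint and we still have an accumulation point); but this common leaf would then be fixed by both $\rho_2(g)$ and $\rho_2(h)$, so by Lemma~\ref{lem_same_fixed_leaf_same_fixed_point} $g$ and $h$ would share a common fixed point on it, forcing $H(y)=H(z)$ and hence (by uniqueness of the non-corner fixed point) $y=z$, reducing to the trivial case $x=y=z$. Otherwise the accumulation set lies in $\cF_2^-(H(y))\cap\cF_2^+(H(z))$; transversality of the two foliations makes this intersection at most a single point, and existence of an accumulation point of $(H(x_n))$ forces it to be exactly one point to which the sequence converges.

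The main obstacle is the sign-compatibility step in the consequence, namely establishing that $H(z)$ has the ``right'' sign for $\rho_2(h)$ so that the accumulation lands in $\cF_2^+(H(z))$ rather than $\cF_2^-(H(z))$; fortunately this is handled by invoking Lemma~\ref{lem_same_fixed_leaf_same_fixed_point} together with injectivity of $H$ to eliminate the bad case. The rest of the argument is a fairly clean transfer of condition~\ref{item_star} across the equivariant, linking-preserving map $H$, using Corollary~\ref{cor:detect_non-corner_leaves} and its sign-analog Remark~\ref{rem:pos_neg_star} in both directions.
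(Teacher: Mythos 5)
Your proof is correct and follows essentially the same route as the paper's: extract a pairwise totally linked subsequence near $x$, verify condition~\ref{item_star} in $P_1$ via Observation~\ref{obs:powers_g}, transfer it to $P_2$ using that $H$ preserves total linking (Lemma~\ref{lem_linkedislinked}) and is equivariant on non-corner fixed points, then invoke Corollary~\ref{cor:detect_non-corner_leaves} in $P_2$, with Remark~\ref{rem:pos_neg_star} and a sign-compatibility check handling the second statement. You spell out a few steps the paper treats more tersely -- the explicit equivariance justification, the diagonal extraction argument excluding ideal endpoints of $\cF_2^-(H(y))$, and the appeal to Lemma~\ref{lem_same_fixed_leaf_same_fixed_point} to dispatch the degenerate case $\cF_2^-(H(y))=\cF_2^-(H(z))$ -- but these are elaborations of the paper's argument rather than a different strategy.
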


\begin{rem}\label{rem:leaves_of_non-corners}
 Note that, in the proof of this property, we do not actually require the sequence $(x_n)$ to be non-corner fixed points. All we need is that it is a sequence of points where $H$ is well-defined. Hence, once we extend the definition of $H$, we may, and will, use this lemma with sequences $(x_n)$ in the new domain of definition of $H$.
\end{rem}

\begin{proof} 
Let $g, x_n, x$ and $y$ be as in the statement of the lemma. 

We show that every subsequence of $(H(x_n))$ admits a further subsequence converging to a point of $\cF_2^{-}(H(y))$, which immediately implies the first statement.  
Fix a subsequence $(x_{n_k})$.  Since $x_{n_k}$ converges to $x$ and $x \in \cF_1^{-}(y)$, any point sufficiently close to $x$ will be totally linked with $y$.  Thus, after dropping the first terms of the subsequence, we may assume all points $x_{n_k}$ are pairwise totally linked and are totally linked with $y$.   

Thus, by Lemma \ref{lem_linkedislinked}, the points $H(x_{n_k})$ are pairwise totally linked.  So, we may pass to a further subsequence along the $H(x_{n_k})$ that satisfies the conclusion of Lemma \ref{lem:TL_limit_trichotomy}.  For simplicity, reindex this sub-subsequence simply by $H(x_n)$, $n \in \mathbb{N}$.  
Note also that $H(y)$ is totally linked with $H(x_n)$ for all $n$, and $H(y)$ is a non-corner point fixed by $\rho_2(g)$. 

By Observation \ref{obs:powers_g}, condition \ref{item_star} is satisfied by $\rho_1(g)$ for the fixed point $y$ and sequence of points $(x_n)$.  By Lemma \ref{lem_linkedislinked}, we therefore have that \ref{item_star} is satisfied by $\rho_2(g)$, for the fixed point $H(y)$ and sequence of points $H(x_n)$.   
Thus, by Corollary \ref{cor:detect_non-corner_leaves}, $H(x_n)$ converges to some point in $\cF_2^{-}(H(y))$, which is what we needed to show. 

To prove the second statement, 
let $h \in G$ be such that $z$ is a negative fixed point for $h$.  
We may apply the same argument as above (passing again to a further subsequence) and using Remark \ref{rem:pos_neg_star} to show that the limit point of $H(x_n)$ lies in $\cF_2^{\pm}(H(z))$, as desired, where the sign $\pm$ depends on whether $H(z)$ was a positive or negative fixed point for $\rho_2(h)$.  Since the limit point was already assumed in $\cF_2^{-}(H(y))$, we conclude that $H(z)$ was a negative fixed point, and $\cF_2^{-}(H(y)) \cap \cF_2^{+}(H(z)) \neq \emptyset$. 
\end{proof}

As a consequence, we note the following global preservation (or global reversing) of positive and negative foliations by our map $H$.  

\begin{corollary} \label{cor:pos_is_pos}
Fix $g \in G$ such that $\rho_1(g)$ has a positive, non-corner, nonsingular fixed point for $y$, and relabel $\cF_2^\pm$ as $\cF_2^\mp$ if needed so that $H(y)$ is a positive fixed point for $\rho_2(g)$.  

With this labeling, a nonsingular point $z \in P_1$ is a positive, non-corner fixed point for some $\rho_1(h)$ if and only if $H(z)$ is a positive, non-corner fixed point for $\rho_2(h)$.
\end{corollary}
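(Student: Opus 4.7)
The plan is to reduce the statement to a propagation step along chains of totally linked non-corner nonsingular positive fixed points. The reverse direction of the biconditional will follow by symmetric application of the forward direction to $H^{-1}$: the labeling convention that makes $H(y)$ positive for $\rho_2(g)$ is the same labeling convention on $P_1$ (under $H^{-1}$) that makes $y$ positive for $\rho_1(g)$, which is our hypothesis. Thus I focus on proving the forward direction.

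The heart of the proof is the following propagation step: if $z_1$ is a positive non-corner nonsingular fixed point of $\rho_1(h_1)$ with $H(z_1)$ positive for $\rho_2(h_1)$, and $z_2 \neq z_1$ is a positive non-corner nonsingular fixed point of $\rho_1(h_2)$ that is totally linked with $z_1$, then $H(z_2)$ is positive for $\rho_2(h_2)$. By Proposition \ref{prop_non-corners_are_equal}, $H(z_2)$ is the unique non-corner fixed point of $\rho_2(h_2)$, so after passing to a sufficient power of $h_2$ (which does not affect the argument) $H(z_2)$ is either positive or negative for $\rho_2(h_2)$. Suppose for contradiction that $H(z_2)$ is negative for $\rho_2(h_2)$, equivalently positive for $\rho_2(h_2^{-1})$. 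By Lemma \ref{lem_linkedislinked}, $H(z_1)$ and $H(z_2)$ remain totally linked in $P_2$, so Observation \ref{obs_easy_tot_linked} produces a (positive) fixed point of $\rho_2(h_1^n h_2^{-m})$ for every $n, m > 0$. Since $\fix(\rho_1) = \fix(\rho_2)$, the same elements $\rho_1(h_1^n h_2^{-m})$ have fixed points in $P_1$. On the other hand, $z_1$ is the unique fixed point of $\rho_1(h_1)$ in $P_1$, so by Proposition \ref{prop:boundary_action_general}(1) the element $h_1$ fixes no corner; similarly $z_2$ is the unique fixed point of $\rho_1(h_2)$, and being positive it has no negative counterpart. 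Applying Proposition \ref{prop_charac_tot_linked_pos} with $\alpha = h_1$ and $\beta = h_2$ then forces $\rho_1(h_1^m h_2^{-n})$ to have no fixed point in $P_1$ for all sufficiently large $m, n$, the desired contradiction.

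To conclude, given any target positive non-corner nonsingular fixed point $z$ of some $\rho_1(h)$, I construct a finite chain $y = z_0, z_1, \dots, z_k = z$ of positive non-corner nonsingular fixed points whose consecutive pairs are totally linked; iterating the propagation step along this chain yields the result. Such a chain exists because non-corner, nonsingular fixed points are dense in $P_1$ (Axiom \ref{Anosov_like_dense_fixed_points} combined with Lemma \ref{lem_nowhere_dense} and the fact that singular points form a discrete set), every such point is positive for a suitable power of its fixing element (inverting if necessary), and any two nonsingular points within a small trivially foliated neighborhood are automatically totally linked. Subdividing a path from $y$ to $z$ into sufficiently small segments and choosing a positive non-corner nonsingular fixed point in each segment produces the desired chain.

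The main obstacle is the propagation step, specifically ruling out a single local sign flip by $H$. The key insight is that the supposed local flip is translated into a global algebraic contradiction using the contrast between Observation \ref{obs_easy_tot_linked} (which \emph{forces} fixed points of algebraic products $h_1^n h_2^{-m}$ in $P_2$ under the sign-flip hypothesis) and Proposition \ref{prop_charac_tot_linked_pos} (which \emph{forbids} them in $P_1$ given that the two elements have only non-corner positive fixed points).
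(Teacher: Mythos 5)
Your proof is correct, and the propagation step is genuinely different from the paper's. The paper proves the corollary as an almost immediate consequence of Lemma~\ref{lem:leaves_of_non-corners}: if $z$ is a non-corner nonsingular fixed point of $\rho_1(h)$ whose leaves meet $\cF^\pm_1(y)$, the second part of that lemma (which tracks the limit of $H(x_n)$ for $x_n \to x \in \cF^-_1(y)\cap\cF^+_1(z)$ via condition~\ref{item_star}) directly yields $\cF^-_2(H(y))\cap\cF^+_2(H(z))\neq\emptyset$, hence the sign is preserved; one then chains along alternating $\cF^\pm$ leaf segments through non-corner fixed points. You instead re-derive sign preservation from the spectral data alone: a putative sign flip $H(z_2)$ negative for $\rho_2(h_2)$, paired with Lemma~\ref{lem_linkedislinked} and Observation~\ref{obs_easy_tot_linked}, forces $\rho_2(h_1^nh_2^{-m})$ (hence $\rho_1(h_1^nh_2^{-m})$) to have a fixed point for all $n,m>0$, while Proposition~\ref{prop_charac_tot_linked_pos} applied in $P_1$ --- using that $z_1$, $z_2$ are non-corner so $\alpha$ fixes no corner and $\beta$ has no negative fixed point --- forbids exactly this for $m,n$ large. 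This is a clean, self-contained argument that bypasses the convergence machinery of Section~4.2. The trade-off is that your step needs consecutive chain points to be \emph{totally} linked (Observation~\ref{obs_easy_tot_linked} and Proposition~\ref{prop_charac_tot_linked_pos} both require it), whereas the paper's Lemma~\ref{lem:leaves_of_non-corners} already handles the partially-linked case, making its chaining along leaf segments a bit more direct; your chain of nearby points in small trivially foliated boxes resolves this, at the cost of an extra (routine) density/subdivision argument. One small point worth tightening: when you ``pass to a sufficient power of $h_2$'' to ensure $H(z_2)$ is positive or negative, you should note that the conclusion for $h_2$ itself still follows once $\rho_2(h_2)$ is known to preserve the half-leaves at $H(z_2)$; this subtlety is implicit in the paper's proof as well (Lemma~\ref{lem:leaves_of_non-corners} tacitly assumes it), so it does not distinguish your argument from theirs.
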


\begin{proof}
If $z$ is any positive, non-corner, nonsingular fixed point of $\rho_1(h)$ in $P_1$ that is either totally or partially linked with $y$, then Lemma \ref{lem:leaves_of_non-corners} directly implies that $H(z)$ is a positive fixed point of $\rho_2(h)$. Since any point in the domain of definition of $H$ can be joined to $y$ by a finite union of segments that are alternatively on the $\cF^+$ and $\cF^-$ leaves of non corner fixed points, the conclusion of the corollary follows by iteration.
\end{proof} 

Thus, by relabeling $\cF^\pm_2$ by $\cF^\mp_2$ as in Corollary \ref{cor:pos_is_pos} if needed, we adopt the following convention. 

\putinbox{
\begin{convention*}
Using Corollary \ref{cor:pos_is_pos}, we will now assume that {\em $H$ sends positive non-corner fixed points to positive non-corner fixed points.}
\end{convention*}}

Using this, we may now extend the domain of definition of $H$.  
We will now abuse terminology a little and call a leaf a \emph{non-corner leaf} if it contains a non-corner fixed point.

\begin{corollary}[First extension of $H$] \label{cor:define_non-corner_leaves}
Let $Q_i \subset P_i$ be the set of points lying on the intersection of two non-corner leaves.  
Extend $H$ to be defined on $Q_1$ as follows: 
given $x = \cF_1^{+}(y) \cap \cF_1^{-}(z) \in Q_1$, let  $H(x) = \cF_2^{+}(H(y)) \cap \cF_2^{-}(H(z))$. 

This is a well defined bijection $Q_1 \to Q_2$, is $G$-equivariant, sends leaves of $\cF_1^+$ (resp.~$\cF_1^-$), to leaves of $\cF_2^+$ (resp.~$\cF_2^-$), and preserves totally linked, partially linked, and unlinked pairs.
\end{corollary}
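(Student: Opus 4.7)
The plan is to use Lemma \ref{lem:leaves_of_non-corners} twice: once with constant sequences to establish leaf-preservation on non-corner fixed points (hence well-definedness), and once with a genuinely convergent sequence to show that the defining intersection is nonempty. I would then check the remaining properties with routine arguments.

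First, I would show that if $y, y'$ are non-corner fixed points with $y' \in \cF_1^-(y)$, then $H(y') \in \cF_2^-(H(y))$, and the symmetric statement for $\cF^+$. For the $\cF^-$ version, choose $g \in G$ such that $y$ is a positive non-corner fixed point of $\rho_1(g)$; by our standing convention $H(y)$ is then a positive fixed point of $\rho_2(g)$. Apply Lemma \ref{lem:leaves_of_non-corners} with the constant sequence $x_n \equiv y'$: the hypothesis $y' \in \cF_1^-(y)$ is met, and the accumulation point of $H(x_n)$, namely $H(y')$, must lie on $\cF_2^-(H(y))$. The symmetric assertion for $\cF^+$ follows by applying the lemma with $g$ replaced by $g^{-1}$, exchanging the roles of $+$ and $-$ (the underlying machinery of Remark \ref{rem:pos_neg_star} and Corollary \ref{cor:detect_non-corner_leaves} is symmetric between positive and negative fixed points). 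Well-definedness of $H$ on $Q_1$ is then immediate: if $x = \cF_1^+(y) \cap \cF_1^-(z) = \cF_1^+(y') \cap \cF_1^-(z')$, then $\cF_2^+(H(y)) = \cF_2^+(H(y'))$ and $\cF_2^-(H(z)) = \cF_2^-(H(z'))$, and since two transverse leaves in a bifoliated plane meet in at most one point, the proposed value $H(x)$ is single-valued.

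Next, to show that $\cF_2^+(H(y)) \cap \cF_2^-(H(z))$ is actually nonempty, I would use an approximation argument. By Axiom \ref{Anosov_like_dense_fixed_points} combined with Lemma \ref{lem_nowhere_dense} (applicable in the nonskewed case), non-corner fixed points are dense in $P_1$, so choose a sequence $(x_n)$ of such points converging to $x$. Since $x \in \cF_1^+(y)$, the $\cF^+$-version of Lemma \ref{lem:leaves_of_non-corners} applied to $y$ forces every accumulation point of $H(x_n)$ in $P_2 \cup \partial P_2$ to lie in a bounded segment of $\cF_2^+(H(y))$; similarly, $x \in \cF_1^-(z)$ forces accumulations into a bounded segment of $\cF_2^-(H(z))$. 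The bounded-segment conclusion is crucial here: it rules out escape to $\partial P_2$, guaranteeing at least one accumulation point exists in $P_2$. Transversality then forces this intersection to be a single point, which is the limit of $H(x_n)$.

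The remaining properties follow with minimal work. Bijectivity comes from running the symmetric construction to obtain $H^{-1} \colon Q_2 \to Q_1$, with both maps characterized by the same limit construction. Equivariance is inherited from equivariance on non-corner fixed points (conjugation sends the fixed point of $\rho_i(\alpha)$ to the fixed point of $\rho_i(\gamma\alpha\gamma^{-1})$) and transfers to $Q_1$ via the definition. Leaf-preservation on $Q_1$ is immediate from the first step, since the non-corner fixed points defining $H(x)$ and $H(x')$ lie on the same $\cF_2^\pm$-leaf whenever $x$ and $x'$ lie on a common $\cF_1^\pm$-leaf. Finally, preservation of linking reduces to the observation that a point in $\cF_1^+(x) \cap \cF_1^-(x')$ automatically lies in $Q_1$ (both leaves being non-corner leaves), so its image under $H$ lies in $\cF_2^+(H(x)) \cap \cF_2^-(H(x'))$; symmetrically for $H^{-1}$. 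Thus the presence or absence of each such intersection transfers, encoding the three types of linking. The main subtle point in the whole argument is the second step — ensuring that accumulations of $H(x_n)$ do not escape to $\partial P_2$ — which is precisely what the ``bounded segment'' clause of Lemma \ref{lem:leaves_of_non-corners} provides; no separate treatment of singular $y$ or $z$ is needed, since that lemma imposes no nonsingularity hypothesis on the anchor point.
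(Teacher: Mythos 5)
Your proof is correct and takes essentially the same approach as the paper: it reduces well-definedness, leaf-preservation, and nonemptiness of the defining intersection to Lemma~\ref{lem:leaves_of_non-corners} together with the Convention, and handles equivariance and linking by the same direct computations, while the paper's own proof of this corollary is merely a terse citation of Lemma~\ref{lem:leaves_of_non-corners}, Corollary~\ref{cor:pos_is_pos}, and the Convention. Your explicit double application of the first (bounded-segment) conclusion of Lemma~\ref{lem:leaves_of_non-corners}, once to $y$ via the $\cF^+$-version and once to $z$, is a useful unpacking of that citation, with the mild bonus that it bypasses the nonsingularity hypothesis appearing only in the ``Consequently'' clause of that lemma, so singular anchors require no special treatment---though you should be careful to attribute the absence of a nonsingularity hypothesis specifically to the \emph{first} part of Lemma~\ref{lem:leaves_of_non-corners}, since the second part does impose one.
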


\begin{proof}
That this extension of $H$ is well defined, a bijection,  and sends leaves of $\cF_1^\pm$ to leaves of $\cF_2^\pm$ follows from Lemma \ref{lem:leaves_of_non-corners}, Corollary \ref{cor:pos_is_pos}, and our Convention. 

For equivariance, suppose $x = \cF_1^{+}(y) \cap \cF_1^{-}(z) \in Q_1$.  Then we have
$ \rho_1(g)(x) = \cF_1^{+}(\rho_1(g)(y)) \cap \cF_1^{-}(\rho_1(g)(z))$, which implies that
\[ H(\rho_1(g)(x)) = \cF_2^{+}(H(\rho_2(g)(y))) \cap \cF_2^{-}(H(\rho_2(g)(z))) = \rho_2(g)(H(x)),\]  
as desired. 

It remains to remark that $H$ preserves the linking properties.  Suppose $y_i, z_i$ are non-corner fixed points in $P_1$ for $i=1,2$.  Let $x_i =  \cF_1^{+}(y_i) \cap \cF_1^{-}(z_i)$. If $x_1$ and $x_2$ are totally linked, then $\cF_1^{+}(y_i) \cap \cF_1^{-}(z_j) \neq \emptyset$ when $i \neq j$.  Thus, $\cF_2^{+}(H(y_i)) \cap \cF_2^{-}(H(z_j)) \neq \emptyset$ also, so the images are totally linked.  
The argument is completely analogous for unlinked and partially linked pairs $x_1, x_2$.  
\end{proof}

\begin{lemma}[$H$ preserves lozenges]\label{lem_preserves_lozenges}
Let $g \in \fix(\rho_1)$.  
If $L_1$ is a $\rho_1(g)$-lozenge, then $H(L_1 \cap Q_1) = L_2 \cap Q_2$ for some $\rho_2(g)$-lozenge $L_2$ 
\end{lemma}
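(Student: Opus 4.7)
The plan is to argue in three conceptual steps. As a preliminary, since $L_1$ is a $\rho_1(g)$-lozenge, $\rho_1(g)$ has only corner fixed points (by Proposition \ref{proposition:cofixed_lozenge}); combined with $\fixnc(\rho_1)=\fixnc(\rho_2)$ (Proposition \ref{prop_non-corners_are_equal}), this forces $\rho_2(g)$ to have only corner fixed points as well. Replacing $g$ by a suitable power via Lemma \ref{lem_power_fixes_max_chain}---a reduction I will justify at the end---we may assume $\rho_i(g)$ fixes every half-leaf through each of its corners, so that Lemma \ref{lem_charac_in_lozenge} applies to both actions.

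First, for each $x \in L_1 \cap Q_1$, Lemma \ref{lem_charac_in_lozenge}(i) gives $\rho_1(g)^n(x) \TL \rho_1(g)^m(x)$ for all $n,m$; by $G$-equivariance and the linking-preservation in Corollary \ref{cor:define_non-corner_leaves}, the same holds for $H(x)$ in $P_2$. The contrapositive of Lemma \ref{lem_charac_in_lozenge}(ii) then places $H(x)$ either in a $\rho_2(g)$-lozenge or on a $\rho_2(g)$-fixed leaf. I rule out the second option: if $\cF_2^{\pm}(H(x))$ were $\rho_2(g)$-invariant, then since $H(x) \in Q_2$ this leaf contains some non-corner fixed point $w$ (of some $\rho_2(h)$), and as both $\rho_2(g)$ and $\rho_2(h)$ fix the leaf, Lemma \ref{lem_same_fixed_leaf_same_fixed_point} forces $w$ to be their common fixed point on it, contradicting that $\rho_2(g)$ has no non-corner fixed points. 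Hence $H(x)$ sits in a $\rho_2(g)$-lozenge $L_2(x)$.

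Next, I would show $L_2(x)$ is independent of $x$. Given $x, x' \in L_1 \cap Q_1$, the iterates $\rho_1(g)^n(x)$ and $\rho_1(g)^m(x')$ remain in $L_1$ and are therefore pairwise totally linked, and by equivariance so are their $H$-images. Since $x, x'$ lie in the common lozenge $L_1$, Proposition \ref{prop:same_lozenge_II} provides $k$- and $l$-webs of non-corner fixed points satisfying explicit linking constraints involving $x, x'$ and their $g$-iterates. Because $H$ bijectively exchanges non-corner fixed points (Proposition \ref{prop_non-corners_are_equal}) and preserves the total/partial/unlinked trichotomy together with leaf-intersection structure (Corollary \ref{cor:define_non-corner_leaves}), these webs transfer verbatim to $P_2$, and the converse direction of Proposition \ref{prop:same_lozenge_II} yields $L_2(x)=L_2(x')=:L_2$. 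Hence $H(L_1 \cap Q_1) \subseteq L_2 \cap Q_2$.

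Finally, applying the same argument to $H^{-1}$ starting from $L_2$ produces a $\rho_1(g)$-lozenge $L_1'$ with $H^{-1}(L_2 \cap Q_2) \subseteq L_1' \cap Q_1$; combining gives $L_1 \cap Q_1 \subseteq L_1' \cap Q_1$, and since $L_1 \cap Q_1$ contains any non-corner fixed point of $L_1$ while distinct open lozenges are disjoint, $L_1 = L_1'$, yielding the equality $H(L_1 \cap Q_1) = L_2 \cap Q_2$. To descend from the power $g^k$ back to $g$, note that $\rho_2(g)$ preserves $H(L_1 \cap Q_1) = L_2 \cap Q_2$ (since $\rho_1(g)$ preserves $L_1$) and hence $L_2$; together with the hyperbolic dynamics of $\rho_2(g^k)$ along the sides of $L_2$, this forces $\rho_2(g)$ to fix each corner of $L_2$. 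I expect the main obstacle to be the uniqueness step above, where one must carefully translate every linking constraint in Proposition \ref{prop:same_lozenge_II}---including the ``for every $M$'' existence of $l$-webs with delicate restrictions---from $P_1$ to $P_2$ using only the properties of $H$ already established.
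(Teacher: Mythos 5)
Your proof follows essentially the same route as the paper's: Lemma~\ref{lem_charac_in_lozenge} together with the linking-preservation of Corollary~\ref{cor:define_non-corner_leaves} places each $H(x)$ into a $\rho_2(g)$-lozenge, and Proposition~\ref{prop:same_lozenge_II} (which the authors designed precisely for this step) gives uniqueness of that lozenge. The only additions beyond the paper's argument are small matters of care it leaves implicit --- passing to $g^k$ so that Lemma~\ref{lem_charac_in_lozenge}'s hypothesis verifiably holds for $\rho_2(g)$ and then descending back to $g$ via the commutation with $\rho_2(g^k)$, and making the $H^{-1}$ symmetry argument explicit to upgrade the inclusion to the claimed equality --- both of which are correct and fill genuine (if minor) gaps in the written proof.
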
 

\begin{proof} 
Let $x \in L_1 \cap Q_1$.  By Lemma \ref{lem_charac_in_lozenge}, $\rho_1(g)^{n }(x) \TL \rho_1(g)^{m}(x)$ for all $n, m\in\bZ$.  By Corollary \ref{cor:define_non-corner_leaves}, $\rho_2(g)^{n }(H(x)) \TL \rho_2(g)^{m}(H(x))$ for all $n, m$. Thus by Lemma \ref{lem_charac_in_lozenge}, $H(x)$ lies inside a $\rho_2(g)$-lozenge or a $\rho_2(g)$-invariant leaf. However, $H(x)\in Q_2$ by Corollary \ref{cor:define_non-corner_leaves}, and (by definition) $Q_2$ does not contain any point on a $\rho_2(g)$-invariant leaf since $\rho_2(g)$ fixes corner points, we deduce that  $H(x)$ lies inside a $\rho_2(g)$-lozenge.

We have left to show that all points of $L_1 \cap Q_1$ are sent to the \emph{same} lozenge by $H$. For this we use Proposition \ref{prop:same_lozenge_II}, which was designed for this purpose.  
Let $x_1, x_2 \in L_1 \cap Q_1$. Then $x_1$ and $x_2$ satisfy to the condition given in Proposition \ref{prop:same_lozenge_II}.  Let $\{p_i\}$ and $\{q_j^M\}$ be the $k$-web and $l$-webs given by the condition.  Since $H$ preserves intersections and non-intersections of $\cF^+$ and $\cF^-$ leaves, it sends webs to webs, and it follows that $H(x_1)$ and $H(x_2)$ also satisfy the condition given in Proposition \ref{prop:same_lozenge_II}, using the webs $H(p_i)$ and $H(q_j^{M})$. Thus $H(x_1)$ and $H(x_2)$ are in the same lozenge.
\end{proof}

When $L$ is a $\rho_1(g)$-lozenge, we abuse notation slightly and write $H(L)$ for the unique lozenge containing $H(L\cap Q_1)$.
Since $H$ preserves the intersections of $\cF^+$ and $\cF^-$ leaves, one easily deduces that $H$ sends lines of lozenges to lines of lozenges. We next show that $H$ also preserves the linear order of a line of lozenges.

\begin{definition}[Order of lines and ends of finite lines]
Let $\cL$ be a line of lozenges. An \emph{order} of $\cL$ is an enumeration $\cL = \{L_i\}_i\in I$, with $I\subset \bZ$, where $L_i$ is adjacent to $L_{i+1}$.

 Let $\cL= \{L_0, \dots, L_n\}$ be an ordered finite line of lozenges.
 Let $c_0$ be the corner of $L_0$ that is not a corner of $L_1$ and $c_n$ the corner of $L_n$ which is not corner of $L_{n-1}$.
 Then the \emph{negative end}, $\cE_-$ is the set of points $x$ such that $x$ is totally linked with $c_0$ and such that $\cF^-(x)$ (or $\cF^+(x)$) intersects every lozenge in $\cL$.
 
 Similarly, the \emph{positive end}, $\cE_+$ is the set of points $x$ such that $x$ is totally linked with $c_n$ and such that $\cF^-(x)$ (or $\cF^+(x)$) intersects every lozenge in $\cL$.
\end{definition}

\begin{lemma}[$H$ preserves ordered lines of lozenges]\label{lem_H_preserves_ordered_lines}
 Let $g\in \fix(\rho_1)$, and $\cL= \{L_i\}_{i\in I}$ be an ordered line of $\rho_1(g)$-lozenges, with $I=\{0,\dots,n\}$ or $I=\bZ$.  (Note that any infinite line contains infinitely many nonseparated leaves in its boundary, so is contained in a scalloped region and hence forms part of an ordered line indexed by $\bZ$.)
 
 Then $H(\cL)= \{H(L_i)\}_{i\in I}$ is an ordered line of lozenges. Moreover, if $\cL$ is finite than $H$ sends its negative (resp.~positive) end to the negative (resp.~positive) end of $H(\cL)$.
\end{lemma}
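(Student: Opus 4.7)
The plan is to leverage the fact that $H$ preserves lozenges individually (Lemma~\ref{lem_preserves_lozenges}) and the full $\cF^\pm$-intersection structure (Corollary~\ref{cor:define_non-corner_leaves}), and use the structural lemma on lines and squares (Lemma~\ref{lem:line_or_square}) to recover the ordered line configuration in $P_2$. Specifically, I must show: (a) $H(L_i)$ and $H(L_{i+1})$ are adjacent in $P_2$; (b) the adjacency pattern reproduces the ordered line structure (opposite sides at each middle lozenge); and (c) for finite lines, the negative and positive ends are sent to the corresponding ends.

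For (a), I would begin by noting that $g$ preserves each $L_i$ and fixes all their corners (by Lemma~\ref{lem_power_fixes_max_chain}). Given adjacent $L_i, L_{i+1}$, I pick non-corner fixed points $x_i \in L_i \cap Q_1$ and $x_{i+1} \in L_{i+1} \cap Q_1$ lying on a common $\cF^\mp_1$ leaf that crosses the shared $\cF^\pm_1$ side of the two lozenges. A direct orbit analysis, using that $g$ preserves the shared side and acts hyperbolically at the shared corner, then gives $\rho_1(g)^n(x_i) \TL \rho_1(g)^m(x_{i+1})$ for all $n,m \in \bZ$. Since $H$ preserves total linking (Corollary~\ref{cor:define_non-corner_leaves}), the same property holds for the images $H(x_i), H(x_{i+1})$, so by Lemma~\ref{lem:line_or_square} the pair $H(L_i), H(L_{i+1})$ lies together in a line or square of $\rho_2(g)$-lozenges.

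To rule out the square case and simultaneously establish (b), I would invoke the three-lozenge configuration $L_{i-1}, L_i, L_{i+1}$. The defining property of an ordered line is that $L_{i\pm 1}$ sit on \emph{opposite} $\cF^\pm$-sides of $L_i$, a condition encoded purely in the $\cF^\pm$-intersection data that $H$ preserves. If $H(L_i)$ and $H(L_{i+1})$ were diagonal in a square, then $H(L_{i-1})$ — which itself must lie in a line-or-square configuration with $H(L_i)$ — would be forced into a position incompatible with this opposite-sides property transferred through $H$. Once adjacency is established, the foliation labels carried through $H$ force the shared sides between consecutive images to be opposite sides of the middle image, thereby giving the correct linear ordering of $\{H(L_i)\}_{i \in I}$.

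For (c), I would characterize each end $\cE_-$ (resp.\ $\cE_+$) by the two properties ``totally linked with the outer extremal corner $c_0$ (resp.\ $c_{n+1}$)'' and ``having a $\cF^\pm$ leaf that crosses every $L_i$''. Total linking is preserved by Lemma~\ref{lem_linkedislinked} together with Corollary~\ref{cor:define_non-corner_leaves}, while the leaf-crossing condition translates because $H$ maps each $L_i \cap Q_1$ bijectively onto $H(L_i) \cap Q_2$ and sends leaves to leaves. Combined with the ordering-preservation from (b), this identifies $H(\cE_\pm)$ with the corresponding ends of $H(\cL)$. The main obstacle I anticipate is ruling out the square-diagonal case in (a), since both line and square configurations satisfy the hypothesis of Lemma~\ref{lem:line_or_square}; the resolution should come from extracting the extra combinatorial constraint imposed by three consecutive lozenges in a line. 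A clean way to carry this out may be to adapt the web-based criterion of Proposition~\ref{prop:same_lozenge_II} into a characterization of ``adjacent lozenges in a line'', which would then be automatically preserved under $H$ thanks to its preservation of all $\cF^\pm$-intersection data.
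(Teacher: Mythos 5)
The central gap is at the adjacency step. Your use of Lemma~\ref{lem:line_or_square} only yields that $H(L_i)$ and $H(L_{i+1})$ lie together in a \emph{line or square} of $\rho_2(g)$-lozenges; this is strictly weaker than the adjacency you need. Your plan to rule out the square-diagonal configuration via a three-lozenge argument (``$H(L_{i-1})$ would be forced into an incompatible position'') is asserted but not proved. More importantly, even granting that the square-diagonal case is excluded --- which in fact follows more simply from your own observation that $H(x_i)$ and $H(x_{i+1})$ lie on a common $\cF^-_2$ leaf, since diagonal lozenges in a square share no common leaf --- you have not addressed the remaining possibility: $H(L_i)$ and $H(L_{i+1})$ could be \emph{non-adjacent} lozenges in a line (with one or more $\rho_2(g)$-lozenges between them). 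Total linking of $g$-orbits cannot distinguish adjacent from non-adjacent lozenges in a line, so nothing in the quoted lemmas eliminates this case. To close the gap from here one would have to argue separately --- e.g.\ using that $H^{-1}$ is also a bijection on lozenges and that $H$ preserves the order of intersection points along a leaf --- that an intermediate lozenge would pull back to a lozenge wedged between $L_i$ and $L_{i+1}$ on their common $\cF^-$ leaf, contradicting their adjacency. Neither this auxiliary argument nor any substitute appears in your proposal.

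Your concluding suggestion --- ``adapt the web-based criterion of Proposition~\ref{prop:same_lozenge_II} into a characterization of adjacent lozenges in a line'' --- is in fact essentially what the paper does: for each pair $L_{i-1}, L_i$ it chooses a $k$-web winding around their shared corner, with $p_0 \in L_{i-1}$, $\cF^+(p_{k-1})$ meeting $L_i$, and the intermediate web points disjoint from $\cL$, then applies $H$ and invokes Lemma~\ref{lem:perf_web} to force $H(L_{i-1})$ and $H(L_i)$ to be adjacent. The web encodes adjacency purely in $\cF^\pm$-intersection data, so it passes through $H$ cleanly and avoids the line-or-square ambiguity entirely. The ends in part (c) are handled by the same web device, whereas your characterization via ``totally linked with the extremal corner'' needs an extra argument, since $H$ is defined only on non-corner fixed points and the corners themselves are not in its domain. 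In short: your first two steps are sound as far as they go, but they stop short of adjacency, and the intended completion is precisely the web construction you gesture at in the last sentence.
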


\begin{proof}
To fix notation, we assume that there exists a leaf of $\cF^-$ intersecting the whole line $\cL$, the other case is handled by the same proof after switching all $+$ and $-$ signs. 
For each $i \in I$ (except the initial element if the line is finite), we can choose a $k_i$-web $\{p_0^i, \dots, p_{k_i-1}^i\}$ of points in $Q_1$ such that $p_0^i\in L_{i-1}$, and $\cF^+(p_{k_i-1}^i)$ intersects $L_i$, but $\cF^{\pm}(p_{j}^i)$ do not intersect any lozenges in $\cL$ for any $0<j<k_{i}-1$.
 
Since $H$ preserves lozenges and intersections of leaves, we deduce that $H(p_0^i), \dots, H(p_{k_i-1}^i)$ is a $k_i$-web with $H(p_0^i)\in H(L_{i-1}\cap Q_1)$, $\cF^+(H(p_{k_i-1}^i))$ intersecting $H(L_i)$, and the leaves of $H(p_{j}^i)$ do not intersect any lozenges in $H(\cL)$ for any $0<j<k_{i}-1$.
 Thus we deduce by Lemma \ref{lem:perf_web} that $H(L_{i-1})$ and $H(L_{i})$ must be adjacent, that is, $\{H(L_i)\}_{i\in I}$ is an ordered line.
 
 We have only left to prove that, if $\cL$ is a finite line, then (the intersection of $Q_1$ with) the positive and negative ends are sent to positive and negative ends by $H$. This follows as above: for any point $x\in Q_1\cap \cE_-$, we can find a $k$-web $\{p_0, \ldots p_{k-1}\}$ with $p_0=x$, $\cF^+_1(p_{k-1})$ intersecting $L_0$ and $\cF^{\pm}_1(p_{j})$ not intersecting any lozenges in the line for $0<j<k-1$. Therefore, $\{H(p_0), \ldots H(p_{k-1})\}$ must be a $k$-web with $\cF^+_2(H(p_{k-1}))$ intersecting $H(L_0)$. Since this is a $k$-web $H(x)=H(p_0)$ is totally linked with the end corner of $H(L_0)$, i.e., it is in the negative end of $H(\cL)$. Similarly, the image of the positive end of $\cL$ must be sent into the positive end of $H(\cL)$.
\end{proof}

\begin{rem}\label{rem_ends_single_lozenge}
 Note that the preservation of ends of lines of lozenges by $H$ holds even when the ``line'' consists of a unique lozenge. That is, given $L$ a $\rho_1(g)$-lozenge and $l$ a (non-corner) leaf intersecting $L$. Then call $\cE_1,\cE_2$ the ``ends'' of $L$ in the adjacent quadrants intersecting $l$, the argument above shows that $H(\cE_1\cap Q_1)$ and $H(\cE_2\cap Q_1)$ are the respective ends of $H(L)$ intersecting $H(l)$.
 \end{rem}

We note the following immediate corollary of Lemma \ref{lem_H_preserves_ordered_lines}.
\begin{corollary} \label{cor:trees_are_same}
If $T_1$ is a tree of scalloped regions in $P_1$, then there exists a tree of scalloped regions $T_2 \subset P_2$ such 
$H(T_1 \cap Q_1) = T_2 \cap Q_2$.  If $\rho_1(h)$ fixes each corner of $T_1$, then $\rho_2(h)$ fixes each corner of $T_2$.   
\end{corollary}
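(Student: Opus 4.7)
The plan is to build $T_2$ directly as the union of the lozenge-images of the lozenges in $T_1$ under $H$, and then to verify the tree-of-scalloped-regions property by combining Lemma \ref{lem_preserves_lozenges} (lozenges go to lozenges) with Lemma \ref{lem_H_preserves_ordered_lines} (adjacency of lozenges is preserved). The final clause about $h$ will follow from the same pair of lemmas applied to the element $h$ in place of the fixed-corner element guaranteed by Remark \ref{rem:scalloped}.

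First I would invoke the last item of Remark \ref{rem:scalloped} to produce a nontrivial $g_0 \in G$ that fixes every corner of $T_1$; in particular, every lozenge $L \subset T_1$ is a $\rho_1(g_0)$-lozenge. Lemma \ref{lem_preserves_lozenges} then associates to each such $L$ a $\rho_2(g_0)$-lozenge $H(L) \subset P_2$ with $H(L \cap Q_1) = H(L) \cap Q_2$. Setting
\[
T_2 \;:=\; \bigcup_{L \subset T_1} H(L),
\]
the identity $H(T_1 \cap Q_1) = T_2 \cap Q_2$ is immediate from the bijectivity of $H \colon Q_1 \to Q_2$ recorded in Corollary \ref{cor:define_non-corner_leaves}.

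Next I would verify the chain structure: given two lozenges in $T_2$, pull them back to lozenges $L, L'$ in $T_1$, choose a finite sequence of lozenges in $T_1$ joining $L$ to $L'$ by shared sides, and apply Lemma \ref{lem_H_preserves_ordered_lines} to each adjacent pair to produce a matching chain in $T_2$. To promote this chain to a tree of scalloped regions, I need to check that each side of each lozenge of $T_2$ is shared with another lozenge of $T_2$. Fix $L \subset T_1$ and a side $s$ of $L$; since $T_1$ is a tree of scalloped regions, there is some other $L' \subset T_1$ sharing the side $s$ with $L$. Treating $\{L,L'\}$ as an ordered line of length two, Lemma \ref{lem_H_preserves_ordered_lines} gives that $H(L)$ and $H(L')$ are adjacent. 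The subtle point is to ensure that the four sides of $L$ correspond bijectively to the four sides of $H(L)$, so that distinct sides of $L$ produce distinct sides of $H(L)$ shared with other lozenges of $T_2$: this is where I expect the only real bookkeeping, and it follows from the preservation of $\cF^\pm$-leaves of non-corner points (Corollary \ref{cor:define_non-corner_leaves}) together with the end-preservation in Lemma \ref{lem_H_preserves_ordered_lines} and Remark \ref{rem_ends_single_lozenge}.

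Finally, if $\rho_1(h)$ fixes every corner of $T_1$, then every $L \subset T_1$ is also a $\rho_1(h)$-lozenge, and a second application of Lemma \ref{lem_preserves_lozenges} yields that each $H(L)$ is a $\rho_2(h)$-lozenge. Thus $\rho_2(h)$ fixes both corners of every lozenge in $T_2$, i.e.\ it fixes every corner of $T_2$. I do not foresee any genuine obstacle beyond the side-matching bookkeeping described above; in particular, no new analytic input is required, as the corollary is essentially a combinatorial consequence of the two preceding lemmas.
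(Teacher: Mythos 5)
Your proof is correct and follows exactly the reasoning the paper treats as ``immediate'': build $T_2$ as the union of the lozenge-images $H(L)$, appeal to Lemma \ref{lem_preserves_lozenges} and Remark \ref{rem:scalloped} for the lozenge map and the element $g_0$, and to Lemma \ref{lem_H_preserves_ordered_lines} (plus the fact that $H$ respects $\cF^+/\cF^-$) for adjacency and side-matching. Your elaboration of the side-bookkeeping and the observation that the last clause is a second application of Lemma \ref{lem_preserves_lozenges} with $h$ in place of $g_0$ are both sound.
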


Our next goal is to extend the domain of definition of $H$ to the remaining points on non-corner leaves by a continuity argument (Proposition \ref{prop_unique_limit_for_NCcapfixed}).  For this, we require several lemmas.

\begin{lemma}\label{lem:accumulation_for_non_corner_leaves}
 Suppose $x_n \in Q_1$ converges to $x \in \cF_1^{+}(c) $, where $c$ and each half leaf of $\cF_1^{\pm}(c)$ are invariant by $\rho_1(h)$.  Then all accumulation points of $H(x_n)$ in $P_2$ lie on $\rho_2(h)$-invariant leaves in $P_2$.  \end{lemma}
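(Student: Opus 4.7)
The plan is to invoke Lemma \ref{lem_charac_in_lozenge} at any accumulation point $y \in P_2$ of $H(x_n)$, combined with the equivariance of $H$ on $Q_1$ and its preservation of the total-linking relation (Corollary \ref{cor:define_non-corner_leaves}). After passing to a subsequence with $H(x_{n_k}) \to y$, equivariance gives, for each $j \in \bZ$,
\[
\rho_2(h)^j(y) = \lim_{k \to \infty} H(\rho_1(h)^j(x_{n_k})),
\]
and $\rho_1(h)^j(x_{n_k}) \to \rho_1(h)^j(x) \in \cF_1^+(c)$, since $\cF_1^+(c)$ is $\rho_1(h)$-invariant. The trichotomy from Lemma \ref{lem_charac_in_lozenge} applied to $\rho_2(h)$ at $y$ forces one of three things: either $y$ lies on a $\rho_2(h)$-invariant leaf (the desired conclusion); or $y$ is in the interior of a $\rho_2(h)$-lozenge $L_2$; or there is a neighborhood $U$ of $y$ and an integer $N$ such that $\rho_2(h)^j(y)$ is not totally linked with any point of $U$ for every $|j|>N$. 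My remaining task is to rule out the last two alternatives.

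To rule out the ``not totally linked'' alternative, I pick $k_0$ so that $H(x_{n_{k_0}}) \in U$. The strict not-TL condition of Lemma \ref{lem_charac_in_lozenge} is governed by a genuine gap on the ideal circle between endpoints of leaves through $\rho_2(h)^j(y)$ and those through nearby points, so it is an open condition in $P_2$ that moreover excludes perfect fits and shared leaves. For each fixed $|j|>N$ and $m$ sufficiently large, I therefore get that $H(\rho_1(h)^j(x_{n_m}))$ is not totally linked with $H(x_{n_{k_0}})$, and by Corollary \ref{cor:define_non-corner_leaves} the same holds for $\rho_1(h)^j(x_{n_m})$ and $x_{n_{k_0}}$ in $P_1$. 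On the other hand, $\rho_1(h)^j(x)$ and $x$ share the leaf $\cF_1^+(c)$; in neighborhoods of two distinct regular points on a common leaf the bifoliation is locally trivial, so for $m, k_0$ large and with appropriate choice of approximating sequences in $Q_1$ the points $\rho_1(h)^j(x_{n_m})$ and $x_{n_{k_0}}$ are forced to be totally linked (or at worst share a leaf/make a perfect fit, both of which are also excluded by the strict not-TL gap on the $P_2$ side after applying $H$). This contradicts the conclusion of the previous sentence.

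To rule out the ``lozenge interior'' alternative, if $y \in L_2^\circ$ then Lemma \ref{lem_preserves_lozenges} produces a $\rho_1(h)$-lozenge $L_1$ with $H(L_1 \cap Q_1) = L_2 \cap Q_2$, which forces $x_{n_k} \in L_1^\circ$ for $k$ large and hence $x \in \overline{L_1}$. A $\rho_1(h)$-invariant leaf meeting the interior of a $\rho_1(h)$-lozenge must in fact be a side of that lozenge, since otherwise $\rho_1(h)$ would move it nontrivially within the lozenge. Hence $c$ is a corner of $L_1$ and $x$ lies on the side $\cF_1^+(c) \cap \overline{L_1}$. Writing $c_2$ for the corner of $L_2$ corresponding to $c$, the side $\cF_2^+(c_2)$ is $\rho_2(h)$-invariant. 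The main obstacle of the argument is then to show that $y$ must lie on $\cF_2^+(c_2)$ rather than in $L_2^\circ$. My plan for this is to combine Remark \ref{rem_ends_single_lozenge} on the preservation of ends of lozenges under $H$ with the web-characterization techniques from the proof of Proposition \ref{prop:same_lozenge_II}: using webs of non-corner fixed points wrapping around the corner $c$, I can detect the ``approach-the-$\cF_1^+(c)$-side'' configuration of $x_{n_k}$ inside $L_1$; because $H$ sends such webs to analogous webs around $c_2$ inside $L_2$, the image sequence $H(x_{n_k})$ is forced to accumulate on $\cF_2^+(c_2)$, contradicting $y \in L_2^\circ$. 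Once both alternatives are ruled out, $y$ lies on a $\rho_2(h)$-invariant leaf, as required.
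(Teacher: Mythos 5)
Your approach is genuinely different from the paper's. The paper's proof is short: it assumes a bad accumulation point $y$ on a non-invariant leaf $l^+$, uses Lemma~\ref{lem_distinct_leaves_distinct_saturations} (which relies on Axiom~\ref{Anosov_like_totallyideal}, the no-totally-ideal-quadrilateral axiom) to produce a non-corner leaf $l^-$ crossing $l^+$ but not $\rho_2(h)(l^+)$, pulls back via $H^{-1}$, and derives a contradiction about perfect fits with $\cF_1^+(c)$. You instead route through the trichotomy from Lemma~\ref{lem_charac_in_lozenge}, which is notable because the paper's footnote explicitly says the appeal to Axiom~\ref{Anosov_like_totallyideal} here ``can easily be avoided \ldots{} at the cost of lengthening the argument.'' You are essentially attempting that longer, axiom-free route.

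However, as written the proof has two real gaps. First, in ruling out the ``not totally linked'' alternative, Lemma~\ref{lem_charac_in_lozenge}(ii) asserts that $\rho_2(h)^j(y)$ is not TL with points of $U$; it says nothing about $\rho_2(h)^j(z)$ for $z$ near $y$. The step from ``$\rho_2(h)^j(y)$ not TL with $H(x_{n_{k_0}})$'' to ``$\rho_2(h)^j(H(x_{n_m}))$ not TL with $H(x_{n_{k_0}})$ for $m$ large'' needs a uniformization: not-TL is a closed condition, not an open one, and a sequence of TL pairs can converge to a not-TL pair. Your appeal to a ``genuine gap on the ideal circle'' gestures at the right idea (the endpoint sets are combinatorially separated, which would make the exclusion robust under small perturbation), but this must actually be extracted from the proof of Lemma~\ref{lem_charac_in_lozenge}, not just asserted. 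Second, and more seriously, your handling of the ``lozenge interior'' alternative is explicitly an outline: you write ``My plan for this is to combine Remark~\ref{rem_ends_single_lozenge} \ldots{} with the web-characterization techniques \ldots'' This is precisely the hard step -- showing that a sequence in $L_1^\circ$ approaching a $\cF^+$-side has image approaching the corresponding side of $L_2$ rather than a point of $L_2^\circ$ -- and it has no proof in your write-up. Until both of these are closed, the argument does not constitute a proof; it constitutes a credible but unfinished alternative strategy.

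Two smaller points: the paper begins by reducing to the case where $c$ is a corner (the non-corner case being the first statement of Lemma~\ref{lem:leaves_of_non-corners} together with Remark~\ref{rem:leaves_of_non-corners}); you should make this reduction too, since several of your sub-arguments tacitly assume $\rho_1(h)$ has a chain of lozenges. And in your lozenge case you invoke Lemma~\ref{lem_preserves_lozenges} to pass from a $\rho_2(h)$-lozenge $L_2$ back to $L_1$, which is the $H^{-1}$ direction of that lemma -- valid by symmetry but worth flagging.
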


\begin{proof}
We can assume $c$ is a corner point, otherwise the conclusion reduces to the first statement of Lemma \ref{lem:leaves_of_non-corners} (using Remark \ref{rem:leaves_of_non-corners}). 
Suppose some subsequence $H(x_{n_k})$ converges to $y$ on a leaf $l^+$ not invariant by $\rho_2(h)$.  By passing to a further subsequence, we may assume that all the points $x_{n_k}$ lie in a single (closed) quadrant of $x$, specifically, we will use the fact that no leaf of $\cF^+(x) = \cF^+(c)$ separates some terms of the sequence from other terms.  

Abusing notation, reindex this subsequence as $H(x_n)$.
By Lemma \ref{lem_distinct_leaves_distinct_saturations}, 
 there exists some non-corner leaf $l^-$ that intersects $l^+$ but not $h(l^+)$\footnote{While Lemma \ref{lem_distinct_leaves_distinct_saturations} uses Axiom \ref{Anosov_like_totallyideal} in its proof, its use can easily be avoided here at the cost of lengthening the argument, we use it only as a shortcut.}.   
Thus, for sufficiently large $n$,  $\cF_2^+(H(x_n)) \cap l^- \neq \emptyset$ but $\cF_2^+(\rho_2(h)(H(x_n))) \cap l^- = \emptyset$.  Since these are intersections of non-corner leaves, we may apply $H^{-1}$ and conclude that $\cF_1^+(x_n) \cap H^{-1}(l^-) \neq \emptyset$ for all sufficiently large $n$.  It follows (as in Observation \ref{obs_sequence_of_totally_linked}) that $H^{-1}(l)$ either intersects, or makes a perfect fit with, $\cF_1^+(x) = \cF_1^+(c)$ or a leaf non-separated with it. 
However, we also have $\cF_1^+(\rho_1(h)(x_n)) \cap H^{-1}(l^-) = \emptyset$ for all sufficiently large $n$. Since $\rho_1(h)(x_n)$ also approaches a point on $\cF_1^+(c)$, and lies on the same side of $\cF_1^+(c)$ as $(x_n)$, we conclude that $H^{-1}(l^-)$ makes a perfect fit with $\cF_1^+(c)$ on the opposite side as this sequence.  But this contradicts the fact that $\cF_1^+(x_n) \cap H^{-1}(l^-) \neq \emptyset$ for all large $n$.  
\end{proof}

\begin{lemma}\label{lem_case_of_general_sequences}
Suppose $x \in \cF_1^{-}(w)\cap \cF^+(c)$, where $w$ is a non-corner fixed point of $\rho_1(g)$ and $c$ is a point fixed by $\rho_1(h)$.  Suppose $(x^1_n)$ and $(x^2_n)$ are sequences in $Q_1$ both converging to $x$, with $H(x^i_n)$ converging to $y_i$.  
Either $y_1 = y_2$, or $y_1$ and $y_2$ are on opposite sides of the same lozenge, or they are on sides of adjacent lozenges.
\end{lemma}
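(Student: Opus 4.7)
The proof combines two localization results. First, apply Lemma \ref{lem:leaves_of_non-corners} with $y = w$ (passing to $g^{-1}$ if needed so that $w$ is positive, and invoking Remark \ref{rem:leaves_of_non-corners} since the $x_n^i$ lie in $Q_1$): this places $y_1, y_2$ on a bounded segment of $\cF_2^-(H(w))$. Second, apply Lemma \ref{lem:accumulation_for_non_corner_leaves} using that $c$ and each of its half-leaves are $\rho_1(h)$-invariant: each $y_i$ lies on a $\rho_2(h)$-invariant leaf in $P_2$. Away from degenerate configurations in which $\cF_2^-(H(w))$ happens to be $\rho_2(h)$-invariant itself (which forces $w$ and $c$ to share fixed-point structure, producing a trivial case), each $y_i$ must therefore lie on a $\rho_2(h)$-invariant $\cF^+$-leaf $l_i^+$.

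Now distinguish two cases. If $c$ is a non-corner fixed point of $\rho_1(h)$, then the only $\rho_2(h)$-invariant $\cF^+$-leaf in $P_2$ is $\cF_2^+(H(c))$, so $l_1^+ = l_2^+$ and consequently $y_1 = y_2$. If $c$ is a corner, Corollary \ref{cor:trees_are_same} yields a corresponding chain $\cC_2$ in $P_2$ whose corners are fixed by $\rho_2(h)$; the $\rho_2(h)$-invariant $\cF^+$-leaves are then exactly the $\cF^+$-sides of lozenges in $\cC_2$. To identify which sides can arise, pass to further subsequences so that each $(x_n^i)$ lies in a single quadrant of $x$. Since $x \in \cF_1^+(c)$ lies on the common $\cF^+$-side of precisely two lozenges $L, L' \in \cC_1$ meeting along $\cF_1^+(c)$, every quadrant of $x$ is contained in either $L$ or $L'$. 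Lemma \ref{lem_preserves_lozenges} then places $H(x_n^i)$ inside the corresponding lozenge $H(L)$ or $H(L')$ of $\cC_2$, so $y_i \in \overline{H(L)} \cup \overline{H(L')}$. Since $H(w)$ is non-corner, $\cF_2^-(H(w))$ cannot coincide with any $\cF^-$-side of these two lozenges, and hence $l_i^+$ must be one of the three $\cF^+$-sides at hand: the shared side between $H(L)$ and $H(L')$, the opposite $\cF^+$-side of $H(L)$, or the opposite $\cF^+$-side of $H(L')$.

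The three conclusions of the lemma now follow by case analysis on the pair $(l_1^+, l_2^+)$: equal leaves give $y_1 = y_2$; a pair consisting of the shared side and one opposite side gives two points on opposite $\cF^+$-sides of a single lozenge (either $H(L)$ or $H(L')$); and the pair of both opposite sides gives points on the outer sides of the adjacent pair $H(L), H(L')$. The main technical obstacle is confirming that this list of admissible leaves is exhaustive --- the convergence $x_n^i \to x$ in $P_1$ must translate, via the local chain structure at $c$, into the images $H(x_n^i)$ being trapped in $\overline{H(L)} \cup \overline{H(L')}$ rather than wandering into distant lozenges of $\cC_2$. This is precisely the content of Lemma \ref{lem_preserves_lozenges} combined with the quadrant-passing step, which together prevent the $\cF^+$-side of a far-away lozenge from appearing as a limit $l_i^+$.
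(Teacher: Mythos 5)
Your proof has a genuine structural gap. The claim that ``every quadrant of $x$ is contained in either $L$ or $L'$'' --- where $L, L'$ are two lozenges sharing $\cF_1^+(c)$ as a common side --- is false in general. The half-leaf of $\cF_1^+(c)$ containing $x$ need not be a side of two lozenges; it may be a side of only one, or of none at all (for instance when $c$ is the terminal corner of a finite chain, or when $x$ lies on a $\rho_1(h)$-invariant half-leaf that bounds no lozenge). Consequently the sequences $(x^i_n)$, even after passing to subsequences that lie in a single quadrant of $x$, need not eventually lie inside a lozenge: they may lie in an \emph{end} of the line of lozenges beyond the last lozenge. The paper's proof explicitly allows for this possibility (``possibly, one of the sequences is in the last lozenge of a finite line, and the other in the adjacent end of the line'') and resolves it by invoking Lemma~\ref{lem_H_preserves_ordered_lines} and Remark~\ref{rem_ends_single_lozenge}, which show $H$ sends ends of lines to ends of lines. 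Your argument simply does not account for that configuration, so the claimed list of admissible $l_i^+$ is not exhaustive.

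There is a second, related gap: you conclude too hastily that each $y_i$ lies on a $\rho_2(h)$-invariant $\cF^+$-leaf, but the paper takes care to separately handle the case where some $y_i$ is on a $\rho_2(h)$-invariant half-leaf that is \emph{not} the side of any lozenge; there the argument that forces $y_1 = y_2$ uses the structure of the complementary component of $\cC$ containing $y_i$, which you do not address. Finally, the invocation of Corollary~\ref{cor:trees_are_same} is misplaced: that corollary is specifically about trees of scalloped regions, not about an arbitrary chain of lozenges fixed by $\rho_2(h)$. The existence of the chain $\cC_2$ in $P_2$ requires a different argument, namely that $h \notin \fixnc(\rho_1) = \fixnc(\rho_2)$ (Proposition~\ref{prop_non-corners_are_equal}) together with Lemma~\ref{lem_power_fixes_max_chain} applied in $P_2$; the paper handles this implicitly.
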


\begin{proof} 
If $c$ is non-corner then $y_1 = y_2$ by Corollary \ref{cor:define_non-corner_leaves}. So we assume from now on that $c$ is a corner point. Hence, by Lemma \ref{lem_power_fixes_max_chain}, after replacing $h$ by a power of itself that fixes all half-leaves through $c$, we may assume that there exists a chain of lozenges $\cC$ in $P_2$ fixed by $\rho_2(h)$.  
Since $w$ is a non-corner fixed point, by Lemma \ref{lem:leaves_of_non-corners} we have $y_1, y_2\in \cF^{-}_2(H(w))$, i.e. $\cF^{-}_2(y_1) = \cF^{-}_2(y_2)$.

By Lemma \ref{lem:accumulation_for_non_corner_leaves}, each of $y_1$ and $y_2$ must be on a $\rho_2(h)$-invariant leaf. Therefore $y_1$ and $y_2$ are either on the side of a $\rho_2(h)$-lozenge, or on a $\rho_2(h)$-invariant half-leaf which is not the side of any lozenge.

Suppose first that $y_1$ is not on the side of a $\rho_2(h)$-lozenge. So it lies in some connected component $S$ of $P_2 \smallsetminus \cC$.
Such a connected component $S$ has a unique corner $d$ of $\cC$ in its boundary, and all $\rho_2(h)$-invariant half-leaves that intersect $S$ are leaves of $\cF^\pm(d)$.  Since $y_1 \in \cF^{-}_2(H(w))$, we must have that $y_1 \in \cF^{+}(d)$, which intersects $\cF^{-}_2(H(w)) = \cF^{-}_2(y_2)$ at a unique point.  Thus, we are forced to have $y_1 = y_2$.   The same argument applies reversing the role of $y_1$ and $y_2$, thus we are left to treat only the case where $y_1$ and $y_2$ both lie on sides of $\rho_2(h)$-lozenges.

Since $y_1$ and $y_2$ are on the same $\cF^-$-leaf, they must be on the sides of lozenges that are contained inside a line (possibly a trivial line, i.e., a single lozenge) of $\rho_2(h)$-lozenges.

Up to passing to a subsequence, we may assume that each sequence $x^i_n$ lies on a single side of $\cF^+(x)$ (however, $x^1_n$ and $x^2_n$ need not necessarily lie on the same side).  Thus, each sequence is eventually in at most one $\rho_1(h)$-lozenge. Moreover, either they are in the same lozenge, or in adjacent lozenges, or, possibly, one of the sequences is in the last lozenge of a finite line, and the other in the adjacent end of the line.

Since $H$ preserves lozenges (Lemma \ref{lem_preserves_lozenges}), and ordered lines (Lemma \ref{lem_H_preserves_ordered_lines}), we deduce that, for $n$ large enough, $y_n^1$ and $y_n^2$ are either in the same lozenge, or in adjacent lozenges, or, one of the sequences is in the last lozenge of a finite line, and the other in the adjacent end of the line. In all cases we have either $y_1$ and $y_2$ are equal, or are on the opposite sides of the same lozenge, or are on opposite sides of adjacent lozenges.
\end{proof}

The next lemma implies that convergent sequences lying on one side of their limit leaf have images with unique limit points under $H$.  

\begin{lemma}\label{lem_case_of_sequences_on_same_side}
Let $x \in \cF_1^{-}(w)\cap \cF^+(c)$, where $w$ is a non-corner fixed point of $\rho_1(g)$ and $c$ is a point fixed by $\rho_1(h)$. 
Suppose that $(x^1_n)$ and $(x^2_n)$ are sequences in $Q_1$ lying on {\em the same side} of $\cF^+(x)$, both converging to $x$. 
If $H(x^i_n)$ converges to $y_i\in P_2$, $i=1,2$, then $y_1=y_2$.
\end{lemma}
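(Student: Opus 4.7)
The strategy is to argue by contradiction: Lemma \ref{lem_case_of_general_sequences} narrows down the possible positions of $y_1, y_2$, and the same-side hypothesis combined with $H$'s preservation of lozenges and leaf-to-leaf correspondence is then used to rule out $y_1 \neq y_2$. If $c$ is a non-corner fixed point, the second part of Lemma \ref{lem:leaves_of_non-corners} (applied with the non-corner fixed points $w$ and $c$) directly gives $y_1 = y_2 = \cF^+_2(H(c)) \cap \cF^-_2(H(w))$. So assume $c$ is a corner; after passing to a suitable power (Lemma \ref{lem_power_fixes_max_chain}), $h$ preserves each half-leaf through $c$ and stabilizes a maximal chain $\cC_1$ of $\rho_1(h)$-lozenges containing $c$. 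Let $\cC_2 := H(\cC_1)$ denote the corresponding chain in $P_2$ (Lemma \ref{lem_preserves_lozenges}).

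Suppose for contradiction that $y_1 \neq y_2$. Since $\cF^-_2(y_1) = \cF^-_2(y_2) = \cF^-_2(H(w))$ is a non-corner leaf while every $\cF^-$-side of a $\cC_2$-lozenge is a corner leaf, Lemma \ref{lem_case_of_general_sequences} forces $y_1, y_2$ to lie on distinct outer $\cF^+$-sides of a region $\mathcal{R} \subset P_2$, which is either a single $\cC_2$-lozenge or the union of two $\cF^+$-adjacent $\cC_2$-lozenges together with their shared $\cF^+$-side. Set $R := H^{-1}(\mathcal{R}) \subset P_1$. Because each $\cF^+$-leaf contains at most one corner of $\cC_1$ (Axiom \ref{Anosov_like_A1}), $\cF^+(c)$ either (i) contains an outer $\cF^+$-side of $R$ with $c$ as its base corner, (ii) contains the shared $\cF^+$-side of $R$ in the two-lozenge case with $c$ as the shared corner, or (iii) is disjoint from $R$ altogether.

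In cases (i) and (ii), the same-side hypothesis pins down a unique lozenge of $\cC_1$ that both sequences eventually enter (when $S$ is the side of $\cF^+(c)$ containing $R$) or that both sequences avoid (when $S$ is the opposite side). In the former situation, the convergence $\cF^+(x^i_n) \to \cF^+(c)$ together with $H$'s preservation of leaf-to-leaf correspondence (Corollary \ref{cor:define_non-corner_leaves}) forces $\cF^+(H(x^i_n))$ to tend to the outer $\cF^+$-side of $\mathcal{R}$ on the ``$c$-side'', yielding $y_1 = y_2$. In the latter situation, both $H(x^i_n)$ lie on the half-plane of $P_2$ bounded by this $\cF^+$-side that does not contain $\mathcal{R}$, so neither can converge to the opposite outer $\cF^+$-side of $\mathcal{R}$, again contradicting $y_1 \neq y_2$.

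The main obstacle I anticipate is case (iii), in which $\cF^+(c)$ is disjoint from $R$ and $x$ sits at positive distance from $R$. The strategy there is to pick a sequence of non-corner fixed points $p_k \to x$ on side $S$ (which exists by Lemma \ref{lem_nowhere_dense}, since corners are nowhere dense) and to show, using Remark \ref{rem_ends_single_lozenge} together with the absence of infinite product regions (Proposition \ref{prop:no_product}), that $H(p_k)$ has a unique limit. Applying Lemma \ref{lem_case_of_general_sequences} to the sequences $(p_k)$ and $(x^i_n)$ would then show this common limit equals both $y_1$ and $y_2$, delivering the final contradiction.
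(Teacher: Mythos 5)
Your proposal takes a genuinely different route from the paper's proof, and it has real gaps that need to be filled.

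The paper's argument first uses the same-side hypothesis \emph{immediately}: since $x^1_n$ and $x^2_n$ converge to $x$ from the same side of $\cF^+(c)$, they eventually lie in a common $\rho_1(h)$-lozenge or end, and by Lemma~\ref{lem_preserves_lozenges} so do the images $y^i_n = H(x^i_n)$. Combined with Lemma~\ref{lem_case_of_general_sequences} and the assumption $y_1 \neq y_2$, this pins $y_1, y_2$ to opposite $\cF^+_2$-sides of a \emph{single} $\rho_2(h)$-lozenge $L$ (containing the tails of both image sequences). The paper then splits on whether the maximal line $\cL^+$ of lozenges through $L$ meeting a common $\cF^+_2$-leaf is infinite or finite. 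In the infinite case, $\cL^+$ sits in a scalloped region and order-preservation of $H^{-1}$ on lines (Lemma~\ref{lem_H_preserves_ordered_lines}) applied to the transverse line gives a contradiction. In the finite case, one chooses a witness point $b \in Q_2$ at an end of $\cL^+$ that is totally linked with the $y^1_n$ but only partially linked with the $y^2_n$; pulling back by $H^{-1}$ gives a point in $Q_1$ that eventually distinguishes $x^1_n$ from $x^2_n$, impossible when both converge to $x$ from the same side.

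Your decomposition by the position of $\cF^+(c)$ relative to $R = H^{-1}(\mathcal R)$ never arrives at either of these discriminating tools, and two of its steps do not hold up. First, in cases (i) and (ii) you invoke a continuity statement---that $\cF^+(H(x^i_n))$ must tend to the ``$c$-side'' of $\mathcal R$---which is not available: Corollary~\ref{cor:define_non-corner_leaves} defines $H$ on $Q_1$ and preserves linking of points of $Q_1$, but $H$ has not been extended to the corner leaf $\cF^+(c)$, and we do not yet know which corner of $H(L')$ corresponds to $c$. That correspondence is what Lemma~\ref{lem:positive_corner} later establishes, and its proof depends on the very lemma you are trying to prove, so this is not usable here. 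Second, your case (iii) cannot occur: the $x^i_n$ eventually lie in $R$ and converge to $x$, so $x \in \overline R$; but a $\rho_1(h)$-invariant leaf meeting a $\rho_1(h)$-lozenge in its closure must be an extended side (it contains a fixed point of $\rho_1(h)$ by Axiom~\ref{Anosov_like_A1}, and the only such points in $\overline R$ are corners), so $\cF^+(c)$ cannot be disjoint from $R$. Even setting that aside, your proposed treatment of case (iii) is circular: showing that $H(p_k)$ has a unique limit for a same-side auxiliary sequence $p_k \to x$ is precisely the statement of this lemma.
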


\begin{proof}
Let us assume for a contradiction that $y_1 \neq y_2$.  Since (for large $n$), the points $x_n^i$ all lie in a common $\rho_1(h)$-lozenge or end, the same is true of their images $H(x_n^i) = y_n^i$.  
Since we assumed $y_1 \neq y_2$, by Lemma \ref{lem_case_of_general_sequences}, we have that $y_1$ and $y_2$ are on opposite sides of a common lozenge $L$, which contains the tail end of both sequences $y_n^i$.  
Consider the maximal (with respect to inclusion) line of lozenges $\cL^+$ containing $L$, intersecting a common $\cF_2^+$-leaf.  Note $\cL^+$ may be a trivial line containing only $L$. We consider separately the cases where $\cL^+$ is finite or infinite, arriving at a contradiction in each case.

\setcounter{case}{0}
\begin{case}[$\cL^+$ is infinite]
 If $\cL^+$ is infinite, then its boundary contains an infinite set of pairwise non-separated leaves in $\cF^+$.  Thus, it is in fact a bi-infinite line, making up a scalloped region $U$. Let $\cL^-$ be the other line of lozenges covering $U$ (i.e. every $\cF_2^-$-leaf intersecting $U$ intersects all lozenges in $\cL^-$). We fix an ordered indexing $\cL^- = \{F_i\}_{i\in \bZ}$. 
 Note that every lozenge in $\cL^-$ intersects $L$ hence, for any $n$ there exists some $i^1_n$ such that $y^1_n \in F_{i^1_n}$. Up to reversing the indexing, we may assume that $i_n \to +\infty$ as $n \to +\infty$. Let $i^2_n$ be the index such that $y^2_n \in F_{i^2_n}$.  Then $i^2_n \to -\infty$ as $n\to \infty$.
 
 Now consider $H^{-1}(\cL^-)$. Since $H^{-1}$ preserves the order of lines, we have that $x^1_n$ and $x^2_n$ cannot converge to the same side of $H^{-1}(L)$, a contradiction. 
\end{case}

\begin{case}[$\cL^+$ is finite]
In this case, up to renaming $y_1$ and $y_2$, we can find a point $b\in Q_2$ on an end of the line $\cL^+$ such that $b$ is totally linked with all $y^1_n$ but only partially linked with $y^2_n$, for $n$ large enough, as in Figure \ref{fig:same_side}.
However, this implies that $H^{-1}(b)$ is totally linked with all $x^1_n$ but 
only partially linked with $x^2_n$, for $n$ large enough. This is impossible since $x^1_n$ and $x^2_n$ both converge to $x$ from the same side of $\cF_1^+(x)$, so $H^{-1}(b)$ is either eventually totally linked with both sequences or eventually partially linked with both sequences.\qedhere  
\end{case}
\end{proof}

 \begin{figure}[h]
	\labellist 
	\small\hair 2pt
	\pinlabel  $L$ at 140 85
	\pinlabel $y^1_n$ at 120 110 
	\pinlabel $y^2_n$ at 160 110 
	\pinlabel $b$ at 132 3
	\endlabellist
	\centerline{ \mbox{
			\includegraphics[width=6cm]{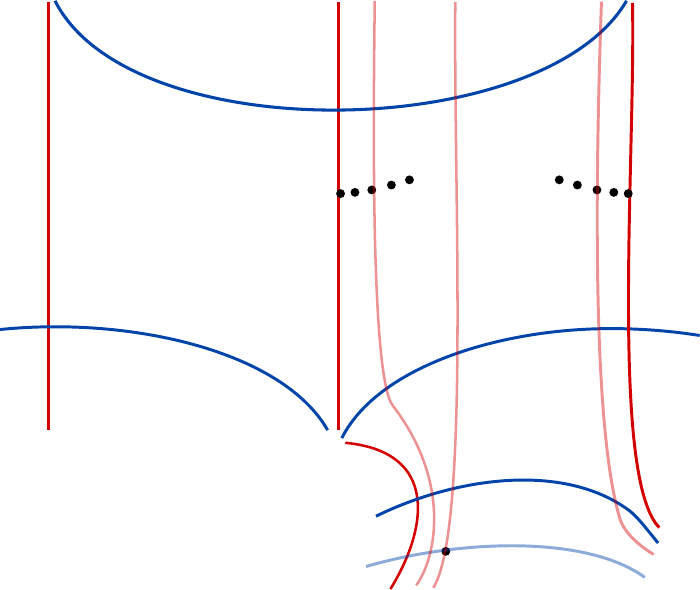} }}
	\caption{Position of $b$ in the case of a finite vertical line of lozenges.}
	\label{fig:same_side}
\end{figure}

\begin{lemma} \label{lem:positive_corner}
Let $L$ be a $\rho_1(h)$-lozenge and suppose $(x_n) \in Q_1 \cap L$ converges to $x \in \cF_1^{-}(w)\cap \cF^+(c)$, where $w$ is a non-corner fixed point and $c$ is a positive corner fixed point of $\rho_1(h)$.  
Then all accumulation points of $H(x_n)$ are on the leaves of \emph{positive} corner fixed points of $\rho_2(h)$. 
\end{lemma}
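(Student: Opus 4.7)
The plan is to first identify where accumulation points of $H(x_n)$ can lie in $P_2$, and then use Lemma \ref{lem:make_NC_points} in each plane to pin down the sign of the relevant corner. Combining Lemma \ref{lem_preserves_lozenges} (so $H(x_n) \in H(L)$), Lemma \ref{lem:leaves_of_non-corners} via Remark \ref{rem:leaves_of_non-corners} applied at $x \in \cF_1^-(w)$ (so accumulation points lie in $\cF_2^-(H(w))$), and Lemma \ref{lem:accumulation_for_non_corner_leaves} applied at $x \in \cF_1^+(c)$ with $c$ fixed by $\rho_1(h)$ (so accumulation points lie on $\rho_2(h)$-invariant leaves), we deduce that every accumulation point $y \in P_2$ of $H(x_n)$ lies on $\cF_2^-(H(w)) \cap \cF_2^+(c')$ for some corner $c'$ of $H(L)$. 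The goal reduces to showing $c'$ is a positive fixed point of $\rho_2(h)$.

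For the main construction, fix a non-corner positive fixed point $a \in L \cap Q_1$ of some element $\alpha \in G$; such $a$ exists by density of non-corner positive fixed points in $L$ (Axiom \ref{Anosov_like_dense_fixed_points} with Lemma \ref{lem_nowhere_dense}), and is totally linked with $c$ since both lie in $L$. Lemma \ref{lem:make_NC_points} applied in $P_1$ with $\alpha$ and $\beta=h$ produces, for all $n, m$ sufficiently large, a unique non-corner positive fixed point $z_{n,m} \in L$ of $\rho_1(h^n \alpha^m)$ whose $\cF_1^+$-leaf converges to $\cF_1^+(c)$ as $n \to \infty$. By our standing Convention, $H(z_{n,m})$ is a non-corner positive fixed point of $\rho_2(h^n \alpha^m)$, and by Lemma \ref{lem_preserves_lozenges} lies in $H(L)$. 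Let $c'_+$ denote the corner of $H(L)$ that is a positive fixed point of $\rho_2(h)$. Then $H(a) \in H(L)$ is a non-corner positive fixed point of $\rho_2(\alpha)$ totally linked with $c'_+$, and applying Lemma \ref{lem:make_NC_points} in $P_2$ with $\rho_2(\alpha),\rho_2(h)$ produces a unique non-corner positive fixed point of $\rho_2(h^n \alpha^m)$ whose $\cF_2^+$-leaf converges to $\cF_2^+(c'_+)$ as $n \to \infty$. By uniqueness this coincides with $H(z_{n,m})$, giving $\cF_2^+(H(z_{n,m})) \to \cF_2^+(c'_+)$.

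To finish, choose a sequence $(a_k) \subset L \cap Q_1$ of non-corner positive fixed points of elements $\alpha_k$ with $a_k \to x$, and for each $k$ select parameters $n_k, m_k$ at the threshold of applicability of Lemma \ref{lem:make_NC_points} for $\alpha_k$ so that $z_k := z^{(k)}_{n_k, m_k}$ remains close to $a_k$ and hence $z_k \to x$ from inside $L$, on the same side of $\cF_1^+(c)$ as the original sequence $(x_n)$. By Lemma \ref{lem_case_of_sequences_on_same_side} applied to $(x_n)$ and $(z_k)$, any accumulation of $H(z_k)$ in $P_2$ equals $y$; combined with the leaf convergence $\cF_2^+(H(z_k)) \to \cF_2^+(c'_+)$ from the previous paragraph, this forces $y \in \cF_2^+(c'_+)$, yielding $c' = c'_+$ and completing the proof. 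The main technical hurdle is guaranteeing $z_k \to x$: Lemma \ref{lem:make_NC_points} only controls the $\cF^+$-leaf of $z_{n,m}$, while its $\cF^-$-coordinate inside the rectangle bounded by $\cF_1^\pm(a_k)$ and $\cF_1^\pm(c)$ depends delicately on the dynamical interplay of $h^{n_k}$ and $\alpha_k^{m_k}$. Taking $a_k$ close to $x$ shrinks the rectangle around the segment of $\cF_1^+(c)$ from $c$ to $x$, and choosing $n_k$ just large enough for uniqueness to apply (rather than letting $n_k \to \infty$, which would force $z_k \to c$) is designed to keep $z_k$ near $a_k$.
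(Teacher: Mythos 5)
Your first two paragraphs are on the right track and essentially match the paper's setup: you apply Lemma~\ref{lem:make_NC_points} in both planes to a positive non-corner fixed point $a\in L$ and the positive corner $c$ of $L$, and use the standing convention plus Lemma~\ref{lem_preserves_lozenges} to conclude that the images are positive non-corner fixed points in $H(L)$ whose $\cF_2^+$-leaves accumulate onto the positive $\cF^+$-side of $H(L)$. This is the right key lemma and the right deduction.

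The third paragraph, however, contains a genuine gap, one you yourself flag as the ``main technical hurdle'' and then do not resolve. You try to make the non-corner fixed points $z_k$ of $\rho_1(h^{n_k}\alpha_k^{m_k})$ \emph{themselves} converge to $x$ by taking $a_k\to x$ and choosing $n_k$ ``at threshold.'' Lemma~\ref{lem:make_NC_points} gives no quantitative control on the $\cF^-$-coordinate of the fixed point inside the square bounded by $\cF_1^\pm(a_k)$ and $\cF_1^\pm(c)$, only on the $\cF^+$-leaf (and only as $n\to\infty$ for a \emph{fixed} $\alpha$). Worse, the two demands are in direct tension: to keep $z_k$ near $a_k$ you want $n_k$ small, but the conclusion $\cF_2^+(H(z_{n,m}))\to\cF_2^+(c'_+)$ on the $P_2$ side is only guaranteed as $n\to\infty$; with a different $\alpha_k$ for each $k$ and bounded $n_k$ neither convergence is available. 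So as written the argument does not close.

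The paper's proof sidesteps this entirely with a projection trick you do not use. Keeping a \emph{single} $a$ (and a single $\alpha$, raised to a high fixed power $N$), it produces the fixed points $a_m$ of $\rho_1(h^m\alpha^N)$ with $\cF_1^+(a_m)\to\cF_1^+(c)$, and then defines
\[
 z_m := \cF_1^+(a_m)\cap\cF_1^-(w).
\]
This $z_m$ is \emph{not} a fixed point; it is the intersection of two non-corner leaves, hence a point of $Q_1$, and it lies on $\cF_1^-(w)$ by construction. Since $\cF_1^+(z_m)=\cF_1^+(a_m)\to\cF_1^+(c)$ and $z_m\in\cF_1^-(w)$, one gets $z_m\to x$ automatically and from the correct side, with no need to control where the fixed point $a_m$ sits in the $\cF^-$-direction. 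Then Lemma~\ref{lem_case_of_sequences_on_same_side} applied to $(x_n)$ and $(z_m)$ forces $H(z_m)\to y$, while the formula $H(z_m)=\cF_2^+(H(a_m))\cap\cF_2^-(H(w))$ together with the $P_2$-application of Lemma~\ref{lem:make_NC_points} shows $\cF_2^+(H(z_m))$ converges to the $\cF^+$-side of $H(L)$ through the positive corner, giving the conclusion. That projection onto $\cF^-(w)$ is the missing idea in your argument.
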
 

\begin{proof} 
Let $y$ be an accumulation point of $H(x_n)$, after passing to a subsequence we assume $H(x_n) \to y$.   Since $x_n \in L$, $H(x_n)$ lies in some $\rho_2(h)$-lozenge $L'$ and $y$ is on the side of $L'$.  
Let $a \in L$ be a positive non-corner fixed points of $\rho_1(\alpha)$.  Replacing $\alpha$ by some very high power $\alpha^N$ if needed,  Lemma \ref{lem:make_NC_points} says that for $m$ large enough, the elements $\rho_2(h^m\alpha)$ have non-corner fixed points $a_m \in L$. Moreover, as $m$ goes to infinity, $\cF_1^+(a_m)$ accumulates onto $\cF_1^+(x) = \cF_1^+(c)$.  Let $z_m = \cF_1^+(a_m) \cap \cF_1^-(w)$.  

Then $z_m$ converges to $x$ and lies on the same side of $\cF^+(x)$ as $x_n$, so Lemma \ref{lem_case_of_sequences_on_same_side} implies that  $H(z_m)$ converges to $y$.    However, $H(z_m) = \cF_2^+(H(a_m)) \cap \cF_2^-(H(w))$, and $\cF_2^+(H(a_m))$ converges to the side of $L'$ containing its positive corner.  Thus $y$ was on the side of a positive corner.   
\end{proof}

The primary application of Lemma \ref{lem:positive_corner} will be via the following immediate corollary. 
\begin{corollary} \label{cor:positive_corner} 
Let $(x^1_n)$ and $(x^2_n)$ be two sequences converging to $x$ as in Lemma \ref{lem:positive_corner}.  Then their images $H(x^1_n)$ and $H(x^2_n)$ cannot converge to opposite sides of the same $\rho_2(h)$-lozenge.  
\end{corollary}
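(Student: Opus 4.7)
The plan is to deduce the corollary essentially directly from Lemma \ref{lem:positive_corner} by recalling a structural fact noted in the text immediately after the definition of positive fixed points: in any $\rho_2(h)$-lozenge, the two corners necessarily have opposite signs for $\rho_2(h)$ (one positive, one negative), which is a short consequence of Axiom \ref{Anosov_like_A1}.

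First I would apply Lemma \ref{lem:positive_corner} separately to each of the sequences $(x^1_n)$ and $(x^2_n)$: since both sequences satisfy the hypotheses of that lemma, any accumulation point $y_i$ of $H(x^i_n)$ must lie on a leaf that passes through a \emph{positive} corner fixed point of $\rho_2(h)$. Now suppose for contradiction that $y_1$ and $y_2$ lie on opposite sides of a common $\rho_2(h)$-lozenge $L' \subset P_2$. By "opposite sides" one means the two parallel sides of $L'$; without loss of generality these are both $\cF_2^+$-sides (the $\cF_2^-$ case being entirely symmetric), so $y_1$ lies on a half-leaf of $\cF_2^+(p)$ and $y_2$ on a half-leaf of $\cF_2^+(q)$, where $p$ and $q$ are the two corners of $L'$.

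By the structural fact recalled above, exactly one of $p, q$ is a positive fixed point of $\rho_2(h)$ and the other is a negative fixed point; say $p$ is positive and $q$ is negative. Then $y_2 \in \cF_2^+(q)$, a leaf passing through the \emph{negative} corner $q$. For the conclusion of Lemma \ref{lem:positive_corner} to hold for $y_2$, there must exist some positive corner fixed point $p'$ of $\rho_2(h)$ with $\cF_2^+(p') = \cF_2^+(q)$. But then $\rho_2(h)$ has two distinct fixed points $p' \neq q$ on the single $\rho_2(h)$-invariant leaf $\cF_2^+(q)$, contradicting the uniqueness of the fixed point on a fixed leaf guaranteed by Axiom \ref{Anosov_like_A1} (equivalently, by Lemma \ref{lem_same_fixed_leaf_same_fixed_point}). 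This yields the desired contradiction.

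There is really no substantial obstacle here---this is why the authors flag the statement as an immediate corollary. The only care needed is (i) parsing "opposite sides" as the two parallel sides of $L'$, and (ii) ruling out the coincidence of a positive-corner leaf with a negative-corner leaf via Axiom \ref{Anosov_like_A1}; both are short.
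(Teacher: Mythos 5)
Your proof is correct, and since the paper states this as an immediate corollary with no written argument, your write-up is essentially the canonical way to fill it in: apply Lemma~\ref{lem:positive_corner} to both sequences and observe that the two corners of a $\rho_2(h)$-lozenge have opposite signs, so both $\cF_2^+$-sides cannot pass through positive corners, with Axiom~\ref{Anosov_like_A1} pinning the unique $\rho_2(h)$-fixed point on each side.

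One small imprecision worth fixing: the step ``without loss of generality these are both $\cF_2^+$-sides (the $\cF_2^-$ case being entirely symmetric)'' should not be phrased as a WLOG, because the two cases are not symmetric here --- Lemma~\ref{lem:positive_corner}'s proof specifically places each accumulation point on an $\cF_2^+$-side of a $\rho_2(h)$-lozenge through its positive corner. Rather, the $\cF_2^-$ case simply cannot occur: by Lemma~\ref{lem:leaves_of_non-corners}, both $y_1$ and $y_2$ lie on $\cF_2^-(H(w))$, which is a non-corner leaf (since $H(w)$ is a non-corner fixed point) and hence is not a leaf through any corner of a $\rho_2(h)$-lozenge. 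The same observation also rules out the alternative reading of Lemma~\ref{lem:positive_corner} in which $y_2$ might lie on the $\cF_2^-$-leaf of a positive corner, which your argument implicitly but not explicitly excludes. With that replacement, the contradiction you derive --- that the negative corner $q$ would have to coincide with some positive corner on the unique $\rho_2(h)$-invariant leaf $\cF_2^+(q)$ --- is exactly right.
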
 

The next lemma shows that trees of scalloped regions are the only obstruction to uniqueness of limit points of images of convergent sequences under $H$.

\begin{lemma}\label{lem_case_of_sequences_on_opposite_sides}
Let $x \in \cF_1^{-}(w)\cap \cF^+(c)$, where $w$ is a non-corner fixed point of $\rho_1(g)$ and $c$ is a point fixed by $\rho_1(h)$. 
Suppose that  $(x^1_n)$ and $(x^2_n)$ are sequences in $Q_1$ lying on {\em opposite sides} of $\cF^+(x)$, both converging to $x$. 
If $H(x^i_n)$ converges to $y_i\in P_2$, $i=1,2$, then
\begin{enumerate}
\item either $y_1=y_2$, or
\item \label{item_case_discontinuity} $x$ is on a side of a tree of scalloped regions, with each lozenge fixed by $\rho_1(h)$, and $y_1, y_2$ are on opposite sides of adjacent $\rho_2(h)$-lozenges contained in a tree of scalloped regions.
\end{enumerate} 
\end{lemma}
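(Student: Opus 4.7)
The plan is to first reduce to the case where $c$ is a corner, since if $c$ is a non-corner fixed point, then the second part of Lemma \ref{lem:leaves_of_non-corners} (applied symmetrically to $w$ and $c$) forces $H(x^i_n) \to \cF_2^-(H(w)) \cap \cF_2^+(H(c))$, yielding $y_1 = y_2$ immediately. With $c$ a corner, after replacing $h$ by a suitable power and, if necessary, by $h^{-1}$, I would assume that $\rho_1(h)$ fixes every corner of the maximal chain $\cC$ of $\rho_1(h)$-lozenges meeting $c$, and that $c$ is a \emph{positive} fixed point. By Corollary \ref{cor:trees_are_same}, $\cC$ corresponds via $H$ to a chain $\cC^*$ of $\rho_2(h)$-lozenges with a corresponding positive corner $c^* \in P_2$.

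Next I would apply Lemma \ref{lem_case_of_general_sequences}: either $y_1=y_2$ (and we are done), or the two limits lie on opposite sides of a single $\rho_2(h)$-lozenge (Case B), or on sides of adjacent $\rho_2(h)$-lozenges $L_1^*,L_2^* \subset \cC^*$ (Case C). Corollary \ref{cor:positive_corner}, which is exactly tailored for this purpose and requires the positivity of $c$, rules out Case B. In Case C, pulling $L_1^*,L_2^*$ back via $H^{-1}$ and invoking the adjacency-preservation of Lemma \ref{lem_H_preserves_ordered_lines}, one obtains adjacent $\rho_1(h)$-lozenges $L_1,L_2 \subset \cC$. Since $x^1_n$ and $x^2_n$ approach $x$ from opposite sides of $\cF_1^+(c)$, while their $H$-images sit in $L_1^*, L_2^*$ respectively, the lozenges $L_1,L_2$ must be adjacent along a $\cF^+$-side contained in $\cF_1^+(c)$, and correspondingly $y_1,y_2$ lie on the two $\cF^+$-sides of $L_1^*,L_2^*$ distinct from their common side.

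The main obstacle will be to upgrade this local adjacency into the conclusion that $x$ lies on a side of an entire \emph{tree} of scalloped regions fixed by $\rho_1(h)$. I would argue this as follows: the inequality $y_1 \neq y_2$ encodes a ``non-trivial $H$-twist'' inside the union $L_1 \cup L_2$, in the sense that sequences in $L_i$ converging to the shared side of $L_1, L_2$ are mapped to sequences converging instead to the \emph{opposite} $\cF^+$-side of $L_i^*$. Such a twist cannot be sustained if $L_1$ or $L_2$ has an isolated side, since in that case a test sequence along the would-be isolated side, together with Lemma \ref{lem_case_of_sequences_on_same_side} and Remark \ref{rem_ends_single_lozenge}, would pin down the image limit continuously and contradict the twist. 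Iterating this argument at each corner and translating by powers of $\rho_1(h)$ (which preserves $\cC$), every side of every lozenge in the relevant subchain is forced to be shared with another $\rho_1(h)$-lozenge, which is exactly the defining property (Definition \ref{def_tree_scalloped}) of a tree of scalloped regions. Pushing this tree forward by $H$ via Corollary \ref{cor:trees_are_same} yields the corresponding tree in $P_2$ containing $L_1^*, L_2^*$, and $y_1, y_2$ sit on the claimed opposite sides of adjacent lozenges in it, completing the proof.
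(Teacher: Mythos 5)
Your reduction to the corner case, the invocation of Lemma \ref{lem_case_of_general_sequences} to narrow to the ``adjacent lozenges'' alternative, and the use of Corollary \ref{cor:positive_corner} to exclude the ``opposite sides of one lozenge'' case all match the paper's proof precisely. The gap is in the final paragraph.

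The assertion that the ``twist'' is incompatible with an isolated side of $L_1$ or $L_2$ is where the hard work actually lives, and it is not spelled out enough to certify. There are really two separate things to show, and they are proved by different mechanisms. First, the paper shows the line $\cL$ (the line of $\rho_2(h)$-lozenges sharing $\cF^+$-sides and meeting $\cF_2^-(H(w))$) is bi-infinite, by explicitly iterating the twist: take a sequence $y^0_n$ inside the neighbor $L_0$ converging to $y_1$, observe that $H^{-1}(y^0_n)$ cannot converge to $x$ (otherwise $y^0_n$ and $y^2_n$ would end up in the same or adjacent lozenges, violating the construction), and conclude from the already-established adjacency alternative that $L_0$ must be a genuine lozenge rather than an end. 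Repeating pushes the line to be bi-infinite. Your ``test sequence along the isolated side'' phrasing gestures at this, but as written you would need Lemma \ref{lem_case_of_sequences_on_opposite_sides} itself (applied to $H^{-1}$) to make the pin-down claim rigorous, so it must be phrased carefully to use only what has already been established (the paper does this correctly by invoking only the adjacency conclusion already derived).

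Second, and this is the step your proposal genuinely omits: one must upgrade a bi-infinite \emph{line} to a \emph{tree} of scalloped regions, which requires showing that each lozenge $L_i$ is also part of a bi-infinite line in the \emph{perpendicular} direction, i.e.\ sharing $\cF^-$-sides. This does not follow from iterating the same argument ``at each corner''; the twist you have so far is entirely in the $\cF^+$-direction and says nothing a priori about the $\cF^-$-sides of $L_i$. The paper handles this with a separate construction: inside $L_0$ it builds sequences $(a_n)$ and $(b_n)$ (the latter connected via a $k$-web across the singular corner into $L_1$) converging to a point $z \in \cF_2^-(c_0)$, then shows via Lemma \ref{lem:positive_corner} and the $k$-web compatibility of $H^{-1}$ that $\lim H^{-1}(a_n) \neq \lim H^{-1}(b_n)$, which propagates the twist into the perpendicular direction and so forces the perpendicular line to be bi-infinite as well. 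Without some version of this perpendicular-propagation argument, the conclusion that every side of every lozenge in the chain is shared (hence a tree of scalloped regions) is not established.
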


\begin{proof} 
Assume that $y_1 \neq y_2$.  Corollary~\ref{cor:define_non-corner_leaves} implies that $c$ is a corner, Lemma \ref{lem_case_of_general_sequences} implies that $y_1$ and $y_2$ are either on opposite sides of the same lozenge or on sides of adjacent lozenges, and finally Corollary \ref{cor:positive_corner} implies that $y_1$ and $y_2$ are in fact on opposite sides of adjacent lozenges; without loss of generality we assume these are the sides containing the positive corners for $\rho_2(h)$ (if not, simply replace $h$ with its inverse).  Dropping the first terms of the sequences if needed, we may assume that $y_n^i$ lies in a single lozenge $L_i$.  
Let $L_i'$ denote $H^{-1}(L_i)$, then $L_i'$ contains the sequence $x^i_n$.  

Let $\cL$ be the line of lozenges containing $L_1$ and $L_2$. Our goal is to show this line lies in a tree of scalloped regions; the first step is to show that  $\cL$ is an infinite line.  
Let $L_0 \neq L_1$ be the lozenge (or end) in $\cL$ that contains $y_1$ in its boundary, and take a sequence $(y^0_n)$ of points in $Q_2\cap L_0$ that converges to $y_1$. Let $x^0_n = H^{-1}(y^0_n)$. Passing to a subsequence, we may assume that $x^0_n$ converges to a point $x_0$. Notice that we cannot have $x_0=x$, because the previous arguments would imply that $y^0_n$ and $y^2_n$ would then either be in the same or adjacent lozenges, which is not true by construction. As above, we further deduce that $x_0$ and $x$ are on opposite (positive corner) sides of adjacent lozenges. In particular, $L_0$ is a true lozenge, not an end, and so is $L_0':=H^{-1}(L_0)$.

Now we can repeat this process inside the lozenge $L_0'$: call $L_{-1}'$ the lozenge, or end, in the line $\cL'=H^{-1}(\cL)$ such that $x_0$ is on the side shared by $L_0'$ and $L_{-1}'$. Then, consider a sequence $x^{-1}_n$ in $Q_1\cap L_{-1}'$ that converges to $x_0$. By the above (applied to the sequences $x^{-1}_n$, $x^{0}_n$ and their images by $H$), the sequence $H(x^{-1}_n)$ converges to a point $y^{-1}$ on the side of $L_{-1}$ opposite to $L_0$. In particular $L_{-1}$ (and thus $L_{-1}'$) is a true lozenge in $\cL$, not an end.
Iterating this process, we deduce that $\cL$ must be a bi-infinite line of lozenges.  

Moreover, we have constructed for each $i$ a sequence of points $x^i_n \in L'_i \cap Q_1$ converging to a point on the positive side of $L_i$ in $\cF^+_1$, with the property that, for $i \in 2\bZ$ the sequences $x^i_n$ and $x^{i-1}_n$ have the same limit, but the sequences $H(x^i_n) = y^i_n$ and $H(x^{i-1}_n) = y^{i-1}_n$ do not.  (Rather it is the sequences $y^i_n$ and $y^{i+1}_n$, for $i \in 2\bZ$, that have common limit points).  See Figure \ref{fig:opposite_sides}.  We will use these in the next step of the proof.  

 \begin{figure}[h]
	\labellist 
	\small\hair 2pt
	\pinlabel $P_1$ at -15 210
	\pinlabel  $L_{-2}'$ at 35 225
	\pinlabel  $L_{-1}'$ at 90 225
	\pinlabel  $L_{0}'$ at 150 225	
	\pinlabel  $L_{1}'$ at 200 225
	\pinlabel  $L_{2}'$ at 250 225
	\pinlabel $\oplus$ at 130 259
	\pinlabel $\oplus$ at 230 259
	\pinlabel $\ominus$ at 60 180
	\pinlabel $\ominus$ at 180 180
	\pinlabel  $x_{n}^{(-1)}$ at 95 195	
	\pinlabel  $x_{n}^0$ at 140 195
	\pinlabel  $x_{n}^1$ at 198 195	
	\pinlabel  $x_{n}^2$ at 245 195	
	\pinlabel $P_2$ at -15 60
	\pinlabel  $L_{-2}'$ at 35 70
	\pinlabel  $L_{-1}$ at 90 70
	\pinlabel  $L_{0}$ at 150 70	
	\pinlabel  $L_{1}$ at 200 70
	\pinlabel  $L_{2}$ at 250 70
	\pinlabel $\oplus$ at 60 23
	\pinlabel $\oplus$ at 180 23
	\pinlabel $\ominus$ at 130 100
	\pinlabel $\ominus$ at 235 100
	\pinlabel  $y_{n}^{(-1)}$ at 93 42	
	\pinlabel  $y_{n}^0$ at 150 42
	\pinlabel  $y_{n}^1$ at 198 42	
	\pinlabel  $y_{n}^2$ at 260 42	
		
	\endlabellist
	\centerline{ \mbox{
			\includegraphics[width=9cm]{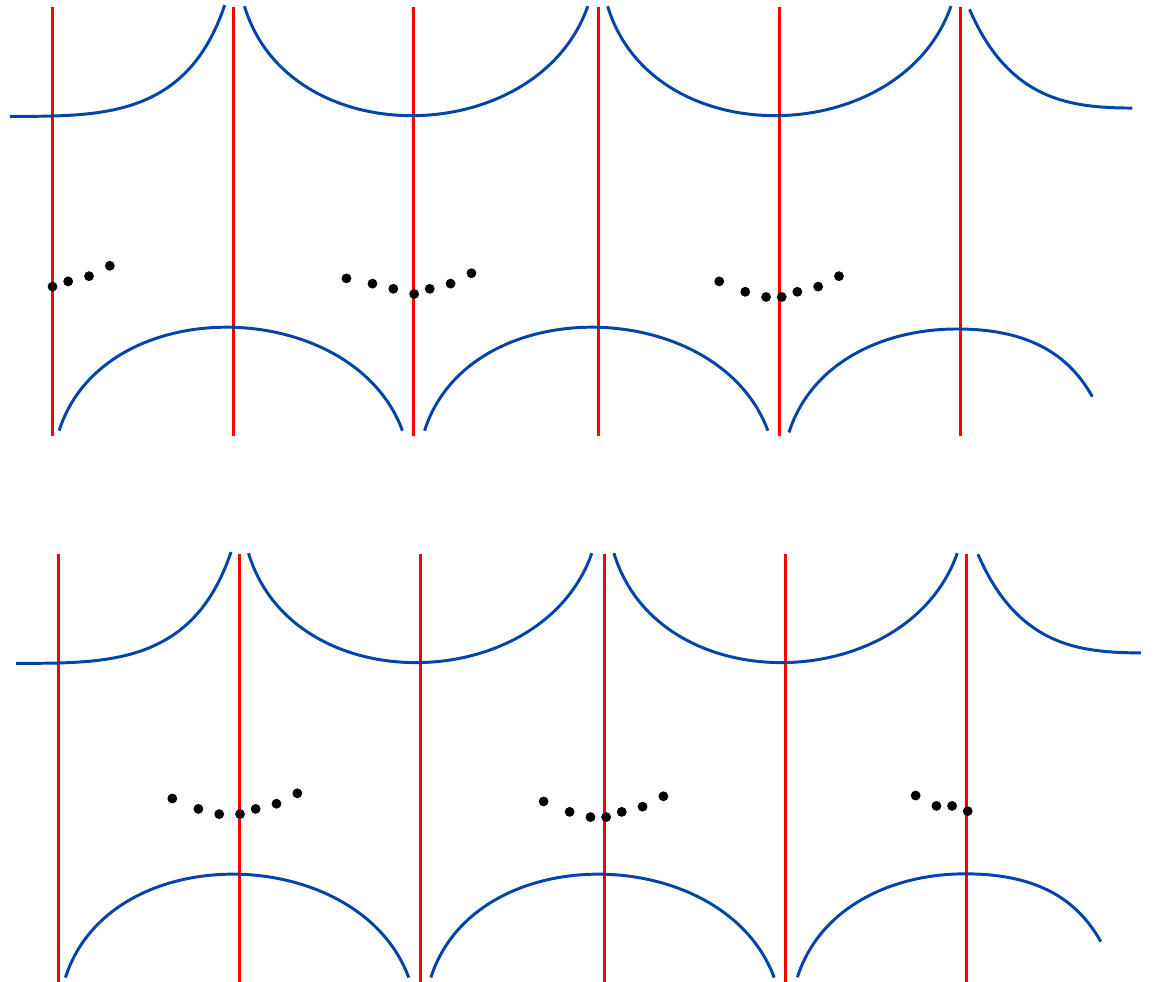} }}
	\caption{Sequences $x^i_n$ in a line of lozenges and their images $y^i_n$ under $H$. The positive (resp. negative) corners for the action of $h$ are denoted by $\oplus$ (resp. $\ominus$).}
	\label{fig:opposite_sides}
\end{figure}

We will now show that this infinite line $\cL$ is part of a tree of scalloped regions.  We  do this by repeating the arguments above to each lozenge $L_i$, but switching the roles of $\cF^+$ and $\cF^-$, thus showing that each $L_i$ must be part of \emph{another} infinite line (one that intersects a common $\cF^+_2$-leaf rather than a common $\cF^-_2$ leaf).   This process can then be iterated again, showing that each lozenge in the (as-yet-constructed) tree is part of two bi-infinite families, thus adjacent to lozenges on all four sides, which will complete the proof. 

Now for the details: Fix one lozenge $L_i$ in $\cL$. The indexing in the proof will be slightly different depending on whether $i$ is even or odd.  For the sake of readability and concreteness, we assume $i = 0$, the odd case being completely analogous.  
Let $c_0$ denote the positive corner of $L_0$.  From the construction in the first step of the proof we have $y^0_n \in L_0$ and $y^1_n \in L_1$ converging to a point in $\cF^+(c_0)$ on the side of $L_0$. 

Consider a sequence $(a_n)$ in $ L_0\cap Q_2$ that converges to a point $z \in \cF^-_2(c_0)\cap \cF_2^-(w_1)$, where $w_0$ is a non-corner fixed point for some non-trivial element $\rho_2(g_1)$. Pick another sequence $b_n\in Q_2$ that converges also to $z$ but from the other side of $\cF_2^-(c_0)$.  Notice that for each $n$, we can choose a $k$-web $b^0_n,\dots,b_n^{k-1}$ such that $b_n^0\in L_1$ and $b_n^{k-1}=b_n$, where $k$ depends on the number of prongs at $c_0$.  

We can apply all the arguments given thus far to the sequences $(a_n)$, $(b_n)$ and their images by $H^{-1}$, concluding that either their limits $a':= \lim_{n \to \infty} H^{-1}(a_n)$ and $b':=\lim_{n \to \infty} H^{-1}(b_n)$ are equal, or that $L_0$ is part of an infinite line of lozenge which intersect a common $\cF^+_2$-leaf.
Thus, all we have left to do is show that $a'\neq b'$.

Applying Lemma \ref{lem:positive_corner}, since $z\in \cF_2^-(c_0)$ and $c_0$ is positive for $\rho_2(h)$, we deduce that $a'$ and $b'$ must be on the $\cF^-_1$-leaf of a positive corner fixed point of $\rho_1(h)$. 
Since $a_n\in L_0$, we deduce that $a'$ must be on the side of $L_0'$ associated with the positive corner of $L_0'$ for $\rho_1(h)$. Thus, if $a'=b'$, we deduce that $H^{-1}(b_n)$ must be in the lozenge or end of the line containing $L_0'$ and intersecting a $\cF_1^+$-leaf that shares the corner $c_{-1}'$. But by construction, $b_n$ is part of a $k$-web joining $b_n$ to a point in $L_1$. Therefore, $H^{-1}(b_n)$ is part of a $k$-web connected to $H^{-1}(L_1)=L_1'$, thus cannot converge to $a'$, which is what we needed to show.  Thus, each lozenge of the original line $\cL$ is contained in another line, i.e. it has adjacent lozenges on all four sides; moreover, that line admits sequences playing a role analogous role to the $(y_n^i)$: their images are moved to opposite sides of lozenges in the corresponding line of lozenges in $P_1$ by $H^{-1}$.  Iterating this argument shows that $\cL$ and $\cL'$ both lie in trees of scalloped regions comprised of $\rho_2(h)$ or $\rho_1(h)$ invariant lozenges, respectively.  
\end{proof} 

Our next lemma says that the setting of item \ref{item_case_discontinuity} of Lemma \ref{lem_case_of_sequences_on_opposite_sides} happens for one sequence in a tree of scalloped regions if and only if it happens for all sequences in that tree of scalloped regions. 
For convenience we introduce the following definition.  

\begin{definition} 
For a tree of scalloped regions $T \subset P_i$ with every corner fixed by $\rho_i(h)$, we let $S_T$ denote the set of pairs of convergent sequences $((x^1_n), (x^2_n))$ of points in $Q_i \cap T$ such that $(x^1_n)$ and $(x^2_n)$ are in adjacent $\rho_i(h)$-lozenges and converge to the same point on a leaf of a positive corner for $\rho_i(h)$.    
\end{definition} 

\begin{lemma} \label{lem:flip_always_or_never}
Let $T \subset P_1$ be a tree of scalloped regions with every corner fixed by $\rho_1(h)$.  Let $((z^1_n), (z^2_n)) \in S_T$.  
If $H(z^1_n)$ and $H(z^2_n)$ converge to the same point in $P_2$, then we have $\lim\limits_{n \to \infty} H(x^1_n) = \lim\limits_{n \to \infty} H(x^2_n)$ for every $((x^1_n), (x^2_n)) \in  S_T$.  
\end{lemma}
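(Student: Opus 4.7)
My plan is to reduce the lemma to showing that a single binary invariant is constant across the tree, then propagate using local combinatorics and equivariance. First, I would use Lemma \ref{lem_case_of_sequences_on_same_side} together with the order-preserving leaf-space argument from Lemma \ref{lem_H_preserves_ordered_lines} to observe that for each lozenge $L \subset T$, the map $H$ restricted to the interior $L^\circ$ extends continuously to a homeomorphism $H_L \colon \overline{L} \to \overline{H(L)}$ that sends each side of $L$ to a specific side of $H(L)$. Consequently, whether $\lim H(x_n^1) = \lim H(x_n^2)$ holds depends only on the pair $(L, L')$ of adjacent lozenges, not on the sequences or the limit point on their shared side. Lemma \ref{lem:positive_corner} (and its analogue with $h^{-1}$) then shows $H_L$ sends the positive corner of $L$ for $\rho_1(h)$ to the positive corner of $H(L)$ for $\rho_2(h)$, so for a pair $(L, L') \in S_T$ sharing positive corner $c$, no-flip is equivalent to $c_+(H(L)) = c_+(H(L'))$, i.e., $H(L)$ and $H(L')$ share their positive corner in the tree $T' := H(T)$.

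Next, I would show this property is consistent at each corner. At any positive corner $c$ of $T$, the $2p$ surrounding lozenges (see Remark \ref{rem:scalloped}) are cyclically adjacent, so under $H$ they map to $2p$ cyclically adjacent lozenges in $T'$ (by Lemma \ref{lem_H_preserves_ordered_lines} combined with Lemma \ref{lem_preserves_lozenges}). By the local combinatorial structure of trees of scalloped regions, a cycle of $2p$ pairwise adjacent lozenges must be the ``star'' around a single $p$-pronged corner $c' \in T'$. Since the sign of $c'$ for $\rho_2(h)$ is an intrinsic property of the point, either $c'$ is positive for $\rho_2(h)$ (so $c_+(H(L_i)) = c'$ for all surrounding lozenges $L_i$, forcing no-flip at every pair at $c$) or $c'$ is negative (forcing flip at every pair at $c$). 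Thus the flip/no-flip status is constant at each positive corner of $T$ and is determined by the sign of $c' \in T'$.

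Finally, I would propagate this status across all positive corners of $T$ using $G$-equivariance and the combinatorial structure of the tree. Within a single scalloped region $U$ of $T$, Lemma \ref{lem:Z2stabilizer} provides a virtually $\bZ^2$ sign-preserving subgroup of $\mathrm{Stab}(U)$ (whose elements commute with $\rho_1(h)$ and hence preserve sign of corners) acting, up to finite index, transitively on positive corners of $U$; by equivariance of $H$, the status is constant on all such corners. Two scalloped regions of $T$ that share a lozenge $L$ (and every pair of adjacent scalloped regions in $T$ does, by Remark \ref{rem:scalloped}) then share its positive corner $c_+(L)$, which lies in both regions and carries a single intrinsic status, forcing the status to agree across the two. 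By connectivity of $T$ as a chain of lozenges, no-flip at the corner of $(L_z, L_z')$ propagates to the corner of $(L_x, L_x')$, completing the proof.

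The main obstacle I anticipate is the local claim in the second paragraph: namely, that a cycle of $2p$ cyclically adjacent lozenges around a corner in $T$ must map under $H$ to a cycle of $2p$ lozenges meeting at a common corner in $T'$, rather than forming some longer combinatorial loop. This requires a careful invocation of the structure of trees of scalloped regions (essentially, that cycles of adjacent lozenges are forced to be ``stars'' at corners), together with the fact that $H$ preserves both adjacency and the signed corner data already established. A secondary but minor difficulty is checking that the sign-preserving subgroup of $\mathrm{Stab}(U)$ in the last paragraph genuinely acts transitively on the positive-corner sub-lattice of $U$; if it does not, one supplements with further inter-region propagation, which the chain-of-lozenges structure of $T$ makes possible.
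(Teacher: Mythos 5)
The paper's proof is much shorter than yours and takes a genuinely different route: it proves the contrapositive, and observes that the \emph{proof} (not just the statement) of Lemma \ref{lem_case_of_sequences_on_opposite_sides} already established that a single flip forces the positive/negative corner correspondence to reverse along an entire bi-infinite line of lozenges, and then---by the same iterative argument used there to build up the tree of scalloped regions one line at a time---across every line in $T$. Since $T$ is connected, this reaches all of $S_T$ at once. You instead re-derive propagation from scratch in two stages: first a local ``star'' consistency claim at each corner, then global propagation via $\mathbb{Z}^2$-equivariance within scalloped regions and chain connectivity across them. The equivariance argument in your third paragraph is plausible but is not used anywhere in the paper's proof and is unnecessary once the line-propagation from the earlier lemma is invoked.

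The real problem is the step you yourself flag as ``the main obstacle'': the claim that the $2p$ cyclically adjacent lozenges around a corner $c$ of $T$ must map under $H$ to $2p$ lozenges sharing a single corner in $T'$. This is not a purely combinatorial fact about trees of scalloped regions that you can simply invoke. (Note also that the $2p$ lozenges at a corner are \emph{cyclically} adjacent, not ``pairwise adjacent'' as you write; e.g., for $p=2$ the diagonal pair is not adjacent.) In fact, once you unwind it, the ``star'' claim for a single corner is \emph{equivalent} to the flip status being constant among pairs at that corner---which is precisely the local content of the lemma you are trying to prove. If the shared corners of $H(L_i), H(L_{i+1})$ and of $H(L_{i+1}), H(L_{i+2})$ were the two distinct corners of $H(L_{i+1})$, one would be positive and the other negative for $\rho_2(h)$, which by Lemma \ref{lem:positive_corner} and Lemma \ref{lem_case_of_sequences_on_opposite_sides} corresponds exactly to one of those two pairs flipping and the other not. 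So asserting the star is asserting the conclusion at $c$; it needs its own proof, and the natural source for that proof is the line-propagation mechanism already carried out in the proof of Lemma \ref{lem_case_of_sequences_on_opposite_sides}, which is what the paper uses directly. As written, your argument has a genuine gap at this point, not merely a detail to be filled in.
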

As usual, this result also holds with the roles of $P_1$ and $P_2$ reversed, using $H^{-1}$ in place of $H$.  

\begin{proof} 
We prove the contrapositive: let $y_1:= \lim\limits_{n \to \infty} H(x^1_n)$ and $y_2: = \lim\limits_{n \to \infty} H(x^2_n)$ and assume $y_1 \neq y_2$ where $((x^1_n), (x^2_n))$ is some element of $S_T$, we will show that for any other $((z^1_n), (z^2_n)) \in S_T$, the limits of $H(z^1_n)$ and $H(z^2_n)$ also disagree.  

Let $L_i$ denote the lozenge containing $x^i_n$ in $T$, and let $c_{1,2}$ denote the corner shared by $L_1$ and $L_2$, which is a positive fixed point of $\rho_1(h)$, by assumption.  
Since $y_1 \neq y_2$, the proof of Lemma \ref{lem_case_of_sequences_on_opposite_sides} shows that the corner shared by $H(L_1)$ and $H(L_2)$ is negative; and this propagates globally: whenever adjacent lozenges in $T$, share a positive corner, their images under $H$ share a negative corner.  
Thus, for any other sequence $((z^1_n), (z^2_n)) \in S_T$, as their images $H(z^1_n)$ and $H(z^2_n)$ are in distinct, adjacent lozenges and must converge to the side of a positive corner for $\rho_2(h)$, we deduce that their limits disagree.  
\end{proof} 

 Using Lemma \ref{lem:flip_always_or_never}, we can now define the {\em sign data} for trees of scalloped regions that is the last ingredient in our classification theorem.  

Given an Anosov-like action $\rho$ of $G$ on a bifoliated plane $P$, let $\tau(P)$ denote the set of trees of scalloped regions in $P$ and $\tau(\rho)$ the set of $\rho(G)$-orbits of elements in $\tau(P)$; i.e., $\tau(\rho) := \tau(P)/\rho(G)$.  
If $\rho_i$ are Anosov-like actions on $(P_i, \cF^+_i, \cF^-_i)$, $i = 1, 2$ and 
 $P(\rho_1) = P(\rho_2)$, either $\tau(P_1) = \tau(P_2) = \emptyset$ or, by Corollary \ref{cor:trees_are_same}, 
 our map $H$ induces a natural, $G$-equivariant bijection $B\colon \tau(P_1) \to \tau(P_2)$ associating a tree of scalloped regions $T_{\rho_1(h)} \subset P_1$ with all corners fixed by $\rho_1(h)$ to the tree $T_{\rho_2(h)} \subset P_2$ of scalloped regions with all corners fixed by $\rho_2(h)$.   This descends to a bijection $\overline{B}\colon \tau(\rho_1) \to \tau(\rho_2)$, since an orbit $\rho_i(G)T_{\rho_i(h)}$ consists of the trees of scalloped regions of the form $T_{\rho_i(ghg^{-1})}$ for $g \in G$.

\begin{definition}[Signs for scalloped trees]
Assume $P(\rho_1) = P(\rho_2)$ for actions $\rho_i(G)$ on $(P_i, \cF^+_i, \cF^-_i)$.
We say that a pair $T_1 \in \tau(P_1)$, and $B(T_1) \in \tau(\rho_2)$ {\em have the same sign} if for some (and hence for {\em all}, by Lemma \ref{lem:flip_always_or_never}) elements $((x^1_n), (x^2_n)) \in S_{T_1}$, the sequences $(H(x^1_n))$ and $(H(x^2_n))$ have the same limit point.  
 \end{definition}
 
Note that, if $T_1$ and $B(T_1)$ have the same sign, then $\rho_1(g)(T_1)$ and $B(\rho_1(g)T_1)$ have the same sign, allowing the following definition

\begin{definition}[Signs for orbits of scalloped trees] \label{def:same_sign_tree}
Assume $P(\rho_1) = P(\rho_2)$ for actions $\rho_i(G)$ on $(P_i, \cF^+_i, \cF^-_i)$.  We say $\rho_1$ and $\rho_2$ {\em have the same sign data for scalloped trees} if $[T]$ and $\overline{B}[T]$ have the same sign, for each $[T] \in \tau(\rho_1)$.    Note this is automatically satisfied if $\tau(\rho_1) = \emptyset$ (equivalently, if either plane has no trees of scalloped regions).  
\end{definition}

\begin{rem} \label{rem:def_sign_tree_flow}
For transitive Anosov flows $\varphi_1$ and $\varphi_2$ on a compact $3$-manifold $M$, with $\Phi_*P(\varphi_1) = P(\varphi_2)$ for some $\Phi \in \mathrm{Aut}(\pi_1(M))$, then $P(f\varphi_1 f^{-1}) = P(\varphi_2)$ for any $f$ inducing $\Phi$ on $\pi_1(M)$.  The orbit space of $\varphi_1$ and $f\varphi_1 f^{-1}$ are naturally homeomorphic via the map induced by a lift of $f$.   This induces a $\pi_1(M)$-equivariant  bijection between trees of scalloped regions in the orbit spaces of $\varphi_1$ and $\varphi_2$, and ``same sign" can be defined exactly as above.  
\end{rem}

Returning to our current setting, we can summarize the results of Lemmas \ref{lem_case_of_sequences_on_same_side} and \ref{lem_case_of_sequences_on_opposite_sides} in the following statement, the proof is immediate from the Lemmas.

\begin{proposition}[Unique limit points for intersections of non-corner and fixed leaves]\label{prop_unique_limit_for_NCcapfixed}
Suppose that $(P_1,\cF_1^+,\cF_1^-)$ and  $(P_1,\cF_1^+,\cF_1^-)$ have the same sign data for scalloped trees.  
Suppose $x_n \in Q_1$ and $x_n \to x \in \cF_1^{-}(w) \cap \cF_1^+(c)$, where $w$ is a non-corner fixed point of $\rho_1(g)$ and $c$ is a (possibly corner) fixed point of $\rho_1(h)$. Then $H(x_n)$ converges to a point in $P_2$.
\end{proposition}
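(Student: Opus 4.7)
My plan is to deduce the proposition by combining the limit-point control established in Lemmas \ref{lem:leaves_of_non-corners}, \ref{lem:accumulation_for_non_corner_leaves}, \ref{lem_case_of_sequences_on_same_side}, and \ref{lem_case_of_sequences_on_opposite_sides}, and then invoking the sign-data hypothesis in the only remaining ambiguous situation.

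The first step is to show that the accumulation set of $(H(x_n))$ in $P_2 \cup \Pbound_2$ is contained in a compact subset of $P_2$ lying on $\rho_2(h)$-invariant leaves. Since $w$ is non-corner and $x_n \to x \in \cF_1^-(w)$, Lemma~\ref{lem:leaves_of_non-corners} (and Remark~\ref{rem:leaves_of_non-corners}) shows every accumulation point lies on a bounded segment of $\cF_2^-(H(w))$. Since additionally $x \in \cF_1^+(c)$ with $c$ fixed by $\rho_1(h)$ (after passing to a power preserving all half-leaves through $c$), Lemma~\ref{lem:accumulation_for_non_corner_leaves} forces every accumulation point to lie on a $\rho_2(h)$-invariant leaf. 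Thus the accumulation set is compactly contained in the intersection of $\cF_2^-(H(w))$ with the union of $\rho_2(h)$-invariant leaves, which is a finite set (or a single point if $c$ is non-corner).

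Next, pass to two subsequences $(x_n^1)$ and $(x_n^2)$ of $(x_n)$ whose images converge to accumulation points $y_1$ and $y_2$ respectively; I want to show $y_1 = y_2$. Passing to further subsequences, each $(x_n^i)$ lies entirely on one side of $\cF_1^+(x)$. If both are on the same side, Lemma~\ref{lem_case_of_sequences_on_same_side} directly yields $y_1 = y_2$. If they are on opposite sides, Lemma~\ref{lem_case_of_sequences_on_opposite_sides} gives two possibilities: either $y_1=y_2$, or we are in case~(\ref{item_case_discontinuity}), where $x$ lies on a side of a tree $T$ of scalloped regions with every corner fixed by $\rho_1(h)$, and $(x_n^1),(x_n^2)$ form a pair in $S_T$ whose $H$-images converge to distinct, adjacent $\rho_2(h)$-lozenges in $B(T)$.

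The only remaining obstacle is ruling out this second alternative, and this is precisely where the hypothesis on matching signs enters. By the hypothesis, $T$ and $\overline{B}(T)$ carry the same sign. By Lemma~\ref{lem:flip_always_or_never}, matching signs means that for \emph{every} element of $S_T$, the two image sequences have a common limit. Applied to the pair $((x_n^1),(x_n^2))$, this forces $y_1 = y_2$, contradicting the assumption of case~(\ref{item_case_discontinuity}). Hence every pair of convergent subsequences of $(H(x_n))$ has the same limit, and since the accumulation set is compactly contained in $P_2$, this shows $(H(x_n))$ itself converges in $P_2$, completing the proof.

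The main technical obstacle is confirming that the hypothesis ``same sign data for scalloped trees'' (Definition~\ref{def:same_sign_tree}) applies uniformly to the specific pair $((x_n^1),(x_n^2))$ produced from an arbitrary convergent subsequence; but this is exactly what Lemma~\ref{lem:flip_always_or_never} guarantees, so no further analysis is required.
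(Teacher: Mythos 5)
Your proof is correct and takes essentially the same route the paper intends: the paper simply states that Proposition~\ref{prop_unique_limit_for_NCcapfixed} ``is immediate from the Lemmas,'' and you fill in precisely those details. You invoke Lemma~\ref{lem:leaves_of_non-corners} (via Remark~\ref{rem:leaves_of_non-corners}) to confine accumulation points to a bounded segment of $\cF_2^-(H(w))$, use Lemma~\ref{lem_case_of_sequences_on_same_side} for subsequences on the same side of $\cF_1^+(x)$, and for opposite sides apply Lemma~\ref{lem_case_of_sequences_on_opposite_sides}, ruling out alternative~(\ref{item_case_discontinuity}) via the same-sign hypothesis and Lemma~\ref{lem:flip_always_or_never} --- exactly as intended.
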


\paragraph{\textbf{Second extension of H}}
From this point forward, we assume that one (and hence both) of the planes $P_i$ have the same sign data for scalloped trees.   Let $R_i\subset P_i$ be the subset consisting of points $x\in P_i$ such that $x \in \cF_i^{-}(w)\cap \cF_i^+(c)$, where $w$ is a non-corner fixed point and $c$ is a (corner or not) fixed point.
 We extend the domain of definition of $H$ to $R_1$ as follows.  
Given $x\in R_1$,  pick any sequence $(x_n)$ in  $Q_1$ converging to $x$ and define $H(x):=\lim_{n \to \infty} H(x_n)$. By Proposition \ref{prop_unique_limit_for_NCcapfixed}, this is well-defined, independent of the choice of sequence.

\begin{lemma}\label{lem_extension_to_R}
 The map $H$ is a bijection from $R_1$ to $R_2$.
\end{lemma}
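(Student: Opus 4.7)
The plan is to show first that the extension lands in the right place, i.e. $H(R_1)\subseteq R_2$, and then to exploit the fact that the entire construction from Lemma \ref{lem_linkedislinked} through Proposition \ref{prop_unique_limit_for_NCcapfixed} is symmetric in the two actions: running it with the roles of $\rho_1$ and $\rho_2$ swapped produces an extension $H^{-1}\colon R_2 \to R_1$ which is automatically inverse to $H$, giving bijectivity.

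For the first point, fix $x \in R_1$ with $x \in \cF_1^-(w)\cap \cF_1^+(c)$, where $w$ is a non-corner fixed point of $\rho_1(g)$ and $c$ is a (possibly corner) fixed point of $\rho_1(h)$. Choose any sequence $(x_n) \in Q_1$ with $x_n \to x$, so by definition $H(x_n) \to H(x)$. Lemma \ref{lem:leaves_of_non-corners} (with the extended domain allowed by Remark \ref{rem:leaves_of_non-corners}) gives $H(x) \in \cF_2^-(H(w))$, where $H(w)$ is a non-corner fixed point of $\rho_2(g)$. On the other hand, Lemma \ref{lem:accumulation_for_non_corner_leaves} applied to $\rho_1(h)$ and the leaf $\cF_1^+(c)$ forces every accumulation point of $H(x_n)$ to lie on a $\rho_2(h)$-invariant leaf of $\cF_2^+$; combined with the uniqueness of the limit from Proposition \ref{prop_unique_limit_for_NCcapfixed}, $H(x)$ itself lies on such a leaf $l^+$. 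By Axiom \ref{Anosov_like_A1}, $l^+$ contains a fixed point $c'$ of $\rho_2(h)$. Thus $H(x) = \cF_2^-(H(w)) \cap \cF_2^+(c') \in R_2$.

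For the second point, every lemma used to construct and verify the extension of $H$ to $R_1$ has a counterpart with $\rho_1$ and $\rho_2$ interchanged; in particular Proposition \ref{prop_non-corners_are_equal} is symmetric, Lemmas \ref{lem_linkedislinked}, \ref{lem:leaves_of_non-corners}, \ref{lem:accumulation_for_non_corner_leaves}, \ref{lem_case_of_sequences_on_same_side}, \ref{lem_case_of_sequences_on_opposite_sides}, and \ref{lem:flip_always_or_never} all apply with the roles of the two planes reversed, and the sign-data hypothesis is symmetric as well. This yields a well-defined extension $\widetilde{H}\colon R_2 \to R_1$ with $\widetilde{H}(R_2)\subseteq R_1$ defined by $\widetilde{H}(y) = \lim_{n\to\infty} H^{-1}(y_n)$ for any $(y_n)\in Q_2$ converging to $y$. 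By Corollary \ref{cor:define_non-corner_leaves}, $H|_{Q_1}$ and $H^{-1}|_{Q_2}$ are mutually inverse, so for $x \in R_1$ with $(x_n)\in Q_1$ converging to $x$, the sequence $H^{-1}(H(x_n)) = x_n$ converges to $x$, while by the definition of $\widetilde{H}$ it converges to $\widetilde{H}(H(x))$; uniqueness of limits forces $\widetilde{H}(H(x)) = x$. The symmetric argument gives $H(\widetilde{H}(y)) = y$ for $y \in R_2$. Hence $H\colon R_1 \to R_2$ is a bijection with inverse $\widetilde{H}$.

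The only genuinely nontrivial ingredient here is the step $H(R_1)\subseteq R_2$, which requires knowing that the image of a sequence converging to a point on the leaf of a corner fixed point accumulates precisely on the leaf of a corresponding corner fixed point in the target; all of that work has already been done in Lemma \ref{lem:accumulation_for_non_corner_leaves} together with the uniqueness statement of Proposition \ref{prop_unique_limit_for_NCcapfixed}, so the proof at this stage is essentially a bookkeeping exercise compiling the previous results together with the symmetry of the construction.
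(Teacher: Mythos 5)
Your proof is correct and follows essentially the same approach as the paper's, establishing $H(R_1)\subseteq R_2$ via Lemmas \ref{lem:leaves_of_non-corners} and \ref{lem:accumulation_for_non_corner_leaves} and then invoking the symmetry of the construction to build the inverse. The only small point to add is that to apply Lemma \ref{lem:accumulation_for_non_corner_leaves} you should first replace $h$ by a power so that $\rho_1(h)$ fixes each half-leaf through $c$ (as the paper does explicitly); the paper also includes an additional argument ruling out $H(x)$ being in the interior of a $\rho_2(h)$-lozenge, but your more direct reading of the conclusion of Lemma \ref{lem:accumulation_for_non_corner_leaves} already covers this case.
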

\begin{rem}
 One could actually prove at this point that $H\colon R_1\to R_2$ is a homeomorphism, using the same arguments as we will use later to prove that a further extension of $H$ is a homeomorphism (see Lemma \ref{lem_continuous}). However, since we do not need to use continuity at this point, we will only give the (shorter) argument for bijectivity here.  
\end{rem}

\begin{proof}
We will show that $H(R_1)\subset R_2$.  Having shown this, one may similarly define $H^{-1}$ on $R_2$ by using convergent sequences in $Q_2 = H(Q_1)$, and repeat the proof to see that $H^{-1}(R_2) \subset R_1$ or equivalently $H(R_1) \supset R_2$.  
 
To show $H(R_1)\subset R_2$, let $x \in \cF_1^{-}(w)\cap \cF_1^+(c)$, where $w$ is a non-corner fixed point of $\rho_1(g)$ and $c$ is a point fixed by $\rho_1(h)$.  Up to replacing $h$ by a power, we assume that $\rho_1(h)$ fixes every half-leaf of $c$. By Lemma \ref{lem:accumulation_for_non_corner_leaves}, we know that $H(x)$ is on $\cF_2^-(H(w))$ as well as either inside the interior of a $\rho_2(h)$-lozenge or on a $\rho_2(h)$-invariant leaf.
 
 Since $x\in \cF^+(c)$, we can pick two sequences $(x^1_n),(x^2_n)$ converging to $x$ and on opposite sides of $\cF^+(c)$. In particular, $x^1_n$ and $x^2_n$ are \emph{not} in the same $\rho_1(h)$-lozenge. (Rather, they are either in distinct $\rho_1(h)$-lozenges, or one or both does not lie in any $\rho_1(h)$-lozenge.)
 By Lemma \ref{lem_preserves_lozenges} we deduce that $H(x^1_n)$ and $H(x^2_n)$ cannot be in the same lozenge either. Thus $H(x)$ cannot be inside the interior of a $\rho_2(h)$-lozenge, and hence must be on a $\rho_2(h)$-invariant leaf. That is, $H(x)\in R_2$.
\end{proof}

Now we extend Proposition \ref{prop_unique_limit_for_NCcapfixed} to all points on leaves of non-corner fixed points.

\begin{proposition}[Unique limit points for non-corner leaves]\label{prop_unique_limit_for_NC}
Suppose $x_n \in Q_1$ and $x_n \to x \in \cF_1^{-}(w)$, where $w$ is a non-corner fixed point of $\rho_1(g)$. Then $H(x_n)$ converges to a unique point in $P_2$.
\end{proposition}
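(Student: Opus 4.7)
The plan is to prove this by contradiction, dividing into a trivial case that reduces to Proposition \ref{prop_unique_limit_for_NCcapfixed} and a main case handled via Lemma \ref{lem_distinct_leaves_distinct_saturations}.

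First, if $x \in R_1$, the conclusion is immediate from Proposition \ref{prop_unique_limit_for_NCcapfixed}. So I may assume $x \notin R_1$, which means that $\cF_1^+(x)$ contains \emph{no} fixed point of the $\rho_1(G)$-action. A useful consequence: by Axiom \ref{Anosov_like_periodic_non-separated} combined with Axiom \ref{Anosov_like_A1}, any two non-separated leaves share a common non-trivial stabilizing element and hence both contain fixed points; since $\cF_1^+(x)$ contains none, it is not non-separated with any other leaf. In particular, $\cF_1^+(x_n) \to \cF_1^+(x)$ in the leaf space for every sequence $x_n \to x$. Together with the fact that $\cF_1^-(w)$ is non-singular (since $w$ is non-corner), this also makes $x$ a regular point of both foliations.

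Next, suppose for contradiction that $H(x_n)$ has two distinct accumulation points $y_1 \neq y_2$. By Lemma \ref{lem:leaves_of_non-corners}, both lie on the non-singular leaf $\cF_2^-(H(w))$, so they are regular points with distinct $\cF_2^+$-leaves. Lemma \ref{lem_distinct_leaves_distinct_saturations} (this is where Axiom \ref{Anosov_like_totallyideal} is essentially invoked) guarantees a leaf of $\cF_2^-$ that intersects one of $\cF_2^+(y_1), \cF_2^+(y_2)$ but not the other. Using this together with density of non-corner fixed points (Lemma \ref{lem_nowhere_dense} and Axiom \ref{Anosov_like_dense_fixed_points}), I will select a non-corner fixed point $c \in P_2$ so that $l := \cF_2^-(c)$ intersects $\cF_2^+(y_1)$ transversally but neither intersects nor makes a perfect fit with $\cF_2^+(y_2)$; additionally, $c$ will be chosen so that the corresponding non-corner leaf $l' := \cF_1^-(H^{-1}(c)) \subset P_1$ does not make a perfect fit with $\cF_1^+(x)$. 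The hardest step is arranging all of these genericity conditions simultaneously; since the set of $\cF^\pm$-leaves making a perfect fit with any fixed leaf is locally finite (corresponding to distinct half-leaves ending at the finitely many ideal endpoints), only countably many choices of $c$ must be avoided, leaving a dense set of admissible ones.

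With this choice in hand, I transfer the separation back to $P_1$. Extract subsequences $H(x_{n_k}) \to y_1$ and $H(x_{m_j}) \to y_2$. Because $y_1$ is regular and $l \cap \cF_2^+(y_1) \neq \emptyset$ transversally, the local foliation structure near that intersection point gives $\cF_2^+(H(x_{n_k})) \cap l \neq \emptyset$ for all large $k$; because $l$ neither intersects nor makes a perfect fit with $\cF_2^+(y_2)$, stability of disjointness yields $\cF_2^+(H(x_{m_j})) \cap l = \emptyset$ for all large $j$. Since $H$ bijectively preserves intersections of non-corner leaves between $Q_1$ and $Q_2$ (Corollary \ref{cor:define_non-corner_leaves}), the same dichotomy holds in $P_1$: $\cF_1^+(x_{n_k}) \cap l' \neq \emptyset$ for large $k$, while $\cF_1^+(x_{m_j}) \cap l' = \emptyset$ for large $j$.

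To derive the contradiction, recall from the first paragraph that $\cF_1^+(x_n) \to \cF_1^+(x)$ in the leaf space, and that $\cF_1^+(x)$ does not make a perfect fit with $l'$. Therefore the intersection behavior of $\cF_1^+(x_n) \cap l'$ must stabilize for $n$ large: it is either eventually always nonempty (if $\cF_1^+(x) \cap l'$ is a transverse point, by local persistence around that point) or eventually always empty (if the two leaves are disjoint, by stability of disjointness in the absence of a perfect fit). This flatly contradicts the alternating behaviors found along the two subsequences, completing the proof.
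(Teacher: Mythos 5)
Your overall strategy is close to the paper's: produce a non-corner fixed point whose $\cF_2^-$-leaf separates $\cF_2^+(y_1)$ from $\cF_2^+(y_2)$, pull back via $H^{-1}$, and contradict the unique convergence of $\cF_1^+(x_n)$. However, there is a genuine gap in the step ``because $l$ neither intersects nor makes a perfect fit with $\cF_2^+(y_2)$, stability of disjointness yields $\cF_2^+(H(x_{m_j})) \cap l = \emptyset$ for all large $j$.'' Disjointness is \emph{not} stable under leaf convergence when the limit leaf is non-separated: if $\cF_2^+(y_2)$ is non-separated with a leaf $f$ that crosses $l$, then $\cF_2^+(H(x_{m_j}))$ converges to the non-separated union containing $f$, and those leaves will intersect $l$ for large $j$ even though $\cF_2^+(y_2)$ does not. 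You carefully establish the corresponding ``not non-separated'' fact for $\cF_1^+(x)$ in $P_1$ (using $x \notin R_1$), but you never establish $y_i \notin R_2$, and so the $P_2$-side analogue is simply missing. This is precisely the scenario that the paper's own proof addresses via Observation~\ref{obs_sequence_of_totally_linked}: when the bad configuration arises, $\cF_2^+(y_2)$ must be non-separated, hence fixed by Axiom~\ref{Anosov_like_periodic_non-separated}, hence $y_2 \in R_2$, which via Lemma~\ref{lem_extension_to_R} forces $x \in R_1$, and the proof is finished by falling back to Proposition~\ref{prop_unique_limit_for_NCcapfixed}. Your argument has no such fallback, and without it the contradiction does not materialize. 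The fix is short — add at the outset that if $y_i \in R_2$ then $H^{-1}(y_i)=x$ lies in $R_1$ (Lemma~\ref{lem_extension_to_R}), so one may assume $y_i \notin R_2$, from which $\cF_2^+(y_i)$ has no fixed point and hence is neither singular nor non-separated — but as written the gap is real.

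Two secondary issues. First, the assertion ``both lie on the non-singular leaf $\cF_2^-(H(w))$ \ldots (since $w$ is non-corner)'' does not follow: being a non-corner fixed point does not preclude being singular. The paper instead deduces non-singularity of the leaves $\cF_2^+(y_i)$ from $y_i \notin R_2$ (exactly the step you omit). Second, the ``genericity'' argument — ``only countably many choices of $c$ must be avoided, leaving a dense set of admissible ones'' — is both unnecessary and partly incorrect: since $c$ is chosen as a \emph{non-corner} fixed point, Lemma~\ref{lem:periodic_pf_corner} already guarantees that $\cF_2^-(c)$ makes no perfect fits at all, so the condition is automatic; moreover, the claim that the set of leaves making a perfect fit with a given leaf is locally finite fails at ideal corners of scalloped regions.
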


Since $w$ is a non-corner, Lemma \ref{lem:leaves_of_non-corners}, says that any accumulation point of $H(x_n)$ lies inside $P_2$, not on $\Pbound_2$. 
So all we have to do in order to prove the Proposition is to show uniqueness of accumulation points.
This is the one and only place where we will need to use Axiom \ref{Anosov_like_totallyideal} -- specifically, we will use Lemma \ref{lem_distinct_leaves_distinct_saturations} which depended on this axiom. 

\begin{proof}[Proof of Proposition \ref{prop_unique_limit_for_NC}] 
Let $x_n \in Q_1$ and assume $x_n \to x \in \cF^-_1(w)$.  Let $(x^i_n)$, $i=1,2$, be two sub-sequences of $(x_n)$
 such that $H(x^i_n)\to y_i \in P_2$.  We need to show $y_1 = y_2$.  

By Proposition \ref{prop_unique_limit_for_NCcapfixed}, if $x\in R_1$, then we are already done. Similarly, if $y_1\in R_2$, then, by Lemma \ref{lem_extension_to_R}, $H^{-1}(y_1)\in R_1$ and we have $H^{-1}(y_1) = \lim_{n \to \infty}  H^{-1}(H(x^1_n)) = x$. So $x\in R_1$ which implies, by Proposition \ref{prop_unique_limit_for_NCcapfixed}, that $y_1=y_2$.  The same reasoning applies if $y_2 \in R_2$.  

By Lemma \ref{lem:accumulation_for_non_corner_leaves}, we have $y_1,y_2\in\cF^-_2(H(w))$.  Thus, if $y_1 \neq y_2$, they must lie on distinct leaves of $\cF^+_2$. 
Assume now for contradiction that $y_1 \neq y_2$. As noticed above, this implies that neither $y_1$ nor $y_2$ would be in $R_2$, so in particular, the leaves $\cF^+_2(y_i)$ are both non-singular.
By Lemma \ref{lem_distinct_leaves_distinct_saturations}, up to switching the names of $y_1$ and $y_2$, there exists $l^-$ such that $l^-\cap \cF_2^+(y_1) \neq \emptyset$ and $l^-\cap \cF_2^+(y_2) =\emptyset$.

Using density of non-corners (and the fact that $\cF^+_2(y_1)$ is non-singular), we can then choose a non-corner fixed point $b$ close to and totally linked with $\cF^+(y_1) \cap l^-$ as in Figure \ref{fig:finding_non_totally_linked}, so that $b$ is totally linked with $y_1$ but not with $y_2$.  Consequently, $b$ must be totally linked with $H(x^1_n)$, for all $n$ large enough.

 \begin{figure}[h]
   \labellist 
  \small\hair 2pt
     \pinlabel $y_2$ at 135 230 
    \pinlabel $y_1$ at 220 200 
    \pinlabel $b$ at 350 280
    \pinlabel $l^-$ at 410 230
 \endlabellist
     \centerline{ \mbox{
\includegraphics[width=5.5cm]{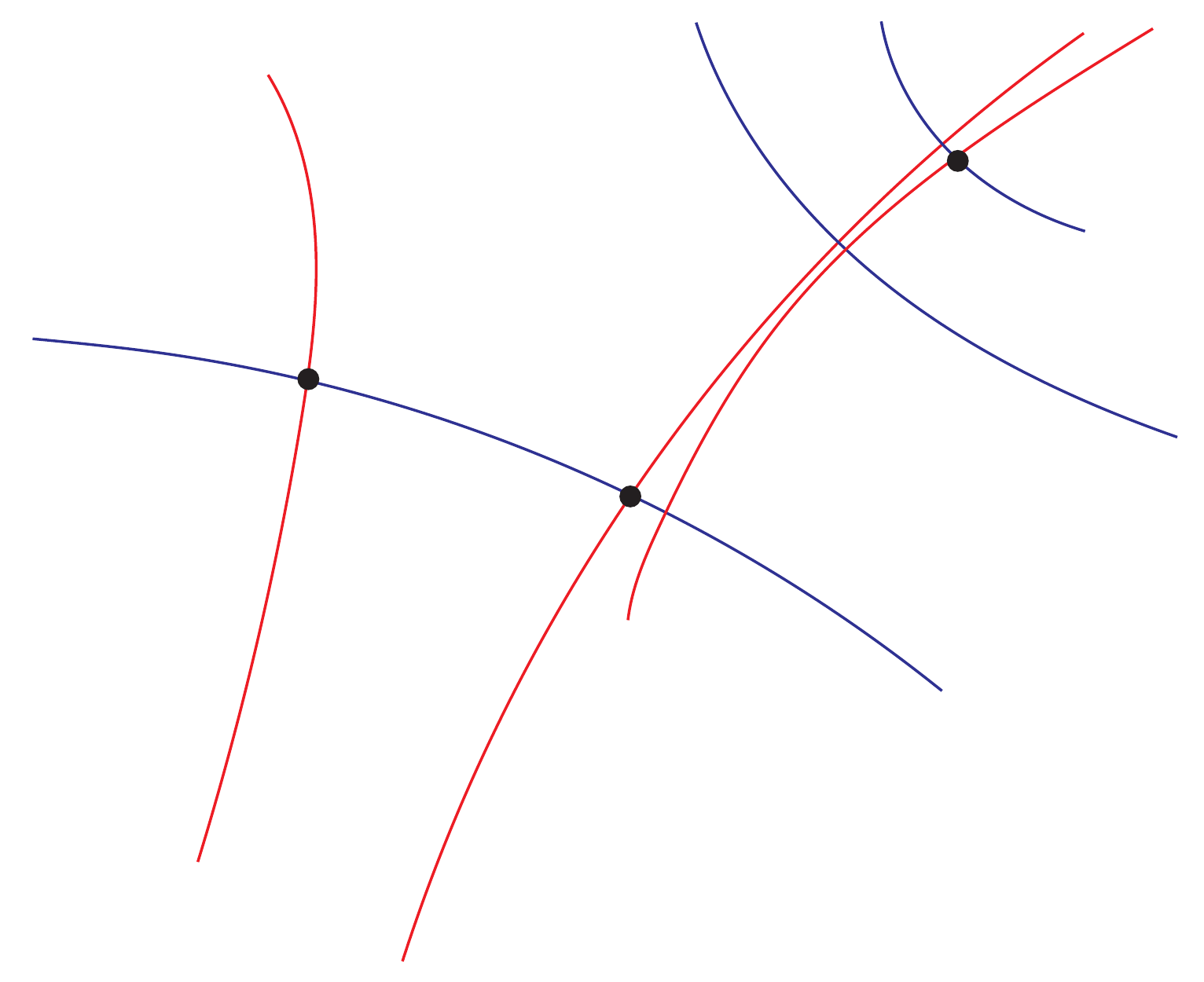}}}
\caption{A non-corner point $b$ totally linked with $y_1$ but not with $y_2$.}
 \label{fig:finding_non_totally_linked} 
\end{figure} 

First consider the case where $b$ is also totally linked with infinitely many $H(x^2_n)$.  In this case, Observation \ref{obs_sequence_of_totally_linked} implies that $b$ would either be totally linked with $y_2$ or that $\cF_2^-(b)$ intersects a leaf that is non-separated with $\cF_2^+(y_2)$ (since $b$ is a non-corner fixed point the ``perfect fit'' option of Observation \ref{obs_sequence_of_totally_linked} cannot happen).
Now, by construction, $b$ is not totally linked with $y_2$. Then we must have that $\cF_2^+(y_2)$ is non-separated with another leaf, which implies by Axiom \ref{Anosov_like_periodic_non-separated}, that $\cF^+_2(y_2)$ is fixed by some element $\rho_2(h)$. Since we also have $y_2\in \cF^-_2(H(w))$, we deduce that $y_2\in R_2$, and are done.

Now consider the remaining case where $b$ is not totally linked with infinitely many $H(x^2_n)$.  After dropping terms, we can assume that $b$ is totally linked with all $H(x^1_n)$ and none of the $H(x^2_n)$.
 Consider $b':= H^{-1}(b)$.  Since $H$ preserves linkedness (Corollary \ref{cor:define_non-corner_leaves}), $b'$ is not totally linked with any of the $x^2_n$. Thus $b'$ cannot be totally linked with $x$.  Since $b$ is totally linked with all the $x^1_n$, by Observation \ref{obs_sequence_of_totally_linked} as above, we conclude that $\cF_1^-(b')$ must intersect a leaf that is non-separated with $\cF_1^+(x)$. But this implies that $\cF_1^+(x)$ is fixed by some element $\rho_1(h)$, so $x\in R_1$.  This concludes the proof.   
\end{proof}

\paragraph{\textbf{Third extension of H}}
Let $S_i \subset P_i$ denote the union of all leaves of  
non-corner fixed points.  
We extend the domain of $H$ to $S_1$ by defining $H(x)$ to be the unique accumulation point of $H(x_n)$ where $x_n$ is any sequence of pairwise totally linked non-corner points approaching $x$.    This is well defined by Proposition \ref{prop_unique_limit_for_NC}.

\begin{lemma}\label{lem_continuous}
$H$ is a homeomorphism $S_1 \to S_2$, where $S_i$ is given the subset topology from $P_i$.  
\end{lemma}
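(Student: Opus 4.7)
The plan is to deduce the lemma from Proposition \ref{prop_unique_limit_for_NC} via a straightforward sequence–chasing and symmetry argument. All the real work has already been done in establishing that $H$ is well-defined on $S_1$; what remains is essentially a continuity argument driven by the denseness of $Q_1$ and the metrizability of the ambient plane.

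First I would verify the elementary fact that $Q_1$ is dense in $P_1$ (and thus in $S_1$ with the subspace topology): by Axiom \ref{Anosov_like_dense_fixed_points} combined with Lemma \ref{lem_nowhere_dense}, the set of non-corner fixed points is dense in $P_1$, and for any point $p \in P_1$ one can find non-corner fixed points $a,b$ arbitrarily close to $p$ whose leaves $\cF^+_1(a)$ and $\cF^-_1(b)$ intersect in any prescribed neighborhood of $p$. I would also fix metrics $d_1$, $d_2$ on $P_1$, $P_2$ compatible with their topologies, which exist since $P_i$ is homeomorphic to the plane.

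For continuity at $x\in S_1$, suppose $x_n \to x$ with $x_n \in S_1$. By construction of the extension of $H$ to $S_1$, for each $n$ and each $\eps>0$ there is a sequence in $Q_1$ converging to $x_n$ whose image converges to $H(x_n)$; consequently we can pick $z_n \in Q_1$ satisfying simultaneously
\[
d_1(z_n,x_n) < 1/n \qquad \text{and} \qquad d_2(H(z_n), H(x_n)) < 1/n.
\]
The triangle inequality gives $z_n \to x$, so $(z_n)$ is a sequence in $Q_1$ converging to $x\in S_1$. Proposition \ref{prop_unique_limit_for_NC} therefore yields $H(z_n)\to H(x)$, and combining with the bound $d_2(H(z_n), H(x_n)) < 1/n$ we conclude $H(x_n) \to H(x)$. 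Thus $H\colon S_1\to S_2$ is continuous.

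Applying the identical construction to $H^{-1}$ in place of $H$ — the bijection $H|_{Q_1}\colon Q_1\to Q_2$ from Corollary \ref{cor:define_non-corner_leaves} has a well-defined inverse, and every step from Lemmas \ref{lem:leaves_of_non-corners} through Proposition \ref{prop_unique_limit_for_NC} is symmetric in the two planes — produces a continuous map $\widetilde H\colon S_2\to S_1$. The compositions $\widetilde H \circ H$ and $H\circ \widetilde H$ are continuous self-maps of $S_1$ and $S_2$ respectively; by construction they restrict to the identity on the dense subsets $Q_1$ and $Q_2$, so they equal the identity on all of $S_1$ and $S_2$. Hence $H$ is a bijection with continuous inverse $\widetilde H$, i.e. a homeomorphism from $S_1$ onto $S_2$. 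There is no genuine obstacle at this step: the entire argument is a formal consequence of the nontrivial uniqueness statement in Proposition \ref{prop_unique_limit_for_NC} together with metrizability and the density of $Q_i$.
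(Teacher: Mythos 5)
Your proposal is correct and takes essentially the same approach as the paper: a diagonal argument leveraging the density of $Q_1$ (resp.\ non-corner fixed points) and the uniqueness of limits from Proposition~\ref{prop_unique_limit_for_NC}, together with symmetry for the inverse. The only cosmetic difference is that you phrase the diagonalization with metrics and the triangle inequality, while the paper uses nested neighborhoods.
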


\begin{proof}
Note that $H(S_1) \subset S_2$ by Lemma \ref{lem:accumulation_for_non_corner_leaves}, and by symmetry we have $H^{-1}(S_2) \subset S_1$ and thus $H(S_1) = S_2$.  It is also immediate from uniqueness of limit points that $H$ is a bijection $S_1 \to S_2$, so we need only prove that $H$ is continuous (and conclude $H^{-1}$ is also continuous by symmetry).  

Let $x\in S_1$ and let $(y_n) \subset S_1$ be a sequence of points that converges to $x$. We want to show that $H(y_n)$ converges to $H(x)$.
To do this, for each $n$, choose a sequence $(y^k_n)$ of non-corner fixed points converging to $y_n$ as $k\to +\infty$. By definition, $H(y^k_n) \to H(y_n)$.
Given any neighborhood $U$ of $H(x)$, take some smaller neighborhood $U'$ of $H(x)$ with closure contained in $U$. If $H(y_n) \notin U$, then there exists $k(n)$ such that $H(y^k_n) \notin U'$ for all $k > k(n)$, by definition of $H$.   If infinitely many such $y_{n}$ failed to be in $ U$, we could take a sequence $m_n > k(n)$ such that $y^{m_n}_n \to x$ but $H(y^{m_n}_n) \notin U'$, contradicting the definition of $H$ using limits of non-corner points. 
\end{proof}

\begin{proof}[End of proof of Theorem \ref{thm_main_general}]
First, we show $H$ extends uniquely to a continuous map $P_1 \to P_2$.  
Define a {\em basic $S$-polygonal domain} in $P_i$ to be a region bounded by segments of leaves in $S_i$, either a rectangle containing no singular points in its interior, or a $2n$-gon containing a single $n$-prong singularity.   Note that each point in $P_i$ is contained in some basic $S$-polygonal domain.

The interior of a basic $S$-polygonal domain $K$ can be characterized by the following ``convexity" property: a point $x$ lies in the interior of $K$ if and only if $x$ is not on $\partial K$ and both $\cF^+(x)$ and $\cF^-(x)$ intersect $\partial K$. 

Now fix a basic $S$-polygonal domain $K \subset P_1$.  By continuity of $H$ on $S_1$, the image of its boundary is a closed, polygonal path in $S_2$, which bounds a domain $K_2$ in $P_2$. By the above characterization of the interior of $S$-polygonal domains, we deduce that $H(K\cap S_1) = K_2\cap S_2$.  Similarly, $H^{-1}$ sends basic $S$-polygonal domains to basic $S$-polygonal domains.

Given any point $x\in P_1$, take a sequence of nested basic $S$-polygonal domains $K_n$ such that $\bigcap_n K_n. = x$.  By the above, $H(\partial K_n)$ is the boundary of a basic $S$ polygonal domain $\overline{H(K_n\cap S_1)}$ in $P_2$.  We claim that $\bigcap_n \overline{H(K_n\cap S_1)}$ is a single point.  For if we had distinct $y_1 \neq y_2$ in $\bigcap_n K'_n$, then we could find two {\em disjoint} sequences $C^1_n,C^2_n$ of nested $S$-polygonal domains inside $\overline{H(K_n\cap S_1)}$,  with $\bigcap_n C^i_n = y_i$.  Thus, $H^{-1}(C^1_n)$ and $H^{-1}(C^2_n)$ would be disjoint nested sequences of $S$-polygonal domains in $K_n$, which contradicts the fact that they both need to contain $x$.

Thus, $\bigcap_n \overline{H(K_n\cap S_1)}$ is a single point in $P_2$; call this point $H(x)$. 
Note that this is well defined, since if we are given any two sequences of nested polygonal domains $K^i_n$ such that $\bigcap_n K^i_n. = x$, for $i=1,2$, then for each $n$ there exists $m(n)$ such that $K^1_{m(n)} \subset K^2_n$, so $\bigcap_n \overline{H(K^1_n\cap S_1)} \subset \bigcap_n \overline{H(K^2_n\cap S_1)}$.   
Thus, $H$ is well defined on all of $P_1$, and by symmetry we also have $H^{-1}$ defined on the whole of $P_2$.  The fact that both $H$ and $H^{-1}$ are continuous now follows from a ``diagonal sequence" argument exactly as in the proof of Lemma \ref{lem_continuous}.

\vspace{.4cm}

To finish the proof of Theorem \ref{thm_main_general}, it remains to show that $H$ conjugates the actions of $\rho_1$ and $\rho_2$.
 By definition of $H$, any non corner point $x\in P_1$ that is fixed by $\rho_1(g)$ is sent to $H(x)$ the unique fixed point of $\rho_2(g)$.
 
Given a non-corner point $x$, fixed by $\rho_1(g)$ and an element $h\in G$, the point $\rho_1(h) x$ is the unique fixed point of $\rho_1(hgh^{-1})$. Hence, $H(\rho_1(h) x)$ is the unique fixed point of $\rho_2(hgh^{-1})$, so $H(\rho_1(h) x) = \rho_2(h) H(x)$. 
Since the set of non-corner fixed points is dense and $H$ is continuous, $H$ conjugates the actions of $\rho_1$ and $\rho_2$.
\end{proof}

\section{Action on ideal circle determines the action on the plane}\label{sec:corollary_for_action_ideal_circle} 
Each Anosov-like action on a bifoliated plane induces an action on the ideal circle.  We show that this action at infinity determines the original action, as follows.  
 \begin{theorem}\label{thm:ideal_circle} 
  Let $\rho_1,\rho_2$ be two Anosov-like actions of a group $G$ on bifoliated planes $(P_1,\cF_1^+, \cF_1^-)$ and $(P_2,\cF_2^+, \cF_2^-)$. Let $\Pbound_1$ and $\Pbound_2$ be the respective ideal circles.

If $h\colon \Pbound_1 \to \Pbound_2$ is a homeomorphism conjugating the induced actions of $\rho_1$ and $\rho_2$, then $h$ extends uniquely to a homeomorphism $H\colon P_1\cup \Pbound_1 \to P_2\cup \Pbound_2$ that conjugates $\rho_1$ and $\rho_2$.
 \end{theorem}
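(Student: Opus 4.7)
The plan is to reduce Theorem~\ref{thm:ideal_circle} to Theorem~\ref{thm_main_general} by transferring spectral data via $h$, then to extend the resulting conjugacy continuously to the ideal circle and verify the extension agrees with $h$.

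First, I would handle the degenerate cases. By Proposition~\ref{prop_4_2_0_global_fixed_points}, having a finite $G$-orbit of cardinality $4$ (respectively $2$) on the ideal circle characterizes the trivial (respectively skewed) plane, and this property is preserved by $h$. For trivial planes, Proposition~\ref{prop:trivial_affine} identifies the action with an affine one on each factor, for which rigidity from the boundary action is elementary. For skewed planes, the boundary action on each side of the strip is a hyperbolic-like action in the sense of \cite{BMB}, and the extension follows from the rigidity results there.

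For the main nontrivial, non-skew case, the first task is to show $\fix(\rho_1)=\fix(\rho_2)$. By Proposition~\ref{prop:boundary_action_general}, $g\in\fix(\rho_i)$ forces $\mathrm{Fix}(g)\subset\Pbound_i$ to consist of either the four endpoints of leaves through a non-corner fixed point, or the closure of endpoints of sides of lozenges in a $g$-invariant chain (four or more points); whereas a freely acting $g$ has at most two boundary fixed points, or exactly four corners of a preserved scalloped region. Since $h$ conjugates the actions, the cardinality and cyclic arrangement of $\mathrm{Fix}(g)$ is transferred. The only ambiguous case is thus when $|\mathrm{Fix}(g)|=4$, where one must distinguish ``free action preserving a scalloped region'' from ``fixes a non-corner or a single pair of lozenge corners.'' The distinguishing feature I would exploit is that corners of scalloped regions (Corollary~\ref{cor:ideal_scalloped}) are \emph{not} endpoints of any single leaf but ideal limits of families of pairwise non-separated leaves, whereas the four boundary fixed points of a non-corner or lozenge element \emph{are} endpoints of $g$-fixed leaves. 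This can be detected purely from the boundary dynamics of $G$ by using the density of fixed points in $P_i$ (Axiom~\ref{Anosov_like_dense_fixed_points}) and analyzing how other $G$-fixed endpoints accumulate on the four fixed points of $g$: in the scalloped case they accumulate in the specific pattern of Lemma~\ref{lemma_scalloped_ideal_corner}, in the other cases they do not.

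Once $\fix(\rho_1)=\fix(\rho_2)$ is established, matching sign data on trees of scalloped regions is immediate, since trees of scalloped regions correspond to concrete boundary configurations transferred by $h$, and the sign of each corner for a given stabilizing element is determined by the local attracting/repelling dynamics on $\Pbound$, also preserved by $h$. Theorem~\ref{thm_main_general} then produces a conjugacy $H_0\colon P_1\to P_2$ sending $\{\cF_1^+,\cF_1^-\}$ to $\{\cF_2^+,\cF_2^-\}$. Finally, I would extend $H_0$ to $\bar{H}\colon P_1\cup\Pbound_1\to P_2\cup\Pbound_2$: since $H_0$ sends leaves to leaves, it induces a natural map on endpoints, and by Proposition~\ref{prop_ideal_circle}(iv) this extends continuously. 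Both $\bar{H}|_{\Pbound_1}$ and $h$ conjugate the boundary actions, and for each $g\in\fixnc(\rho_1)$ with non-corner fixed point $x$, both maps send the four endpoints of $\cF_1^\pm(x)$ to the four endpoints of $\cF_2^\pm(H_0(x))$, a correspondence pinned down by cyclic-order preservation together with the attracting/repelling dichotomy. By Axiom~\ref{Anosov_like_dense_fixed_points} this dense set of boundary points forces $\bar{H}=h$ on all of $\Pbound_1$, giving the desired extension. The main obstacle will be making the boundary-dynamical distinction in the $|\mathrm{Fix}(g)|=4$ case of the first step fully rigorous.
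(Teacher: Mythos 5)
Your overall strategy matches the paper's: reduce to Theorem~\ref{thm_main_general} by showing $\fix(\rho_1)=\fix(\rho_2)$, handle trivial/skew cases separately via Proposition~\ref{prop_4_2_0_global_fixed_points}, and then extend. The key difference, and the place where your proposal has a genuine gap, is precisely the step you flag as "the main obstacle."

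For the $|\mathrm{Fix}(g)|=4$ case, you propose to distinguish a freely acting $g$ preserving a scalloped region from a $g$ with a non-corner fixed point by analyzing how \emph{other} $G$-fixed endpoints accumulate on $\mathrm{Fix}(g)$, invoking Lemma~\ref{lemma_scalloped_ideal_corner}. This is not fleshed out, and there is a far simpler and more direct criterion available, which the paper uses: look at the attracting/repelling pattern of $g^k$ itself at its four boundary fixed points. For $g^k$ with a (regular) non-corner fixed point $x$, the four endpoints of $\cF^\pm(x)$ are \emph{alternately} attracting and repelling (expansion on $\cF^+$, contraction on $\cF^-$). For $g^k$ acting freely and preserving a scalloped region $U$, the four boundary fixed points are the corners of $U$ (Corollary~\ref{cor:ideal_scalloped}), and the dynamics are \emph{not} alternating: if one corner is attracting for $g^k$, an adjacent corner is also a limit under positive iteration, since endpoints of the leaves crossing $U$ are pushed towards both. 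This is a purely local observation about the action of $g^k$ on $\Pbound$, requiring no auxiliary $G$-orbit accumulation analysis, and it immediately settles the case. Beyond this, you also do not address the upgrade from "$g^k$ does not act freely on $P_2$" to "$g$ does not act freely on $P_2$" — the paper handles this by a second pass through Proposition~\ref{prop:boundary_action_general} applied to $g$ itself, ruling out the scalloped-region case (since $g^k$ would then also be free) and the $\leq 2$-fixed-point cases (since $g$ or $g^2$ would translate on complementary intervals, forcing $g^k$ to have at most two boundary fixed points).

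Your treatment of the sign data is also too quick. "Same sign" in Definition~\ref{def:same_sign_tree} is not a sign attached to individual corners, but whether $H$ maps sequences converging to adjacent sides of lozenges in $T$ to sequences converging to the same leaf in $P_2$. The paper's argument is that for a sequence $x^i_n$ of non-corner positive fixed points of $\rho_1(g_{i,n})$ converging to the invariant leaf, the attracting fixed points of $\rho_2(g_{i,n})$ on $\Pbound_2$ are pinned down by $h$, so the images $H(x^i_n)$ cannot converge to distinct leaves. The underlying idea you suggest — boundary dynamics controls the limiting leaves — is right, but you would need to spell this out along these lines to make it rigorous.
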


 \begin{proof} 
First, if $(P_1,\cF_1^+, \cF_1^-)$ is trivial, then there is a finite index subgroup $G'$ that acts on $\Pbound_1$ with $4$ global fixed points (the four points that are not the endpoints of any leaves). Thus, the action of $\rho_2(G')$ on $\Pbound_2$ must also have $4$ global fixed points, so $\Pbound_2$ is trivial by 
 Proposition \ref{prop_4_2_0_global_fixed_points}.  
Besides the  four global fixed points, all boundary points correspond to unique endpoints of leaves, and points of $P_i$ correspond uniquely to intersections of leaves.  Thus, one obtains directly the homeomorphism $H$ from $h$: given a point $x \in P_1$ with $\xi^\pm$ any chosen boundary points of $\cF(x)^\pm$, define $H(x)$ to be the unique point of intersection of the leaves with endpoints $h(\xi^-)$ and $h(\xi^+)$, one easily checks this is a homeomorphism.  

So we can now assume that neither plane is trivial. 
If $(P_1,\cF_1^+, \cF_1^-)$ is skewed, then there exists a finite index subgroup $G'$ acting on $\Pbound_1$ with exactly two global fixed points. So $G'$ also acts with two global fixed points on $\Pbound_2$, and, since $P_2$ is not trivial, Proposition  \ref{prop_4_2_0_global_fixed_points} again gives that $P_2$ is skewed. Then, once again, the homeomorphism $H$ can be easily constructed directly via intersections of leaves, as each component of the complement of the global fixed set in the boundary can be identified with the leaf space of $\cF^+$ or $\cF^-$. (See e.g. \cite{BMB} for a more detailed account of this strategy).

Finally assume that the bifoliated planes are neither skewed nor trivial.  Here we will show that $\fix(\rho_1) = \fix(\rho_2)$.  
Let $g \in \fix(\rho_1)$.  Let $k$ be such that $\rho_1(g^k)$ fixes all half-leaves through (any of) its fixed point(s) in $P_1$.  
Then by Proposition \ref{prop:boundary_action_general}, $\rho_1(g^k)$ either has exactly $2n$ fixed points in $\Pbound_1$, alternatively attractors and repellers (these are the endpoints of $\cF^\pm(x)$, where $n$ is the number of prongs through $x$), or $x$ is a corner and $\rho_1(g^k)$ has more than four fixed points on $\Pbound_1$. 
Since the boundary actions are conjugate, the number of fixed points of $\rho_2(g^k)$ and the dynamics at these fixed points is the same as that for $\rho_1(g^k)$.  

We wish to show that $\rho_2(g^k) \in P(\rho_2)$, i.e. to eliminate the case where $\rho_2(g^k)$ acts freely.  If it did act freely, then we would be in case (3) of Proposition \ref{prop:boundary_action_general} and so $\rho_2(g^k)$ would have either at most two fixed points (disagreeing with the action of $\rho_1(g^k)$, or exactly four fixed points, the ideal corners of a scalloped region. However, the dynamics at the corners of an invariant 
scalloped region are not alternating attractor/repeller: If one of the four corners of the scalloped region is an attractor for the action of $\rho_2(g^k)$, then the adjacent corners also are limits of points under positive iteration by $\rho_2(g^k)$, coming from endpoints of leaves through the scalloped region. Thus, we conclude that $\rho_2(g^k)$ does not act freely. We have left to show that this implies that $\rho_2(g)$ does not act freely either, i.e $g \in \fix(\rho_2)$\footnote{For Anosov flows, this was proven in \cite{Fenley_homotopy}}. Indeed, if $\rho_2(g)$ acts freely on $P$, then by Proposition \ref{prop:boundary_action_general} it has either at most two fixed points on $\Pbound_2$ or it preserves a scalloped region. The latter cannot happen, since $\rho_2(g^k)$ would also act freely. If $\rho_2(g)$ has one or two fixed points on $\Pbound_2$ and acts freely in $P$, then $\rho_2(g)$ or $\rho_2(g^2)$ acts as a translation on each connected component of the complement of the fixed points in $\Pbound_2$. This again contradicts the fact that $\rho_2(g^k)$ has at least four fixed points in $\Pbound_2$. Thus  $g \in \fix(\rho_2)$. 
By symmetry, we obtain $\fix(\rho_1) = \fix(\rho_2)$.

If either $P_1,\cF^{\pm}_1$ or $P_2,\cF^{\pm}_2$ has no tree of scalloped regions, we are already done by Theorem \ref{thm_main_general}.    In the case where one (hence both) of the planes does have a tree of scalloped regions, we need to show that having conjugate boundary actions implies that their trees of scalloped regions have the same signs.  Note that, since $P(\rho_1) = P(\rho_2)$, our map $H$ is defined on $Q_1$ and the definition of {\em same sign} makes sense.  

Let $T$ be a tree of scalloped regions in $P_1$ with each corner fixed by $\rho_1(h)$, and let $((x^1_n), (x^2_n)) \in S_{T}$ with $(x^i_n)$ both converging to $x$ on a $\rho_1(h)$-invariant leaf $l$.  (See Definition \ref{def:same_sign_tree} and preceding remarks for this notation.)    Say, without loss of generality that $l \in \cF_1^+$, the case of $\cF_1^-$ is completely analogous.  

All points $x^i_n$ are regular, non-corner fixed points; let $g_{i,n} \in G$ denote an element such that $x^i_n$ is a positive fixed point of $\rho_1(g_{i,n})$.   Then the endpoints of $\cF^+(x^i_n)$ are the unique attracting fixed points of the action of $\rho_1(g_{i,n})$ on $\partial P_1$, and as $n \to \infty$ these converge to the endpoints of $l$.   Our map $H$ is defined on $x^i_n$, and  $H(x^i_n)$ is the unique positive fixed point of $\rho_2(g_{i,n})$.  Thus, the endpoints $\xi^i_n, \eta^i_n$ of $\cF^+(H(x^i_n))$ are the unique attracting fixed points of $\rho_2(g_{i,n})$ on $\Pbound_2$.  Since $\rho_1$ and $\rho_2$ are conjugate, as $n \to \infty$ both $\xi^1_n$ and $\xi^2_n$ converge to the same point (the image of an endpoint of $l$ under the conjugating map), and similarly $\eta^1_n$ and $\eta^2_n$ converge to the same point.  Thus, $H(x^1_n)$ and $H(x^2_n)$ cannot limit onto distinct $\cF^+$-leaves forming opposite sides of a pair of lozenges, so $T$ and the corresponding tree $B(T)$ in $P_2$ have the same sign.  
Since our choice of tree of scalloped region was arbitrary, we conclude that $\rho_1$ and $\rho_2$ have the same sign data for their trees of scalloped regions so the actions on the planes are conjugate by Theorem \ref{thm_main_general}.

Finally, the uniqueness of the extension $H$ conjugating the two actions is immediate in the trivial and skew case; in the general case it follows directly from density of non-corner fixed points, whose images are uniquely defined under any conjugacy.  
\end{proof}

\section{Building counterexamples}\label{sec:transitive}

There are two conditions in Theorem \ref{thm_main_Anosov}: sign data for trees of scalloped regions and transitivity.  Here we explain how to build counterexamples when the transitivity condition is not met, leaving the other case to the forthcoming paper \cite{BFM_in_preparation}.

It is clear that our proof of Theorem \ref{thm_main_Anosov} requires the flows to be transitive.
In a personal communication, Bin Yu suggested a construction of some counterexamples for the non-transitive case. We give an explicit version of this construction. It is based on the construction via DA and gluings developed in \cite{BBY} that we will not recall explicitely here.

\begin{theorem}[Bin Yu, personal communication]\label{thm_BinYu}
 There exists a manifold $M$ carrying two non-orbit equivalent, non-transitive Anosov flows, $\varphi_1, \varphi_2$, such that $\cP(\varphi_1) = \cP(\varphi_2)$.
\end{theorem}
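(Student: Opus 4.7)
The plan is to exploit the flexibility of the Bonatti--Bin--Yu gluing construction: non-transitive Anosov flows decompose along transverse tori into ``hyperbolic plugs'', each carrying a hyperbolic basic set, and the orbit equivalence class of the glued flow depends on the gluing map while the union of the free homotopy classes of periodic orbits is essentially determined by the plugs themselves. The strategy is therefore to build two gluings of the same pair of plugs that produce homeomorphic total manifolds and identical $\cP$, but are not orbit equivalent.

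Concretely, I would start from a transitive Anosov flow $\psi_0$ on a closed $3$-manifold $N_0$ (for instance a suspension of a hyperbolic toral automorphism or a geodesic flow on a negatively curved surface) and perform a DA modification along a carefully chosen periodic orbit, creating a hyperbolic attractor whose basin is bounded by a torus transverse to the flow. Cutting along this torus yields an \emph{attracting plug} $P^+$ with hyperbolic attractor $\Lambda^+$; time-reversing the analogous construction gives a \emph{repelling plug} $P^-$. By Williams' classification of hyperbolic attractors and the explicit orbital structure of a DA modification, the set of free homotopy classes $\cP(P^\pm)$ of periodic orbits inside each plug is understood and stable under reparameterization. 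Any gluing $\phi\colon \partial P^+ \to \partial P^-$ matching transverse orientations produces a non-transitive Anosov flow $\varphi_\phi$ on the closed manifold $M_\phi$, with exactly two basic sets separated by the gluing torus.

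Next, I would select two gluings $\phi_1, \phi_2$ with $M_{\phi_1} \cong M_{\phi_2} =: M$ such that the induced inclusions $\pi_1(P^\pm) \hookrightarrow \pi_1(M)$ send $\cP(P^+) \cup \cP(P^-)$ to the same subset of conjugacy classes in $\pi_1(M)$. A natural way to arrange this is to take $\phi_2 = \phi_1 \circ \tau$ where $\tau$ is a Dehn twist along a simple closed curve on the boundary torus whose homology class is not the class of any periodic orbit of either plug; then the identifications in $\pi_1(M)$ of the two plug subgroups agree on the set of free homotopy classes realized by orbits, giving $\cP(\varphi_{\phi_1}) = \cP(\varphi_{\phi_2})$. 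The main obstacle — and the main place to invest work — is to show that the two flows are genuinely not orbit equivalent. For this, one needs to extract an invariant from the flow that is preserved by every orbit equivalence but that records the gluing data: candidates include the combinatorial type of the boundary torus's intersection with the weak stable/unstable foliations of each basic set, the way lifted pieces of the two basic sets are identified in $\widetilde{M}$ by the deck action, or — since transitivity fails — the distinct dynamical roles of attractor versus repeller as seen from inside $M$. Selecting $\tau$ so that it changes such an invariant while fixing $\cP$ will complete the construction.
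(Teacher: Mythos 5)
Your overall strategy of building non-transitive examples by gluing hyperbolic plugs \`a la Bonatti--Bin--Yu is the right framework, and the observation that $\cP$ is ``essentially determined by the plugs'' is the correct guiding heuristic. But as written the proposal has two genuine gaps, and the second one is precisely the hard part of the theorem.

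First, the $\cP$-equality step is not actually established. Changing the gluing by a Dehn twist $\tau$ alters the amalgamation $\pi_1(M) = \pi_1(P^+) *_{\bZ^2} \pi_1(P^-)$, and there is no reason a priori that the resulting manifolds are even homeomorphic, let alone that an identification $M_{\phi_1} \cong M_{\phi_2}$ can be chosen so as to carry the image of $\cP(P^+) \cup \cP(P^-)$ to itself. Your proposed criterion (the homology class of the twist curve avoids periodic orbit classes) controls neither issue: periodic orbit classes live in $\pi_1$, not $H_1$, and the conjugacy classes in $\pi_1(M)$ coming from the two plugs depend on the amalgamation data, not only on the orbits inside each plug.

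Second, and more seriously, you defer exactly the content of the theorem: you list candidate invariants to distinguish $\varphi_{\phi_1}$ from $\varphi_{\phi_2}$ but do not produce one, nor show that any of them changes under the chosen $\tau$ while $\cP$ does not. The paper's proof resolves this by a structurally different design. Instead of one attracting and one repelling plug and a twist of the gluing, it uses a central piece $N'$ flanked by two outer pieces $\hat P_1, \hat P_2$, where $N'$ is built so that it has an attracting basic set but no repelling one (hence its flow admits no direction-reversing self-orbit equivalence) yet carries a symmetry $H$ inducing the identity on $\cP$. The two flows $\varphi_1, \varphi_2$ are obtained by attaching $\hat P_1, \hat P_2$ either as written or after applying $H$ and simultaneously reversing the flows on the outer pieces. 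The $\cP$-equality then holds essentially tautologically (pushed forward by $H$), and non-orbit-equivalence follows because the outer pieces are rigged to be pairwise non-homeomorphic (so any orbit equivalence must preserve the decomposition and agree with $H$ on the gluing tori), which forces a direction-reversing self-orbit equivalence of $\varphi_{N'}$ -- a contradiction by design. This ``attractor-without-repeller'' asymmetry is the mechanism your proposal is missing; without it, distinguishing two gluings of the same two plugs up to orbit equivalence is genuinely delicate and you have not shown how to do it.
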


\begin{proof} 
Before going into details we give the basic strategy.  
We first construct a manifold $N'$ with Anosov flow $\varphi_{N'}$.  The key properties of $N'$ are that $\varphi_{N'}$ has no direction-reversing self-orbit equivalence (this is ensured by giving $\varphi_{N'}$ an attracting set but no repelling set), but nevertheless it admits a diffeomorphism $H\colon N' \to N'$ such that $H_*(\cP(\varphi_{N'})) = \cP(\varphi_{N'})$.    We then perform an attracting and repelling DA blow-up on $N'$, and glue on new pieces in two different ways, to produce a new manifold $M$ with two different Anosov flows $\varphi_1$ and $\varphi_2$ that each extend $\varphi_{N'}$ in such a way that we can extend $H$ to a map satisfying $H_*(\cP(\varphi_{1})) = \cP(\varphi_{2})$.  However, our construction will ensure that any purported orbit equivalence of $\varphi_1$ with $\varphi_2$ induces a direction-reversing orbit equivalence of $\varphi_{N'}$, which is impossible.

Now we give the construction. To build $N'$ we start with a genus two surface $\Sigma$ equipped with a hyperbolic metric that admits a reflection $h$ on a geodesic $\beta_0$.  Let $N = T^1\Sigma$ denote its unit tangent bundle, $\psi$ the associated geodesic flow on $N$, and let $Th \colon N \to N$ denote the map induced by $h$.
Then $\beta_0$ is a closed orbit satisfying $Th(\beta_0) = \beta_0$. 
Let $Q$ be another manifold equipped with another Anosov flow $\theta$, perform an attracting DA blow-up on $\beta_0$ and a repelling DA blow-up on a periodic orbit of $Q$ (see, e.g.~\cite{BBY} for the construction).  Removing a small neighborhood of the orbits, chosen so that its boundary is transverse to the flow, and equivariantly with respect to $Th$ on $N$, we obtain {\em hyperbolic plugs} in the sense of \cite{BBY} on two manifolds with torus boundaries $\hat N$ and $\hat Q$. Then, we can glue them to obtain a new Anosov flow $\varphi_N$ on a manifold $N'$.  

Note that $Th$ gives a finite order diffeomorphism of the torus on which $\hat N$ and $\hat Q$ are glued, so by performing an isotopy to the identity in a small neighborhood of the boundary on $\hat N$, we may extend $Th$ to a diffeomorphism $H$ of $N'$ that preserves each periodic orbit of $\theta$ on $\hat Q$.  Note also that $N'$ has the desired property that no self-orbit equivalence of $\phi_{N'}$ can flip the direction of the flow.  

Now we glue on two more building blocks.  
Let $P_1$ and $P_2$ be two non-diffeomorphic $3$-manifolds supporting two Anosov flows $\psi_1$ and $\psi_2$. (In fact, one could even take $P_1 = P_2$ provided the flows are inequivalent with different spectra of periodic orbits.) 
Let $\alpha_i$ be a periodic orbit of $\psi_i$, chosen so that the two noncompact manifolds $P_i - \alpha_i$ are non-diffeomorphic. 
Let $\beta_1$ be a periodic orbit of $Th$ that is not preserved by $Th$, it corresponds also to a periodic orbit of $\varphi_{N'}$.  Let $\beta_2 = h(\beta_1)$. 
Perform an attracting DA blow-up on $\alpha_1$ and $\beta_1$ and a repelling DA blow-up on $\alpha_2$ and $\beta_2$, and remove small tubular neighborhoods transverse to the flow as before, equivariantly with respect to $H$ on $N'$ so that $h$ induces a diffeomorphism of the blown-up manifold sending the torus boundary component $T_1$ coming from $\beta_1$ to the torus boundary component $T_2$ from $\beta_2$.  Call the resulting manifolds $\hat N'$, $\hat P_1$ and $\hat P_2$, these are again {\em hyperbolic plugs}. We further choose the manifolds $P_i$, so that $\hat N'$, $\hat P_1$ and $\hat P_2$ are not homeomorphic.
Using the techniques of \cite{BBY} we may now glue these plugs in two different ways to produce Anosov flows on the glued manifolds.    
\begin{itemize}
\item Way 1:  Glue $(\hat P_1, \hat \psi_1)$ onto $T_1$, and $(\hat P_2, \hat \psi_2)$ to $T_2$.  Call the resulting manifold $M_1$ with Anosov flow $\varphi_1$. 
\item Way 2:  Glue $(\hat P_1, \hat \psi_1^{-1})$ onto $H(T_1) = T_2$, and  $(\hat P_2,\hat \psi_2^{-1})$ onto $H(T_2) = T_1$.  Call the resulting manifold $M_2$ with Anosov flow $\varphi_2$.  
\end{itemize} 
Thanks to \cite{BBY}, we can choose the gluings in the same isotopy classes, hence the map $H$ can again be extended to realize a diffeomorphism between $M_1$ and $M_2$, which we (abusing notation slightly) also denote by $H$.   Note that each periodic orbit of $\varphi_1$ or $\varphi_2$ is a periodic orbit of one of the flows $\varphi_N$, $\psi_1$ or $\psi_2$, and thus $H_\ast\cP(\varphi_1) = \cP(\varphi_2)$. 

It remains to show that $\varphi_1$ and $\varphi_2$ are not orbit equivalent.  Suppose for contradiction that $f\colon M_1 \to M_2$ was an orbit equivalence. Since we chose the pieces $\hat P_1$ and $\hat P_2$ non-homeomorphic to $\hat N'$, $f$ needs to preserve each pieces. Then, by performing an isotopy along orbits, $f$ can be chosen to preserve the transverse tori.  But $f(\hat P_1)$ cannot be equal to $\hat P_2$, since $P_i \smallsetminus \alpha_i$ were chosen to be non-diffeomorphic. 
Thus, $f$ agrees with $H$ on $T_i$.  However, since the $T_1$ is attracting and $T_2$ is repelling, $f$ must induce a {\em direction-reversing} orbit equivalence of $\varphi_{N'}$, which we already showed to be impossible.  
\end{proof}

\section{Further questions} \label{sec:further_q}
We conclude this work by advertising three directions for future study.

\subsection{Dynamical characterizations of pseudo-Anosov actions.} 

The {\em convergence group theorem} of Tukia, Gabai, and Casson-Jungreis, gives a dynamical characterization of subgroups of $\mathrm{Homeo}(S^1)$ that are conjugate to Fuchsian groups, i.e. the boundary actions of fundamental groups of hyperbolic orbifolds and surfaces.   
Thurston's definition of {\em extended convergence group} (see \cite[Section 7]{Thurston:3MFC}) gives a similar characterization applicable to skew-Anosov flows on compact 3-manifolds.  

In the spirit of these results, we ask whether there is a purely dynamical condition that ensures that a group acting on a bifoliated plane is isomorphic to the fundamental group of a compact 3-manifold, and the action conjugate to that on the orbit space from a pseudo-Anosov flow. 
The examples in Remark \ref{rem_affine_counterexample} and \ref{rem:skew_counterexample} 
show that our Anosov-like conditions are not sufficient for trivial and skew planes. A good starting point to the general program would be to answer the following: 

\begin{question} Suppose $G$ is a group with a faithful, Anosov-like action on a bifoliated plane that is neither skewed nor trivial. Is $G$ necessarily a three-manifold group, and the action conjugate to that on the orbit space of a pseudo-Anosov flow?   If not, what additional dynamical hypotheses characterize such actions? 
\end{question}

\subsection{Towards a minimal set of axioms} \label{sec:digression_on_axioms} 

Our choice of the Axioms \ref{Anosov_like_A1} - \ref{Anosov_like_totallyideal} as the definition of ``Anosov-like action" was chosen for their mix of generality and convenience.   However, it is natural to ask whether a shorter or simpler list of axioms would suffice.  
While it would certainly be interesting to study the class of groups that admit actions satisfying only the hyperbolicity axiom \ref{Anosov_like_A1}, one is unlikely to recover such a rich structure theory for such actions as developed in Section \ref{sec:bifoliated}.  

Thus, it is natural to ask for more. Motivated by the example of transitive Anosov flows, one could then ask for topological transitivity and cocompactness of the action.  
A cocompact hyperbolic action will automatically verify Axiom \ref{Anosov_like_prongs_are_fixed}. However, while cocompactness is essential in the proof that Axiom \ref{Anosov_like_periodic_non-separated} holds for flows by Fenley (\cite[Theorem D]{Fenley_structure_branching}), it is not clear to us that it will be sufficient to prove Axiom \ref{Anosov_like_prongs_are_fixed} holds for an action with only hyperbolicity properties.
It is also not clear to us if a cocompact, topologically transitive hyperbolic action will automatically satisfy \ref{Anosov_like_dense_fixed_points}.

An alternative approach would be to strengthen the hyperbolicity condition as follows:  

\begin{definition} \label{def:extended_hyperbolic}
 	Let $G$ be a group acting on a bifoliated plane $(P,\cF^+, \cF^-)$, preserving each foliation.
	The action is said to be \emph{everywhere hyperbolic} if it satisfies the following properties:
	\begin{enumerate}
	 \item Topological hyperbolicity, as in \ref{Anosov_like_A1}.
	 \item For any sequence of pairwise distinct elements $g_n$ in $G$ and any point $x\in X$, if $g_n(x)$ converges to a point $y$, then for any foliated box neighborhood $V$ of $y$ and any compact sets $K^{\pm}$ inside $\cF^{\pm}(x)$, there exists $N$ such that for all $n>N$, we have either $g_n(K^+)\subset V$ and $g_n(K^-)\not\subset V$ or $g_n(K^-)\subset V$ and $g_n(K^+)\not\subset V$.
	 \item For any $x\in X$, there exists two sequences $g_n^{\pm}$ in $G$ such that $g_n^{\pm}(x)$ converges to points $y^{\pm}$ and, for any neighborhoods $V^{\pm}$ of $y^{\pm}$, and any compact sets $K^{\pm}$ inside $\cF^{\pm}(x)$, we have, for $n$ large enough, $g_n^{+}(K^+)\subset V^+$ and $g_n^{-}(K^-)\subset V^-$.
	\end{enumerate}
\end{definition}

 Condition (3) above mimics the fact that for a pseudo-Anosov flow on a compact manifold, hyperbolicity happens everywhere and each orbit has both its $\omega$- and $\alpha$-limit sets non-empty. This is the additional feature used by Fenley to prove that non-separated leaves are periodic in \cite[Theorem D]{Fenley_structure_branching}, as well as the non-existence of totally ideal quadrilaterals in \cite[Proposition 4.4]{Fen_qgpA16}.

\begin{rem}
While these convergence-type conditions (2) and (3) in Definition \ref{def:extended_hyperbolic} seem technical, they effectively captures the dynamics of Anosov-like actions.  
One should be able to show that topological transitivity, together with the everywhere hyperbolic condition imply the other axioms of Anosov-like flows.  Transitivity together with condition (2) can give an analogue of the Anosov closing lemma, and 
condition (3) should make it possible to follow the strategy of proof in 
\cite[Theorem D]{Fenley_structure_branching} and \cite[Proposition 4.4]{Fen_qgpA16} to deduce Axioms \ref{Anosov_like_periodic_non-separated} and \ref{Anosov_like_totallyideal}.
\end{rem}

\subsection{Spectral rigidity for nontransitive examples} 

In the counterexamples for nontransitive flows constructed in Section \ref{sec:transitive}, each basic set of $\varphi_1$ corresponds to a basic set of $\varphi_2$, and the restriction of the flows to each of these corresponding basic sets are orbit equivalent. It would be interesting to know whether this is necessary: 
 \begin{question}
  Let $\varphi_1$, $\varphi_2$ be two Anosov flows on $M$ such that $\cP(\varphi_1) = \cP(\varphi_2)$.
  Let $\Delta_1, \dots, \Delta_{n_1}$ and $\Lambda_1, \dots, \Lambda_{n_2}$ be the basic sets of $\varphi_1$ and $\varphi_2$ respectively.
  Is it necessarily true that $n_1 = n_2$? Further (assuming no trees of scalloped regions, for simplicity), up to reordering, does there exist, for each $i$, a homeomorphism $h_i \colon \Delta_i \to \Lambda_i$ that realizes an orbit equivalence between the restrictions of the flows to those basic sets?
 \end{question}

\bibliographystyle{amsalpha}
\bibliography{spectral_bib}
 
\end{document}